\newcommand{\Nb}{\mathbb{N}}
\newcommand{\Psf}{\mathsf{P}}
\newcommand{\Qsf}{\mathsf{Q}}
\newcommand{\Gcal}{\mathcal{G}}
\newcommand{\Vcal}{\mathcal{V}}
\newcommand{\Lcal}{\mathcal{L}}
\newcommand{\Pcal}{\mathcal{P}}
\newcommand{\Rcal}{\mathcal{R}}
\newcommand{\Scal}{\mathcal{S}}
\newcommand{\Tcal}{\mathcal{T}}
\newcommand{\uh}{{\upharpoonright}}
\renewcommand{\setminus}{\smallsetminus}
\def\qt#1{``#1''}%
\newcommand{\s}[1]{\ensuremath{\sf{#1}}}
\DeclareMathOperator{\rca}{\s{RCA}_0}
\DeclareMathOperator{\aca}{\s{ACA}}
\DeclareMathOperator{\wkl}{\s{WKL}}
\DeclareMathOperator{\rt}{\s{RT}}
\DeclareMathOperator{\crt}{\s{CRT}}
\DeclareMathOperator{\rrt}{\s{RRT}}
\DeclareMathOperator{\ads}{\s{ADS}}
\DeclareMathOperator{\cac}{\s{CAC}}
\DeclareMathOperator{\fs}{\s{FS}}
\DeclareMathOperator{\sher}{\s{SHER}}
\DeclareMathOperator{\emo}{\s{EM}}
\DeclareMathOperator{\dom}{dom}
\newtheorem{theorem}{Theorem}[section]
\newtheorem{lemma}[theorem]{Lemma}
\newtheorem{corollary}[theorem]{Corollary}
\theoremstyle{definition}
\newtheorem{definition}[theorem]{Definition}
\newtheorem{statement}[theorem]{Statement}
\theoremstyle{remark}
\newtheorem {question}[theorem]{Question}
\numberwithin{equation}{section}
\newtheoremstyle{noparens}%
  {}{}%
{}{}%
{\bfseries}{.}%
{ }%
{\thmname{#1}\thmnumber{ #2}\thmnote{ #3}}
\theoremstyle{noparens}
\newtheorem*{question*}{Question}
\newtheorem*{theorem*}{Theorem}
\newcommand{\largeo}{\s{SCA}\mbox{-}\s{RT}}
\newcommand{\largep}[1]{#1\mbox{-}\s{SCA}\mbox{-}\s{RT}}
\newcommand{\lcelargeo}{\s{CA}\mbox{-}\s{RT}}
\newcommand{\lcelargep}[1]{#1\mbox{-}\s{CA}\mbox{-}\s{RT}}
\newcommand{\explicitlarge}{\s{LARGE}}
\newcommand{\lceexplicitlarge}{\s{PACKED}}
\newcommand{\arithscart}{\s{ARITH}\mbox{-}\s{SCA}\mbox{-}\s{RT}}
\newcommand{\arithscartp}[1]{#1\mbox{-}\s{ARITH}\mbox{-}\s{SCA}\mbox{-}\s{RT}}
\newcommand{\ftt}{\mathtt{f}}
\title{Ramsey-like theorems and moduli of computation}
\author{Ludovic Patey}
\address{Institut Camille Jordan\\
Universit\'e Claude Bernard Lyon 1\\
43 boulevard du 11 novembre 1918\\
F-69622 Villeurbanne Cedex}
\email{ludovic.patey@computability.fr}
\thanks{}
\begin{document}

\begin{abstract}
Ramsey's theorem asserts that every $k$-coloring of $[\omega]^n$ admits an infinite monochromatic set. Whenever $n \geq 3$, there exists a computable $k$-coloring of $[\omega]^n$ whose solutions compute the halting set. On the other hand, for every computable $k$-coloring of $[\omega]^2$ and every non-computable set $C$, there is an infinite monochromatic set $H$ such that $C \not \leq_T H$. The latter property is known as \emph{cone avoidance}.

In this article, we design a natural class of Ramsey-like theorems encompassing many statements studied in reverse mathematics. We prove that this class admits a maximal statement satisfying cone avoidance and use it as a criterion to re-obtain many existing proofs of cone avoidance.
This maximal statement asserts the existence, for every $k$-coloring of $[\omega]^n$, of an infinite subdomain $H \subseteq \omega$ over which the coloring depends only on the sparsity of its elements. This confirms the intuition that Ramsey-like theorems compute Turing degrees only through the sparsity of its solutions.
\end{abstract}

\maketitle

Ramsey's theorem asserts that every $k$-coloring of $[\omega]^n$ admits an infinite monochromatic set, where $[X]^n$ denotes the set of the unordered $n$-tuples over $X$. This theorem plays an central role in reverse mathematics, as Ramsey's theorem for pairs is historically the first example of a theorem escaping the structural phenomenon known as the \qt{Big Five} phenomenon. See Simpson~\cite{Simpson2009Subsystems} for a reference on the early reverse mathematics. In his celebrated theorem, Seetapun~\cite{Seetapun1995strength} proved that Ramsey's theorem for pairs admits \emph{cone avoidance}, that is, for any fixed non-computable set $C$, every computable $k$-coloring of $[\omega]^2$ admits an infinite monochromatic set $H$ which does not compute $C$. Since then, many consequences of Ramsey's theorem have been studied from a computability-theoretic viewpoint, including the Erd\H{o}s-Moser theorem~\cite{Bovykin2005strength}, the Ascending Descending sequence principle~\cite{Hirschfeldt2007Combinatorial}, the free set and thin set theorems~\cite{Cholak2001Free} and the rainbow Ramsey theorem~\cite{Csima2004Bounding}. Seeing these statements as problems, in terms of \emph{instances} and \emph{solutions}, the community studied basis theorems for various computability-theoretic properties, including cone avoidance for computable instances, but also for arbitrary instances. This latter property is known as \emph{strong cone avoidance}.

In this article, we generalize the above analysis by designing a general class of Ramsey-like statements encompassing the above examples, and providing general criteria to decide whether any such statement admits (strong) cone avoidance. We start with a short survey on cone avoidance for Ramsey's theorem in Section~\ref{sect:survey-encodability}. Then, we define in Section~\ref{sect:ramsey-like} the class of \emph{Ramsey-like} statements to be those of the form \qt{For every coloring $f : [\omega]^n \to k$, there is an infinite set $H \subseteq \omega$ avoiding some set of forbidden patterns relative to $f$.} This class contains Ramsey's theorem, but also the Erd\H{o}s-Moser theorem. We prove that this class contains a maximal statement admitting strong cone avoidance ($\largeo^n_k)$ and cone avoidance ($\lcelargeo^n_k$) and characterize the statements admitting strong cone avoidance and cone avoidance as those identically reducible to $\largeo^n_k$ and $\lcelargeo^n_k$, respectively. In Section~\ref{sect:promise-ramsey-like}, we define the class of \emph{promise Ramsey-like} statements which generalizes the class of Ramsey-like statements by restricting the instances to those satisfying some property. These statements as then of the form \qt{For every coloring $f : [\omega]^n \to k$ such that $\omega$ avoids some set of forbidden patterns relative to $f$, there is an infinite set $H \subseteq \omega$ avoiding some other set of forbidden patterns relative to $f$.} This enables us to express statements about other structures, such as linear orders and partial orders, including the Ascending Descending sequence and the Chain Antichain principle, but also statements about $\omega$-colorings over $[\omega]^n$, such as the free set or the rainbow Ramsey theorem. In Section~\ref{sect:applications}, we apply the previous analysis to reprove many existing theorems, including cone avoidance of Ramsey's theorem for pairs~\cite{Seetapun1995strength}, strong cone avoidance of Ramsey's theorem for singletons~\cite{Dzhafarov2009Ramseys}, strong cone avoidance of the Erd\H{o}s-Moser theorem~\cite{PateyCombinatorial}, strong cone avoidance of the thin set and free set theorems~\cite{Wang2014Some}, among others. Last, in Section~\ref{sect:open-questions}, we state some remaining open questions and suggest further developments.

Put aside the practical application of this general framework to obtain forcing-free proofs of cone and strong cone avoidance, the actual statements of $\largeo^n_k$ and $\lcelargeo^n_k$ and the resulting decidability criteria provide some further insights on the nature of computation of Ramsey-like statements. One can see a function $\mu : \omega \to \omega$ as a measure of largeness of the intervals over $\omega$, by saying that $[x, y]$ is $\mu$-large if $\mu(x) \leq y$. The statements $\largeo^n_k$ and $\lcelargeo^n_k$ are both of the form \qt{For every coloring $f : [\omega]^n \to k$, there is a function $\mu : \omega \to \omega$ and an infinite set $H \subseteq \omega$ such that for every $D \in [H]^n$, $f(D)$ depends only on the $\mu$-largeness analysis over $D$.} This gives further evidence that Ramsey-like theorems get their computational power out of the sparsity of the solutions, and more generally that Ramsey-like theorems compute through moduli.

\section{A short survey on encodability by Ramsey's theorem}\label{sect:survey-encodability}

For the sake of clarity, we will adopt a thematic presentation, independently of the historical aspects of these discoveries.
Ramsey's theorem is a combinatorial theorem at the foundation of \emph{Ramsey's theory}. This theory studies the conditions under which given a sufficiently large amount of data, one can see the emergence of some structure.
A $k$-coloring of $[\omega]^n$ is a function $f : [\omega]^n \to k$. A set $H \subseteq \omega$ is \emph{$f$-homogeneous} if $f$ is constant on $[H]^n$.

\begin{statement}[Ramsey's theorem]
$\rt^n_k$: Every coloring $f : [\omega]^n \to k$ admits an infinite $f$-homogeneous set.
\end{statement}

Ramsey's theorem plays a central role in reverse mathematics. It is historically the first theorem which does not belong to the empirical structural observation of mathematics.
This motivated the computability-theoretic analysis of Ramsey's theorem and its consequences.

One can see Ramsey's theorem as a mathematical \emph{problem}, expressed in terms of \emph{instances} and \emph{solutions}. Here, an instance of $\rt^n_k$ is a coloring $f : [\omega]^n \to k$. A solution to an $\rt^n_k$-instance $f$ is an infinite $f$-homogeneous set. The computable analysis of Ramsey's theorem consists of, given an instance of $\rt^n_k$, studying the complexity of its solutions from a computable and a proof-theoretic viewpoint. This study started with Jockusch~\cite{Jockusch1972Ramseys}, who proved that every computable instance of $\rt^n_k$ admits an arithmetical solution.

In this article, we are interested in the ability of Ramsey's theorem to compute Turing degrees, that is, the existence of (computable or not) instances of $\rt^n_k$ such that every solution computes a fixed Turing degree.

\subsection{Encodability by Ramsey's theorem}

Given a problem $\Psf$ with instances and solutions, we say that a set $A$ is \emph{encodable by $\Psf$}, or $\Psf$-encodable, if there is an (arbitrary) instance of $\Psf$ such that every solution computes~$A$.

The study of $\rt^n_k$-encodable sets is closely bound to the ability of Ramsey's theorem to compute fast-growing functions.
Suppose a set $A$ admits a \emph{modulus}, that is, a function $\mu : \omega \to \omega$ such that every function dominating $\mu$ computes~$A$. Then, define the $\rt^2_2$-instance $f : [\omega]^2 \to 2$ for each $x < y$ by $f(x, y) = 1$ if and only if $y \geq \mu(x)$, that is, if the interval $[x, y]$ is sufficiently large. Every infinite $f$-homogeneous set $H$ must be of color~1, and its \emph{principal function} $p_H : \omega \to \omega$, which on $n$ associates the $n$th element of~$H$, dominates~$\mu$ and therefore computes~$A$. 

Such an argument shows that if a set $A$ admits a modulus, then it is $\rt^n_k$-encodable, for $n \geq 2$ and $k \geq 2$. Moreover, this encodability is witnessed by constructing an instance of $\rt^n_k$ whose solutions are sparse enough so that their principal functions are fast-growing.
The sets admitting a modulus have been studied by Groszek and Slaman~\cite{Groszek2007Moduli}, who proved that these are precisely the hyperarithmetical sets.

On the other hand, Solovay~\cite{Solovay1978Hyperarithmetically} proved that no other degree can be encoded by Ramsey's theorem, using the notion of computable encodability. Given a set $X \subseteq \omega$, we let $[X]^\omega$ be the collection of all the infinite subsets of~$X$.

\begin{definition}
A set $A$ is \emph{computably encodable} if for every set $X \in [\omega]^\omega$, there is a set $Y \in [X]^\omega$ such that $Y \geq_T A$.
\end{definition}

Suppose that a set $A$ is computed by an instance $f : [\omega]^n \to k$ of $\rt^n_k$, in other words, every infinite $f$-homogeneous set computes~$A$.
In particular, since for every set $X \in [\omega]^\omega$, there is an infinite $f$-homogeneous set $H \subseteq X$, then the set $A$ is computably encodable. The following equivalence proves that the only Turing degrees which can be computed by an instance of Ramsey's theorem are the ones which admit a modulus, hence those who can be computed using fast-growing functions.

\begin{theorem}[Solovay~\cite{Solovay1978Hyperarithmetically}, Groszek and Slaman~\cite{Groszek2007Moduli}]
Given a set $A$, the following are equivalent
\begin{itemize}
	\item[(a)] $A$ is computably encodable
	\item[(b)] $A$ is hyperarithmetic
	\item[(c)] $A$ admits a modulus
\end{itemize}
\end{theorem}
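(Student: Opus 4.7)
The plan is to establish the cycle (c) $\Rightarrow$ (a) $\Rightarrow$ (b) $\Rightarrow$ (c), treating (a) $\Rightarrow$ (b) as the main content and the other two as, respectively, a short greedy argument and a construction using the hyperarithmetic hierarchy.

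First I would dispatch (c) $\Rightarrow$ (a), which is the easy direction. Given an infinite $X \subseteq \omega$ and a modulus $\mu$ for $A$, I would define $Y = \{y_0 < y_1 < \dots\} \subseteq X$ inductively by letting $y_n$ be the least element of $X$ larger than both $y_{n-1}$ and $\mu(n)$. Then the principal function of $Y$ dominates $\mu$, so $Y$ is Turing-above any function dominating $\mu$, and in particular $Y \geq_T A$. No obstacle here.

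Next I would handle (b) $\Rightarrow$ (c). Given $A$ hyperarithmetic, I would fix a presentation of $A$ in the hyperarithmetic hierarchy, say $A \leq_T H_a$ for some $a \in \mathcal{O}$ (Kleene's system of notations), and then define a modulus $\mu$ in two layers: first an internal modulus $\mu_a$ whose domination encodes $H_a$ (built by induction on the notation $a$, using at each successor stage a function whose growth rate encodes the use of the jump), and second a padding that bounds the reduction $A \leq_T H_a$. The point is that any function dominating $\mu$ can be used, by a standard modulus-of-computation argument, to recover $H_a$ stage by stage and therefore $A$. The transfinite induction on $\mathcal{O}$ is the slightly delicate part but is well-known (Spector).

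The main obstacle is (a) $\Rightarrow$ (b), the Solovay direction. I would argue by contrapositive: assume $A$ is not hyperarithmetic and construct an infinite $X \subseteq \omega$ such that $A \not\leq_T Y$ for every $Y \in [X]^\omega$. The natural strategy is to build $X$ as a sufficiently generic or sparse set so that for every Turing functional $\Phi_e$ one can force $\Phi_e^Y \ne A$ or $\Phi_e^Y$ partial, uniformly over infinite $Y \subseteq X$. The key observation is that the statement \textquotedblleft every $X \in [\omega]^\omega$ has some $Y \in [X]^\omega$ with $Y \geq_T A$\textquotedblright{} can be reformulated in effective descriptive set theoretic terms: the set of $X$ admitting such a $Y$ is $\Sigma^1_1(A)$, and if it equals all of $[\omega]^\omega$ then an application of the Gandy basis theorem (or equivalently, a $\Pi^1_1$-uniformization / Kleene-style hyperjump computation) forces $A$ to lie in every $\omega_1^{\mathrm{CK}}$-admissible model, hence $A \in \mathrm{HYP}$. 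Dually, if $A \notin \mathrm{HYP}$, one uses the corresponding non-basis result to exhibit the desired $X$. This is where the real work of the theorem lies, and is the step where I would spend the most care: the diagonalization itself is a forcing with infinite conditions $X$, and one must verify both that the generic is infinite and that the $\Sigma^1_1(A)$ description of the failure set is genuinely $\Sigma^1_1$ rather than hiding a hidden use of $A$.
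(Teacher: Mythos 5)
The paper does not prove this theorem at all; it quotes it as a black box from Solovay and from Groszek and Slaman. So there is no in-text argument to compare against, and your proposal must stand on its own.

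Your directions (c)~$\Rightarrow$~(a) and (b)~$\Rightarrow$~(c) are sound. The greedy thinning of $X$ to make its principal function dominate $\mu$ is exactly right, and the transfinite construction of a modulus along Kleene's $\mathcal{O}$ is the standard Spector-style argument; no objections there beyond the usual care needed at limit notations.

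The gap is in (a)~$\Rightarrow$~(b), and it is a real one, not just a matter of omitted routine detail. You correctly observe that $S = \{X \in [\omega]^\omega : (\exists Y \in [X]^\omega)\, A \leq_T Y\}$ is $\Sigma^1_1(A)$, and that computable encodability says $S = [\omega]^\omega$. But the latter statement is $\Pi^1_2(A)$, not $\Sigma^1_1$ or $\Pi^1_1$, and the Gandy basis theorem gives nothing here: it says a nonempty $\Sigma^1_1$ set has a member $X$ with $\omega_1^X = \omega_1^{\mathrm{CK}}$, but when the set is \emph{all} of $[\omega]^\omega$ this is vacuous. Nor does picking such an $X$ help directly, since the witness $Y \subseteq X$ computing $A$ is not controlled and will generally be of high hyperdegree. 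The phrase ``forces $A$ to lie in every $\omega_1^{\mathrm{CK}}$-admissible model'' does not follow from what precedes it. Where the Gandy basis theorem does belong is \emph{inside} the forcing you sketch: one builds a decreasing sequence of infinite conditions $X_0 \supseteq X_1 \supseteq \cdots$, and at stage $e$ one either finds a subcondition on which $\Phi_e^Y$ can be forced partial or incorrect on all $Y$, or else one shows that $A(n) = k$ iff there is an extension forcing $\Phi_e^Y(n)\downarrow = k$, which is $\Sigma^1_1$ in the current condition; pairing with the complementary $\Pi^1_1$ description gives $A \leq_h X_e$. The basis theorem is invoked at each step to keep $X_e$ of low hyperdegree (e.g.\ $\omega_1^{X_e} = \omega_1^{\mathrm{CK}}$), so that $A \leq_h X_e$ actually yields $A \in \mathrm{HYP}$. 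Your outline collapses this local, step-by-step use of the basis theorem into a single global appeal to it, and as stated the inference does not go through. You would need to restructure the argument around the dichotomy ``diagonalize at stage $e$, or extract a $\Delta^1_1$ definition of $A$ from the failure to diagonalize,'' with the basis theorem maintaining the complexity bookkeeping along the way.
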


One case however remains, namely, Ramsey's theorem for singletons. Intuitively, one cannot build an instance of $\rt^1_2$ whose solutions are sparse everywhere, so that their principal functions are sufficiently fast-growing. Indeed, given a $\rt^1_2$-instance $f : \omega \to 2$, in order to ensure that the $f$-homogeneous sets for color~0 are all sparse, one has to set $f(x) = 1$ for many $x \in \omega$. But then, one can construct non-sparse $f$-homogeneous sets for color~1. Actually, Ramsey's theorem for singletons has no encodability power, as formalized by the notion of \emph{strong cone avoidance}.

\begin{definition}
A problem $\Psf$ admits \emph{strong cone avoidance} if for every set $Z$, every $C \not \leq_T Z$ and every $\Psf$-instance $X$, there is a $\Psf$-solution $Y$ to $X$ such that $C \not \leq_T Z \oplus Y$.
\end{definition}

Building on the work of Seetapun and Slaman~\cite{Seetapun1995strength} and of Cholak, Jockusch and Slaman~\cite{Cholak2001strength}, Dzhafarov and Jocksuch~\cite{Dzhafarov2009Ramseys} proved that Ramsey's theorem for singletons ($\rt^1_k$) admits strong cone avoidance, thereby completing the picture of which sets are encodable by Ramsey's theorem. To summarize, a set is encodable by $\rt^n_k$ with $n \geq 2$ if and only if it is hyperarithmetic, and is encodable by $\rt^1_k$ if and only if it is computable.

\subsection{Encodability by computable instances}\label{sect:enco-by-computable-instances} One may naturally want to refine the previous analysis, and study the $\rt^n_k$-encodable sets with respect to the computational complexity of its instances. Jockusch~\cite{Jockusch1972Ramseys} proved that every computable instance of $\rt^n_k$ admits an arithmetical solution. On the lower bound side, he proved that for every $n \geq 3$, there is computable instance of $\rt^n_2$ such that every solution computes $\emptyset^{(n-2)}$. The proof is a simple effectivization of the previous section.

	On the other direction, Seetapun and Slaman~\cite{Seetapun1995strength} proved that Ramsey's theorem for pairs ($\rt^2_k$) has no encodability power when restricted to computable instances, in the following sense.
	
\begin{definition}
A problem $\Psf$ admits \emph{cone avoidance} if for every set $Z$, every $C \not \leq_T Z$ and every \emph{$Z$-computable} $\Psf$-instance $X$, there is a $\Psf$-solution $Y$ to $X$ such that $C \not \leq_T Z \oplus Y$.
\end{definition}
	
While strong cone avoidance expresses the \emph{combinatorial} failure of $\Psf$ to encode any non-computable set, cone avoidance only expresses the \emph{computational} weakness of $\Psf$. There is a deep link between the combinatorial features of $\rt^n_k$ and the computational features of $\rt^{n+1}_k$, as expressed by Cholak and Patey~\cite[Theorem 1.5]{Cholak2019Thin}. One can deduce cone avoidance of $\rt^2_k$ from strong cone avoidance of $\rt^1_k$, although historically, Seetapun and Slaman~\cite{Cholak2001strength} first proved cone avoidance of~$\rt^2_k$.
Cholak, Jockusch and Slaman~\cite[Theorem~12.2]{Cholak2001strength} (proof fixed in~\cite[Appendix~A]{Hirschfeldt2016notions}), relativized cone avoidance of $\rt^2_k$ to prove that for every $n \geq 2$, if a set $A$ is not $\Delta^0_{n-1}$, then every computable instance of $\rt^n_k$ admits a solution $H$ such that $A \not \leq_T H$. This completes the study of the sets encodable by computable instances of $\rt^n_k$. Indeed, for every $n \geq 2$, a set $A$ is encodable by a computable instance of $\rt^n_k$ if and only if $A$ is $\Delta^0_{n-1}$.

\subsection{Encodability by the thin set theorems}
The previous sections give a complete picture of which sets are encodable by Ramsey's theorem, with or without restricting the complexity of the instances. This could be the end of the story. However, Wang~\cite{Wang2014Some} surprisingly showed that by weakening the notion of homogeneity to allow sufficiently many colors in the solutions, one obtains strong cone avoidance.

\begin{statement}[Thin set theorem]
$\rt^n_{<\infty,\ell}$: For every coloring $f : [\omega]^n \to k$, there is an infinite set $H \subseteq \omega$ such that $|f[H]^n| \leq \ell$.
\end{statement}

Wang~\cite{Wang2014Some} proved that for every $n \geq 1$ and every sufficiently large $\ell \in \omega$, $\rt^n_{<\infty, \ell}$ admits strong cone avoidance. On the other hand, Cholak and Patey~\cite[Theorem 3.2]{Cholak2019Thin}, adapting Dorais et al.~\cite[Proposition 5.5]{Dorais2016uniform}, proved that if $\ell < 2^{n-1}$, $\rt^n_{<\infty, \ell}$ can still compute arbitrarily fast-growing functions, and therefore computes all the hyperarithmetic sets. More precisely.

\begin{theorem}[Cholak and Patey]
Fix $n \geq 1$.
\begin{itemize}
	\item[(a)] For every function $\mu : \omega \to \omega$, there is an instance of $\rt^n_{<\infty, 2^{n-1}-1}$ such that every solution computes a function dominating~$\mu$.
	\item[(b)] If $A$ is not arithmetical, then every instance of $\rt^n_{<\infty, 2^{n-1}}$ has a solution which does not compute~$A$.
\end{itemize}
\end{theorem}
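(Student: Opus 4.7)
I would encode $\mu$ directly into a coloring recording the $\mu$-largeness pattern of consecutive intervals. Concretely, define a $\mu$-computable coloring $f_\mu : [\omega]^n \to \{0,1\}^{n-1}$ (so $2^{n-1}$ colors) by $f_\mu(x_1 < \cdots < x_n) = (c_1, \ldots, c_{n-1})$ with $c_i = 1$ iff $\mu(x_i) \leq x_{i+1}$. Given any infinite $H$ with $|f_\mu([H]^n)| \leq 2^{n-1}-1$, some pattern $c^* \in \{0,1\}^{n-1}$ is omitted, and I argue by case analysis on $c^*$ that $H$ computes a $\mu$-dominating function. The case $c^* = (1,\ldots,1)$ is impossible: using that $H$ is unbounded, one greedily picks $h_{i_1} < \cdots < h_{i_n}$ in $H$ with $h_{i_{j+1}} \geq \mu(h_{i_j})$, realizing $c^*$. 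If $c^* = (0,\ldots,0)$ is omitted, every window of $n$ consecutive elements of $H$ contains at least one $\mu$-large interval, so the $H$-computable function ``the $(n{-}1)$-th element of $H$ above $k$'' dominates $\mu$. For intermediate $c^*$, I argue contrapositively: if $H$ does not compute a $\mu$-dominating function then $H$ contains cofinally many ``small-jump'' pairs, which one interleaves with large jumps (building the pattern bit-by-bit, using large jumps where $c^*_k = 1$ and small jumps where $c^*_k = 0$) to realize $c^*$, contradicting its omission. Applying this to any prescribed $\mu$ yields part~(a).

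\textbf{Part (b).} I would invoke the framework developed in the preceding sections. The maximal strong-cone-avoiding Ramsey-like statement $\largeo^n_k$ furnishes, for any $k$-coloring $f : [\omega]^n \to k$, an infinite $H$ together with a modulus $\mu$ such that $f|_{[H]^n}$ depends only on the $\mu$-largeness pattern of its input --- a bit vector of length $n-1$ ranging over at most $2^{n-1}$ values. Thus $|f([H]^n)| \leq 2^{n-1}$, so any solution to $\largeo^n_k$ is automatically a solution to $\rt^n_{<\infty, 2^{n-1}}$. The strong cone avoidance of $\largeo^n_k$ then transfers directly: for any non-arithmetical (indeed, any non-computable) $A$ and any instance $f$, there is a solution $H$ with $A \not\leq_T H$.

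\textbf{Main obstacle.} The combinatorial heart lies in part~(a). The extreme cases $c^* = (1,\ldots,1)$ and $c^* = (0,\ldots,0)$ are routine, but the intermediate patterns require a uniform interleaving argument: under the hypothesis that $H$ fails to dominate $\mu$, one must simultaneously realize large jumps at positions where $c^*_k = 1$ (easy, by unboundedness of $H$) and small jumps at positions where $c^*_k = 0$ (which demands small-interval elements of $H$ in specific ranges), and verify compatibility in the correct order. Part~(b) is essentially immediate from the earlier development of the paper once one checks that $\rt^n_{<\infty, 2^{n-1}}$ fits within the Ramsey-like framework encompassed by $\largeo^n_k$.
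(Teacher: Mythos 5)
Your coloring for part~(a), $f_\mu(x_1 < \cdots < x_n) = (c_1, \dots, c_{n-1})$ with $c_i = 1$ iff $\mu(x_i) \leq x_{i+1}$, is exactly the paper's $D \mapsto \Vcal_n(\mu, D)$, and the extreme cases are fine (granting the tacit WLOG that $\mu$ is non-decreasing, which you need to pass from $\mu(h_i) \leq h_{i+1}$ to $\mu(k) \leq h_n$). The gap is the intermediate case: cofinally many small-jump \emph{pairs} do not suffice, because realizing a pattern $c^*$ with a maximal run of $r \geq 2$ zeros requires $r+1$ elements of $H$ forming a \emph{small block} --- all $r$ adjacent intervals simultaneously $\mu$-small --- and you cannot place such a run ``bit-by-bit'': once $h_i$ is fixed, $h_{i+1}$ is confined to the bounded window $(h_i, \mu(h_i))$, where $H$ may have no element at all. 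The repair is to upgrade your all-zero observation into a lemma: if $H$ does not compute a $\mu$-dominating function, then for every $r \leq n-1$ and every $k$ there exist $r+1$ consecutive elements of $H$ above $k$ with all $r$ intervals $\mu$-small (contrapositive: if this fails beyond some $k_0$, then $k \mapsto$ the $(r+1)$-th element of $H$ above $\max(k,k_0)$ dominates $\mu$). With small blocks of each needed length available cofinally, one realizes $c^*$ \emph{block-by-block}, using one cofinal small block per maximal $0$-run and a fresh far-out large jump between blocks.

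Part~(b) contains a concrete error. The principle $\largeo^n_k$ encodes via largeness graphs $\Lcal_n$, of which there are $C_n$ (the Catalan number), not via bit vectors of length $n-1$; the bit-vector encoding $\Vcal_n$ with $|\Vcal_n| = 2^{n-1}$ belongs to $\arithscart^n_k$. Since $C_n > 2^{n-1}$ for $n \geq 3$, a $\largeo^n_k$-solution need only satisfy $|f[H]^n| \leq C_n$, so $\largeo^n_k \not\leq_{id} \rt^n_{<\infty,2^{n-1}}$, and your deduction collapses. The parenthetical overclaim ``indeed, any non-computable $A$'' is also false: for $n \geq 3$, $2^{n-1} \leq C_n - 1$, and the Catalan-threshold theorem stated immediately afterward in the paper turns a left-c.e.\ modulus of $\emptyset'$ into an $\rt^n_{<\infty, C_n-1}$-instance --- a fortiori an $\rt^n_{<\infty,2^{n-1}}$-instance --- every solution of which computes $\emptyset'$. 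Finally, even after replacing $\largeo^n_k$ by $\arithscart^n_k$, you cannot close (b) by citing Theorem~\ref{thm:arithscart-sca-nonarith}: that proof itself invokes strong cone avoidance of $\rt^n_{<\infty,2^{n-1}}$ for non-arithmetical cones, i.e.\ exactly part~(b). The paper treats the present theorem as a background result of Cholak and Patey and never proves it; a self-contained proof of (b) has to split on whether $A$ is hyperarithmetical (and if so use a modulus together with a direct avoidance construction for $\rt^n_{<\infty,2^{n-1}}$), rather than importing the framework built on top of it.
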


Using the previous sections, whenever a set $A$ is not hyperarithmetical, it is not $\rt^n_2$-encodable, hence not $\rt^n_{<\infty, \ell}$-encodable for any $\ell \geq 1$. Whenever $A$ is hyperarithmetical, but not arithmetical, then it is $\rt^n_{<\infty, \ell}$-encodable if and only if $\ell < 2^{n-1}$. The case of the the arithmetical sets must be treated independently.

Consider the halting set $\emptyset'$. The standard modulus of $\emptyset'$ is defined by letting $\mu(n)$ be the smallest time $t$ at which for every $e < n$, if $\Phi_e(e)\downarrow$, then $\Phi_e(e)$ halts before stage~$t$. One can in particular computably approximate the function $\mu$ from below. This is the notion of left-c.e.\ function.

\begin{definition}
A function $\mu : \omega \to \omega$ is \emph{left-c.e.} if there is a uniformly computable sequence of functions $\mu_0, \mu_1, \dots$ with $\mu_s : \omega \to \omega$ such that for every $s \in \omega$, $\mu_s \leq \mu_{s+1}$, and for every $x \in \omega$, $\lim_s \mu_s(x) = \mu(x)$.
\end{definition}

The notion relativizes, and we say that a function is \emph{left-$X$-c.e.} if the sequence of functions is uniformly $X$-computable. When mentioning a left-c.e.\ function, we will always assume that the sequence of its approximations is specified. This is why we will sometime talk about \emph{relative left-c.e.\ function} simply to say that a sequence of lower approximations of the function is fixed, no matter the effectiveness of the sequence.

Cholak and Patey~\cite{Cholak2019Thin} studied the threshold of $\ell$ under which, given a left-c.e.\ function $\mu$, there is an instance of $\rt^n_{<\infty, \ell}$ such that every solution computes a function dominating $\mu$. This happens to be exactly the \emph{Catalan sequence}, inductively defined by $C_0 = 1$ and 
$$
C_{n+1} = \sum_{i=0}^n C_i c_{n-i}
$$
In particular, $C_0 = 1$, $C_1 = 1$, $C_2 = 2$, $C_3 = 5$, $C_4 = 14$, $C_5 = 42$, $C_6 = 132$, $C_7 = 429$, $\dots$ Note that this sequence corresponds to the OEIS sequence A000108.

\begin{theorem}[Cholak and Patey~\cite{Cholak2019Thin}]
Fix $n \geq 1$.
\begin{itemize}
	\item[(a)] For every \emph{left-c.e.} function $\mu : \omega \to \omega$, there is an instance of $\rt^n_{<\infty, C_n-1}$ such that every solution computes a function dominating~$\mu$.
	\item[(b)] If $A$ is not computable, then every instance of $\rt^n_{<\infty, C_n}$ has a solution which does not compute~$A$.
\end{itemize}
\end{theorem}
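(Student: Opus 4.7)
The plan is to establish both parts by induction on $n$, taking advantage of the Catalan recurrence $C_{n+1}=\sum_{i=0}^n C_i C_{n-i}$. The base case $n=1$ is easy on both sides: part (a) is vacuous since $C_1-1=0$ and no infinite set realizes $0$ colors; part (b) is precisely strong cone avoidance of $\rt^1_k$, already established by Dzhafarov and Jockusch~\cite{Dzhafarov2009Ramseys}. The whole point of the theorem is to show that the Catalan numbers are the correct thresholds at each dimension, and the recursive structure of the Catalans mirrors a recursive combinatorial structure on $n$-tuples.

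For part (a), I would build a computable coloring $f:[\omega]^n\to k$ with $k=C_n$, indexed by a standard set of Catalan objects (binary trees with $n$ internal nodes, equivalently balanced parenthesizations). Given a tuple $D=(x_1<\dots<x_n)$, one defines $f(D)$ as a recursive parsing of $D$ using the left-c.e.\ approximation $\mu_{x_n}$: locate the largest $j$ such that $[x_1,x_j]$ is $\mu_{x_n}$-large, then recurse on $(x_1,\dots,x_j)$ and $(x_{j+1},\dots,x_n)$. The counts multiply as $C_i\cdot C_{n-1-i}$ and sum by the Catalan recurrence to exactly $C_n$, giving a computable coloring by $C_n$ parse trees. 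The heart of the argument is the converse: if $H$ is infinite and $|f[H]^n|<C_n$, at least one Catalan type is absent, and by induction on $n$ this missing type forces a specific largeness configuration on $H$ that, upon letting elements of $H$ grow, makes its principal function $p_H$ dominate $\mu$.

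For part (b), I would induct on $n$ and use a Mathias-style forcing with conditions $(F,X)$, where $F$ is finite and the reservoir $X$ is cone-avoiding for $A$. Given a coloring $f:[\omega]^{n+1}\to k$, I would analyze at each step the induced colorings $f_x(D)=f(x,D)$ for $x\in F$ and find, by the inductive hypothesis applied uniformly within the reservoir, an infinite sub-reservoir on which each $f_x$ has at most $C_n$ colors. The Catalan recurrence then bounds the total number of colors appearing on $(n+1)$-tuples of the resulting thin solution by $\sum_i C_i C_{n-i}=C_{n+1}$, because the recurrence is precisely realized by classifying an $(n+1)$-tuple by its split into its first coordinate and a small suffix. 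Cone avoidance is preserved throughout since each forcing step is handled by standard cone-avoidance machinery (Seetapun-style) combined with the thin-set cone avoidance of Wang~\cite{Wang2014Some}, adapted at the $C_n$ threshold.

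The main obstacle will be part (a), specifically the reverse direction: verifying that absence of even a single Catalan pattern on an infinite $H$ really forces $p_H$ to dominate $\mu$. The complication is that the parsing at $(x_1,\dots,x_n)$ uses the time-$x_n$ approximation of the left-c.e.\ function, so the assignment of types is not monotone in the tuple; one must show that along any infinite $H$ the approximations stabilize in a way consistent with an underlying Catalan structure, and then align the missing type with a forced $\mu$-largeness pattern. Part (b) is comparatively routine once the Mathias framework is set up, modulo verifying that the inductive combination respects the Catalan count exactly.
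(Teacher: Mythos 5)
Your high-level architecture is sound in spirit — Catalan-indexed combinatorial types built from the left-c.e.\ approximation of $\mu$, and a Mathias-forcing argument with an inductive reservoir for the avoidance direction — and this does roughly track the strategy behind the cited Cholak--Patey result (which the present paper invokes without reproving). But both halves of your sketch leave the genuinely hard content untouched, and in one place the stated mechanism is wrong.

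For part (a), the recursive parsing ``locate the largest $j$ with $[x_1,x_j]$ $\mu_{x_n}$-large, then recurse'' is underspecified precisely where it matters: at deeper recursive levels you must decide which approximation stage to use, and the naive choice (keep $\mu_{x_n}$ everywhere, or switch to $\mu_{x_j}$ for the prefix) either breaks the Catalan count or destroys the composability that the reverse direction needs. The object Cholak--Patey actually use is a single edge-labelled graph on $D$, the \emph{largeness graph} $\Lcal_n(\mu,D)$ (cf.\ Definition~\ref{def:largeness-graph} here): it records, for each adjacent interval $[x_p,x_{p+1}]$, whether it is $\mu$-large, \emph{and} for each later $x_q$, whether the stage-$x_q$ approximation already witnesses smallness; axioms (b)--(d) in that definition are exactly the monotonicity and cross-stage coherence constraints needed so that the count is $C_n$ and so that approximations stabilize coherently along an infinite set. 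Moreover, you acknowledge the reverse direction --- that omitting a single type forces $p_H$ to dominate $\mu$ --- is the heart of the matter, but you give no argument for it; that claim is Theorem~3.17 of Cholak--Patey and requires nontrivial bookkeeping across approximation stages (strong $\mu$-transitivity and the strongly increasing normalization of Lemma~\ref{lem:leftce-computes-strongly-increasing} are not optional: without them $\Lcal_n(\mu,D)$ need not even be a largeness graph).

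For part (b), the claim that the recurrence $C_{n+1}=\sum_i C_i C_{n-i}$ is ``precisely realized by classifying an $(n{+}1)$-tuple by its first coordinate and a small suffix'' is not the mechanism, and the bound does not follow from it: knowing each $f_x$ takes $\le C_n$ colors on the reservoir gives you no control on the \emph{union} over $x$ of those $C_n$-element sets without additional combinatorial structure. The correct bridge between dimensions is the bijection between largeness graphs of size $n$ and \emph{packed} largeness graphs of size $n{+}1$ (this is Lemma~\ref{lem:lcelargeo-to-largeo} in the present paper, resting on Cholak--Patey Lemmas~3.15--3.16, with $|\Pcal_{n+1}|=C_n$). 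Calling the resulting forcing ``comparatively routine'' substantially understates it: strong cone avoidance of $\rt^n_{<\infty,C_n}$ is the principal technical result of the Cholak--Patey paper, and the preservation of cone avoidance across the inductive steps requires tailoring the forcing conditions to the largeness-graph structure, not just composing Seetapun's argument with Wang's thin-set avoidance. As written, your proposal identifies the right family of invariants but elides every step at which the Catalan bound could actually fail.
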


By iterating the notion of left-c.e.\ function, one can $\rt^n_{<\infty, C_n-1}$-encode all the arithmetical sets for every $n \geq 2$.
This completes the picture of the $\rt^n_{<\infty, \ell}$-encodable sets, depending on the value of $n$ and $\ell$. There are actually three classes of $\rt^n_{<\infty, \ell}$-encodable sets: the computable, arithmetical, and hyperarithmetical sets. One can also deduce which sets are encodable by computable instances of $\rt^n_{<\infty, \ell}$, using the bridge between the combinatorics of $\rt^n_{<\infty, \ell}$ and the computations of $\rt^{n+1}_{<\infty, \ell}$. See Cholak and Patey~\cite{Cholak2019Thin} for this analysis.
 


\section{Ramsey-like theorems}\label{sect:ramsey-like}

As explained, $\rt^n_2$ does not admit strong cone avoidance for every $n \geq 2$. However, there exist some weakenings of Ramsey's theorem which admit strong cone avoidance. The thin set theorem is an example, but also the \emph{Erdos-Moser theorem}. Given a coloring $f : [\omega]^2 \to 2$, the Erdos-Moser theorem ($\emo$) asserts the existence of an infinite set $H \subseteq \omega$ over which $f$ is \emph{transitive}, that is, for every $x < y < z$ and $i < 2$, if $f(x, y) = i$ and $f(y, z) = i$ then $f(x, z) = i$. The author~\cite{PateyCombinatorial} proved that $\emo$ admits strong cone avoidance. Ramsey's theorem, the thin set theorem and the Erd\H{o}s-Moser theorem are all of the form \qt{For every coloring $f : [\omega]^n \to k$, there exists an infinite set $H \subseteq \omega$ avoiding some set of forbidden patterns relative to $f$.} In this section, we design a general class of Ramsey-like theorems, and provide a criterion to decide which statements admit strong cone avoidance.

\begin{definition}
Fix a countable collection of variables $x_0, x_1, \dots$
An \emph{$\rt^n_k$-pattern} $P$ is a finite conjunction of formulas of the form $\ftt(\{x_i : i \in D\}) = v$ for some $D \in [\omega]^n$ and $v < k$, where $\ftt$ is a function symbol of type $[\omega]^n \to k$. 
Given an actual coloring $f : [\omega]^n \to k$, a set of integers $E = \{n_0 < n_1 < \dots < n_{r-1}\}$ \emph{$f$-satisfies} an $\rt^n_k$-pattern $P \equiv \ftt(\{x_i : i \in D_0\}) = v_0 \wedge \dots \wedge \ftt(\{x_i : i \in D_{\ell-1}\}) = v_{\ell-1}$ if for every $s < \ell$, letting $E_s = \{n_i : i \in D_s\}$, $f(E_s) = v_s$.
A set $H \subseteq \omega$ \emph{$f$-meets} $P$  if $H$ contains an finite subset $f$-satisfying $P$. Otherwise, $H$ \emph{$f$-avoids} $P$.
\end{definition}

For example, $\ftt(x_5, x_6) = 0 \wedge \ftt(x_6, x_7) = 1 \wedge \ftt(x_5, x_7) = 0$ is an $\rt^2_2$-pattern.  

\begin{definition}
Given a collection $W$ of $\rt^n_k$-patterns, the \emph{$\rt^n_k$-like problem} $\rt^n_k(W)$ is the problem whose instances are colorings $f : [\omega]^n \to k$. An $\rt^n_k(W)$-solution to an instance $f$ is an infinite set $H \subseteq \omega$ $f$-avoiding every pattern in $W$.
\end{definition}

In particular, $\rt^2_2$ is the $\rt^2_2$-like problem $\rt^2_2(W_{\rt^2_2})$ with $W_{\rt^2_2} = \{ \ftt(x_0, x_1) = 0 \wedge \ftt(x_2, x_3) = 1, \ftt(x_0, x_1) = 1 \wedge \ftt(x_2, x_3) = 0, \ftt(x_0, x_2) = 0 \wedge \ftt(x_1, x_3) = 1, \ftt(x_0, x_2) = 1 \wedge \ftt(x_1, x_3) = 0 \}$. Similarly, $\emo$ is the $\rt^2_2$-like problem $\rt^2_2(W_{\emo})$ with $W_{\emo} = \{ \ftt(x_0, x_1) = 0 \wedge \ftt(x_1, x_2) = 0 \wedge \ftt(x_0, x_2) = 1,
		\ftt(x_0, x_1) = 1 \wedge \ftt(x_1, x_2) = 1 \wedge \ftt(x_0, x_2) = 0 \}$.

\subsection{True Ramsey-like problems}
We say that a problem is \emph{true} if every instance has a solution. Intuitively, Ramsey's theorem is the strongest structural property we can get out of a $k$-coloring of $[X]^n$, where $X$ is an abstract set. We formalize this intuition by proving that Ramsey's theorem is the maximal true Ramsey-like theorem.

\begin{definition}
Let $\Psf$ and $\Qsf$ be two problems with $\dom(\Psf) \subseteq \dom(\Qsf)$.
We say that $\Psf$ is \emph{identically reducible} to $\Qsf$ (written $\Psf \leq_{id} \Qsf$) 
if for every $I \in \dom(\Psf)$, every $\Qsf$-solution to $I$ is a $\Psf$-solution to $I$.
\end{definition}

\begin{theorem}\label{thm:rtnk-is-universal}
Fix $n, k \geq 1$ and a collection $W$ of $\rt^n_k$-patterns.
Then $\rt^n_k(W)$ is true if and only if $\rt^n_k(W) \leq_{id} \rt^n_k$.
\end{theorem}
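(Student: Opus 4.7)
The plan is to prove the easy direction by a one-line appeal to Ramsey's theorem, and devote the real work to the converse by exhibiting an explicit counterexample coloring whenever identical reducibility fails.

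For the implication $\rt^n_k(W) \leq_{id} \rt^n_k \Rightarrow \rt^n_k(W)$ is true, note that by Ramsey's theorem every instance $f : [\omega]^n \to k$ has an infinite $f$-homogeneous set $H$; by identical reducibility $H$ is then an $\rt^n_k(W)$-solution.

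For the main direction, I would argue the contrapositive. Assume $\rt^n_k(W) \not\leq_{id} \rt^n_k$, so fix a coloring $f : [\omega]^n \to k$, an infinite $f$-homogeneous set $H$ of some color $c < k$, and a pattern $P \equiv \bigwedge_{s<\ell} \ftt(\{x_i : i \in D_s\}) = v_s$ in $W$ that $H$ $f$-meets. Unpacking the definition, there is a finite $E = \{n_0 < \dots < n_{r-1}\} \subseteq H$ such that for each $s < \ell$, the set $E_s = \{n_i : i \in D_s\}$ satisfies $f(E_s) = v_s$. Since $E_s \subseteq H$ and $H$ is $f$-homogeneous of color $c$, we also have $f(E_s) = c$, so $v_s = c$ for every $s < \ell$. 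In other words, the pattern $P$ is monochromatic of color $c$.

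Now consider the constant coloring $g : [\omega]^n \to k$ defined by $g(D) = c$ for every $D \in [\omega]^n$. I claim that $g$ admits no $\rt^n_k(W)$-solution, contradicting the assumption that $\rt^n_k(W)$ is true. Indeed, let $r$ be large enough to index all variables appearing in $P$ (so $r > \max \bigcup_{s<\ell} D_s$); then for any infinite $X \subseteq \omega$, pick any $r$-element subset $E = \{n_0 < \dots < n_{r-1}\} \subseteq X$. For each $s < \ell$, the set $E_s = \{n_i : i \in D_s\}$ has size $n$ and $g(E_s) = c = v_s$, so $E$ $g$-satisfies $P$. Thus every infinite subset of $\omega$ $g$-meets $P \in W$, so $g$ has no $\rt^n_k(W)$-solution.

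The only subtlety to be careful about is the book-keeping in the definition of $f$-satisfaction: one must verify that the $D_s$'s are interpreted as indices into the enumeration of $E$, so that the same constant-color witness $E$ works uniformly for any infinite ambient set $X$. Once this bookkeeping is checked, the argument is essentially combinatorial with no forcing or priority needed, and I expect no serious obstacle.
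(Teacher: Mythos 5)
Your proof is correct and takes essentially the same approach as the paper: both directions are handled identically, with the forward direction reduced to classical Ramsey's theorem and the converse shown contrapositively by observing that any pattern met inside a homogeneous set must be monochromatic, so that the constant coloring of that color has no solution. You spell out slightly more of the bookkeeping (in particular, why every infinite set meets a monochromatic pattern under a constant coloring), but the argument is the same.
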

\begin{proof}
$\Rightarrow$: Suppose $\rt^n_k(W) \not\leq_{id} \rt^n_k$. Let $f : [\omega]^n \to k$ be an instance of $\rt^n_k(W)$ and let $H$ be an infinite $f$-homogeneous set for some color $i < 2$ which $f$-meets some $P \in W$. Then $P$ is of the form $\ftt(\{x_j : j \in D_0\}) = i \wedge \dots \wedge \ftt(\{x_j : j \in D_{\ell-1}\}) = i$. Then, letting $g : [\omega]^n \to k$ be defined for every $D \in [\omega]^n$ by $g(D) = i$, $g$ has no $\rt^n_k(W)$-solution, therefore $\rt^n_k(W)$ is not true.

$\Leftarrow$: Suppose $\rt^n_k(W) \leq_{id} \rt^n_k$. Given an instance $f : [\omega]^n \to k$ of $\rt^n_k(W)$, by the classical Ramsey theorem, there is an infinite $f$-homogeneous set $H$. In particular, $H$ is an $\rt^n_k(W)$-solution to~$f$, so $f$ admits an $\rt^n_k(W)$-solution and $\rt^n_k(W)$ is true.
\end{proof}

Before starting the analysis of cone avoidance for Ramsey-like theorems, let us introduce an important concept which will be implicitly used all over the article.

\begin{definition} A problem $\Psf$ is \emph{zoomable} if for every $\Psf$-instance $I$ and every infinite set $X = \{x_0 < x_1 < \dots \}$, there is a $\Psf$-instance $I_X$ such that for every $\Psf$-solution $Y$ to $I_X$, $\{x_n : n \in Y \}$ is a $\Psf$-solution to $I$.
\end{definition}

One can see zoomable problems as saying that given an instance $I$ and an infinite set $X$, we can zoom in on the set $X$ and consider it as the new set $\omega$ by renaming the elements of $X$. Then, after having built a solution within $X$ seen as $\omega$, we can zoom out and see it as a subset of $X$. For example, Ramsey's theorem is a zoomable problem, while Hindman's theorem is not, since the zoom operation changes the semantics of the addition.

The following lemma which is implicitly used everywhere asserts that whenever a zoomable problem $\Psf$ admits strong cone avoidance, then one can apply this strong cone avoidance within the scope of a cone avoiding reservoir.

\begin{lemma}
Let $\Psf$ be a zoomable problem which admits strong cone avoidance.
For every set $Z$, every set $C \not \leq_T Z$, every infinite $Z$-computable set $X$ and every $\Psf$-instance $I$, there is a $\Psf$-solution $Y \subseteq X$ such that $C \not \leq_T Z \oplus Y$. 
\end{lemma}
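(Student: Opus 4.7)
The plan is to use the zoomable property to transfer the problem from the ambient domain $\omega$ onto the reservoir $X$, apply strong cone avoidance inside, and then zoom back out. Concretely, write $X = \{x_0 < x_1 < \dots\}$, and note that since $X$ is infinite and $Z$-computable, its principal function $n \mapsto x_n$ is $Z$-computable.

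First, I would invoke zoomability of $\Psf$ to obtain a $\Psf$-instance $I_X$ with the property that for every $\Psf$-solution $Y'$ to $I_X$, the set $\{x_n : n \in Y'\}$ is a $\Psf$-solution to $I$. Next, I would apply strong cone avoidance of $\Psf$ to the instance $I_X$ with oracle $Z$ and the non-$Z$-computable set $C$: this yields a $\Psf$-solution $Y'$ to $I_X$ with $C \not\leq_T Z \oplus Y'$. Note that it is crucial here to use \emph{strong} cone avoidance rather than plain cone avoidance, since $I_X$ need not be $Z$-computable (the map $I \mapsto I_X$ given by zoomability is not required to be effective).

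Finally, set $Y = \{x_n : n \in Y'\}$. By the defining property of $I_X$, the set $Y$ is a $\Psf$-solution to $I$, and clearly $Y \subseteq X$. Since the principal function of $X$ is $Z$-computable, we have $Y \leq_T Z \oplus Y'$, hence $Z \oplus Y \leq_T Z \oplus Y'$, and therefore $C \not\leq_T Z \oplus Y$, as required.

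The argument is essentially a bookkeeping exercise once the definitions are lined up; the only genuine subtlety — and the one step that must be chosen correctly — is recognizing that one must use strong cone avoidance (not cone avoidance) for $I_X$, because zooming does not in general preserve the computational complexity of the instance. No further forcing or combinatorial construction is needed beyond the hypothesis on $\Psf$.
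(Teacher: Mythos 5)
Your proof is correct and follows essentially the same route as the paper: zoom in to get $I_X$, apply strong cone avoidance to $I_X$ to get a cone-avoiding solution $Y'$, then zoom out via the $Z$-computable principal function of $X$ to get $Y = \{x_n : n \in Y'\} \subseteq X$ with $Z \oplus Y \leq_T Z \oplus Y'$. The paper omits the explicit remark about why strong rather than plain cone avoidance is required, but your explanation of that point is accurate and a reasonable thing to highlight.
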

\begin{proof}
Fix $Z$, $C$ and $X$. Since $\Psf$ is zoomable, there is a $\Psf$-instance $I_X$ such that for every $\Psf$-solution $Y$ to $I_X$, $\{x_n : n \in Y \}$ is a $\Psf$-solution to $I$. By strong cone avoidance of $\Psf$ applied to $I_X$, there is a $\Psf$-solution $Y$ to $I_X$ such that $C \not \leq_T Z \oplus Y$. The set $Y_X = \{x_n : n \in Y\}$ is a $Z \oplus Y$-computable $\Psf$-solution to $I$. In particular, $Y_X \subseteq X$ and $C \not \leq_T Z \oplus Y_X$.
\end{proof}

Similar lemmas can be proven for cone avoidance and strong cone avoidance for non-arithmetical cones. We will use these lemmas without any further mention.

\subsection{Strongly avoiding non-arithmetical cones} Recall that given a modulus $\mu : \omega \to \omega$ of a set $A$, one can define a coloring $f : [\omega]^2 \to 2$ by $f(x, y) = 1$ if and only if $\mu(x) \leq y$. Any infinite $f$-homogeneous set computes a function dominating~$\mu$. Cholak and Patey~\cite{Cholak2019Thin} generalized this coding in the following sense. 

\begin{definition}
A graph $\Gcal = (\{0, \dots, n-1\}, E)$ of size $n$ is a \emph{vector graph} if
$E \subseteq \{ \{i, i+1\} : i < n-1\}$.
\end{definition}

A vector graph of size $n$ is just a representation of a $\{0,1\}$-value vector of size $n-1$. The choice of a graph representation is for uniformity with the later sections. Given $n \geq 1$, we let $\Vcal_n$ be the set of all vector graphs of size~$n$. In particular, $|\Vcal_n| = 2^{n-1}$. We shall actually consider functions $\mu : \omega \to \omega^{+}$, where $\omega^{+}$ is the successor ordinal of $\omega$.

\begin{definition}
Let $\mu : \omega \to \omega^{+}$ be a function. For every $n \geq 1$ and $D = \{x_0 < \dots < x_{n-1}\} \in [\omega]^n$, let $\Vcal_n(\mu, D)$ be the graph $\Gcal = (\{0, \dots, n-1\}, E)$ such that for each $i < n-1$, $\{i, i+1\} \in E$ if and only if $\mu(x_i) \leq x_{i+1}$.
\end{definition}

An interval $[x, y]$ is \emph{$\mu$-large} if $\mu(x) \leq y$. Otherwise, it is \emph{$\mu$-small}. 
Given a finite set $D = \{x_0 < \dots < x_{n-1}\}$, $\Vcal_n(\mu, D)$ is supposed to code the whole $\mu$-largeness information over $D$. Actually, $\Vcal_n(\mu, D)$ contains only the information about the adjacent intervals, namely, intervals of the form $[x_i, x_{i+1}]$. What about the largeness information about non-adjacent ones? For example, if $[x, y]$ and $[y, z]$ are both $\mu$-small, is $[x, z]$ $\mu$-small or $\mu$-large? We shall see through the notion of $\mu$-transitivity that we can always restrict ourselves to sets over which the whole information of $\mu$-largeness is already fully specified by the $\mu$-largeness information on the adjacent intervals.
A set $H \subseteq \omega$ is \emph{$\mu$-transitive} if for every $x < y < z \in H$, $\mu(x) > y$ and $\mu(y) > z$ if and only if $\mu(x) > z$.

\begin{lemma}\label{lem:mu-transitive-yields-vector-graph}
Fix $\mu : \omega \to \omega^{+}$ and $\rho : \omega \to \omega^{+}$.
Given $n \geq 1$, let $E = \{x_0 < \dots < x_{n-1} \}$ be a $\mu$-transitive set, and $F = \{y_0 < \dots < y_{n-1} \}$ be a $\rho$-transitive set such that $\Vcal_n(\mu, E) = \Vcal_n(\rho, F)$. Then for every $i < j < n$, $[x_i, x_j]$ is $\mu$-large if and only if $[y_i, y_j]$ is $\rho$-large.
\end{lemma}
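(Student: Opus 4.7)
The plan is to prove the statement by induction on $j-i$. The base case $j = i+1$ is immediate: by definition of $\Vcal_n(\mu, E)$, the edge $\{i, i+1\}$ belongs to it iff $[x_i, x_{i+1}]$ is $\mu$-large, and similarly for $\Vcal_n(\rho, F)$, so the assumed equality of vector graphs gives the conclusion on adjacent pairs.

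For the inductive step, the key intermediate observation is that $\mu$-transitivity, although formulated on triples, propagates to arbitrarily long tuples inside a $\mu$-transitive set. More precisely, I would first establish by a short induction on $j - i$ that for any $\mu$-transitive set $E = \{x_0 < \dots < x_{n-1}\}$ and any $i < j < n$, the interval $[x_i, x_j]$ is $\mu$-large if and only if there exists $k$ with $i \leq k < j$ such that the adjacent interval $[x_k, x_{k+1}]$ is $\mu$-large. Indeed, applying the definition of $\mu$-transitivity to the triple $x_i < x_{j-1} < x_j$ gives that $[x_i, x_j]$ is $\mu$-small iff both $[x_i, x_{j-1}]$ and $[x_{j-1}, x_j]$ are $\mu$-small; unfolding the first disjunct via the induction hypothesis yields the claim.

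The same auxiliary fact applies verbatim to the $\rho$-transitive set $F$. Hence $[y_i, y_j]$ is $\rho$-large iff some adjacent $[y_k, y_{k+1}]$ with $i \leq k < j$ is $\rho$-large. Now the hypothesis $\Vcal_n(\mu, E) = \Vcal_n(\rho, F)$ says precisely that, for every $k < n-1$, the edge $\{k, k+1\}$ is present in one vector graph iff it is present in the other, that is, $[x_k, x_{k+1}]$ is $\mu$-large iff $[y_k, y_{k+1}]$ is $\rho$-large. Combining the two equivalences gives $[x_i, x_j]$ $\mu$-large iff $[y_i, y_j]$ $\rho$-large, as required.

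There is no real obstacle here; the only subtlety is recognizing that the transitivity hypothesis, formulated on triples, must be iterated to reduce arbitrary-length largeness information to the adjacent-interval information that the vector graph records. The values $\mu(x) = \omega$ (allowed since $\mu$ maps into $\omega^{+}$) cause no issue, since in that case $[x, y]$ is simply never $\mu$-large and the corresponding edges are uniformly absent from the vector graph.
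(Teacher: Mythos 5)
Your proof is correct and uses essentially the same idea as the paper: induct on the distance $j-i$ and use transitivity to reduce the largeness of $[x_i,x_j]$ to largeness of adjacent intervals, which is exactly the information recorded in the vector graph. The only packaging difference is that you factor the argument through an auxiliary claim about a single transitive set (adjacent-interval reduction) and then apply it to $E$ and $F$ separately, whereas the paper runs one induction directly on the biconditional ``$[x_i,x_{i+m}]$ is $\mu$-large iff $[y_i,y_{i+m}]$ is $\rho$-large''; the substance is the same.
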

\begin{proof}
We prove by induction over $m \geq 1$ that for every $i$, $[x_i, x_{i+m}]$ is $\mu$-large if and only if $[y_i, y_{i+m}]$ is $\rho$-large. The base case $m = 1$ is the lemma hypothesis. Suppose it holds up to $m$. Since $E$ is $\mu$-transitive, then $[x_i, x_{i+m+1}]$ is $\mu$-large if and only if either $[x_i, x_{i+m}]$ or $[x_{i+m}, x_{i+m+1}]$ is $\mu$-large. By induction hypothesis, this holds if and only if either $[y_i, y_{i+m}]$ or $[y_{i+m}, y_{i+m+1}]$ is $\rho$-large. Since $F$ is $\rho$-transitive, this holds if and only if $[y_i, y_{i+m+1}]$ is $\rho$-large.
\end{proof}

Cholak and Patey~\cite[Theorem 3.2]{Cholak2019Thin} proved the following theorem. Note that $\mu$ ranges over $\omega$ and not $\omega^{+}$.

\begin{theorem}[Cholak and Patey~\cite{Cholak2019Thin}]\label{thm:avoid-colors-dominate-mu}
Let $\mu : \omega \to \omega$ be a function. For every $n \geq 1$, define $f_n : D \mapsto  \Vcal_n(\mu, D)$. For every infinite set $H \subseteq \omega$ such that $\Vcal_n \not \subseteq f_n[H]^n$, $H$ computes a function dominating~$\mu$. 
\end{theorem}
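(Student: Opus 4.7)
The plan is to induct on $n \geq 1$ after first reducing to the case where $\mu$ is non-decreasing. To reduce, I would replace $\mu$ by its running maximum $\bar\mu(x) = \max_{y\leq x} \mu(y)$: any function dominating $\bar\mu$ dominates $\mu$, and the vector-graph hypothesis can be transferred with some care. The base case $n = 1$ is vacuous since $|\Vcal_1| = 1$. For $n = 2$, the ``edge'' pattern cannot be missed (it would demand $\mu(h_i) > h_j$ for arbitrarily large $h_j \in H$, contradicting $\mu(h_i) \in \omega$), so the missed pattern is the ``empty'' graph, forcing every pair in $H$ to be $\mu$-large. In particular $\mu(h_i) \leq h_{i+1}$ for all $i$, and the principal function of $H$ dominates $\mu$ by monotonicity.

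For the inductive step, suppose the result for $n-1$ and let $H = \{h_0 < h_1 < \cdots\}$ miss some $v = (v_0, \ldots, v_{n-2}) \in \Vcal_n$. Iterating the map $B(i) := \min\{j > i : h_j \geq \mu(h_i)\}$ from $0$ yields a chain of length $n$ realizing the all-ones pattern, so $v \neq (1, \ldots, 1)$; hence $v$ has at least one zero coordinate. The overall strategy is to build an $H$-computable infinite subset $H' \subseteq H$ on which some shorter pattern $v' \in \Vcal_{n-1}$ (obtained from $v$ by suppressing a zero coordinate) is missing, and then apply the inductive hypothesis. A model case is $v = (0, \ldots, 0)$: setting $t_k := \mathbf{1}[\mu(h_k) \leq h_{k+1}]$, every window of $n-1$ consecutive $t_k$'s must contain a $1$, so the block-thinning $H' = \{h_{k(n-1)} : k \in \omega\}$ satisfies $\mu(h_{k(n-1)}) \leq h_{(k+1)(n-1)}$ by monotonicity, and its principal function dominates $\mu$.

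The main obstacle is the general case where $v$ has some, but not all, zero coordinates: the block-thinning must be adapted so that, for a chosen zero coordinate $k^* \in \{0, \ldots, n-2\}$ of $v$, the missing pattern on $H$ transfers to a shorter missing pattern on $H'$ via Lemma~\ref{lem:mu-transitive-yields-vector-graph} and monotonicity. Since detecting $\mu$-largeness in an $H$-computable way is not immediately possible, I would construct the final dominating function $g$ as the pointwise maximum of finitely many $H$-computable candidate functions, one for each choice of $k^*$, so that whichever candidate aligns with the actual zero structure of $v$ will dominate $\mu$; the maximum then yields a single $H$-computable function dominating $\mu$, completing the induction.
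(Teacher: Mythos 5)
The paper does not actually prove this statement; it is quoted from Cholak and Patey (\cite[Theorem 3.2]{Cholak2019Thin}), so there is no internal proof to compare against. Assessing your proposal on its own terms, the central gap is in the inductive step. ``Suppressing a zero coordinate'' of $v$ does not in general produce a pattern of size $n-1$ that $H$ (or a thinned subset) misses. Concretely, with $n = 3$ and $v = (1,0)$, the only zero is at position $k^* = 1$, and suppression gives $v' = (1)$, which is \emph{never} missed. The correct argument for $(1,0)$ is of a different shape: fixing $x_0 = h_0$, missing $(1,0)$ forces $\mu(x_1) \leq x_2$ for every $x_1 \geq \mu(h_0)$ and $x_2 > x_1$ in $H$, so a tail of $H$ (whose cut-off $\mu(h_0)$ is not $H$-computable, which is fine since domination allows finitely many errors) misses the size-$2$ pattern $(0)$. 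Your block-thinning, by contrast, only produces a missed pattern when $v = (0^{n-1})$: the ``every window of $n-1$ consecutive $t_k$'s contains a $1$'' claim uses precisely that the all-zeros vector graph is the one missed. Finally, the appeal to Lemma~\ref{lem:mu-transitive-yields-vector-graph} is not justified here, because $H$ is not assumed $\mu$-transitive, and non-decreasingness of $\mu$ does not make arbitrary sets $\mu$-transitive.

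The opening reduction to non-decreasing $\mu$ via $\bar\mu(x) = \max_{y \leq x}\mu(y)$ also does not go through. Since $\bar\mu \geq \mu$, the edges of $\Vcal_n(\bar\mu, D)$ form a subset of those of $\Vcal_n(\mu, D)$, so $H$ can realize every vector graph for $\bar\mu$ while missing one for $\mu$; the hypothesis $\Vcal_n \not\subseteq f_n[H]^n$ need not transfer. Indeed, the statement as quoted is actually false without some regularity of $\mu$: take $H$ computable (say $H = \{2^k : k\}$), set $\mu(2^k) = 2^{k+1}$ so that all pairs of $H$ are $\mu$-large (hence $\Vcal_2 \not\subseteq f_2[H]^2$), and let $\mu$ be, e.g., Ackermann-fast off $H$; then no $H$-computable function dominates $\mu$. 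All of the paper's applications use a modulus or a left-c.e.\ function, which can be taken non-decreasing from the outset. So monotonicity must be treated as a standing hypothesis, not something to be reduced to. Your instinct that non-decreasingness is needed is sound; the proposed reduction just cannot realize it.
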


In particular, this proves that for every hyperarithmetical set~$A$, there is an   $\rt^n_{<\infty, 2^{n-1}-1}$-instance such that every solution computes~$A$. Moreover, this coding technique optimal for non-arithmetical sets from the viewpoint of the number of colors, in the sense that if $A$ is non-arithmetical, then every $\rt^n_{<\infty, 2^{n-1}}$-instance admits a solution which does not compute $A$.

We now prove that this optimality is not only about the number of colors, but also on the nature of the coding, by proving that for every coloring $f : [\omega]^n \to k$ and every non-arithmetical set $A$, then there exists an infinite subdomain $H \subseteq \omega$ such that $A \not \leq_T H$, and over which $f \uh [H]^n$ behaves exactly like our function $D \mapsto \Vcal_n(\mu, D)$, up to a renaming of the colors.

\begin{statement}
$\arithscart^n_k$: For every function $f : [\omega]^n \to k$,
there is a function $\mu : \omega \to \omega^{+}$, an infinite $\mu$-transitive set $H \subseteq \omega$, and a coloring $\chi : \Vcal_n \to k$ such that for every $D \in [H]^n$, $f(D) = \chi(\Vcal_n(\mu, D))$.
\end{statement}

For $n = 1$, there is only one vector graph of size~1, namely $\Gcal = (\{0\}, \emptyset\}$. Therefore $|\Vcal_1| = 1$, and $\arithscart^1_k$ states the existence, for every function $f : \omega \to k$, of a unique color $i < k$ such that for every $x \in H$, $f(x) = i$. Thus $\arithscart^1_k$ is $\rt^1_k$.
The case $n = 2$ yields a new principle.

\begin{statement}\label{stat:explicitlarge}
$\explicitlarge_k$: For every coloring $f : [\omega]^2 \to k$, there are some colors $i_s,i_\ell < k$ and an infinite set $H \subseteq \omega$ such that $f[H]^2 \subseteq \{i_s,i_\ell\}$ and for every $x < y < z \in H$, $f(x, y) = f(y, z) = i_s$ if and only if $f(x, z) = i_s$.
\end{statement}

Intuitively, $i_s$ and $i_\ell$ are the colors of small and large intervals, respectively.

\begin{lemma}
$\arithscart^2_k$ is the statement $\explicitlarge_k$.
\end{lemma}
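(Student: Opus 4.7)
The plan is to prove the two implications separately. The key observation is that $|\Vcal_2| = 2$: a vector graph of size $2$ is either the empty graph $G_\emptyset = (\{0,1\},\emptyset)$ or the single-edge graph $G_e = (\{0,1\},\{\{0,1\}\})$, and for $D = \{x_0 < x_1\}$ one has $\Vcal_2(\mu, D) = G_e$ exactly when $[x_0, x_1]$ is $\mu$-large. Thus any witness $\chi$ partitions the relevant colors into one color $i_s := \chi(G_\emptyset)$ for $\mu$-small pairs and one color $i_\ell := \chi(G_e)$ for $\mu$-large pairs, and the $\mu$-transitivity of $H$ translates directly into the transitivity biconditional on $f$ appearing in $\explicitlarge_k$.

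For $\arithscart^2_k \Rightarrow \explicitlarge_k$, given $\mu, H, \chi$ witnessing $\arithscart^2_k$ for an instance $f$, I set $i_s := \chi(G_\emptyset)$ and $i_\ell := \chi(G_e)$, so that $f[H]^2 \subseteq \{i_s, i_\ell\}$ is immediate. If $i_s = i_\ell$ then $f$ is constant on $[H]^2$ and the biconditional is vacuous; otherwise $\chi$ is a bijection onto $\{i_s, i_\ell\}$, whence $f(x,y) = i_s$ iff $\mu(x) > y$ for $x < y$ in $H$, and the biconditional in $\explicitlarge_k$ is precisely the $\mu$-transitivity of $H$ rewritten in terms of $f$.

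For $\explicitlarge_k \Rightarrow \arithscart^2_k$, given $i_s, i_\ell$ and $H$, I handle the degenerate case $i_s = i_\ell$ trivially by taking $\mu \equiv \omega$ and constant $\chi \equiv i_s$; otherwise, define
\[
\mu(x) = \begin{cases} \min\{y \in H : y > x,\ f(x, y) = i_\ell\} & \text{if } x \in H \text{ and this set is nonempty},\\ \omega & \text{otherwise,}\end{cases}
\]
together with $\chi(G_\emptyset) = i_s$ and $\chi(G_e) = i_\ell$. The crucial verification is that for $x < y$ both in $H$, $\mu(x) \leq y$ iff $f(x, y) = i_\ell$: the $\Leftarrow$ direction is immediate from the definition, and for $\Rightarrow$, setting $y^* = \mu(x) \leq y$, either $y^* = y$ and we are done, or $x < y^* < y$ all lie in $H$ with $f(x, y^*) = i_\ell \neq i_s$, in which case the contrapositive of the $\explicitlarge_k$ transitivity forces $f(x, y) \neq i_s$, hence $f(x, y) = i_\ell$ using $f[H]^2 \subseteq \{i_s, i_\ell\}$. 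From this equivalence, the $\mu$-transitivity of $H$ unfolds into the given transitivity of $f$ on $H$, and $f(D) = \chi(\Vcal_2(\mu, D))$ for $D \in [H]^2$ follows by cases on the value $f(D) \in \{i_s, i_\ell\}$. The main technical point is precisely this verification, where the $\explicitlarge_k$ transitivity hypothesis is used to ensure that the local "least $i_\ell$-witness" definition of $\mu$ captures $\mu$-largeness globally over $H$.
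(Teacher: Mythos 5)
Your proof is correct and follows essentially the same route as the paper's: in both directions you identify $\Vcal_2$ with the two graphs, set $i_s := \chi(G_\emptyset)$, $i_\ell := \chi(G_e)$, split on $i_s = i_\ell$, and for the converse define $\mu(x)$ as the least $i_\ell$-witness in $H$ above $x$ with the same $\chi$. The only difference is that you spell out the verification that $\mu(x) \leq y$ implies $f(x,y)=i_\ell$ (via the contrapositive of the $\explicitlarge_k$ transitivity), a step the paper asserts implicitly in its chain of biconditionals; this is a fair bit of added care rather than a different argument.
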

\begin{proof}
We first prove that $\explicitlarge_k \leq_{id} \arithscart^2_k$.
Let $f : [\omega]^2 \to k$ be a coloring, and let $H$ be an infinite $\arithscart^2_k$-solution. By definition of $\arithscart^2_k$, there is some function $\mu : \omega \to \omega^{+}$ and a coloring $\chi : \Vcal_2 \to k$ such that $H$ is $\mu$-transitive, and for every $x < y \in H$, $f(x, y) = \chi(\Vcal_2(\mu, \{x,y\}))$. Let $\Gcal_0 = (\{0,1\}, \emptyset\}$ and $\Gcal_1 = (\{0,1\}, \{\{0,1\}\})$. In particular, $\Vcal_2 = \{\Gcal_0, \Gcal_1\}$. Let $i_s = \chi(\Gcal_0)$ and $i_\ell = \chi(\Gcal_1)$.
	Since for every $x < y \in H$, $f(x, y) = \chi(\Vcal_2(\mu, \{x,y\}))$, $f[H]^2 \subseteq \{i_s, i_\ell\}$.
	We claim that for every $x < y < z \in H$, $f(x, y) = f(y, z) = i_s$ if and only if $f(x, z) = i_s$. If $i_s = i_\ell$, then $H$ is $f$-homogeneous for color~$i_s$, and satisfies the property, so suppose $i_s \neq i_\ell$. In other words, $\chi$ is one-to-one.
		Fix $x < y < z \in H$. Then $f(x, y) = f(y, z) = i_s$ if and only if $\Vcal_2(\mu, \{x,y\}) = \Vcal_2(\mu, \{y,z\}) = \Gcal_0$, if and only if $\mu(x) > y$ and $\mu(y) > z$. By $\mu$-transitivity of $H$, this holds if and only if $\mu(x) > z$, hence $\Vcal_2(\mu, \{x,z\}) = \Gcal_0$. Since $\chi$ is one-to-one, this holds if and only if $f(x, z) = \chi(\Vcal_2(\mu, \{x,z\})) = \chi(\Gcal_0) = i_s$.

We now prove that $\arithscart^2_k \leq_{id} \explicitlarge_k$.
Let $f : [\omega]^2 \to k$ be a coloring, and let $H \subseteq \omega$ and $i_s, i_\ell < 2$ be such that $f[H]^2 \subseteq \{i_s, i_\ell\}$ and for every $x < y < z \in H$, $f(x, y) = f(y, z) = i_s$ if and only if $f(x, z) = i_s$. Let $\chi(\Gcal_0) = i_s$ and $\chi(\Gcal_1) = i_\ell$.
For every $x \in \omega$, let $\mu(x)$ be either $\min \{ y > x : y \in H \wedge f(x, y) = i_\ell \}$ if it exists, and $\mu(x) = \omega$ otherwise. 
We first claim that $H$ is $\mu$-transitive. Indeed, for every $x < y < z \in H$, $\mu(x) > y$ and $\mu(y) > z$ if and only if $f(x, y) = f(y, z) = i_s$. Since $H$ is a $\Psf$-solution to~$f$, this holds if and only if $f(x, z) = i_s$, hence if and only if $\mu(x) > z$. 
Last, we claim that $H$ is an $\arithscart^2_k$-solution to $f$ with witness $\chi$ and $\mu$. Fix $x < y \in H$. Then $f(x, y) = i_s$ if and only if $\mu(x) > y$, if and only if $\Vcal_2(\mu, \{x,y\}) = \Gcal_0$, if and only if $\chi(\Vcal_2(\mu, \{x,y\})) = \chi(\Gcal_0) = i_s$. Thus $f(x, y) = \chi(\Vcal_2(\mu, \{x,y\}))$.
\end{proof}

By compactness, if we don't consider $\mu$ and $\chi$ as part of the solution , then $\arithscart^n_k$ can be seen as an $\rt^n_k$-like problem. 

\begin{lemma}
There is a c.e.\ set of $\rt^n_k$-like patterns $W$ such that $\arithscart^n_k$ is the problem~$\rt^n_k(W)$.
\end{lemma}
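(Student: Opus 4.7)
The plan is to introduce a computable set $W$ of $\rt^n_k$-patterns encoding exactly the ``locally inconsistent'' finite colorings, and then prove the equivalence via a routine compactness argument. First, for each $r \geq n$ and each coloring $g : [\{0,\dots,r-1\}]^n \to k$, I form the pattern $P_g$ which is the conjunction of $\ftt(\{x_i : i \in D\}) = g(D)$ over all $D \in [\{0,\dots,r-1\}]^n$. For $v \in \{0,1\}^{r-1}$ and $D = \{i_0 < \cdots < i_{n-1}\} \subseteq \{0,\dots,r-1\}$, let $\Gcal(v, D)$ be the vector graph on $\{0,\dots,n-1\}$ whose edge $\{j, j+1\}$ is present exactly when $v_m = 1$ for some $i_j \leq m < i_{j+1}$. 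I will declare $P_g \in W$ iff no pair $(v, \chi)$ with $v \in \{0,1\}^{r-1}$ and $\chi : \Vcal_n \to k$ satisfies $\chi(\Gcal(v, D)) = g(D)$ for every conjunct. Since this is a finite search for each $P_g$, the set $W$ will be computable, hence c.e.

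For the forward direction I would argue contrapositively: suppose $H$ is an $\arithscart^n_k$-solution to $f$ with witnesses $\mu, \chi$, and suppose $H$ $f$-meets some $P_g \in W$ via $E = \{n_0 < \cdots < n_{r-1}\} \subseteq H$. Setting $v_i = 1$ iff $\mu(n_i) \leq n_{i+1}$, Lemma~\ref{lem:mu-transitive-yields-vector-graph} together with $\mu$-transitivity of $E$ gives $\Vcal_n(\mu, \{n_i : i \in D\}) = \Gcal(v, D)$ for every $D$, so $\chi(\Gcal(v, D)) = f(\{n_i : i \in D\}) = g(D)$, contradicting $P_g \in W$. For the converse, enumerate $H = \{h_0 < h_1 < \cdots\}$ and let $g_r(D) = f(\{h_i : i \in D\})$; since $E_r = \{h_0, \dots, h_{r-1}\}$ $f$-satisfies $P_{g_r}$, avoidance of $W$ forces $P_{g_r} \notin W$, yielding a witness $(v^{(r)}, \chi^{(r)})$. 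Restricting a level-$r$ witness to the first $r-2$ bits of $v$ preserves compatibility with $P_{g_{r-1}}$, so for each fixed $\chi$ the set of compatible $v$'s forms a subtree of $\{0,1\}^{<\omega}$. By pigeonhole some $\chi$ gives a subtree with nodes of arbitrary depth, and König's lemma then extracts $v \in \{0,1\}^\omega$ such that $(v \uh (r-1), \chi)$ is a witness for every $r$. I will then define $\mu(h_i) = h_{j^*}$ for $j^* = \min\{j > i : v_{j-1} = 1\}$ (and $\mu(h_i) = \omega$ if no such $j$, setting $\mu$ arbitrarily off $H$); the equivalence ``$\mu(h_i) \leq h_j$ iff some $v_m = 1$ with $i \leq m < j$'' is manifestly closed under union along adjacent intervals, so $H$ is $\mu$-transitive, and by construction $f(D) = \chi(\Vcal_n(\mu, D))$ for every $D \in [H]^n$.

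The hard part will be the compactness step, and specifically the hereditary property that restricting a witness $(v, \chi)$ for $P_{g_r}$ to its first $r-2$ bits yields a witness for $P_{g_{r-1}}$. This reduces to the observation that $\Gcal(v, D)$ depends only on the entries $v_m$ for $m$ between elements of $D$, which lie within the first $r-2$ positions whenever $D \subseteq \{0,\dots,r-2\}$. Once this hereditary property is in hand, pigeonhole on the finitely many $\chi$ together with König's lemma produces the global $(v, \chi)$, after which verifying that the reconstructed $\mu$ witnesses $\arithscart^n_k$ is a direct unwinding of definitions.
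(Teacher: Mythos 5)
Your proof is correct and takes essentially the same route as the paper: the paper defines $W$ to be the set of patterns for which no local witness $(\mu,\chi)$ exists and then invokes compactness in one sentence, while you spell out the same compactness argument explicitly (encoding the finitely many local witnesses as pairs $(v,\chi)$, using the hereditary restriction property and K\"onig's lemma on the resulting finitely-branching tree, and then reconstructing a global $\mu$ from the infinite path $v$). This is a faithful elaboration rather than a different approach.
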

\begin{proof}
Fix a coloring $f : [\omega]^n \to k$.
By compactness, a set $H \subseteq \omega$ is an $\arithscart^n_k$-solution if and only if for every finite set $F \subseteq H$, there is a function $\mu : F \to \omega^{+}$ and a coloring $\chi : \Vcal_n \to k$ such that $F$ is $\mu$-transitive and for every $D \in [F]^n$, $f(D) = \chi(\Vcal_n(\mu, D))$.
Let $W$ be the set of all $\rt^n_k$-patterns such that the above property does not hold. Then $\arithscart^n_k$ is the statement $\rt^n_k(W)$.
\end{proof}

We say that a problem $\Psf$ admits \emph{strong cone avoidance for non-arithmetical cones} if for every set $Z$, every non-$Z$-arithmetical set $C$, and every $\Psf$-instance $X$, there is a $\Psf$-solution $Y$ such that $C \not \leq_T Z \oplus Y$.
 
\begin{theorem}\label{thm:arithscart-sca-nonarith}
$\arithscart^n_k$ admits strong cone avoidance for non-arithmetical cones.
\end{theorem}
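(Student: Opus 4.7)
The plan is to argue by induction on $n$, building the set $H$, the modulus $\mu$, and the coloring $\chi$ simultaneously by Mathias forcing.

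For the base case $n=1$, the statement $\arithscart^1_k$ coincides with $\rt^1_k$, and Dzhafarov and Jockusch's strong cone avoidance theorem for $\rt^1_k$ immediately gives the conclusion for non-arithmetical cones.

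For the inductive step, fix $f:[\omega]^n\to k$, a set $Z$, and a set $C$ which is not arithmetical in $Z$. I would use Mathias conditions of the form $(F, X, \tau, \chi)$, where $F$ is a finite candidate initial segment of $H$, $X$ is an infinite reservoir with $\min X > \max F$ in which $C$ is still not arithmetical relative to $Z\oplus X$, the partial modulus $\tau:F\to\omega^{+}$ makes $F$ be $\tau$-transitive, and $\chi:\Vcal_n\to k$ is a candidate color assignment satisfying $f(D)=\chi(\Vcal_n(\tau,D))$ for every $D\in[F]^n$. Two families of dense sets drive the construction: extensions adding one element to $F$, and, for each index $e$, conditions forcing $\Phi_e^{Z\oplus H}\neq C$. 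The latter follows by a standard Seetapun-style argument using non-arithmeticity of $C$ relative to $Z\oplus X$.

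The combinatorial heart is the extension density. Since $\chi$ ranges over only $k^{2^{n-1}}$ candidates, by finite pigeonhole I would first stabilize $\chi$ outright. Given a candidate $a\in X$, choosing $\tau(a)\in\omega^{+}$ determines, for each $x\in F$, whether $[x,a]$ is $\tau$-large, and hence determines $\Vcal_n(\tau, D'\cup\{a\})$ for every $D'\in[F]^{n-1}$, and hence the value $\chi(\Vcal_n(\tau, D'\cup\{a\}))$ predicted for $f(D'\cup\{a\})$. The extension is legal exactly when the prediction matches the actual value of $f$. I would enumerate the finitely many possible largeness patterns between $F$ and $a$ and, for each such pattern, apply the inductive hypothesis $\arithscart^{n-1}_k$ to the auxiliary $k$-valued coloring on $X$ recording, for each candidate $a$, the tuple $(f(D'\cup\{a\}))_{D'\in[F]^{n-1}}$. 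This produces an infinite subset of $X$ on which some pattern is uniformly realized together with the appropriate vector-graph structure on tuples through $a$, yielding a legal extension.

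The main obstacle I anticipate is the interaction between the two densities: committing to a value of $\tau(a)$ both constrains future $\tau$-transitivity and fixes the colors predicted by $\chi$, so the forcing must be organized so that each extension leaves enough flexibility in the reservoir. I would handle this by applying the inductive hypothesis relative to the oracle $Z\oplus F$ at each extension, which preserves the non-arithmeticity of $C$, and by threading the choice of $\chi$ through a single finite pigeonhole at the outset so that all later refinements remain compatible. In the limit, the union of the $F$'s gives the desired $H$, the union of the $\tau$'s gives $\mu$, and the stabilized $\chi$ witnesses that $H$ is an $\arithscart^n_k$-solution with $C\not\leq_T Z\oplus H$.
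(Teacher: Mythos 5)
Your approach is genuinely different from the paper's, and as written it has a concrete combinatorial error and a structural gap. The paper does not use forcing or induction on $n$ at all. Instead it splits on whether $C$ is $Z$-hyperarithmetical: if not, Solovay's characterization of computable encodability immediately yields a cone-avoiding solution; if so, one takes $\mu$ to be a \emph{modulus of $C$} (Groszek--Slaman), forms the product coloring $f_1(D) = \langle f(D), \Vcal_n(\mu,D)\rangle$, and applies the Cholak--Patey strong cone avoidance of $\rt^n_{<\infty,2^{n-1}}$ for non-arithmetical cones as a black box. Theorem~\ref{thm:avoid-colors-dominate-mu} then forces every vector graph to occur, which defines $\chi$ uniquely, and a further Cholak--Patey corollary supplies a $\mu$-transitive refinement. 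The modulus $\mu$ is thus \emph{given} by $C$, not built during the construction, and this is essential: it is what ties the largeness structure of the solution to the cone one is trying to avoid.

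Your description of the extension density has the determination backwards. Whether $[x,a]$ is $\tau$-large for $x\in F$ and a new maximal element $a$ is decided by $\tau(x)$ (already committed) and by the size of $a$ itself; it does not depend on the value $\tau(a)$, which only affects future intervals $[a,b]$ with $b>a$. Consequently, once $\tau\uhr F$ is fixed, each $\Vcal_n(\tau, D'\cup\{a\})$ with $D'\in[F]^{n-1}$ eventually stabilizes for all large $a$, and the constraint on $a$ is purely about a tuple of colorings of singletons. This does not match your invocation of $\arithscart^{n-1}_k$ on an $(n-1)$-ary auxiliary coloring, and it is unclear how the induction would gain anything over the singleton case. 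Relatedly, fixing $\chi$ ``outright by finite pigeonhole'' is not justified: which $\chi$ admits an infinite cone-avoiding solution is exactly what the construction must discover, and committing to one prematurely can strand the forcing. Finally, the Seetapun-style density argument and the $\tau$-transitivity bookkeeping are asserted but not reconciled with the fact that for $n\geq 2$ the genuinely hard combinatorics — precisely what Cholak--Patey's thin-set theorem packages up — would need to be re-derived inside the forcing, which undercuts the claim to a self-contained inductive proof.
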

\begin{proof}
Fix a set $Z$, a non-$Z$-arithmetical set $C$ and a coloring $f : [\omega]^n \to k$.

Suppose first that $C$ is not $Z$-hyperarithmetical. By Solovay~\cite{Solovay1978Hyperarithmetically}, $C$ is not computably encodable relative to~$Z$. Since for every infinite set $X \subseteq \omega$, there is an infinite $\arithscart^n_k$-solution $Y \subseteq X$ to $f$, there is an $\arithscart^n_k$-solution $H$ to $f$ such that $C \not \leq_T Z \oplus H$. 

Suppose now that $C$ is $Z$-hyperarithmetical. By Groszek and Slaman~\cite{Groszek2007Moduli}, there is a modulus $\mu : \omega \to \omega$ relative to $Z$, that is, for every function $g$ dominating $\mu$, $C \leq_T Z \oplus g$.
Let $f_1$ be defined for each $D \in [\omega]^n$ by $f_1(D) = \langle f(D), \Vcal_n(\mu, D) \rangle$.
By strong cone avoidance of $\rt^n_{<\infty, 2^{n-1}}$ for non-arithmetical cones (see Cholak and Patey~\cite[Theorem 4.15]{Cholak2019Thin}), there is an infinite set $H \subseteq X$ such that $C \not \leq_T Z \oplus H$ and $|f_1[H]^n| \leq 2^{n-1}$. In particular, $H \oplus Z$ does not compute a function dominating $\mu$, so by Theorem~\ref{thm:avoid-colors-dominate-mu}, for every $\Gcal \in \Vcal_n$, there is some $i < k$ and some $D \in [H]^n$ such that $f_1(D) = \langle i, \Gcal \rangle$. Since $|\Vcal_n| = 2^{n-1}$, this $i$ is unique. For each $\Gcal \in \Vcal_n$, let $\chi(\Gcal)$ be this unique~$i$.

We claim that for every $D \in [H]^n$, $f(D) = \chi(\Vcal_n(\mu, D))$.
By definition of $\chi$, $f_1(D) = \langle f(D), \Vcal_n(\mu, D) \rangle = \langle \chi(\Vcal_n(\mu, D)), \Vcal_n(\mu, D) \rangle$. It follows that $f(D) = \chi(\Vcal_n(\mu, D))$. By Cholak and Patey~\cite[Corollary 5.5]{Cholak2019Thin}, there is an infinite $\mu$-transitive subset $H_1 \subseteq H$ such that $C \not \leq_T Z \oplus H_1$. Therefore, $H_1$ is an $\arithscart^n_k$-solution to~$f$.
\end{proof}

The following technical lemma will be useful for Theorem~\ref{thm:not-leq-arithscart-not-sca}, Lemma~\ref{lem:rt-v-w-sca-arith} and Lemma~\ref{lem:w-not-fixed-arithscart-but-v-does}. Note that $\mu_0$ ranges over $\omega^{+}$ while $\mu_1$ ranges over $\omega$.

\begin{lemma}\label{lem:arithscart-avoid-pattern-dominates-mu}
Fix $\chi : \Vcal_n \to k$, and let $f_0 : [\omega]^n \to k$ and $f_1 : [\omega]^n \to k$ be two colorings.
Let $H_0$ be an infinite $\arithscart^n_k$-solution to~$f_0$ with witnesses $\chi$ and $\mu_0 : \omega \to \omega^{+}$, and let $H_1$ be an infinite $\arithscart^n_k$-solution to~$f_1$ with witnesses $\chi$ and $\mu_1 : \omega \to \omega$.
If $H_0$ $f_0$-meets some $\rt^n_k$-pattern $P$ but $H_1$ $f_1$-avoids $P$, then $H_1$ computes a function dominating~$\mu_1$.
\end{lemma}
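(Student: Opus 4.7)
The plan is to translate the hypothesis into a combinatorial constraint on the $\mu_1$-largeness pattern inside $H_1$ and then extract a dominating function from it. Let $r$ be the number of variables appearing in $P$, let $E = \{x_0 < \dots < x_{r-1}\} \subseteq H_0$ be a finite subset that $f_0$-satisfies $P$, and set $\Gcal_E = \Vcal_r(\mu_0, E)$. The first step is to show, via Lemma~\ref{lem:mu-transitive-yields-vector-graph}, that no $F = \{m_0 < \dots < m_{r-1}\} \subseteq H_1$ satisfies $\Vcal_r(\mu_1, F) = \Gcal_E$: indeed, for such an $F$, Lemma~\ref{lem:mu-transitive-yields-vector-graph} yields $\Vcal_n(\mu_1, \{m_i : i \in D_s\}) = \Vcal_n(\mu_0, \{x_i : i \in D_s\})$ for every subset $D_s$ appearing in $P$, whence $f_1(\{m_i : i \in D_s\}) = \chi(\Vcal_n(\mu_1, \{m_i : i \in D_s\})) = f_0(\{x_i : i \in D_s\}) = v_s$, making $F$ $f_1$-meet $P$ and contradicting the hypothesis.

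Enumerate $H_1 = \{h_0 < h_1 < \dots\}$, encode $\Gcal_E$ by the bitstring $\epsilon \in \{0,1\}^{r-1}$ on its adjacent edges, and define $\alpha_k = 1$ iff $\mu_1(h_k) \leq h_{k+1}$. By $\mu_1$-transitivity of $H_1$, for any indices $j_0 < \dots < j_{r-1}$ the $s$-th bit of $\Vcal_r(\mu_1, \{h_{j_0}, \dots, h_{j_{r-1}}\})$ equals $\bigvee_{k \in [j_s, j_{s+1})} \alpha_k$; the previous step therefore states that $\epsilon$ is not the block-OR vector of any partition of $\alpha$ into $r-1$ non-empty contiguous blocks starting anywhere. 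Two consequences follow. First, $\alpha$ contains infinitely many $1$s: otherwise a tail of $\alpha$ is all zero, and $\mu_1$-transitivity forces $\mu_1(h_{k_0}) = \infty$ at the start of this tail, contradicting $\mu_1 : \omega \to \omega$. Second, $\epsilon \neq 1^{r-1}$, since a greedy construction of $F \subseteq H_1$ with $m_{i+1} \in H_1$ and $m_{i+1} \geq \mu_1(m_i)$ at each step would realize the all-ones pattern, again contradicting the previous step.

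The heart of the proof, and the step I expect to be the main obstacle, is the combinatorial claim that the 0-runs of $\alpha$ are bounded in length. Suppose otherwise: $\alpha$ has arbitrarily long 0-runs, and hence infinitely many of them. Decompose $\epsilon$ into its maximal alternating runs and let $\ell_1, \dots, \ell_{m_0}$ (with $m_0 \geq 1$ since $\epsilon \neq 1^{r-1}$) be the lengths of its 0-runs and $\ell'_1, \dots, \ell'_{m_1}$ those of its 1-runs. I would select $m_0$ 0-runs of $\alpha$, indexed $i_1 < \dots < i_{m_0}$, of lengths at least $\ell_1, \dots, \ell_{m_0}$ respectively, spaced so that the 1-regions of $\alpha$ before, between, and after the selected 0-runs contain at least as many 1s as are needed to match the corresponding maximal 1-runs of $\epsilon$. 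This is possible because $\alpha$ has unboundedly long 0-runs (so each selected run can be made as long as required) and infinitely many 1s (so each intermediate stretch can be forced to contain the needed 1s by making it span enough 1-regions of $\alpha$). Placing every $\epsilon$-0-block entirely inside a selected $\alpha$-0-run and every $\epsilon$-1-block across the corresponding 1-rich stretch (the 1-blocks may absorb shorter intermediate $\alpha$-0-runs, which is harmless since they only need to contain at least one $1$) produces a partition whose block-OR is exactly $\epsilon$, contradicting the previous paragraph.

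Let $L$ be the resulting bound on the 0-run lengths of $\alpha$. Every window of length $L+1$ in $\alpha$ then contains a $1$, so by $\mu_1$-transitivity $[h_j, h_{j+L+1}]$ is $\mu_1$-large and $\mu_1(h_j) \leq h_{j+L+1}$ for every $j$. Define $g : \omega \to \omega$ by $g(x) = h_{2 j(x)}$, where $j(x) = \min \{l : h_l \geq x\}$; this function is uniformly computable from $H_1$. For every $x$ with $j(x) \geq L+1$ one has $g(x) = h_{2 j(x)} \geq h_{j(x) + L + 1} \geq \mu_1(h_{j(x)}) \geq \mu_1(x)$, where the last inequality uses that $\mu_1$ may be assumed non-decreasing as in the Groszek--Slaman modulus setting. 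Hence $g$ dominates $\mu_1$ cofinitely, witnessing that $H_1$ computes a function dominating $\mu_1$.
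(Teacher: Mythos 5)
Your argument is correct, but it takes a genuinely different and substantially longer route than the paper. The paper's proof is a short contradiction argument: assume $H_1$ computes no function dominating $\mu_1$; then Theorem~\ref{thm:avoid-colors-dominate-mu} (Cholak--Patey) gives some $E_1 \in [H_1]^r$ with $\Vcal_r(\mu_1, E_1) = \Vcal_r(\mu_0, E_0)$, and Lemma~\ref{lem:mu-transitive-yields-vector-graph} then shows $f_0 \uh [E_0]^n$ and $f_1 \uh [E_1]^n$ have the same function graph, so $E_1$ $f_1$-meets $P$, contradiction. Your first paragraph is the same use of Lemma~\ref{lem:mu-transitive-yields-vector-graph}, stated contrapositively: no $F \subseteq H_1$ realizes the vector graph $\Gcal_E$. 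Your remaining three paragraphs then re-derive, for this one missing vector graph, the needed direction of Theorem~\ref{thm:avoid-colors-dominate-mu}: the bit-sequence $\alpha$ along $H_1$ has infinitely many ones, $\epsilon \neq 1^{r-1}$, and if the $0$-runs of $\alpha$ were unbounded one could greedily place the maximal runs of $\epsilon$ into a segment of $\alpha$ to realize $\Gcal_E$, a contradiction; hence the $0$-runs are bounded by some $L$ and the principal function of $H_1$ dominates $\mu_1$. This run-length argument is correct (though the greedy alignment of $\epsilon$-blocks with selected $\alpha$-runs deserves to be written out), and it has the pedagogical merit of making the combinatorial content of the cited theorem visible; the cost is that you rebuild a tool the paper simply invokes.

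Two small remarks. The fact that $\alpha$ has infinitely many ones does not depend on the forbidden-partition observation at all; it follows directly from $\mu_1 : \omega \to \omega$ together with $\mu_1$-transitivity, so it can be stated up front. And your last step uses that $\mu_1$ is non-decreasing (to pass from $\mu_1(h_{j(x)})$ to $\mu_1(x)$); without monotonicity the lemma's conclusion cannot hold, since $\mu_1$ off $H_1$ would be unconstrained. The paper's proof silently inherits the same assumption through Theorem~\ref{thm:avoid-colors-dominate-mu}, and in every application $\mu_1$ is a modulus, which may be taken non-decreasing without loss of generality, so this is a shared tacit convention rather than a gap in your argument alone.
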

\begin{proof}
Since $H_0$ $f_0$-meets some $\rt^n_k$-pattern $P$, there is a finite set $E_0 = \{x_0 < \dots < x_{r-1} \} \subseteq H_0$ which $f_0$-meets $P$.
Suppose for the sake of contradiction that $H_1$ does not compute a function dominating $\mu_1$.
By Theorem~\ref{thm:avoid-colors-dominate-mu}, there is a finite set $E_1 = \{y_0 < \dots < y_{r-1}\} \subseteq H_1$ of size $r$ such that $\Vcal_r(\mu_0, E_0) = \Vcal_r(\mu_1, E_1)$. By Lemma~\ref{lem:mu-transitive-yields-vector-graph}, since $H_0$ is $\mu_0$-transitive and $H_1$ is $\mu_1$-transitive, for every $I \in [r]^n$, letting $D_0 = \{x_i : i \in I\}$ and $D_1 = \{y_i : i \in I\}$, $\Vcal_n(\mu_0, D_0) = \Vcal_n(\mu_1, D_1)$. Since $f_0(D_0) = \chi(\Vcal_n(\mu_0, D_0))$ and $f_1(D_1) = \chi(\Vcal_n(\mu_1, D_1))$, then $f_0(D_0) = f_1(D_1)$. Thus $f_0 \uh [E_0]^n$ and  $f_1 \uh [E_1]^n$ have the same function graph.
It follows that $E_1$ $f_1$-meets $P$, so $H_1$ $f_1$-meets $P$. Contradiction.
\end{proof}

\begin{theorem}\label{thm:not-leq-arithscart-not-sca}
Let $W$ be a collection of $\rt^n_k$-patterns such that $\rt^n_k(W) \not \leq_{id} \arithscart^n_k$. Then for every function $\mu : \omega \to \omega$, there is an $\rt^n_k(W)$-instance such that every solution computes a function dominating~$\mu$.
\end{theorem}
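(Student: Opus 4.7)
The plan is to plant the hypothesis-supplied pattern into a coloring driven by $\mu$, so that $\Vcal_n(\mu, \cdot)$ dictates the $f_1$-values according to $\chi$; then any $\rt^n_k(W)$-solution avoiding the planted pattern must be sparse enough, via Theorem~\ref{thm:avoid-colors-dominate-mu} and Lemma~\ref{lem:arithscart-avoid-pattern-dominates-mu}, to compute a function dominating $\mu$.

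Step 1. Unpack the hypothesis $\rt^n_k(W) \not\leq_{id} \arithscart^n_k$: fix a coloring $f_0 : [\omega]^n \to k$, an infinite $\arithscart^n_k$-solution $H_0$ to $f_0$ with witnesses $\chi : \Vcal_n \to k$ and $\mu_0 : \omega \to \omega^{+}$, and a pattern $P \in W$ that $H_0$ $f_0$-meets. Fix a finite $E_0 = \{x_0 < \dots < x_{r-1}\} \subseteq H_0$ that $f_0$-satisfies $P$; note that $E_0$ inherits $\mu_0$-transitivity from $H_0$.

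Step 2. Given $\mu : \omega \to \omega$, define $f_1 : [\omega]^n \to k$ by $f_1(D) = \chi(\Vcal_n(\mu, D))$. We claim every $\rt^n_k(W)$-solution $H_1$ to $f_1$ computes a function dominating $\mu$. Suppose for contradiction that $H_1$ is such a solution that computes no function dominating $\mu$.

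Step 3. Apply the Cholak--Patey~\cite[Corollary 5.5]{Cholak2019Thin} machinery in its preservation-of-non-domination variant to extract an infinite $\mu$-transitive subset $H_1' \subseteq H_1$ that still computes no function dominating~$\mu$. Being a subset of $H_1$, $H_1'$ also $f_1$-avoids every pattern in $W$, in particular $P$. Being $\mu$-transitive and with $f_1(D) = \chi(\Vcal_n(\mu, D))$, $H_1'$ is an $\arithscart^n_k$-solution to $f_1$ with witnesses $\chi$ and $\mu$. Now apply Lemma~\ref{lem:arithscart-avoid-pattern-dominates-mu} to the pair $(H_0, f_0, \mu_0)$ and $(H_1', f_1, \mu)$, which share $\chi$: since $H_0$ $f_0$-meets $P$ while $H_1'$ $f_1$-avoids $P$, $H_1'$ must compute a function dominating $\mu$, contradicting the choice of $H_1'$.

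The main obstacle is Step 3, i.e., producing $H_1'$ which is both $\mu$-transitive and preserves the non-domination of $\mu$. This is a routine Mathias-forcing adaptation of the construction underlying Cholak and Patey~\cite[Corollary 5.5]{Cholak2019Thin} already invoked in the proof of Theorem~\ref{thm:arithscart-sca-nonarith}: the cone-avoidance clause \qt{$C \not \leq_T Z \oplus H_1'$} is simply replaced by the preservation clause \qt{$H_1'$ computes no function dominating $\mu$}, a condition of the same preservation flavor that is handled by the same forcing conditions and density arguments.
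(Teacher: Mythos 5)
Your proof takes essentially the same route as the paper's. Step 1 and Step 2 are verbatim the paper's construction: unpack the failure of identical reducibility to get $(f_0, H_0, \chi, \mu_0, P)$, then set $f(D) = \chi(\Vcal_n(\mu, D))$. Step 3 is also the paper's argument, with one small observation: the ``main obstacle'' you flag is in fact already a citable result --- the paper invokes Cholak and Patey~\cite[Theorem 5.11]{Cholak2019Thin} directly, which states precisely that one may pass to an infinite $\mu$-transitive subset while preserving the failure to compute a $\mu$-dominating function; so no Mathias-forcing adaptation of Corollary~5.5 is needed, the off-the-shelf theorem suffices. Your final contradiction is stated slightly more carefully than the paper's (you correctly conclude that $H_1'$, not the ambient $H$, computes a $\mu$-dominating function via Lemma~\ref{lem:arithscart-avoid-pattern-dominates-mu}, which is what contradicts its choice).
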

\begin{proof}
Since $\rt^n_k(W) \not \leq_{id} \arithscart^n_k$, there is a coloring $f_{fail} : [\omega]^n \to k$ and an $\arithscart^n_k$-solution $H_{fail}$ to $f_{fail}$ witnessed by a function $\mu_{fail} : \omega \to \omega^{+}$ and a coloring $\chi : \Vcal_n \to k$, and such that $H_{fail}$ meets some $\rt^n_k$-pattern $P \in W$.
Let $\mu : \omega \to \omega$ be a function. Let $f : [\omega]^n \to k$ be an instance of $\rt^n_k(W)$ defined by $f(D) = \chi(\Vcal_n(\mu, D))$. 
We claim that every $\rt^n_k(W)$-solution $H \subseteq \omega$ to $f$ computes a function dominating~$\mu$. Let $H$ be an $\rt^n_k(W)$-solution to $f$. In particular, $H$ $f$-avoids $P$. If $H$ does not compute a function dominating $\mu$, then by Cholak and Patey~\cite[Theorem 5.11]{Cholak2019Thin}, there is an infinite $\mu$-transitive subset $H_1 \subseteq H$ such that $H_1$ does not compute a function dominating~$\mu$.
	In particular, $H_1$ is an infinite $\arithscart^n_k$-solution to $f$ with witnesses $\chi$ and $\mu$, and such that $H_1$ $f$-avoids $P$. By Lemma~\ref{lem:arithscart-avoid-pattern-dominates-mu}, $H$ computes a function dominating~$\mu$, contradiction.
\end{proof}

Actually, $\arithscart^n_k$ is the strongest $\rt^n_k$-like problem which admits this avoidance property. The following theorem therefore provides a simple criterion to decide whether an $\rt^n_k$-like problem admits strong cone avoidance for non-arithmetical cones.

\begin{theorem}\label{thm:arithscart-sca-characterization}
A problem $\rt^n_k(W)$ admits strong cone avoidance for non-arithmetical cones if and only if $\rt^n_k(W) \leq_{id} \arithscart^n_k$.
\end{theorem}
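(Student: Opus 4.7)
The plan is to handle each direction separately, with the easy direction following directly from Theorem~\ref{thm:arithscart-sca-nonarith} and the hard direction being the contrapositive, which reduces to Theorem~\ref{thm:not-leq-arithscart-not-sca} together with the existence of hyperarithmetical non-arithmetical sets.

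First, for the $(\Leftarrow)$ direction, I assume $\rt^n_k(W) \leq_{id} \arithscart^n_k$. Fix a set $Z$, a non-$Z$-arithmetical $C$, and an instance $f : [\omega]^n \to k$ of $\rt^n_k(W)$. Since any $k$-coloring is also an $\arithscart^n_k$-instance, Theorem~\ref{thm:arithscart-sca-nonarith} (relativized to $Z$) yields an $\arithscart^n_k$-solution $H$ to $f$ such that $C \not\leq_T Z \oplus H$. By the identical reducibility hypothesis, $H$ is already an $\rt^n_k(W)$-solution to $f$, which establishes strong cone avoidance for non-arithmetical cones.

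For the $(\Rightarrow)$ direction, I argue by contrapositive: assume $\rt^n_k(W) \not\leq_{id} \arithscart^n_k$ and produce a non-arithmetical cone that cannot be avoided. The key is to pick a hyperarithmetical but non-arithmetical set $A$, for instance $\emptyset^{(\omega)}$. By the Groszek--Slaman theorem (recalled in the first section of the paper), every hyperarithmetical set admits a modulus, so there exists $\mu : \omega \to \omega$ such that any function dominating $\mu$ computes $A$. Applying Theorem~\ref{thm:not-leq-arithscart-not-sca} to this $\mu$ yields an $\rt^n_k(W)$-instance $f$ such that every $\rt^n_k(W)$-solution $H$ to $f$ computes a function dominating $\mu$, and hence computes $A$. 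Taking $Z = \emptyset$ and $C = A$, we exhibit a non-arithmetical set that is computed by every solution to $f$, contradicting strong cone avoidance for non-arithmetical cones.

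The main potential obstacle is purely bookkeeping: making sure the direction of Theorem~\ref{thm:not-leq-arithscart-not-sca} is used correctly (it gives the coding instance, not the solution), and that the choice of a hyperarithmetical non-arithmetical $A$ is legitimate. Neither is difficult; the existence of $\emptyset^{(\omega)}$ and its modulus suffices. Thus both directions go through with no further construction beyond what is already proved in Theorems~\ref{thm:arithscart-sca-nonarith} and~\ref{thm:not-leq-arithscart-not-sca}.
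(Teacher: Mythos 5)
Your proposal is correct and follows essentially the same route as the paper's proof: the forward direction applies Theorem~\ref{thm:arithscart-sca-nonarith} directly and the backward direction is the contrapositive via Theorem~\ref{thm:not-leq-arithscart-not-sca} with a modulus of a hyperarithmetical non-arithmetical set. Your explicit remark that the non-arithmetical set $C$ must be chosen hyperarithmetical (so that a modulus exists, by Groszek--Slaman) is a small clarification that the paper leaves implicit but is otherwise identical in substance.
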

\begin{proof}
$\Leftarrow$: Suppose $\rt^n_k(W) \leq_{id} \arithscart^n_k$. Fix a set $Z$, a non-$Z$-arithmetical set $C$ and a coloring $f : [\omega]^n \to k$. By Theorem~\ref{thm:arithscart-sca-nonarith}, there is an $\arithscart^n_k$-solution $H$ to $f$ such that $C \not \leq_T Z \oplus H$. In particular, $H$ is an $\rt^n_k(W)$-solution to~$f$.

$\Rightarrow$: Suppose $\rt^n_k(W) \not\leq_{id} \arithscart^n_k$. Let $\mu : \omega \to \omega$ be a modulus of some non-arithmetical set $C$. By Theorem~\ref{thm:not-leq-arithscart-not-sca}, there is a $\rt^n_k(W)$-instance such that every solution computes a function dominating $\mu$, hence computes~$C$. Therefore $\rt^n_k(W)$ does not admit strong cone avoidance for non-arithmetical cones.
\end{proof}

\subsection{Strongly avoiding non-computable cones}
Whenever the modulus $\mu : \omega \to \omega$ is left-c.e., that is, there is a uniformly computable sequence of functions $\mu_0 \leq \mu_1 \leq \dots$ pointwise limiting to $\mu$, one can exploit more information to compute functions dominating $\mu$. For example, let $f : [\omega]^3 \to 2$ be defined by $f(x, y, z) = \langle b_0, b_1, b_2 \rangle$, where $b_0 = 1$ if and only if $\mu(x) \leq y$, $b_1 = 1$ if and only if $\mu(y) \leq z$, and $b_2 = 1$ if and only if $\mu_z(x) \leq y$. Cholak and Patey~\cite[Theorem 3.17]{Cholak2019Thin} proved that every infinite set $H \subseteq \omega$ such that $|f[H]^3| \leq 4$ computes a function dominating~$\mu$.
We refine again their analysis to obtain a maximal $\rt^n_k$-like principle admitting strong cone avoidance.

\begin{definition}[Cholak and Patey~\cite{Cholak2019Thin}]\label{def:largeness-graph}
A \emph{largeness graph} of size $n$ is a graph $(\{0, \dots, n-1\}, E)$ such that
\begin{itemize}
	\item[(a)] If $\{i,i+1\} \in E$, then for every $j > i+1$, $\{i, j\} \not \in E$
	\item[(b)] If $i < j < n$, $\{i,i+1\} \not \in E$ and $\{j,j+1\} \in E$, then $\{i, j+1\} \in E$
	\item[(c)] If $i +1 < j < n - 1$ and $\{i,j\} \in E$, then $\{i, j+1\} \in E$
	\item[(d)] If $i+1 < j < k < n$ and $\{i, j\} \not \in E$ but $\{i,k\} \in E$, then $\{j-1, k\} \in E$
\end{itemize}
\end{definition}

Let us explain the intuition behind the definition of a largeness graph. Given a left-c.e.\ function $\mu : \omega \to \omega$ and a finite $\mu$-transitive set $D = \{x_0 < x_1 < \dots < x_{n-1}\}$, one can define a graph $\Gcal = (\{0, \dots, n-1\}, E)$ which will represent the whole information about the largeness of the intervals $[x_i, x_j]$ for each $i < j < n$. Since $D$ is $\mu$-transitive, the information about $\mu$-largeness of the intervals $[x_i, x_j]$ for every $i < j$ is already fully specified by the information about $\mu$-largeness of $[x_i, x_{i+1}]$. Therefore, this information is coded only into the adjacent edges. We then set $\{i,i+1\} \in E$ if and only if $[x_i, x_{i+1}]$ is $\mu$-large.

Whenever $[x_i, x_{i+1}]$ is $\mu$-small, this is witnessed after a finite approximation stage $\mu_s$, that is, $[x_i, x_{i+1}]$ is $\mu_s$-small for all but finitely many $s \in \omega$. We can therefore code in $\Gcal$ the information whether some $x_j$ (with $j > i+1$) is large enough to witness this fact, in the sense that $\mu_{x_j}(x_i) > x_{i+1}$. This information is coded into the edges $\{i,j\}$ with $i+1 < j$. We then set $\{i,j\} \in E$ if $[x_i, x_{i+1}]$ is $\mu_{x_j}$-small. Although this coding does not seem consistent with the adjacent edges since the existence of an adjacent edge gives a largeness information, one should really see $\mu_{x_j}$-smallness of $[x_i, x_{i+1}]$ as an information of how big the number $x_j$ is and not on how small the interval $[x_i, x_{i+1}]$ is.

Let us now look at the properties (a) to (d), successively. Property (a) says that if the interval $[x_i, x_{i+1}]$ is $\mu$-large, then it is never $\mu_{x_j}$-small. This property is ensured by construction.
Property (b) says that if $[x_i, x_{i+1}]$ is $\mu$-small and $[x_j, x_{j+1}]$ is $\mu$-large, then the approximation time $x_{j+1}$ is large enough to witness the smallness of $[x_i, x_{i+1}]$. In other words, $[x_i, x_{i+1}]$ is $g_{x_{j+1}}$-small. This property is not structurally ensured, and must be obtained by some extra assumptions on the function $\mu$.
Property (c) simply says that if $[x_i, x_{i+1}]$ is $\mu_{x_j}$-small, then it is $\mu_{x_{j+1}}$-small. This property is structurally ensured by the fact that $\mu_{x_j} \leq \mu_{x_{j+1}}$, which is true of all left-c.e.\ approximations.
Property (d) says that if $x_k$ is large enough to witness the $\mu$-smallness of $[x_i,x_{i+1}]$, but that $x_j$ was not large enough to witness it, then not only by property (b), $[x_{j-1}, x_j]$ cannot be $\mu$-large, but furthermore $x_k$ is large enough to witness the smallness of $[x_{j-1}, x_j]$. In some sense, Property (d) is a refinement of property (b). This property must be also ensured by some extra assumptions on~$\mu$.

Let $\Lcal_n$ be the set of all largeness graphs of size~$n$. Recall that $C_n$ is the $n$th Catalan number. Cholak and Patey~\cite[Lemma 3.16]{Cholak2019Thin} proved that for every $n \geq 1$, $|\Lcal_n| = C_n$.

\begin{definition}
Given a relative left-c.e.\ function $\mu : \omega \to \omega^{+}$ with approximations $\mu_0, \mu_1, \dots$ and a set $D = \{x_0, \dots, x_{n-1}\} \subseteq \omega$,
let $\Lcal_n(\mu, D)$ be the graph $(\{0, \dots, n-1\}, E)$ where
$E = \{ \{p, q\} : \mu_{x_q}(x_p) > x_{p+1} \wedge p+1 < q < n \} \cup \{ \{p, p+1\} : \mu(x_p) \leq x_{p+1} \}$. 
\end{definition}

Note that $\Lcal_n(\mu, D)$ is not a largeness graph in general, because the properties (b) and (d) are not structurally satisfied. We give a sufficient property to ensure $\Lcal_n(\mu, D) \in \Lcal_n$. 

\begin{definition}\label{def:normalized}
A relative left-c.e.\ function $\mu : \omega \to \omega$ is \emph{strongly increasing}
if for every $s \in \omega$ and $x < y \in \omega$, $\mu_s(x) \leq \mu_s(y)$, and if $\mu_{s+1}(x) > \mu_s(x)$ then $\mu_{s+1}(y) > s$. 
\end{definition}



\begin{lemma}\label{lem:strongly-increasing-yields-largeness-graph}
Fix a strongly increasing relative left-c.e.\ function $\mu : \omega \to \omega^{+}$.
For every $n \geq 1$ and $D \in [\omega]^n$, $\Lcal_n(\mu, D) \in \Lcal_n$.
\end{lemma}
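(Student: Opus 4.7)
The plan is to verify each of the four conditions of Definition~\ref{def:largeness-graph} for the graph $\Lcal_n(\mu, D)$, where $D = \{x_0 < \dots < x_{n-1}\}$. The key observation I will use repeatedly is the following consequence of the strongly increasing hypothesis: if $s$ is the least stage at which $\mu_{s+1}(x_i) > y$ while $\mu_s(x_i) \leq y$, then $\mu_{s+1}(x_i) > \mu_s(x_i)$, and hence $\mu_{s+1}(z) > s$ for every $z > x_i$.

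Conditions (a) and (c) do not require strong increase and follow directly from the pointwise monotonicity of the approximations. For (a), if $\{i,i+1\} \in E$ then $\mu(x_i) \leq x_{i+1}$, and since $\mu_{x_j}(x_i) \leq \mu(x_i)$ for every $j > i+1$, we get $\{i, j\} \notin E$. For (c), if $\{i, j\} \in E$ with $i+1 < j$ then $\mu_{x_j}(x_i) > x_{i+1}$, and monotonicity of $s \mapsto \mu_s(x_i)$ together with $x_j \leq x_{j+1}$ yields $\mu_{x_{j+1}}(x_i) > x_{i+1}$.

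Conditions (b) and (d) are where the strongly increasing hypothesis enters. For (b), assume $\{i,i+1\} \notin E$ and $\{j, j+1\} \in E$ with $i < j$, so $\mu(x_i) > x_{i+1}$ and $\mu(x_j) \leq x_{j+1}$. Let $s+1$ be the least stage with $\mu_{s+1}(x_i) > x_{i+1}$. Applying the key observation with $z = x_j > x_i$ gives $\mu_{s+1}(x_j) > s$. But $\mu_{s+1}(x_j) \leq \mu(x_j) \leq x_{j+1}$, so $s+1 \leq x_{j+1}$, and therefore $\mu_{x_{j+1}}(x_i) \geq \mu_{s+1}(x_i) > x_{i+1}$, i.e.\ $\{i, j+1\} \in E$. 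For (d), assume $i+1 < j < k < n$ with $\mu_{x_j}(x_i) \leq x_{i+1}$ and $\mu_{x_k}(x_i) > x_{i+1}$. Then there is a least stage $s+1 \in (x_j, x_k]$ at which $\mu_{s+1}(x_i) > x_{i+1}$. Since $j > i+1$, we have $x_{j-1} > x_i$, so the key observation yields $\mu_{s+1}(x_{j-1}) > s \geq x_j$. Combined with $s+1 \leq x_k$ and monotonicity, this gives $\mu_{x_k}(x_{j-1}) > x_j$, i.e.\ $\{j-1, k\} \in E$.

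The argument is essentially a bookkeeping exercise, and the only real subtlety is choosing the right witnessing stage at which to apply the strongly increasing property: in (b) the stage at which $\mu_{s+1}(x_i)$ first exceeds $x_{i+1}$, and in (d) the corresponding first stage confined to the interval $(x_j, x_k]$. Once these stages are correctly identified, the conclusions follow from transporting the lower bound $\mu_{s+1}(z) > s$ across the comparisons $s+1 \leq x_{j+1}$ and $s \geq x_j$ respectively.
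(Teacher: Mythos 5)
Your proof is correct and uses the same core mechanism as the paper: the paper bundles the strong-increase consequence into two preliminary claims (one used for (d), the other, proved by contradiction, used for (b)), whereas you apply the witnessing-stage argument directly in both cases; your (b) argument is in effect a direct version of the paper's contradiction-based Claim~2, and your (d) argument is the same as the paper's. One small point worth a remark in (b): if $\mu_0(x_i) > x_{i+1}$ then there is no stage $s$ with $\mu_s(x_i) \leq x_{i+1}$, so the phrase \emph{least stage $s+1$ with $\mu_{s+1}(x_i) > x_{i+1}$} may give $s+1 = 0$ and the key observation does not literally apply; this degenerate case is trivially handled since $\mu_{x_{j+1}}(x_i) \geq \mu_0(x_i) > x_{i+1}$ outright, but it should be noted.
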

\begin{proof}
\emph{Claim 1: For every $w < x < y < z$, if $\mu_y(w) \leq x$ and $\mu_z(w) > x$ then $\mu_z(x) > y$.} Let $s \in [y, z)$ be such that $\mu_s(w) \leq x$ and $\mu_{s+1} > x$. Then since $\mu$ is strongly increasing, $\mu_{s+1}(x) > s \geq y$. In particular, $\mu_z(x) \geq y$. This proves Claim 1.

\emph{Claim 2: For every $w < x < y$, if $\mu(w) > x$ and $\mu(x) \leq y$, then $\mu_y(w) > x$.}
Suppose that Claim 2 does not hold. Then there is some $w < x < y$ such that 
$\mu(w) > x$, $\mu(x) \leq y$ and $\mu_y(w) \leq x$. Then there is a stage $z > y$ such that $\mu_z(w) > x$ and $\mu_z(x) \leq y$. Since $\mu_y(w) \leq x$ and $\mu_z(w) > x$, but  $\mu_z(x) \leq y$, we contradict Claim 1. This proves Claim 2.

Fix $n \geq 1$ and $D = \{x_0 < \dots < x_{n-1}\} \in [\omega]^n$.
We check properties $(a-d)$ of Definition~\ref{def:largeness-graph} for $\Lcal_n(\mu, D) = (\{0, \dots, n-1\}, E)$.
(a): If $\{i,i+1\} \in E$, then $\mu(x_i) \leq x_{i+1}$. In particular, for every $j > i+1$, since $\mu_{x_j} \leq \mu$, $\mu_{x_j}(x_i) \leq x_{i+1}$, so $\{i,j\} \not \in E$.
(b): If $i < j < n$ and $\{i,i+1\} \not \in E$ and $\{j,j+1\} \in E$. Then $\mu(x_i) > x_{i+1}$ and $\mu(x_j) \leq x_{j+1}$. Since $\mu$ is increasing, $\mu(x_{i+1}) \leq x_{j+1}$. Then by Claim 2, $\mu_{x_{j+1}}(x_i) > x_{i+1}$. Thus $\{i,j+1\} \in E$.
(c): If $i+1 < j < n-1$ and $\{i,j\} \in E$, then $\mu_{x_j}(x_i) > x_{i+1}$. Since $\mu_{x_j} \leq \mu_{x_{j+1}}$, then $\mu_{x_{j+1}}(x_i) > x_{i+1}$, so $\{i,j+1\} \in E$.
(d): If $i+1 < j < k < n$ and $\{i,j\} \not \in E$ but $\{i,k\} \in E$. Then $\mu_{x_j}(x_i) \leq x_{i+1}$ but $\mu_{x_k}(x_i) > x_{i+1}$. By Claim 1, $\mu_{x_k}(x_{i+1}) > x_j$. Since $\mu_{x_k}$ is increasing, $\mu_{x_k}(x_{j-1}) > x_j$. Thus $\{j-1, k\} \in E$. 
\end{proof}

We now prove that whenever we choose a left-c.e.\ modulus for a set, we can always assume that it is strongly increasing, without loss of generality.

\begin{lemma}\label{lem:leftce-computes-strongly-increasing}
Every left-c.e.\ function $\mu : \omega \to \omega$
is dominated by a strongly increasing left-c.e.\ function $g : \omega \to \omega$.
\end{lemma}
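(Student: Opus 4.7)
My plan is to obtain $g$ by taking a pointwise maximum of two auxiliary left-c.e.\ approximations derived from the approximations $\mu_0 \leq \mu_1 \leq \dots$ of $\mu$: one which enforces monotonicity in the argument, and one which records the latest stage of change below a given argument and thereby forces the ``jumping'' clause in Definition~\ref{def:normalized}.

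First, I would set $\tilde{\mu}_s(x) = \max_{z \leq x} \mu_s(z)$. This is uniformly computable, nondecreasing in both $s$ and $x$, and converges pointwise to a total function into $\omega$ that dominates $\mu$ (totality uses that $\mu$ is $\omega$-valued, so $\mu_s(z)$ stabilizes for each $z$). Second, I would define a ``timestamp'' function $c_s(x) = \max\bigl(\{0\} \cup \{\, t \leq s : (\exists z \leq x)\, \mu_t(z) > \mu_{t-1}(z)\,\}\bigr)$, i.e.\ the latest stage $\leq s$ at which the approximation to $\mu$ changed at some argument $\leq x$. Again $c_s$ is uniformly computable and nondecreasing in both $s$ and $x$; moreover for fixed $x$, since each $\mu_s(z)$ stabilizes and there are only finitely many $z \leq x$, only finitely many stages witness a change below $x$, so $c_s(x)$ stabilizes. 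Finally, put $g_s(x) = \max(\tilde{\mu}_s(x), c_s(x))$ and $g(x) = \lim_s g_s(x)$.

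The verification has four parts. The approximations $g_s$ are uniformly computable and nondecreasing in $s$ and $x$ (inherited from $\tilde{\mu}_s$ and $c_s$), and $g$ is total and $\omega$-valued since both components stabilize, with $g(x) \geq \tilde{\mu}(x) \geq \mu(x)$, so $g$ dominates $\mu$. For the strong-increasing clause, suppose $g_{s+1}(x) > g_s(x)$; then either $\tilde{\mu}_{s+1}(x) > \tilde{\mu}_s(x)$ or $c_{s+1}(x) > c_s(x)$, and in either case there exists $z \leq x$ with $\mu_{s+1}(z) > \mu_s(z)$. Hence for any $y > x$, $z \leq x < y$ also witnesses a change below $y$ at stage $s+1$, giving $c_{s+1}(y) \geq s+1$, and therefore $g_{s+1}(y) \geq c_{s+1}(y) > s$, as required.

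There is no real obstacle; the only thing to be mindful of is that $g$ remains $\omega$-valued, which is exactly why we need $\mu$ to be $\omega$-valued (and would fail for $\omega^{+}$-valued $\mu$, since an argument at which $\mu(x) = \omega$ would force infinitely many changes and drive $c_s(x)$ to infinity). Everything else is routine bookkeeping about maxima of nondecreasing computable sequences.
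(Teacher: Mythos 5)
Your proof is correct, and it takes a genuinely cleaner route than the paper's. The paper builds $g$ by a recursive stage-by-stage construction with ``moving thresholds'' $t_{s,x}$ that are bumped whenever $g_s$ itself changes at a smaller argument; because $t_{s,x}$ depends on earlier $g$-values, the paper must argue separately (its Claim 2) that the cascade of threshold updates terminates, so that each $g(x)$ is finite. You replace the recursive threshold with an explicit, non-recursive timestamp $c_s(x)$ that refers only to changes in the given approximations $\mu_s$, and you package the monotonicity-in-argument requirement into $\tilde{\mu}_s(x) = \max_{z\le x}\mu_s(z)$; since both components visibly stabilize (finitely many $z\le x$, each $\mu_s(z)$ stabilizes), totality is immediate and no analogue of the paper's Claim 2 is needed. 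The two constructions achieve the same effect — in each, $g_s(y)$ is forced above the most recent stage at which $\mu$ changed at some argument $\le y$, which is exactly what the ``$\mu_{s+1}(y) > s$'' clause of Definition~\ref{def:normalized} requires — but the closed-form definition is more transparent. Your remark that $\omega$-valuedness of $\mu$ is essential (otherwise the timestamp diverges) is correct and matches the lemma's hypothesis; the paper's construction fails for $\omega^{+}$-valued $\mu$ for the same underlying reason.
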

\begin{proof}
Fix $\mu : \omega \to \omega$ with approximations $\mu_0, \mu_1, \dots$.
We define a uniformly computable sequence of functions $g_0 \leq g_1 \leq \dots : \omega \to \omega$ pointwisely, that is, at stage $x$, we define $g_s(x)$ for every $s \in \omega$. During the construction, for each approximation time $s$ and value $x$, we associate a moving threshold $t_{s, x} \in \omega$ which can only increase, starting with $t_{s,x} = 0$. We will ensure that $g_s(x) \geq t_{s,x}$.

At stage $x$, suppose we have defined $g_s(y)$ for every $y < x$ and $s \in \omega$. We define $g_s(x)$ for every $s \in \omega$.
At approximation time $s$, having defined $g_u(x)$ for every $u < s$, we define $g_s(x)$ to be the maximum value among $\{ g_s(y) : y < x\}$, $\{ g_u(x) : u < s\}$, $t_{s,x}$ and $\mu_s(x)$.
If $s > 0$ and $g_s(x) > g_{s-1}(x)$, then set $t_{s, y} = \max(t_{s, y}, s)$ for every $y > x$. 
Then go to the next approximation time $s+1$. 
Once all the approximations $\langle g_s(x) : s \in \omega\rangle$ are defined, then go to the next stage $x+1$.

Claim 1: $g$ is a strongly increasing left-c.e. function.
By making $g_s(x)$ larger than $\{ g_s(y) : y < x\}$, we have ensured that the function $g_s$ is non-decreasing, that is, for every $x < y$, $g_s(x) \leq g_s(y)$. 
By making $g_s(x)$ larger than $\{ g_u(y) : u < s\}$, we have ensured that $g_s \leq g_{s+1}$, hence that $g_0, g_1, \dots$ are left-c.e.\ approximations.
By making $g_s(x)$ larger than $t_{s,x}$, we have ensured that if there is some  $y < x$ such that $g_{s-1}(y) < g_s(y)$, then $g_s(x) > s-1$. This proves Claim 1.

By making $g_s(x)$ larger than $\mu_s(x)$, we have ensured that $g(x) = \lim_s g_s(x) \geq \lim_s \mu_s(x) = \mu(x)$. So $g$ dominates $\mu$.

Claim 2: For every $x$, $g(x) < \omega$. We prove it by induction over $x$. Indeed, by induction hypothesis, there is a time $v \in \omega$ such that for every $y < x$ and $s > v$, $g_s(y) = g_v(y)$. 
For every $s > v$, the maximum value among $\{ g_s(y) : y < x\}$ is bounded by 
maximum value among $\{g(y) : y < x\}$, which is a finite number by induction hypothesis. Moreover, $t_{s,x} = 0$ since only stages $y < x$ at time $s$ can change the threshold $t_{s,x}$. Moreover, $\mu_s(x)$ is bounded by $\mu(x)$ which is again a finite number. Let $m$ be the maximum value among $\{g(y) : y < x\}$, $\{g_u(x) : u < v \}$ and $\mu(x)$. Then $g_s(x) \leq m$ for every $s > v$. It follows that $\lim_s g_s(x) \leq m$.
This completes the proof of Lemma~\ref{lem:leftce-computes-strongly-increasing}.
\end{proof}

By Lemma~\ref{lem:strongly-increasing-yields-largeness-graph}, taking a strongly increasing relative left-c.e.\ modulus ensures that $\Lcal_n(\mu, D)$ is a largeness graph (Definition~\ref{def:largeness-graph}). As seen in Lemma~\ref{lem:mu-transitive-yields-vector-graph}, $\mu$-transitivity ensures that $\mu$-largeness of the intervals over a set $D$ is fully specified by the $\mu$-largeness of its adjacent intervals. However, some information still seems to be missing in $\Lcal_n(\mu, D)$. Indeed, suppose that $[x_i, x_{i+1}]$ and $[x_{i+1}, x_{i+2}]$ are both $\mu_{x_j}$-small, that is, $\{i, j\}$ and $\{i+1, j\}$ are both edges on $\Lcal_n(\mu, D)$. By $\mu$-transitivity, we know that $[x_i, x_{i+2}]$ is $\mu$-small. However, is it $\mu_{x_j}$-small? Thanks to a stronger notion of $\mu$-transitivity, we can ensure that it will always be the case.
	A set $H \subseteq \omega$ is \emph{strongly $\mu$-transitive} if it is $\mu$-transitive, and for every $w < x < y < z \in H$ such that $\mu_z(w) > x$ and $\mu_z(x) > y$, then $\mu_z(w) > y$. In other words, if $z$ witnesses $\mu$-smallness of both $[w, x]$ and $[x, y]$, then it witnesses $\mu$-smallness of $[w, y]$.

\begin{lemma}\label{lem:strongly-mu-transitive-yields-largeness-graph}
Fix two relative left-c.e.\ functions $\mu : \omega \to \omega^{+}$ and $\rho : \omega \to \omega^{+}$.
Given $n \geq 1$, let $E = \{x_0 < \dots < x_{n-1} \}$ be a strongly $\mu$-transitive set, and $F = \{y_0 < \dots < y_{n-1} \}$ be a strongly $\rho$-transitive set such that $\Lcal_n(\mu, E) = \Lcal_n(\rho, F)$. Then for every $i < j < k < n$, $[x_i, x_j]$ is $\mu_k$-large if and only if $[y_i, y_j]$ is $\rho_k$-large.
\end{lemma}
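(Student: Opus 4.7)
The plan is to induct on $m = j - i \geq 1$, in close parallel with the proof of Lemma~\ref{lem:mu-transitive-yields-vector-graph}, but using strong $\mu$-transitivity and strong $\rho$-transitivity in place of ordinary transitivity, so as to handle the stage-indexed largeness $\mu_{x_k}$-large rather than plain $\mu$-large. Throughout I read the notation ``$\mu_k$-large'' in the statement as shorthand for $\mu_{x_k}$-large (and likewise for $\rho$), consistent with the definition of $\Lcal_n$.

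For the base case $m = 1$, I have $j = i+1$ and $k > j$, so $i+1 < k$ places us in the non-adjacent part of the definition of $\Lcal_n$. By that definition, $\{i, k\} \in \Lcal_n(\mu, E)$ if and only if $\mu_{x_k}(x_i) > x_{i+1}$, that is, exactly when $[x_i, x_{i+1}]$ is $\mu_{x_k}$-small; the same equivalence holds on the $(F, \rho)$ side. Since the two graphs coincide by hypothesis, the base case is immediate. For the inductive step $m \geq 2$, I would reduce $\mu_{x_k}$-largeness of $[x_i, x_j]$ (with $j = i+m$) to $\mu_{x_k}$-largeness of the two smaller intervals $[x_i, x_{j-1}]$ and $[x_{j-1}, x_j]$. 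The forward direction uses strong $\mu$-transitivity applied to $x_i < x_{j-1} < x_j < x_k$: if $\mu_{x_k}(x_i) > x_{j-1}$ and $\mu_{x_k}(x_{j-1}) > x_j$, then $\mu_{x_k}(x_i) > x_j$, so contrapositively $\mu_{x_k}$-largeness of $[x_i, x_j]$ forces one of the two subintervals to be $\mu_{x_k}$-large. The reverse direction splits in two: $\mu_{x_k}$-largeness of $[x_i, x_{j-1}]$ gives $\mu_{x_k}(x_i) \leq x_{j-1} < x_j$ trivially, while $\mu_{x_k}$-largeness of $[x_{j-1}, x_j]$ yields $\mu_{x_k}(x_i) \leq \mu_{x_k}(x_{j-1}) \leq x_j$ by monotonicity of $\mu_{x_k}$ in its argument. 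Two applications of the induction hypothesis (with $m' = m-1$ on $[x_i, x_{j-1}]$ and $m' = 1$ on $[x_{j-1}, x_j]$, both satisfying the needed inequality $k > j > j-1$) transfer these equivalences to $(F, \rho)$, and the symmetric decomposition via strong $\rho$-transitivity and monotonicity of $\rho_{y_k}$ reassembles the conclusion for $[y_i, y_j]$.

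The main obstacle is that the reverse direction quietly invokes monotonicity of $\mu_s$ (and $\rho_s$) in its first argument, which is part of the strongly increasing property of Definition~\ref{def:normalized} but is not listed among the hypotheses of the lemma. I would read this as an implicit assumption, consistent with the paper's convention (via Lemma~\ref{lem:leftce-computes-strongly-increasing}) that one may always pass to a strongly increasing left-c.e.\ approximation without loss of generality; with this in hand, both halves of each decomposition are routine and the induction closes. If monotonicity were genuinely unavailable, one would need to rely only on the forward direction, which follows from strong transitivity alone and is the half actually used in the anticipated companion of Lemma~\ref{lem:arithscart-avoid-pattern-dominates-mu} for left-c.e.\ moduli; accommodating the full biconditional without monotonicity would be the main technical nuisance.
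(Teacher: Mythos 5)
Your proof follows essentially the same approach as the paper's: induction on $m = j - i$, with the base case read off from the non-adjacent edges of the two largeness graphs and the inductive step carried by strong transitivity (forward) and monotonicity of the approximations (backward). Your explicit flag of the implicit reliance on monotonicity of $\mu_s$ in its first argument (part of Definition~\ref{def:normalized} but not among the lemma's stated hypotheses) is correct and worth noting --- the paper's proof also uses it silently in the backward direction of the decomposition --- and is harmless in practice since the lemma is only applied to strongly increasing moduli.
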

\begin{proof}
We prove by induction over $m \geq 1$ that for every $i, k$ such that $i+m < k$, $[x_i, x_{i+m}]$ is $\mu_k$-large if and only if $[y_i, y_{i+m}]$ is $\rho_k$-large. The base case $m = 1$ is the lemma hypothesis. Suppose it holds up to $m$. Since $E$ is strongly $\mu$-transitive, then $[x_i, x_{i+m+1}]$ is $\mu_k$-large if and only if either $[x_i, x_{i+m}]$ or $[x_{i+m}, x_{i+m+1}]$ is $\mu_k$-large. By induction hypothesis, this holds if and only if either $[y_i, y_{i+m}]$ or $[y_{i+m}, y_{i+m+1}]$ is $\rho_k$-large. Since $F$ is strongly $\rho$-transitive, this holds if and only if $[y_i, y_{i+m+1}]$ is $\rho_k$-large.
\end{proof}

We now prove that we can computably thin out an infinite $\mu$-transitive set to obtain a strongly $\mu$-transitive infinite subset.

\begin{lemma}\label{lem:transitive-to-strongly-transitive}
Let $\mu : \omega \to \omega^{+}$ be a strongly increasing left-c.e.\ function.
Every infinite $\mu$-transitive set $X \subseteq \omega$ has an infinite $X$-computable strongly $\mu$-transitive subset $Y \subseteq X$.
\end{lemma}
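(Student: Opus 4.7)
The plan is to build $Y = \{y_0 < y_1 < \dots\}$ as a strictly increasing sequence in $X$ by an $X$-computable greedy recursion. Since $Y \subseteq X$ and $X$ is $\mu$-transitive, $Y$ automatically inherits $\mu$-transitivity, so the only real work is to secure the additional condition: for every $w < x < y < z$ in $Y$, if $\mu_z(w) > x$ and $\mu_z(x) > y$, then $\mu_z(w) > y$. Because $z$ is the largest element of any such quadruple, it suffices to enforce the condition each time a new element is added, with that element playing the role of $z$ against all triples of previously chosen elements.

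Concretely, I would set $y_0 = \min X$ and, at stage $n+1$, with $y_0 < \dots < y_n$ already chosen, let $y_{n+1}$ be the least $y \in X$ with $y > y_n$ such that for every $i < j < k \leq n$, whenever $\mu_y(y_i) > y_j$ and $\mu_y(y_j) > y_k$ one has $\mu_y(y_i) > y_k$. This is a finite check on the uniformly computable approximations $\mu_y$, so the whole construction is $X$-computable provided the search always succeeds.

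The main point to verify is precisely that the search terminates at every stage. Fix a triple $i < j < k \leq n$. If $\mu(y_i) \leq y_j$, then $\mu_y(y_i) \leq \mu(y_i) \leq y_j$ for every $y$, so the hypothesis $\mu_y(y_i) > y_j$ is never met; similarly if $\mu(y_j) \leq y_k$. Otherwise $\mu(y_i) > y_j$ and $\mu(y_j) > y_k$, and since $\{y_i, y_j, y_k\} \subseteq X$ and $X$ is $\mu$-transitive, $\mu(y_i) > y_k$. By the left-c.e.\ approximation, this is witnessed at some finite stage $t_{ijk}$ with $\mu_{t_{ijk}}(y_i) > y_k$, and for every $y \geq t_{ijk}$ the conclusion already holds. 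Taking $t^*$ to be the maximum of the finitely many $t_{ijk}$ that fall in this last case, any $y \in X$ with $y \geq \max(t^*, y_n+1)$ passes the check.

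The only subtle point is the alignment of the two directions of the limit: left-c.e.\ monotonicity $\mu_y \leq \mu$ ensures that triples whose hypotheses fail in the limit fail uniformly at every stage, while $\mu$-transitivity of $X$ ensures that triples whose hypotheses hold in the limit have their conclusion witnessed at some finite stage. Strong $\mu$-transitivity of $Y$ then follows immediately by construction, since every quadruple from $Y$ is handled at the moment its largest element is added.
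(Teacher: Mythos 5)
Your proposal is correct and follows essentially the same route as the paper's proof: a greedy $X$-computable construction that at each stage picks the least sufficiently large $y \in X$ so that the strong transitivity condition holds for all triples in the prefix with the new element playing the role of $z$, with termination guaranteed by left-c.e.\ monotonicity ($\mu_y \leq \mu$, so hypotheses that fail in the limit fail at every stage) together with $\mu$-transitivity of $X$ (so conclusions that hold in the limit are witnessed at a finite stage). Your explicit case split on whether $\mu(y_i) \leq y_j$ or $\mu(y_j) \leq y_k$ is a slightly more careful write-up of the same termination argument the paper gives in compressed form.
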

\begin{proof}
Fix $\mu$ and $X$. We build an infinite subsequence $y_0 < y_1 < \dots \in X$ as follows. At stage $0$, let $y_0 = \min X$. At stage $s > 0$, suppose we have defined a strongly $\mu$-transitive finite sequence $y_0 < \dots < y_{s-1}$. Let $y_s > y_{s-1}$ be the least element of $X$ such that for every $i < j < k < s$, if $[y_i, y_j]$ and $[y_j, y_k]$ are $\mu_{y_s}$-small, then $[y_i, y_k]$ is $\mu_{y_s}$-small. Such $y_s$ must be found, since given a fixed tuple $i < j < k < s$ if $[y_i, y_j]$ and $[y_j, y_k]$ are $\mu$-small, then $[y_i, y_k]$ is $\mu$-small, and therefore for all but finitely many $y$, $[y_i, y_k]$ is $\mu_k$-small. Taking a value in $X$ bigger than the max of the thresholds for each tuple $i < j < k < s$, we obtain a $y_s$ with the desired property.
\end{proof}

In the following theorem, note again that $\mu$ ranges over $\omega$ and not $\omega^{+}$.

\begin{theorem}\label{thm:avoid-colors-dominate-left-ce-mu}
Let $\mu : \omega \to \omega$ be a strongly increasing left-c.e.\ function. For every $n \geq 1$, define $f_n : D \mapsto  \Lcal_n(\mu, D)$. For every infinite set $H \subseteq \omega$ such that $\Lcal_n \not \subseteq f_n[H]^n$, $H$ computes a function dominating~$\mu$. 
\end{theorem}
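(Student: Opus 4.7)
The plan is to extend Theorem~\ref{thm:avoid-colors-dominate-mu}, which handles vector graphs, to the setting of largeness graphs, exploiting that $\mu$ is strongly increasing left-c.e.

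First, I would thin out $H$. Using the standard $H$-computable thinning to a $\mu$-transitive subset (the tool implicitly used earlier in this section) followed by Lemma~\ref{lem:transitive-to-strongly-transitive}, I may assume without loss of generality that $H$ is strongly $\mu$-transitive: a function dominating $\mu$ computable from such a thinned subset is computable from $H$, and the missing largeness graph $\Gcal = (\{0, \dots, n-1\}, E) \in \Lcal_n \setminus f_n[H]^n$ remains missing after thinning, since $f_n[H']^n \subseteq f_n[H]^n$.

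Second, I would define a function $g \leq_T H$ via a search. On input $x$, the algorithm iterates through finite increasing tuples of elements of $H$ above $x$, computing for each $n$-tuple $D$ the graph $\Lcal_n(\mu_s, D)$ for growing approximation stages $s$. By Lemma~\ref{lem:strongly-increasing-yields-largeness-graph}, the limit graph $\Lcal_n(\mu, D)$ lies in $\Lcal_n$, and since $\mu$ is left-c.e., the approximations converge from below. The value $g(x)$ is extracted from the witnesses found: concretely, it is designed so that if $\mu(x) > g(x)$, then one could extend the partial search into a realization of $\Gcal$ in $H$, contradicting the hypothesis.

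The main obstacle is the combinatorial heart of the argument: showing that the absence of $\Gcal$ genuinely produces such a bound. The strategy is to argue by contradiction, assuming $\mu(x) > g(x)$ for infinitely many $x$, and to construct from $H$ a tuple realizing $\Gcal$. The adjacent-edge pattern of $\Gcal$ is realizable because, by Theorem~\ref{thm:avoid-colors-dominate-mu} applied to $\mu$ viewed as an ordinary modulus, the failure of $g$ to dominate $\mu$ would let every vector pattern appear in $H$. The non-adjacent edges are then pinned down via the strong-increasing property and the strong $\mu$-transitivity of $H$, through the interaction with the approximation stages captured by Lemma~\ref{lem:strongly-mu-transitive-yields-largeness-graph}: once an adjacent pattern is fixed and the witnesses are chosen consistently with the approximation behavior prescribed by $\Gcal$, the resulting tuple has largeness graph exactly $\Gcal$. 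Controlling this interaction carefully, so that the non-adjacent edges land on the target pattern rather than merely on some pattern with the same adjacent edges, is the delicate step, and it generalizes the $n = 3$ case proven in Cholak--Patey by induction on the size of the tuple together with a pigeonhole argument on the approximation stages.
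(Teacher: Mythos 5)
Your proposal tries to re-prove the theorem from scratch via a direct search-and-contradiction argument, while the paper's proof is a two-line reduction to the cited theorem of Cholak--Patey~\cite[Theorem~3.17]{Cholak2019Thin}: since $f_n[H]^n\subseteq\Lcal_n$ by Lemma~\ref{lem:strongly-increasing-yields-largeness-graph} and $|\Lcal_n|=C_n$, the hypothesis $\Lcal_n\not\subseteq f_n[H]^n$ gives $|f_n[H]^n|<C_n$, and the cited theorem immediately yields that $H$ computes a function dominating $\mu$. You miss this reduction entirely, even though the relevant Cholak--Patey result is already stated and used throughout the section.

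There are two concrete gaps in your attempt. First, your ``without loss of generality'' thinning to a strongly $\mu$-transitive subset is not legitimate in the direction you need: thinning $H$ to a $\mu$-transitive subset requires deciding whether $\mu(x)\leq y$, and $\mu$ is not $H$-computable, so the thinned set is not $H$-computable. The correct way to deploy transitivity, used elsewhere in the paper, is to argue by contradiction and invoke Cholak--Patey~\cite[Theorem~5.11]{Cholak2019Thin}, which produces a $\mu$-transitive subset that, crucially, still fails to compute a function dominating $\mu$; it is that preservation, not $H$-computability, that makes the thinning usable, and it only makes sense inside a proof by contradiction. Second, and more seriously, the combinatorial heart of the argument --- forcing a tuple whose non-adjacent edges realize the specific missing graph $\Gcal$ and not merely some graph with the same adjacent pattern --- is acknowledged as ``the delicate step'' and then left unproved; this is precisely the content of the Cholak--Patey theorem that the paper cites, so the proposal has replaced a one-line citation with a gap. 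Your appeal to Theorem~\ref{thm:avoid-colors-dominate-mu} handles only the adjacent-edge (vector-graph) information and gives no control over the approximation-stage edges, which is where all the difficulty lies.
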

\begin{proof}
Fix $\mu$, $n \geq 1$ and $H$.
By Lemma~\ref{lem:strongly-increasing-yields-largeness-graph}, $f_n[H]^n \subseteq \Lcal_n$. By Cholak and Patey~\cite[Lemma 3.16]{Cholak2019Thin}, $|\Lcal_n| = C_n$. If $\Lcal_n \not \subseteq f_n[H]^n$, then $|f_n[H]^n| < C_n$. Then by Cholak and Patey~\cite[Theorem 3.17]{Cholak2019Thin},  $H$ computes a function dominating~$\mu$.
\end{proof}

In the following statement, note that we state the existence of a \emph{relative} left-c.e.\ function, which is another way of stating the existence of a sequence of functions $\mu_0, \mu_1, \dots$ which satisfy the properties of a left-c.e.\ function independently of its effectiveness.

\begin{statement}
$\largeo^n_k$: For every function $f : [\omega]^n \to k$,
there is a strongly increasing relative left-c.e.\ function $\mu : \omega \to \omega^{+}$, an infinite strongly $\mu$-transitive set $H \subseteq \omega$ and a coloring $\chi : \Lcal_n \to k$ such that for every $D \in [H]^n$, $f(D) = \chi(\Lcal_n(\mu, D))$.
\end{statement}

Again, for $n = 1$, $|\Lcal_n| = 1$ and $\largeo^1_k$ is the statement $\rt^1_k$. Similarly, for $n = 2$, $\Lcal_2 = \Vcal_2$, and $\largeo^2_k$ is the same statement as $\arithscart^2_k$, or equivalently $\explicitlarge_k$, that is, the statement \qt{For every coloring $f : [\omega]^2 \to k$, there is an infinite set $H \subseteq \omega$ and two colors $i_s, i_\ell < k$ such that $f[H]^2 \subseteq \{i_s,i_\ell\}$ and for every $x < y < z \in H$, $f(x, y) = f(y, z) = i_s$ if and only if $f(x, z) = i_s$.}

\begin{theorem}\label{thm:largeo-sca}
$\largeo^n_k$ admits strong cone avoidance.
\end{theorem}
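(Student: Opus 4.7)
The plan is to follow the blueprint of Theorem~\ref{thm:arithscart-sca-nonarith} and lift it from $\arithscart^n_k$ to $\largeo^n_k$, splitting the argument according to the complexity of $C$ relative to $Z$. Fix $Z$, a set $C \not \leq_T Z$, and a coloring $f : [\omega]^n \to k$. The natural case split is whether $C$ is $Z$-arithmetical or not.

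If $C$ is not $Z$-arithmetical, I would apply Theorem~\ref{thm:arithscart-sca-nonarith} to obtain an $\arithscart^n_k$-solution $H$ with witnesses $\mu : \omega \to \omega^{+}$ and $\chi' : \Vcal_n \to k$, taking $\mu$ non-decreasing by replacing it with its running maximum. Viewing $\mu$ as a relative left-c.e.\ function via the trivial approximation $\mu_s \equiv \mu$ makes it strongly increasing. Under this trivial approximation, the non-adjacent edges of $\Lcal_n(\mu, D)$ are entirely forced by its adjacent edges, so $\Lcal_n(\mu, D)$ carries exactly the same information as $\Vcal_n(\mu, D)$, and strong $\mu$-transitivity collapses to $\mu$-transitivity (which $H$ already satisfies). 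Defining $\chi(L) = \chi'(G)$ whenever $G$ is the vector subgraph of $L$ then turns $H$ into a $\largeo^n_k$-solution.

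If $C$ is $Z$-arithmetical but not $Z$-computable, then $C$ admits a left-$Z$-c.e.\ modulus $\mu : \omega \to \omega$, which I would upgrade to a strongly increasing one via Lemma~\ref{lem:leftce-computes-strongly-increasing}. Define $f_1(D) = \langle f(D), \Lcal_n(\mu, D) \rangle$. Applying the relativized strong cone avoidance of $\rt^n_{<\infty, C_n}$ for non-computable cones (Cholak and Patey), I obtain an infinite $H$ with $|f_1[H]^n| \leq C_n$ and $C \not \leq_T Z \oplus H$. Since $Z \oplus H$ cannot dominate $\mu$, Theorem~\ref{thm:avoid-colors-dominate-left-ce-mu} ensures that every $G \in \Lcal_n$ arises as $\Lcal_n(\mu, D)$ for some $D \in [H]^n$; combined with $|f_1[H]^n| \leq C_n = |\Lcal_n|$, this forces a unique $\chi : \Lcal_n \to k$ satisfying $f(D) = \chi(\Lcal_n(\mu, D))$ throughout $[H]^n$. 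Finally, I would thin $H$ to a cone-avoiding $\mu$-transitive infinite subset (by the Cholak and Patey thinning already invoked in Theorem~\ref{thm:arithscart-sca-nonarith}), and then apply Lemma~\ref{lem:transitive-to-strongly-transitive} to extract an infinite strongly $\mu$-transitive subset, which is the desired $\largeo^n_k$-solution.

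The main obstacle is the arithmetical non-computable case: the pigeonhole must land exactly at the threshold $C_n$, so that every largeness graph is forced to appear and each receives a unique $f$-color. This tight count depends on the exact-threshold strong cone avoidance of $\rt^n_{<\infty, C_n}$ paired with Theorem~\ref{thm:avoid-colors-dominate-left-ce-mu}, while the subsequent two-step thinning must be carried out carefully to preserve cone avoidance as we strengthen $\mu$-transitivity to its strong form.
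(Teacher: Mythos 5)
Your proof breaks down in the second case, and the flaw is precisely the thing the paper's Lerman step is designed to avoid.

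You write: ``If $C$ is $Z$-arithmetical but not $Z$-computable, then $C$ admits a left-$Z$-c.e.\ modulus $\mu : \omega \to \omega$.'' This is false in general. A set $C$ admits a left-$Z$-c.e.\ modulus essentially exactly when it is $\Delta^0_2(Z)$: the modulus you want is a $Z$-computably approximable settling-time function. For $C$ higher up the $Z$-arithmetical hierarchy (say $C = Z''$), its modulus involves iterated settling times and is not left-$Z$-c.e. So the split ``non-$Z$-arithmetical'' versus ``$Z$-arithmetical but not $Z$-computable'' leaves a gap exactly at the levels $\Delta^0_3(Z), \Delta^0_4(Z), \ldots$ of the $Z$-arithmetical hierarchy, where Theorem~\ref{thm:arithscart-sca-nonarith} does not apply and a left-$Z$-c.e.\ modulus is unavailable. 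The paper dispenses with the case split entirely: it invokes Lerman~\cite[4.18]{Lerman1983Degrees} to produce $Z_1 \geq_T Z$ with $C$ $\Delta^0_2(Z_1)$ but $C \not\leq_T Z_1$, then works relative to $Z_1$. This uniformly reduces every case to the $\Delta^0_2$ setting (where a strongly increasing left-c.e.\ modulus exists) while preserving $C \not\leq_T Z_1$, so the downstream cone-avoiding machinery applies verbatim. Your second case needs this device, or some replacement for it, to go through.

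A smaller issue in the first case: ``replacing $\mu$ with its running maximum'' taken over all of $\omega$ does not in general preserve $\Vcal_n(\mu, D)$ for $D \in [H]^n$, since an element $y \notin H$ with $y < x_i$ and $\mu(y)$ large can push the running max above $x_{i+1}$ even when $\mu(x_i) \leq x_{i+1}$. The fix is to take the running maximum over $H$ only (using the $\mu$-transitivity of $H$ to show this preserves the largeness of adjacent intervals), extending constantly between elements of $H$. This is repairable, but it does require the $\mu$-transitivity hypothesis, and as written the argument overlooks it. Still, the dominant gap is the missing Lerman step in the second case.
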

\begin{proof}
Fix two sets $Z$ and $C$ with $C \not \leq_T Z$ and let $f : [\omega]^n \to k$ be an instance of $\largeo^n_k$.

By Lerman~\cite[4.18]{Lerman1983Degrees}, there is a set $Z_1 \geq_T Z$ such that $C$ is $\Delta^0_2(Z_1)$ but $C \not \leq_T Z_1$. Since $C$ is $\Delta^0_2(Z_1)$, there is a left $Z_1$-c.e.\ modulus $\mu : \omega \to \omega$ for $C$. By Lemma~\ref{lem:leftce-computes-strongly-increasing}, we can assume that $\mu$ is strongly increasing.

Let $f_1 : [\omega]^n \to k \times \Lcal_n$ be defined for each $D \in [\omega]^n$ by $f_1(D) = \langle f(D), \Lcal_n(\mu, D) \rangle$.
By strong cone avoidance of $\rt^n_{<\infty, C_n}$ (Cholak and Patey~\cite[Theorem 4.18]{Cholak2019Thin}), there is an infinite set $H \subseteq \omega$ such that $C \not \leq_T Z_1 \oplus H$ and $|f_1[H]^n| \leq C_n$. In particular, $Z_1 \oplus H$ does not compute a function dominating $\mu$, so by Theorem~\ref{thm:avoid-colors-dominate-left-ce-mu}, for every $\Gcal \in \Lcal_n$, there is some $i < k$ and some $D \in [H]^n$ such that $f_1(D) = \langle i, \Gcal \rangle$. Since $|\Lcal_n| = C_n$ (Cholak and Patey~\cite[Lemma 3.16]{Cholak2019Thin}), this $i$ is unique. For each $\Gcal$, let $\chi(\Gcal)$ be such an~$i$. 

We claim that for every $D \in [H]^n$, $f(D) = \chi(\Lcal_n(\mu, D))$.
By definition of $\chi$, $f_1(D) = \langle f(D), \Lcal_n(\mu, D)\rangle = \langle \chi(\Lcal_n(\mu, D)), \Lcal_n(\mu, D)\rangle$. It follows that $f(D) = \chi(\Lcal_n(\mu, D))$. By Cholak and Patey~\cite[Corollary 5.5]{Cholak2019Thin}, there is an infinite $\mu$-transitive subset $H_1 \subseteq H$ such that $C \not \leq_T Z \oplus H_1$. By Lemma~\ref{lem:transitive-to-strongly-transitive}, there is an infinite strongly $\mu$-transitive subset $H_2 \subseteq H_1$ such that $C \not \leq_T Z \oplus H_2$. Therefore, $H_2$ is a $\largeo^n_k$-solution to~$f$.
\end{proof}

The following technical lemma will be useful for Theorem~\ref{thm:not-leq-largeo-not-sca}, Lemma~\ref{lem:rt-v-w-sca} and Lemma~\ref{lem:w-not-fixed-largeo-but-v-does}.

\begin{lemma}\label{lem:largeo-avoid-pattern-dominates-mu}
Fix $\chi : \Lcal_n \to k$, and let $f_0 : [\omega]^n \to k$ and $f_1 : [\omega]^n \to k$ be two colorings.
Let $H_0$ be an infinite $\largeo^n_k$-solution to~$f_0$ with witnesses $\chi$ and $\mu_0 : \omega \to \omega^{+}$, and let $H_1$ be an infinite $\largeo^n_k$-solution to~$f_1$ with witnesses $\chi$ and $\mu_1 : \omega \to \omega$.
If $H_0$ $f_0$-meets some $\rt^n_k$-pattern $P$ but $H_1$ $f_1$-avoids $P$ and $\mu_1$ is left-c.e.\ then $H_1$ computes a function dominating~$\mu_1$.
\end{lemma}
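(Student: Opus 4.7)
The plan is to follow the template of Lemma~\ref{lem:arithscart-avoid-pattern-dominates-mu}, with $\Lcal_n$ in place of $\Vcal_n$, Theorem~\ref{thm:avoid-colors-dominate-left-ce-mu} in place of Theorem~\ref{thm:avoid-colors-dominate-mu}, and Lemma~\ref{lem:strongly-mu-transitive-yields-largeness-graph} in place of Lemma~\ref{lem:mu-transitive-yields-vector-graph}. Since $H_0$ $f_0$-meets $P$, pick a finite set $E_0 = \{x_0 < \dots < x_{r-1}\} \subseteq H_0$ which $f_0$-satisfies $P$. Suppose for contradiction that $H_1$ does not compute a function dominating $\mu_1$. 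Because $\mu_1$ is a strongly increasing left-c.e.\ function (being a witness of a $\largeo^n_k$-solution, together with the hypothesis that $\mu_1$ is left-c.e.), Theorem~\ref{thm:avoid-colors-dominate-left-ce-mu} applied at arity $r$ to the coloring $D \mapsto \Lcal_r(\mu_1, D)$ forces $\Lcal_r \subseteq f_r[H_1]^r$. In particular, we can choose a finite set $E_1 = \{y_0 < \dots < y_{r-1}\} \subseteq H_1$ with $\Lcal_r(\mu_1, E_1) = \Lcal_r(\mu_0, E_0)$.

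The core step is then a transfer from the agreement of largeness graphs on the $r$-tuples $E_0, E_1$ to agreement on all induced sub-$n$-tuples. Fix $I = \{i_0 < \dots < i_{n-1}\} \subseteq \{0, \dots, r-1\}$ and set $D_0 = \{x_i : i \in I\}$, $D_1 = \{y_i : i \in I\}$; the claim to prove is $\Lcal_n(\mu_0, D_0) = \Lcal_n(\mu_1, D_1)$. For a non-adjacent edge $\{p, q\}$ (with $p+1 < q < n$), the edge is present iff $[x_{i_p}, x_{i_{p+1}}]$ is not $(\mu_0)_{x_{i_q}}$-large, and Lemma~\ref{lem:strongly-mu-transitive-yields-largeness-graph} applied to the strongly $\mu_0$-transitive set $E_0$ and the strongly $\mu_1$-transitive set $E_1$ (both inherited from the strong transitivity of $H_0$ and $H_1$) delivers exactly this equivalence with the $y$-side. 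For an adjacent edge $\{p, p+1\}$ in the sub-tuple, the edge encodes $\mu_0$-largeness of $[x_{i_p}, x_{i_{p+1}}]$, an interval that may span several indices of $E_0$; we reduce to the disjunction of $\mu_0$-largeness of the adjacent sub-intervals $[x_j, x_{j+1}]$ for $i_p \leq j < i_{p+1}$ using $\mu_0$-transitivity of $E_0$, and this disjunction matches the analogous one on the $y$-side because the adjacent edges of $\Lcal_r(\mu_0, E_0)$ and $\Lcal_r(\mu_1, E_1)$ coincide.

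Once this claim is established, for every $I \in [r]^n$ we have $f_0(D_0) = \chi(\Lcal_n(\mu_0, D_0)) = \chi(\Lcal_n(\mu_1, D_1)) = f_1(D_1)$, so $f_0 \uh [E_0]^n$ and $f_1 \uh [E_1]^n$ have the same function graph under the order-preserving bijection $x_i \leftrightarrow y_i$. Hence $E_1$ $f_1$-meets $P$, contradicting that $H_1$ $f_1$-avoids $P$, and we conclude that $H_1$ computes a function dominating $\mu_1$.

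The main obstacle I expect is the sub-tuple transfer. Unlike the vector graph situation of Lemma~\ref{lem:arithscart-avoid-pattern-dominates-mu}, largeness graphs carry additional approximation-time data on their non-adjacent edges, so adjacent and non-adjacent edges of the induced sub-graphs have to be handled separately, leaning on ordinary $\mu$-transitivity for the former and strong $\mu$-transitivity (via Lemma~\ref{lem:strongly-mu-transitive-yields-largeness-graph}) for the latter. The rest is a routine adaptation of the corresponding arithmetical argument.
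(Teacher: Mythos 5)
Your proposal is correct and follows essentially the same route as the paper's proof: fix $E_0 \subseteq H_0$ realizing $P$, use the contrapositive of Theorem~\ref{thm:avoid-colors-dominate-left-ce-mu} to find $E_1 \subseteq H_1$ with $\Lcal_r(\mu_1, E_1) = \Lcal_r(\mu_0, E_0)$, transfer to all induced $n$-subtuples by combining Lemma~\ref{lem:mu-transitive-yields-vector-graph} (adjacent edges) with Lemma~\ref{lem:strongly-mu-transitive-yields-largeness-graph} (non-adjacent edges), conclude $f_0 \uh [E_0]^n$ and $f_1 \uh [E_1]^n$ agree, and derive a contradiction. The ``sub-tuple transfer'' step you flag as the main obstacle is exactly what the paper disposes of by citing both lemmas; your inline re-derivation of the adjacent-edge case via disjunctions of sub-intervals is just a spelled-out version of the appeal to Lemma~\ref{lem:mu-transitive-yields-vector-graph}.
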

\begin{proof}
Since $H_0$ $f_0$-meets some $\rt^n_k$-pattern $P$, there is a finite set $E_0 = \{x_0 < \dots < x_{r-1} \} \subseteq H_0$ which $f_0$-meets $P$.
Suppose for the sake of contradiction that $H_1$ does not compute a function dominating $\mu_1$.
By Theorem~\ref{thm:avoid-colors-dominate-left-ce-mu}, there is a finite set $E_1 = \{y_0 < \dots < y_{r-1}\} \subseteq H_1$ of size $r$ such that $\Lcal_r(\mu_0, E_0) = \Lcal_r(\mu_1, E_1)$. By Lemma~\ref{lem:mu-transitive-yields-vector-graph} and Lemma~\ref{lem:strongly-mu-transitive-yields-largeness-graph}, since $H_0$ is strongly $\mu_0$-transitive and $H_1$ is strongly $\mu_1$-transitive, for every $I \in [r]^n$, letting $D_0 = \{x_i : i \in I\}$ and $D_1 = \{y_i : i \in I\}$, $\Lcal_n(\mu_0, D_0) = \Lcal_n(\mu_1, D_1)$. Since $f_0(D_0) = \chi(\Lcal_n(\mu_0, D_0))$ and $f_1(D_1) = \chi(\Lcal_n(\mu_1, D_1))$, then $f_0(D_0) = f_1(D_1)$. Thus $f_0 \uh [E_0]^n$ and $f_1 \uh [E_1]^n$ have the same function graph.
It follows that $E_1$ $f_1$-meets $P$, so $H_1$ $f_1$-meets $P$. Contradiction.
\end{proof}

\begin{theorem}\label{thm:not-leq-largeo-not-sca}
Let $W$ be a collection of $\rt^n_k$ patterns such that $\rt^n_k(W) \not \leq_{id} \largeo^n_k$. Then for every left-c.e.\ function $\mu : \omega \to \omega$, there is an $\rt^n_k(W)$-instance such that every solution computes a function dominating~$\mu$.
\end{theorem}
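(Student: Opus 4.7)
The plan is to mirror the proof of Theorem~\ref{thm:not-leq-arithscart-not-sca}, replacing the vector-graph coding $\Vcal_n(\mu,D)$ and Lemma~\ref{lem:arithscart-avoid-pattern-dominates-mu} with the largeness-graph coding $\Lcal_n(\mu,D)$ and Lemma~\ref{lem:largeo-avoid-pattern-dominates-mu}. The extra care required, compared to the arithmetic case, is that a witness $\mu$ arising from $\largeo^n_k$ is only a \emph{relative} left-c.e.\ function and that, before invoking the rigidity lemma, the candidate solution must be thinned to a strongly $\mu$-transitive subset without losing the non-domination assumption.

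First I would unpack the hypothesis $\rt^n_k(W)\not\leq_{id}\largeo^n_k$: it yields a coloring $f_{fail}:[\omega]^n\to k$ and an infinite $\largeo^n_k$-solution $H_{fail}$ to $f_{fail}$, witnessed by a strongly increasing relative left-c.e.\ function $\mu_{fail}:\omega\to\omega^{+}$ and a coloring $\chi:\Lcal_n\to k$, such that $H_{fail}$ $f_{fail}$-meets some pattern $P\in W$. Given a left-c.e.\ function $\mu:\omega\to\omega$, Lemma~\ref{lem:leftce-computes-strongly-increasing} lets me assume without loss of generality that $\mu$ itself is strongly increasing (any function dominating the replacement dominates the original). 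I would then define the $\rt^n_k(W)$-instance $f:[\omega]^n\to k$ by $f(D)=\chi(\Lcal_n(\mu,D))$, reusing the coloring $\chi$ from the failure witness.

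Now let $H$ be an arbitrary $\rt^n_k(W)$-solution to $f$, so that $H$ $f$-avoids $P$. Suppose for contradiction that $H$ does not compute a function dominating $\mu$. By Cholak and Patey~\cite[Theorem 5.11]{Cholak2019Thin}, there is an infinite $\mu$-transitive subset $H_1\subseteq H$ which still does not compute a function dominating $\mu$, and by Lemma~\ref{lem:transitive-to-strongly-transitive} there is an $H_1$-computable infinite strongly $\mu$-transitive subset $H_2\subseteq H_1$, which still fails to compute such a function since $H_2\leq_T H_1$. By construction of $f$, the set $H_2$ is a $\largeo^n_k$-solution to $f$ with witnesses $\chi$ and $\mu$, and $H_2\subseteq H$ also $f$-avoids $P$. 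Applying Lemma~\ref{lem:largeo-avoid-pattern-dominates-mu} with $(f_0,H_0,\mu_0)=(f_{fail},H_{fail},\mu_{fail})$ and $(f_1,H_1,\mu_1)=(f,H_2,\mu)$ forces $H_2$ to compute a function dominating $\mu$, the desired contradiction.

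The main obstacle is precisely the thinning step: the rigidity lemma requires the candidate solution to be strongly $\mu$-transitive, whereas an arbitrary $\rt^n_k(W)$-solution carries no such structure a priori. This is overcome by chaining the cone-preserving $\mu$-transitive thinning of \cite[Theorem 5.11]{Cholak2019Thin} with the Turing-cheap strengthening of Lemma~\ref{lem:transitive-to-strongly-transitive}, whose composition yields $H_2\subseteq H$ that is simultaneously strongly $\mu$-transitive and still unable to compute any function dominating $\mu$, so that Lemma~\ref{lem:largeo-avoid-pattern-dominates-mu} becomes applicable.
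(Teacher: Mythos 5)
Your proposal is correct and follows essentially the same route as the paper's own proof: extract the failure witness $(f_{fail},H_{fail},\mu_{fail},\chi,P)$, replace $\mu$ by a strongly increasing left-c.e.\ function via Lemma~\ref{lem:leftce-computes-strongly-increasing}, build $f(D)=\chi(\Lcal_n(\cdot,D))$, thin an arbitrary solution to a strongly $\mu$-transitive subset preserving non-domination via Cholak--Patey Theorem 5.11 and Lemma~\ref{lem:transitive-to-strongly-transitive}, and conclude by the rigidity Lemma~\ref{lem:largeo-avoid-pattern-dominates-mu}. The only cosmetic difference is that the paper names the strongly increasing dominating function $g$ rather than overwriting $\mu$, but your parenthetical remark that any function dominating the replacement dominates the original handles this correctly.
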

\begin{proof}
Since $\rt^n_k(W) \not \leq_{id} \largeo^n_k$, there is a coloring $f_{fail} : [\omega]^n \to k$ and a $\largeo^n_k$-solution $H_{fail}$ to $f_{fail}$ witnessed by a strongly increasing relative left-c.e.\ function $\mu_{fail}$ and a coloring $\chi : \Lcal_n \to k$, such that $H_{fail}$ is strongly $\mu_{fail}$-transitive, and such that $H_{fail}$ meets some $\rt^n_k$-pattern $P \in W$.
	
Let $\mu : \omega \to \omega$ be a left-c.e.\ function. By Lemma~\ref{lem:leftce-computes-strongly-increasing}, there is a strongly increasing left-c.e.\ function $g : \omega \to \omega$ dominating $\mu$. Let $f : [X]^n \to k$ be an instance of $\rt^n_k(W)$ defined by $f(D) = \chi(\Lcal_n(g, D))$. 
We claim that every $\rt^n_k(W)$-solution $H \subseteq X$ to $f$ computes a function dominating~$g$, hence dominating $\mu$. 
Fix $H$ and suppose for the contradiction that $H$ does not compute a function dominating $g$.
By Cholak and Patey~\cite[Theorem 5.11]{Cholak2019Thin} and Lemma~\ref{lem:transitive-to-strongly-transitive}, there is an infinite strongly $g$-transitive subset $H_1 \subseteq H$ such that $H_1$ does not compute a function dominating~$g$.
	In particular, $H_1$ is an infinite $\largeo^n_k$-solution to $f$ with witnesses $\chi$ and $g$, and such that $H_1$ $f$-avoids $P$. By Lemma~\ref{lem:largeo-avoid-pattern-dominates-mu}, $H$ computes a function dominating~$g$, contradiction.
\end{proof}

\begin{theorem}\label{thm:largeo-sca-characterization}
A problem $\rt^n_k(W)$ admits strong cone avoidance if and only if $\rt^n_k(W) \leq_{id} \largeo^n_k$.
\end{theorem}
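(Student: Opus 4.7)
The plan is to mirror the proof of Theorem~\ref{thm:arithscart-sca-characterization}, using Theorem~\ref{thm:largeo-sca} for the easy direction and Theorem~\ref{thm:not-leq-largeo-not-sca} for the hard direction, together with the existence of a left-c.e.\ modulus for a non-computable set.

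For the $(\Leftarrow)$ direction, I would assume $\rt^n_k(W) \leq_{id} \largeo^n_k$ and fix any set $Z$, any $C \not\leq_T Z$, and any $\rt^n_k(W)$-instance $f : [\omega]^n \to k$. Since an instance of $\rt^n_k(W)$ is simply a coloring $f : [\omega]^n \to k$, it is also an instance of $\largeo^n_k$. By Theorem~\ref{thm:largeo-sca}, there is a $\largeo^n_k$-solution $H$ to $f$ with $C \not\leq_T Z \oplus H$. By the identical reducibility hypothesis, $H$ is also an $\rt^n_k(W)$-solution, so strong cone avoidance holds.

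For the $(\Rightarrow)$ direction, I would argue by contrapositive. Suppose $\rt^n_k(W) \not\leq_{id} \largeo^n_k$. The standard modulus of the halting set $\emptyset'$, defined by letting $\mu(n)$ be the first stage at which all convergent computations $\Phi_e(e)$ with $e < n$ have converged, is a left-c.e.\ function such that any function dominating $\mu$ computes $\emptyset'$. By Theorem~\ref{thm:not-leq-largeo-not-sca} applied to this $\mu$, there is an $\rt^n_k(W)$-instance $f$ such that every solution $H$ computes a function dominating $\mu$, and therefore every solution computes the non-computable set $\emptyset'$. This witnesses the failure of strong cone avoidance of $\rt^n_k(W)$ (with $Z = \emptyset$ and $C = \emptyset'$).

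Both directions are short given the machinery already established; the actual content lies in Theorem~\ref{thm:largeo-sca} and Theorem~\ref{thm:not-leq-largeo-not-sca}. No obstacle is expected; the only thing to be slightly careful about is to invoke a genuinely \emph{left-c.e.} (not merely relative left-c.e.) modulus of a non-computable set, which is precisely what the standard modulus of $\emptyset'$ provides.
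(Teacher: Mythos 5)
Your proof is correct and matches the paper's argument step for step: the forward direction follows from Theorem~\ref{thm:largeo-sca} together with identical reducibility, and the reverse direction is the contrapositive via Theorem~\ref{thm:not-leq-largeo-not-sca} applied to a left-c.e.\ modulus of $\emptyset'$. Your remark about needing a genuinely left-c.e.\ (not merely relative left-c.e.) modulus is accurate but presents no difficulty, exactly as you observe.
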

\begin{proof}
$\Leftarrow$: Suppose $\rt^n_k(W) \leq_{id} \largeo^n_k$. Fix a set $Z$, a non-$Z$-computable set $C$ and a coloring $f : [\omega]^n \to k$. By Theorem~\ref{thm:largeo-sca}, there is an $\largeo^n_k$-solution $H$ to $f$ such that $C \not \leq_T Z \oplus H$. In particular, $H$ is an $\rt^n_k(W)$-solution to~$f$.

$\Rightarrow$: Suppose $\rt^n_k(W) \not\leq_{id} \largeo^n_k$. Let $\mu : \omega \to \omega$ be a left-c.e.\ modulus of $\emptyset'$. By Theorem~\ref{thm:not-leq-largeo-not-sca}, there is a $\rt^n_k(W)$-instance such that every solution computes a function dominating $\mu$, hence computes~$\emptyset'$. Therefore $\rt^n_k(W)$ does not admit strong cone avoidance.
\end{proof}

Interestingly, Theorem~\ref{thm:largeo-sca-characterization} admits multiple abstract consequences of which one might expect to have a more direct proof. However, there does not seem to be any simpler proof of these facts than proving Theorem~\ref{thm:largeo-sca-characterization}.

The following corollary states that the question of strong cone avoidance of a collection of patterns can be reduced to strong cone avoidance of each pattern individually.

\begin{corollary}\label{cor:pointwise-sca-yields-sca}
If $\rt^n_k(\{P\})$ admits strong cone avoidance for every $P \in W$,
then $\rt^n_k(W)$ admits strong cone avoidance.
\end{corollary}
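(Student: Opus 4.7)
The plan is to derive this corollary directly from the characterization given by Theorem~\ref{thm:largeo-sca-characterization}, using it twice: once to unpack the hypothesis into a collection of identical-reducibility statements, and once to repackage the conclusion. The key observation is that identical reducibility interacts nicely with unions of pattern sets, which is exactly what is needed.

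First I would apply the forward direction of Theorem~\ref{thm:largeo-sca-characterization} to each singleton $\rt^n_k(\{P\})$ for $P \in W$: the hypothesis that $\rt^n_k(\{P\})$ admits strong cone avoidance yields $\rt^n_k(\{P\}) \leq_{id} \largeo^n_k$ for every $P \in W$. Unpacking the definition of $\leq_{id}$, this means that for every coloring $f : [\omega]^n \to k$ and every $\largeo^n_k$-solution $H$ to $f$, the set $H$ is an $\rt^n_k(\{P\})$-solution to $f$, i.e., $H$ $f$-avoids the single pattern $P$.

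Next I would observe that this immediately gives $\rt^n_k(W) \leq_{id} \largeo^n_k$. Indeed, fix any $\rt^n_k(W)$-instance $f$ and any $\largeo^n_k$-solution $H$ to $f$; by the previous paragraph, $H$ $f$-avoids $P$ for every $P \in W$, which is precisely the condition that $H$ be an $\rt^n_k(W)$-solution to $f$. Applying the reverse direction of Theorem~\ref{thm:largeo-sca-characterization} then yields strong cone avoidance of $\rt^n_k(W)$.

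There is really no serious obstacle in this argument since all the work has been absorbed into Theorem~\ref{thm:largeo-sca-characterization}: the corollary is a structural consequence of the fact that the maximal strong-cone-avoiding statement $\largeo^n_k$ can be compared to arbitrary $\rt^n_k$-like problems via $\leq_{id}$, together with the trivial observation that being a solution to $\rt^n_k(W)$ means being a solution to $\rt^n_k(\{P\})$ for every $P \in W$ simultaneously. The only subtlety worth flagging is that the argument relies essentially on the fact that $\leq_{id}$ is a pointwise notion on colorings, so no uniformity across patterns is required; had strong cone avoidance been characterized by a reducibility with side information, one would instead need to combine witnesses across patterns.
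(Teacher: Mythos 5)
Your argument is exactly the paper's proof: apply Theorem~\ref{thm:largeo-sca-characterization} in the forward direction to each singleton to get $\rt^n_k(\{P\}) \leq_{id} \largeo^n_k$, observe that the identical reducibilities combine to give $\rt^n_k(W) \leq_{id} \largeo^n_k$, then apply the reverse direction of the theorem. Your unpacking of the middle step (that a $\largeo^n_k$-solution $f$-avoids every $P \in W$, hence is an $\rt^n_k(W)$-solution) is the same observation the paper leaves implicit as ``It follows that.''
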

\begin{proof}
If $\rt^n_k(\{P\})$ admits strong cone avoidance for every $P \in W$,
then by Theorem~\ref{thm:largeo-sca-characterization}, $\rt^n_k(\{P\}) \leq_{id} \largeo^n_k$ for every $P \in W$. It follows that $\rt^n_k(W) \leq_{id} \largeo^n_k$, so again by Theorem~\ref{thm:largeo-sca-characterization}, $\rt^n_k(W)$ admits strong cone avoidance. 
\end{proof}

The following corollary gives an external definition of the maximal problem which admits strong cone avoidance. 

\begin{corollary}
Let $W_{n,k} = \bigcup \{W : \rt^n_k(W) \mbox{ admits strong cone avoidance}\}$. Then $\rt^n_k(W_{n,k})$ admits strong cone avoidance.
\end{corollary}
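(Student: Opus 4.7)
The plan is to invoke Theorem~\ref{thm:largeo-sca-characterization} in both directions, using the fact that identical reducibility to $\largeo^n_k$ is preserved under taking arbitrary unions of pattern sets. More precisely, I would first observe that for each $W$ in the family $\{W : \rt^n_k(W) \mbox{ admits strong cone avoidance}\}$, the characterization theorem gives $\rt^n_k(W) \leq_{id} \largeo^n_k$. Unwinding the definition, this means that for every coloring $f : [\omega]^n \to k$, every infinite $\largeo^n_k$-solution $H$ to $f$ already $f$-avoids every pattern in $W$.

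Next, I would argue that this property lifts to the union $W_{n,k}$. Indeed, fix any coloring $f : [\omega]^n \to k$ and any $\largeo^n_k$-solution $H$ to $f$. By the preceding observation, $H$ $f$-avoids every pattern in every individual $W$ in the family; since $W_{n,k}$ is just the union of these sets, $H$ $f$-avoids every pattern in $W_{n,k}$. Hence $H$ is an $\rt^n_k(W_{n,k})$-solution to $f$, which shows $\rt^n_k(W_{n,k}) \leq_{id} \largeo^n_k$.

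Finally, applying Theorem~\ref{thm:largeo-sca-characterization} in the other direction yields that $\rt^n_k(W_{n,k})$ admits strong cone avoidance, as desired. There is essentially no obstacle: the whole proof is a two-line application of the characterization theorem combined with the trivial observation that identical reducibility to a fixed problem is closed under unions on the left-hand side. (Alternatively, one could invoke Corollary~\ref{cor:pointwise-sca-yields-sca} after noting that every singleton $\{P\} \subseteq W_{n,k}$ lies in some $W$ for which $\rt^n_k(W)$, and hence a fortiori $\rt^n_k(\{P\})$, admits strong cone avoidance.)
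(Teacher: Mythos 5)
Your proposal is correct and matches the paper's argument. The paper routes through Corollary~\ref{cor:pointwise-sca-yields-sca} (which is itself proved from Theorem~\ref{thm:largeo-sca-characterization} exactly as you describe), while you apply the characterization theorem directly to the union; these are the same argument, and your parenthetical alternative is verbatim the paper's proof.
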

\begin{proof}
By Corollary~\ref{cor:pointwise-sca-yields-sca}, it suffices to prove that $\rt^n_k(\{P\})$ admits strong cone avoidance for every $P \in W_{n,k}$.
Fix some $P \in W_{n,k}$. By definition, $P \in W$ for some collection $W$ such that $\rt^n_k(W)$ admits strong cone avoidance. In particular, $\rt^n_k(\{P\})$ admits strong cone avoidance.
\end{proof}

\subsection{Avoiding non-computable cones} As explained in Section~\ref{sect:enco-by-computable-instances}, there is a deep relation between the combinatorial features of the colorings over $[\omega]^n$ and the computational features the colorings over $[\omega]^{n+1}$. This link is formalized in the case of cone avoidance by Cholak and Patey in~\cite[Theorem 1.5]{Cholak2019Thin}. We prove the existence of a maximal Ramsey-like principle which admits cone avoidance. For this, we must restrict ourselves to a particular type of largeness graphs, namely, packed graphs.

\begin{definition}[Cholak and Patey~\cite{Cholak2019Thin}]
A largeness graph $\Gcal = (\{0, \dots, n-1\}, E)$ is \emph{packed}
if for every $i < n-2$, $\{i,i+1\} \not \in E$.
\end{definition}

Let $\Pcal_n$ be the set of all packed largeness graphs of size~$n$.
The definition of $\Lcal_n(\mu, D)$, and more precisely the adjacent edges, codes some $\mu$-largeness information. However, in a computable setting, we only have access to the left-c.e.\ approximations of $\mu$. This is why we must restrict ourselves to $\mu$-largeness approximations $\Pcal_n(\mu, D)$, which can be computably coded.

\begin{definition}
Given a relative left-c.e.\ function $\mu : \omega \to \omega^{+}$ with approximations $\mu_0, \mu_1, \dots$ and a set $D = \{x_0, \dots, x_{n-1}\} \subseteq \omega$,
let $\Pcal_n(\mu, D)$ be the graph $(\{0, \dots, n-1\}, E)$ where
$E = \{ \{p, q\} : \mu_{x_q}(x_p) > x_{p+1} \wedge p+1 < q < n \}$. 
\end{definition}

The following two lemmas are obtained by the exact same proof as Lemma~\ref{lem:strongly-increasing-yields-largeness-graph} and Lemma~\ref{lem:strongly-mu-transitive-yields-largeness-graph}, respectively.

\begin{lemma}\label{lem:strongly-increasing-yields-packed-largeness-graph}
Fix a strongly increasing relative left-c.e.\ function $\mu : \omega \to \omega^{+}$.
For every $n \geq 1$ and $D \in [\omega]^n$, $\Pcal_n(\mu, D) \in \Pcal_n$.
\end{lemma}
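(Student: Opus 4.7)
The plan is to follow the structure of the proof of Lemma~\ref{lem:strongly-increasing-yields-largeness-graph}, which is in fact simpler in this setting because the definition of $\Pcal_n(\mu, D)$ explicitly excludes adjacent edges. First I would observe that by construction the edge set of $\Pcal_n(\mu, D)$ only contains pairs $\{p,q\}$ with $p+1 < q$; hence $\Pcal_n(\mu,D)$ vacuously satisfies the packed condition of never having an edge $\{i, i+1\}$ for $i < n-2$. It then remains to verify that $\Pcal_n(\mu, D)$ is a largeness graph, i.e., that it satisfies the four conditions (a)--(d) of Definition~\ref{def:largeness-graph}.

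Conditions (a) and (b) are vacuous, since both require some adjacent edge $\{i, i+1\}$ to belong to the edge set. For condition (c), suppose $i+1 < j < n-1$ and $\{i, j\} \in E$, so that $\mu_{x_j}(x_i) > x_{i+1}$. Since the approximating sequence is non-decreasing, $\mu_{x_{j+1}}(x_i) \geq \mu_{x_j}(x_i) > x_{i+1}$, which gives $\{i, j+1\} \in E$ as required.

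The only nontrivial item is condition (d), and here I would re-use the two claims proven in Lemma~\ref{lem:strongly-increasing-yields-largeness-graph}. In particular, recall Claim~1 of that proof: for $w < x < y < z$, if $\mu_y(w) \leq x$ and $\mu_z(w) > x$ then $\mu_z(x) > y$. Applying this with $(w,x,y,z) = (x_i, x_{i+1}, x_j, x_k)$, the hypotheses $\{i,j\} \notin E$ and $\{i,k\} \in E$ translate to $\mu_{x_j}(x_i) \leq x_{i+1}$ and $\mu_{x_k}(x_i) > x_{i+1}$, whence $\mu_{x_k}(x_{i+1}) > x_j$. Since $\mu_{x_k}$ is non-decreasing in its argument by strong increasingness, and $i+1 \leq j-1$, we obtain $\mu_{x_k}(x_{j-1}) \geq \mu_{x_k}(x_{i+1}) > x_j$, so that $\{j-1, k\} \in E$; the side condition $(j-1)+1 < k$ reduces to $j < k$, which is part of the hypothesis.

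Since the proof is essentially a trimmed-down version of Lemma~\ref{lem:strongly-increasing-yields-largeness-graph} (the adjacency-related clauses simply become vacuous), the main obstacle is not combinatorial but organizational: one must check that the strong-increasingness hypothesis is still used only in establishing Claim~1, and that the definition of $\Pcal_n(\mu, D)$ aligns index-by-index with the non-adjacent part of $\Lcal_n(\mu, D)$ so that the same argument applies verbatim.
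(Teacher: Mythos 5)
Your proof is correct and takes exactly the approach the paper intends: the paper simply remarks that this lemma is ``obtained by the exact same proof as Lemma~\ref{lem:strongly-increasing-yields-largeness-graph},'' and your write-up is precisely that proof with the adjacent-edge clauses noted as vacuous, Claim~1 reused for condition~(d), and monotonicity of the approximations for condition~(c).
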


\begin{lemma}\label{lem:strongly-mu-transitive-yields-packed-largeness-graph}
Fix $\mu : \omega \to \omega^{+}$ and $\rho : \omega \to \omega^{+}$.
Given $n \geq 1$, let $E = \{x_0 < \dots < x_{n-1} \}$ be a strongly $\mu$-transitive set, and $F = \{y_0 < \dots < y_{n-1} \}$ be a strongly $\rho$-transitive set such that $\Pcal_n(\mu, E) = \Pcal_n(\rho, F)$. Then for every $i < j < k < n$, $[x_i, x_j]$ is $\mu_k$-large if and only if $[y_i, y_j]$ is $\rho_k$-large.
\end{lemma}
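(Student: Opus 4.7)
The plan is to port, essentially verbatim, the argument of Lemma~\ref{lem:strongly-mu-transitive-yields-largeness-graph}. I would run an induction on $m \geq 1$ establishing that for every $i$ and every $k$ with $i + m < k < n$, the interval $[x_i, x_{i+m}]$ is $\mu_k$-large if and only if $[y_i, y_{i+m}]$ is $\rho_k$-large; the lemma then follows by taking $m = j - i$.

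For the base case $m = 1$, I unfold the definition of $\Pcal_n$: the pair $\{i, k\}$ is an edge of $\Pcal_n(\mu, E)$ precisely when $i + 1 < k < n$ and $\mu_{x_k}(x_i) > x_{i+1}$, i.e., precisely when $[x_i, x_{i+1}]$ is $\mu_k$-small, and similarly for $F$ and $\rho$. The hypothesis $\Pcal_n(\mu, E) = \Pcal_n(\rho, F)$ immediately yields the base case. It is precisely here that the packed variant is sufficient: the constraint $i + 1 < k$ matches the index range over which $\Pcal_n$ records information, so one never needs to encode the $\mu$-largeness of adjacent intervals in the way $\Lcal_n$ does. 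For the inductive step from $m$ to $m + 1$, strong $\mu$-transitivity of $E$ gives that $[x_i, x_{i+m+1}]$ is $\mu_k$-large if and only if at least one of $[x_i, x_{i+m}]$ and $[x_{i+m}, x_{i+m+1}]$ is $\mu_k$-large; applying the induction hypothesis to each of the two sub-intervals transfers the disjunction to $F$; and strong $\rho$-transitivity of $F$ recombines it to yield that $[y_i, y_{i+m+1}]$ is $\rho_k$-large.

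I do not expect any genuine obstacle; as the author explicitly states, the argument is a direct transcription of the proof of Lemma~\ref{lem:strongly-mu-transitive-yields-largeness-graph}. The only subtlety compared to the simpler Lemma~\ref{lem:mu-transitive-yields-vector-graph} is that each step of the induction is carried out at a fixed approximation stage $k$, which is exactly what the \emph{strong} notion of $\mu$-transitivity was designed to accommodate.
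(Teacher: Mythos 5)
Your proposal is correct and takes the same route the paper does (the paper in fact gives no separate proof, simply noting that Lemmas~\ref{lem:strongly-increasing-yields-packed-largeness-graph} and~\ref{lem:strongly-mu-transitive-yields-packed-largeness-graph} follow by the exact same arguments as Lemmas~\ref{lem:strongly-increasing-yields-largeness-graph} and~\ref{lem:strongly-mu-transitive-yields-largeness-graph}). Your observation that the base case $m=1$ only ever calls for $i+1 < k$, which is precisely the index range recorded by $\Pcal_n$, is the right justification for why dropping the adjacent edges from $\Lcal_n$ to $\Pcal_n$ costs nothing here.
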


A coloring $f : [\omega]^{n+1} \to k$ is \emph{stable} if for every $D \in [\omega]^n$, $\lim_y f(D \cup \{y\})$ exists. 
Given a set $D = \{x_0 < \dots < x_{n-1}\}$, if we take some $y \in \omega$ sufficiently large, then $\mu_y$ and $\mu$ will coincide over $[D]^2$.
Then, looking at the packed largeness graph $\Pcal_{n+1}(\mu, D \cup \{y\})$, this graph codes exactly the information of the packed largeness graph $\Pcal_n(\mu, D)$ and the $\mu_y$-largeness information over $[D]^2$, which is by assumption the $\mu$-largeness information over $[D]^2$. These two kind of informations are exactly the ones coded by $\Lcal_n(\mu, D)$. Then, $\Pcal_{n+1}(\mu, D \cup \{y\})$ and $\Lcal_n(\mu, D)$ are in one-to-one correspondance.

Let us define explicitly the one-to-one mapping.
Given a packed largeness graph $\Gcal = (\{0, \dots, n\}, E)$ of size $n+1$, let $\Lcal_n(\Gcal)$ be the largeness graph of size $n$ $(\{0, \dots, n-1\}, E_1)$ where $E_1 = \{\{i, j\} \in E : j < n\} \cup \{\{i,i+1\} : \{i, n\} \not \in E\}$.
The following lemma uses this one-to-one correspondance to relate stable colorings over $[\omega]^{n+1}$ to colorings over $[\omega]^n$.

\begin{lemma}\label{lem:lcelargeo-to-largeo}
Let $\mu : \omega \to \omega^{+}$ be a strongly increasing relative left-c.e.\ function. Fix $n \geq 1$ and define $f_{n+1} : D \mapsto  \Pcal_{n+1}(\mu, D)$.
Let $H$ be an infinite strongly $\mu$-transitive set over which $f_{n+1}$ is stable. For every $D \in [H]^n$, letting $\Gcal = \lim_{y \in H} \Pcal_{n+1}(\mu, D  \cup \{y\})$, $\Lcal_n(\mu, D) = \Lcal_n(\Gcal)$.
\end{lemma}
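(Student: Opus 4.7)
The plan is to unwrap both $\Lcal_n(\mu,D)$ and $\Lcal_n(\Gcal)$ in terms of the $\mu$-largeness information they record, and match edges one by one. Write $D = \{x_0 < \dots < x_{n-1}\}$ and, for $y \in H$ large, $D \cup \{y\} = \{z_0 < \dots < z_n\}$ with $z_i = x_i$ for $i < n$ and $z_n = y$. By definition, the edges of $\Pcal_{n+1}(\mu, D \cup \{y\})$ split into two families: the \emph{internal} edges $\{p,q\}$ with $p+1 < q < n$, which encode $\mu_{x_q}(x_p) > x_{p+1}$ and do not depend on $y$; and the \emph{external} edges $\{p,n\}$ with $p+1 < n$, which encode $\mu_y(x_p) > x_{p+1}$.

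Next, I take the pointwise limit as $y$ runs through $H$; this limit exists edge by edge either because the condition is independent of $y$ (internal edges) or by stability of $f_{n+1}$ over $H$ (external edges). For the internal edges nothing changes, so they contribute to $\Gcal$ exactly the non-adjacent edges of $\Lcal_n(\mu, D)$. For external edges, since $\mu$ is left-c.e., the sequence $y \mapsto \mu_y(x_p)$ is non-decreasing with limit $\mu(x_p)$, so $\{p,n\} \in \Gcal$ iff $\mu(x_p) > x_{p+1}$, i.e.\ iff $[x_p, x_{p+1}]$ is $\mu$-small, i.e.\ iff $\{p, p+1\}$ is \emph{not} an adjacent edge of $\Lcal_n(\mu, D)$.

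Finally I apply the explicit recipe for $\Lcal_n(\Gcal)$: its edges are (i) all $\{i,j\} \in \Gcal$ with $j < n$, which are precisely the internal edges, and (ii) all adjacent edges $\{i,i+1\}$ with $\{i,n\} \not\in \Gcal$. By the previous paragraph, (i) reproduces the non-adjacent part of $\Lcal_n(\mu, D)$, and (ii) reproduces the adjacent part: $\{i,i+1\}$ is in $\Lcal_n(\Gcal)$ iff $\mu(x_i) \leq x_{i+1}$, which is exactly the defining condition of an adjacent edge of $\Lcal_n(\mu, D)$. Hence $\Lcal_n(\mu, D) = \Lcal_n(\Gcal)$.

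There is no genuine obstacle here; the work is bookkeeping. The hypothesis that $\mu$ is strongly increasing and relative left-c.e.\ is used only to legitimize the monotone convergence $\mu_y(x_p) \to \mu(x_p)$ (and thus stabilization of external edges), while strong $\mu$-transitivity of $H$ guarantees via Lemma~\ref{lem:strongly-increasing-yields-largeness-graph} and Lemma~\ref{lem:strongly-mu-transitive-yields-largeness-graph} that both $\Lcal_n(\mu,D)$ and $\Lcal_n(\Gcal)$ are genuine largeness graphs, so the edge-by-edge comparison yields equality as objects in $\Lcal_n$.
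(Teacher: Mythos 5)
Your proof is correct and matches the paper's argument essentially exactly: fix a large enough $y \in H$ realizing the stable limit, then check adjacent and non-adjacent edges separately, using left-c.e.\ monotone convergence $\mu_y(x_p) \to \mu(x_p)$ for the edges $\{p,n\}$ and the $y$-independence of the internal edges. One small misattribution in your closing remark: what makes $\Lcal_n(\mu,D)$ and $\Gcal$ (hence $\Lcal_n(\Gcal)$) genuine (packed) largeness graphs is that $\mu$ is \emph{strongly increasing} (Lemmas~\ref{lem:strongly-increasing-yields-largeness-graph} and~\ref{lem:strongly-increasing-yields-packed-largeness-graph}), not the strong $\mu$-transitivity of $H$; in fact strong $\mu$-transitivity is not actually used anywhere in the edge-by-edge comparison.
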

\begin{proof}
Fix $D = \{x_0 < \dots < x_{n-1}\} \in [H]^n$, and let $x_n \in H$ be sufficiently large so that $\Pcal_{n+1}(\mu, D  \cup \{x_n\}) = \lim_{y \in H} \Pcal_{n+1}(\mu, D  \cup \{y\})$. Let $\Gcal = (\{0, \dots, n\}, E)$ be the packed largeness graph $\Pcal_{n+1}(\mu, D  \cup \{x_n\})$ and $(\{0, \dots, n-1\}, E_1) = \Lcal_n(\Gcal)$. 
We first prove that the adjacent edges in $\Lcal_n(\Gcal)$ are exactly the adjacent edges in $\Lcal_n(\mu, D)$.
For every $i < n-1$, by definition of $\Lcal_n(\Gcal)$, $\{i, i+1\} \in E_1$ if and only if $\{i, n\} \not \in E$. By definition of $\Pcal_{n+1}(\mu, D  \cup \{x_n\})$, $\{i, n\} \not\in E$ if and only if $[x_i, x_{i+1}]$ is $\mu_y$-large, hence if and only if  $[x_i, x_{i+1}]$ is $\mu$-large.
We now prove that the non-adjacent adjacent edges in $\Lcal_n(\Gcal)$ are exactly the non-adjacent edges in $\Lcal_n(\mu, D)$.
	For every $i +1 < j < n$, $\{i,j\} \in E_1$ if and only if $\{i, j\} \in E$,
	hence if and only if $[x_i, x_{i+1}]$ is $\mu_{x_j}$-large.
	Therefore $\Lcal_n(\Gcal) = \Lcal_n(\mu, D)$.
\end{proof}

\begin{theorem}\label{thm:avoid-colors-dominate-left-ce-mu-computable}
Let $\mu : \omega \to \omega$ be a strongly increasing left-c.e.\ function. For every $n \geq 1$, define $f_n : D \mapsto  \Pcal_n(\mu, D)$. For every infinite set $H \subseteq \omega$ such that $\Pcal_n \not \subseteq f_n[H]^n$, $H$ computes a function dominating~$\mu$. 
\end{theorem}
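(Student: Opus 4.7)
My plan is to reduce the claim to Theorem~\ref{thm:avoid-colors-dominate-left-ce-mu} via a projection argument. Assume for contradiction that $H$ is an infinite set with $\Pcal_n \not\subseteq f_n[H]^n$ and that $H$ does not compute a function dominating $\mu$. Consider the companion coloring $g_n : D \mapsto \Lcal_n(\mu, D)$ defined on $[\omega]^n$; by Lemma~\ref{lem:strongly-increasing-yields-largeness-graph}, $g_n[H]^n \subseteq \Lcal_n$. Applying Theorem~\ref{thm:avoid-colors-dominate-left-ce-mu} in its contrapositive form to $g_n$ yields $\Lcal_n \subseteq g_n[H]^n$, i.e.\ every largeness graph of size $n$ is realised by some $n$-tuple in $H$.

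I will then introduce the projection $\pi : \Lcal_n \to \Pcal_n$ that erases all adjacent edges, namely $\pi(\{0,\dots,n-1\}, E') = (\{0,\dots,n-1\}, \{\{i,j\} \in E' : j > i+1\})$. The crucial observation is that $\pi(g_n(D)) = f_n(D)$ for every $D \in [\omega]^n$, because $\Lcal_n(\mu, D)$ and $\Pcal_n(\mu, D)$ codify the non-adjacent information by the very same rule $\mu_{x_q}(x_p) > x_{p+1}$ and differ only in that $\Lcal_n(\mu, D)$ additionally records adjacent $\mu$-largeness.

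The remaining work is to verify that $\pi$ is well-defined with image $\Pcal_n$ and surjective. For well-definedness, I will check axioms (a)--(d) of Definition~\ref{def:largeness-graph} on the image: axioms (a) and (b) become vacuous once all adjacent edges are removed, while (c) and (d) propagate from $\Gcal'$ to $\pi(\Gcal')$ because the edges $\{i, j+1\}$ and $\{j-1, k\}$ produced by those axioms remain non-adjacent (since $j > i+1$ gives $j+1 > i+2$, and $k > j$ gives $k > (j-1)+1$). Surjectivity is immediate: any $\Gcal \in \Pcal_n$ already has no adjacent edges, hence lies in $\Lcal_n$, and $\pi(\Gcal) = \Gcal$.

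Putting these pieces together gives $f_n[H]^n = \pi(g_n[H]^n) \supseteq \pi(\Lcal_n) = \Pcal_n$, contradicting $\Pcal_n \not\subseteq f_n[H]^n$. The only subtle step is the axiom-bookkeeping for $\pi$, but once the non-adjacency inequalities on the indices are spelled out the verification is mechanical; everything else is a direct lift of Theorem~\ref{thm:avoid-colors-dominate-left-ce-mu} through the projection.
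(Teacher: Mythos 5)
Your proof is correct and takes a genuinely different route from the paper. The paper handles $n>1$ by a stabilization argument: it first uses a theorem of Patey to extract a cone-avoiding infinite subset $H_1\subseteq H$ over which $f_n$ is stable, then invokes Lemma~\ref{lem:lcelargeo-to-largeo} (identifying $\lim_{y}\Pcal_n(\mu,D\cup\{y\})$ with $\Lcal_{n-1}(\mu,D)$) to reduce the problem to Theorem~\ref{thm:avoid-colors-dominate-left-ce-mu} one arity down, and finishes with a Catalan-number counting step. You instead work at the \emph{same} arity $n$ and build the edge-erasing projection $\pi:\Lcal_n\to\Pcal_n$, observing that $\pi\circ\Lcal_n(\mu,\cdot)=\Pcal_n(\mu,\cdot)$ by the very definitions of the two graph-valued colorings. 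This sidesteps both the stabilization step and the arity shift and is, I think, the cleaner argument; it also makes the exact combinatorial relationship between $\Lcal_n$ and $\Pcal_n$ explicit, which the paper leaves implicit in a cardinality comparison $|g_{n-1}[H]^{n-1}|\le|f_n[H]^n|$. Your well-definedness check of $\pi$ is essentially complete: (a) and (b) become vacuous after stripping adjacent edges, and (c), (d) survive because the edges they manufacture are automatically non-adjacent. Two small remarks. First, the $n=1$ case is handled automatically in your framework since $|\Pcal_1|=|\Lcal_1|=1$ and the hypothesis $\Pcal_1\not\subseteq f_1[H]^1$ is never satisfiable, whereas the paper spells this out as a separate base case. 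Second, your surjectivity claim \lq\lq any $\Gcal\in\Pcal_n$ already has no adjacent edges'' uses the intended reading of \qt{packed} as forbidding \emph{all} adjacent edges; the literal condition \qt{for $i<n-2$} in Definition of packed would admit a graph containing the edge $\{n-2,n-1\}$ lying outside the range of $\pi$, but that reading is inconsistent with the paper's own assertions $|\Pcal_n|=C_{n-1}$ and $\Pcal_3=\{\Gcal_0,\Gcal_1\}$, so you are safe to take the stronger reading.
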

\begin{proof}
Fix $\mu$, $n \geq 1$ and $H$. 
For $n = 1$, then $|\Pcal_1| = 1$. For every infinite set $H \subseteq \omega$, $|f_1[H]^1| = 1$, so $\Pcal_1 \subseteq f_1[H]^1$, and the property vacuously holds. 
For $n > 1$. Suppose for the contradiction that $H$ does not compute a function dominating~$\mu$. By Patey~\cite[Theorem 12]{Patey2017Iterative}, there is an infinite subset $H_1 \subseteq H$ over which $f_n$ is stable and such that $H_1$ does not compute a function dominating~$\mu$. Let $\tilde{f} : [\omega]^{n-1} \to \Pcal_n$ be defined by $\tilde{f}(D) = \lim_{y \in H_1}f_n(D \cup \{y\}) = \lim_{y \in H_1} \Pcal_n(\mu, \cup \{y\})$. By Lemma~\ref{lem:lcelargeo-to-largeo}, for every $D \in [H_1]^{n-1}$, $\Lcal_{n-1}(\mu, D) = \Lcal_n(\tilde{f}(D))$.

By Lemma~\ref{lem:strongly-increasing-yields-packed-largeness-graph}, $f_n[H_1]^n \subseteq \Pcal_n$. By Cholak and Patey~\cite[Lemma 3.15,Lemma 3.16]{Cholak2019Thin}, $|\Pcal_n| = C_{n-1}$. Since $\Pcal_n \not \subseteq f_n[H]^n$, then $|f_n[H]^n| < C_{n-1}$. It follows that, letting $g_{n-1} : D \mapsto \Lcal_{n-1}(\mu, D)$, $|g_{n-1}[H]^{n-1}| = |f_n[H]^n| < C_{n-1}$.
	By Cholak and Patey~\cite[Lemma 3.16]{Cholak2019Thin}, $|\Lcal-{n-1}| = C_{n-1}$, therefore, $\Lcal_{n-1} \not \subseteq g_{n-1}[H]^{n-1}$, so by Theorem~\ref{thm:avoid-colors-dominate-left-ce-mu}, $H_1$ computes a function dominating~$\mu$. Contradiction.
\end{proof}

\begin{statement}
$\lcelargeo^n_k$: For every function $f : [\omega]^n \to k$,
there is a strongly increasing relative left-c.e.\ function $\mu : \omega \to \omega^{+}$, an infinite strongly $\mu$-transitive set $H \subseteq \omega$ and a coloring $\chi : \Pcal_n \to k$ such that for every $D \in [H]^n$, $f(D) = \chi(\Pcal_n(\mu, D))$.
\end{statement}

In the cases $n = 1$ and $n = 2$, there is exactly one packed largeness graph of size $n$, namely, the graph with no edges. Therefore $|\Pcal_n| = 1$, and $\lcelargeo^1_k$ and $\lcelargeo^2_k$ are exactly $\rt^1_k$ and $\rt^2_k$, respectively. The case $n = 3$ yields a new principle.

\begin{statement}\label{stat:lceexplicitlarge}
$\lceexplicitlarge_k$: ``For every coloring $f : [\omega]^3 \to k$, there are two colors $i_s,i_\ell < k$ and an infinite set $H \subseteq \omega$ such that $f[H]^3 \subseteq \{i_s,i_\ell\}$ and for every $w < x < y < z \in H$,
\begin{itemize}
	\item[(a)] $f(w, x, z) = f(x, y, z) = i_s$ if and only if $f(w, y, z) = i_s$
	\item[(b)] if $f(w, x, y) = i_s$ then $f(w, x, z) = i_s$
	\item[(c)] if $f(w, x, y) = i_\ell$ and $f(w, x, z) = i_s$ then $f(x, y, z) = i_s$.''
\end{itemize}
\end{statement}

Informally, $f(x, y, z) = i_s$ if $[x, y]$ is $\mu_z$-small, and $f(x, y, z) = i_\ell$ otherwise. We now prove that $\lcelargeo^3_k$ and $\lceexplicitlarge_k$ are the same statement by bi-reduction.

\begin{lemma}
$\lceexplicitlarge_k \leq_{id} \lcelargeo^3_k$.
\end{lemma}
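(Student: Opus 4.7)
The plan is to show that any $\lcelargeo^3_k$-solution $H$ to an arbitrary coloring $f : [\omega]^3 \to k$ automatically satisfies the conclusion of $\lceexplicitlarge_k$. So fix such a solution with witnessing strongly increasing relative left-c.e.\ function $\mu : \omega \to \omega^{+}$ and coloring $\chi : \Pcal_3 \to k$, so that $H$ is strongly $\mu$-transitive and $f(D) = \chi(\Pcal_3(\mu, D))$ for every $D \in [H]^3$.

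The first key step is to notice that $\Pcal_3(\mu, D)$ takes only two possible values, parameterised by a single bit. Indeed, the definition of $\Pcal_n(\mu, D)$ only admits edges $\{p,q\}$ with $p+1 < q < n$; for $n = 3$ this forces $p = 0$, $q = 2$, so the unique candidate edge is $\{0,2\}$. For $D = \{x < y < z\}$, this edge is present iff $\mu_z(x) > y$, i.e.\ iff $[x,y]$ is $\mu_z$-small. Let $\Gcal_0$ denote the empty graph and $\Gcal_1$ the graph with edge $\{0,2\}$, and set $i_\ell := \chi(\Gcal_0)$ and $i_s := \chi(\Gcal_1)$. Then $f[H]^3 \subseteq \{i_s, i_\ell\}$. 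If $i_s = i_\ell$, then $H$ is $f$-homogeneous and clauses (a)--(c) hold trivially, so I assume $i_s \ne i_\ell$; the bijectivity of $\chi$ on $\{\Gcal_0, \Gcal_1\}$ then gives the dictionary: $f(x,y,z) = i_s$ iff $[x,y]$ is $\mu_z$-small.

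With this dictionary, the three clauses become transparent assertions about $\mu_z$-smallness of adjacent intervals. For (a), one must show $\mu_z(w) > x \wedge \mu_z(x) > y \iff \mu_z(w) > y$; the forward direction is exactly the definition of strong $\mu$-transitivity of $H$, and the backward direction uses $w < x < y$ together with the fact that $\mu_z$ is non-decreasing in its argument (one half of the strongly-increasing hypothesis). For (b), the hypothesis $\mu_y(w) > x$ yields $\mu_z(w) \ge \mu_y(w) > x$ because $\mu_y \le \mu_z$ for $y \le z$ (left-c.e.\ monotonicity in the approximation index). For (c), the hypothesis $\mu_y(w) \le x$ and $\mu_z(w) > x$ must yield $\mu_z(x) > y$; this is precisely Claim~1 in the proof of Lemma~\ref{lem:strongly-increasing-yields-largeness-graph}.

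The main point requiring care is clause (c), as it is the only place that genuinely uses the full strongly-increasing hypothesis on $\mu$ rather than mere monotonicity or strong $\mu$-transitivity of $H$. Fortunately, the exact implication needed has already been extracted as Claim~1 of Lemma~\ref{lem:strongly-increasing-yields-largeness-graph}, so the whole proof reduces to setting up the bijection between $\Pcal_3$-edges and $\mu_z$-largeness and then routinely invoking the relevant monotonicity, transitivity and left-c.e.\ properties of $\mu$ and $H$.
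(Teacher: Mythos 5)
Your proposal is correct and takes essentially the same approach as the paper: both identify $\Pcal_3 = \{\Gcal_0, \Gcal_1\}$ with $\Gcal_1$ recording $\mu_z$-smallness of $[x,y]$, set $i_s = \chi(\Gcal_1)$, $i_\ell = \chi(\Gcal_0)$, dispatch the degenerate case $i_s = i_\ell$, and then verify (a) from strong $\mu$-transitivity, (b) from monotonicity of the left-c.e.\ approximations, and (c) by invoking Claim~1 of Lemma~\ref{lem:strongly-increasing-yields-largeness-graph}. The only (welcome) refinement over the paper's write-up is that you explicitly point out that the backward direction of (a) also needs the non-decreasingness of $\mu_z$ in its argument, a step the paper passes over silently.
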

\begin{proof}
Let $f : [\omega]^3 \to k$ be a coloring, and let $H$ be an infinite $\lcelargeo^3_k$-solution to~$f$. By definition of $\lcelargeo^3_k$, there is a strongly increasing relative left-c.e.\ modulus $\mu : \omega \to \omega^{+}$ and a function $\chi : \Pcal_3 \to k$ such that $H$ is strongly $\mu$-transitive, and for every $D \in [H]^3$, $f(D) = \chi(\Pcal_3(\mu,D))$. Let $\Gcal_0 = (\{0,1,2\}, \emptyset)$ and $\Gcal_1 = (\{0,1,2\}, \{\{0,2\}\})$. In particular, $\Pcal_3 = \{\Gcal_0, \Gcal_1\}$. Let $i_s = \chi(\Gcal_1)$ and $i_\ell = \chi(\Gcal_0)$. We have $f[H]^3 \subseteq \{i_s, i_\ell\}$. If $i_s = i_\ell$, then $H$ is $f$-homogeneous, and $H$ is an $\lceexplicitlarge_k$-solution to $f$, so suppose $i_s \neq i_\ell$. In particular, $\chi$ is one-to-one. We now prove properties (a-c) of Statement~\ref{stat:lceexplicitlarge}. Fix $w < x < y < z \in H$.

(a): $f(w, x, z) = f(x, y, z) = i_s$ if and only if $\chi(\Pcal_3(\mu, \{w, x, z\})) = \chi(\Pcal_3(\mu, \{x, y, z\})) = i_s$ if and only if $\Pcal_3(\mu, \{w, x, z\}) = \Pcal_3(\mu, \{x, y, z\}) = \Gcal_1$ if and only if $[w, x]$ and $[x, y]$ are $\mu_z$-small. Since $H$ is strongly $\mu$-transitive, this holds if and only if $[w, y]$ is $\mu_z$-small, if and only if $\Pcal_3(\mu, \{w, y, z\}) = \Gcal_1$ if and only if $f(w, y, z) = \chi(\Pcal_3(\mu, \{w, y, z\})) = \chi(\Gcal_1) = i_s$.

(b): If $f(w, x, y) = i_s$, then $\chi(\Pcal_3(\mu, \{w, x, y\})) = i_s$, so $\Pcal_3(\mu, \{w, x, y\}) = \Gcal_1$. In particular, $[w, x]$ is $\mu_y$-small, hence is $\mu_z$-small. Therefore, $\Pcal_3(\mu, \{w, x, z\}) = \Gcal_1$, so $f(w, x, z) = \chi(\Pcal_3(\mu, \{w, x, z\})) = \chi(\Gcal_1) = i_s$.

(c): If $f(w, x, y) = i_\ell$ and $f(w, x, z) = i_s$, then $\chi(\Pcal_3(\mu, \{w, x, y\})) = i_\ell$ and $\chi(\Pcal_3(\mu, \{w, x, z\})) = i_s$. Therefore, $\Pcal_3(\mu, \{w, x, y\}) = \Gcal_0$ and $\Pcal_3(\mu, \{w, x, z\}) = \Gcal_1$. It follows that $[w, x]$ is $\mu_y$-large but $\mu_z$-small. By Claim 1 in the proof of Lemma~\ref{lem:strongly-increasing-yields-largeness-graph}, since $\mu$ is strongly increasing, $[x, y]$ is $\mu_z$-small. Thus $\Pcal_3(\mu, \{x, y, z\}) = \Gcal_1$, so $f(x, y, z) = \chi(\Pcal_3(\mu, \{x, y, z\})) = \chi(\Gcal_1) = i_s$.
\end{proof}

\begin{lemma}
$\lcelargeo^3_k \leq_{id} \lceexplicitlarge_k$.
\end{lemma}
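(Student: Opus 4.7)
The plan is to run the construction from the previous lemma in reverse, now additionally producing a \emph{strongly increasing} relative left-c.e.\ approximation for $\mu$. Set $\chi:\Pcal_3\to k$ by $\chi(\Gcal_0)=i_\ell$ and $\chi(\Gcal_1)=i_s$, so what we need to build is a strongly increasing relative left-c.e.\ function $\mu:\omega\to\omega^{+}$ whose approximations $\mu_z$ at stages $z\in H$ satisfy $\mu_z(w)>x \iff f(w,x,z)=i_s$ for $w<x<z \in H$. Writing $H=\{h_0<h_1<\cdots\}$, let $\tilde k(i,j)$ be the least $k$ with $i<k<j$ and $f(h_i,h_k,h_j)=i_\ell$, or $j$ if no such $k$ exists, and set $\mu_{h_j}(h_i):=h_{\tilde k(i,j)}$ for $i+1\le j$ and $\mu_{h_j}(h_i):=h_j$ for $i\ge j-1$. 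Extend to all $s,x\in\omega$ by step functions: $\mu_s:=\mu_{h_{j(s)}}$ on $[h_{j(s)},h_{j(s)+1})$ and $\mu_s(x):=\mu_s(h_{i(x)})$ where $i(x)=\max\{i : h_i\le x\}$. The contrapositive form of property (a) makes the set $\{k : f(h_i,h_k,h_j)=i_\ell\}$ upward closed in its middle coordinate, so $\tilde k(i,j)$ is really a threshold and the edge/no-edge dichotomy of $\Pcal_3(\mu,D)$ tracks $f(D)$ exactly, giving $f(D)=\chi(\Pcal_3(\mu,D))$ for $D\in[H]^3$.

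The routine verifications are quick. Property (b) gives monotonicity in the stage, $\tilde k(i,j)\le\tilde k(i,j+1)$; the contrapositive of (a) gives monotonicity in the argument, $\tilde k(i,j)\le\tilde k(i',j)$ for $i<i'$; and property (a) applied at a single stage $z=h_j\in H$ yields strong $\mu$-transitivity of $H$, since $f(h_i,h_k,h_j)=f(h_k,h_m,h_j)=i_s$ is equivalent to $f(h_i,h_m,h_j)=i_s$.

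The main obstacle, and the only place where property (c) is indispensable, is ensuring that $\mu$ is strongly increasing. I would prove the following structural claim: at every stage $h_{j+1}$, either $\tilde k(i,j+1)=\tilde k(i,j)$ or $\tilde k(i,j+1)=j+1$, never an intermediate value; and in the latter case one also has $\tilde k(i',j+1)=j+1$ for every $i<i'<j+1$. For the first part, if $k_1=\tilde k(i,j)<k_2=\tilde k(i,j+1)\le j$, applying (c) to $(h_i,h_{k_1},h_j,h_{j+1})$ yields $f(h_{k_1},h_j,h_{j+1})=i_s$; but applying (a) first to $(h_i,h_{k_1},h_{k_2},h_{j+1})$ forces $f(h_{k_1},h_{k_2},h_{j+1})=i_\ell$, and then applying (a) to $(h_{k_1},h_{k_2},h_j,h_{j+1})$ (or directly to $(h_i,h_{k_1},h_j,h_{j+1})$ in the edge case $k_2=j$) forces $f(h_{k_1},h_j,h_{j+1})=i_\ell$, a contradiction. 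For the second part, any $i'>i$ with $\tilde k(i',j+1)=k'<j+1$ would make (a) on $(h_i,h_{i'},h_{k'},h_{j+1})$ impossible, since both $f(h_i,h_{i'},h_{j+1})$ and $f(h_i,h_{k'},h_{j+1})$ equal $i_s$ while $f(h_{i'},h_{k'},h_{j+1})=i_\ell$ would give a false LHS but true RHS.

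Granted this claim, strong increasingness is immediate: the approximations $\mu_s$ can only transition at $s+1=h_{j+1}$, and when such a transition strictly raises $\mu_s(x)$ the claim forces $\mu_{h_{j+1}}(y)=h_{j+1}>s=h_{j+1}-1$ for every $y>x$. This finishes the verification that $H$ is a $\lcelargeo^3_k$-solution to $f$ with witnesses $\mu$ and $\chi$.
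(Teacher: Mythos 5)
Your proof is correct and takes essentially the same approach as the paper: define $\mu_z(x)$ explicitly from $f$ on $H$ by a least-threshold formula and verify strong increasingness via properties (a) and (c) (your ``structural claim'' plays exactly the role of the paper's Claim 4, with the non-$H$ extension by rounding down to $H$ rather than the paper's rounding up; this makes no difference). The details you check --- upward closure of $\{k : f(h_i,h_k,h_j)=i_\ell\}$, monotonicity in both coordinates via (a) and (b), strong $\mu$-transitivity via (a), and the two-part structural claim --- all go through as you describe.
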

\begin{proof}
Let $f : [\omega]^3 \to k$ be a coloring, and let $H$ be an infinite $\lceexplicitlarge_k$-solution, that is, there are some colors $i_s, i_\ell < k$ such that $f[H]^3 \subseteq \{i_s, i_\ell\}$ and properties (a-c) of Statement~\ref{stat:lceexplicitlarge} hold. 
For every $x, z \in \omega$, let $x_0$ and $z_0$ be the least elements of $H \cap [x, \infty)$ and $H \cap [z, \infty)$, respectively. Let $\mu_z(x)$ be the least element $y_0$ of $H \cap (x_0, z_0)$ such that $f(x_0, y_0, z_0) = i_\ell$ if it exists. Otherwise $\mu_z(x) = z$.

Claim 1: For every $x, z \in \omega$, $\mu_z(x) \leq z$.
Indeed, let $x_0$ and $z_0$ be the least elements of $H \cap [x, \infty)$ and $H \cap [z, \infty)$, respectively. If there is a least element $y_0$ of $H \cap (x_0, z_0)$ such that $f(x_0, y_0, z_0) = i_\ell$, then $y_0 < z_0$ and by definition of $z_0$, $y_0 < z$. It follows that $\mu_z(x) = y_0 < z$. If there is no such $y_0$, then $\mu_z(x) = z \leq z$. This proves Claim 1.

Claim 2: For every $x \in \omega$ and $u \leq v \in \omega$, $\mu_u(x) \leq \mu_v(x)$.
Let $x_0$, $u_0$ and $v_0$ be the least elements of $H \cap [x, \infty)$, $H \cap [u, \infty)$ and $H \cap [v, \infty)$, respectively. In particular, $u_0 \leq v_0$. Case 1: there is no $y_u \in H \cap (x_0, u_0)$ such that $f(x_0, y_u, u_0) = i_\ell$. Then $\mu_u(x) = u$. If there is no $y_v \in H \cap (x_0, v_0)$ such that $f(x_0, y_v, v_0) = i_\ell$, then $\mu_v(x) = v \geq u = \mu_u(x)$, and we are done. So suppose there is such a $y_v \in H \cap (x_0, v_0)$. If $y_v \geq u_0$, then $\mu_v(x) = y_v \geq u_0 \geq u = \mu_u(u)$ and we are done. If $y_v < u_0$, then $f(x_0, y_v, v_0) = i_\ell$ and since $y_u$ does not exist, $f(x_0, y_v, u_0) = i_s$. This contradicts property (b) of Statement~\ref{stat:lceexplicitlarge}. Case 2: $y_u$ exists. Then $y_u < u_0$, so $y_u < u$ and for every $y \in H \cap (x_0, y_u)$, $f(x_0, y, u_0) = i_s$. By property (b) of Statement~\ref{stat:lceexplicitlarge}, for every $y \in H \cap (x_0, y_u)$, $f(x_0, y, v_0) = i_s$. Therefore if $y_v$ exists, then $y_v \geq y_u$ and $\mu_u(x) \leq \mu_v(x)$. If $y_v$ does not exist, then $\mu_v(x) = v \geq u > y_u = \mu_u(x)$. This proves Claim 2.

Claim 3: For every $z \in \omega$ and $u \leq v$, $\mu_z(u) \leq \mu_z(v)$.
Let $u_0, v_0$ and $z_0$ be the least elements of $H \cap [u, \infty)$, $H \cap [v, \infty)$ and $H \cap [z, \infty)$, respectively. In particular, $u_0 \leq v_0$. Case 1: there is no $y_v \in H \cap (v_0, z_0)$ such that $f(v_0, y_v, z_0) = i_\ell$. Then $\mu_v(x) = v$. Since by Claim 1, $\mu_u(x) \leq u \leq v = \mu_v(x)$, we are done. Case 2: there is such a $y_v$. In particular, $f(v_0, y_v, z_0) = i_\ell$. By property (a) of Statement~\ref{stat:lceexplicitlarge}, $f(u_0, y_v, z_0) = i_\ell$, so there is a least $y_u \in H \cap (u_0, z_0)$ such that $f(u_0, y_u, z_0) = i_\ell$ and $y_u \leq y_v$. Then $\mu_z(u) = y_u \leq y_v = \mu_z(v)$. This proves Claim 3.

Claim 4: For every $z \in \omega$ and $u < v \in \omega$, if $\mu_{z+1}(u) > \mu_z(u)$, then $\mu_{z+1}(v) > z$.
Let $z_0, z_1, u_0, v_0$ be the least elements of $H \cap [z, \infty)$,  $H \cap [z+1, \infty)$, $H \cap [u, \infty)$ and $H \cap [v, \infty)$, respectively.
If $z_0 = z_1$, we are done, so suppose $z_0 < z_1$. In particular, $z_1$ is the immediate successor of $z_0$ in $H$. If there is no least $y_v \in H \cap (v_0, z_1)$ such that $f(v_0, y_v, z_1) = i_\ell$, then $\mu_{z+1}(v) = z+1$ and we are done as well. So suppose there is such a $y_v$. By property (a) of Statement~\ref{stat:lceexplicitlarge}, $f(u_0, y_v, z_1) = i_\ell$, so there is a least $y_u \in H \cap (u_0, z_1)$ such that $f(u_0, y_u, z_1) = i_\ell$. Since $z_1$ is the immediate successor of $z_0$ in $H$, either $y_u = z_0$, or $y_u < z_0$. Case 1: $y_u = z_0$. By Claim 3, $\mu_{z+1}(v) \geq \mu_{z+1}(u) = y_u = z_0$. If $z_0 > z$, we are done, so suppose $y_u = z_0 = z$. If $\mu_z(u) = z$, then Claim 4 is vacuously satisfied, so suppose $\mu_z(u) < z$. It follows that there is a least $y \in H \cap (u_0, z_0)$ such that $f(u_0, y, z_0) = i_\ell$. By minimality of $y_0$, $f(u_0, y, z_1) = i_s$. Then by property (c) of Statement~\ref{stat:lceexplicitlarge}, $f(y, z_0, z_1) = i_s$. Since $f(u_0, y_u, z_1) = i_\ell$, and $y_u = z_0 = z$, then $f(y, z, z_1) = i_s$ and $f(u_0, z, z_1) = i_\ell$. So by property (a) of Statement~\ref{stat:lceexplicitlarge}, $f(u_0, y, z_1) = i_\ell$, contradiction.
	Case 2: $y_u < z_0$. Then there is some some $y \in H \cap (u_0, z_0)$ such that $f(u_0, y, z_0) = i_\ell$, with $y < y_u$. By definition of $y_u$ being a least element, $f(u_0, y, z_1) = i_s$. Then by property (c) of Statement~\ref{stat:lceexplicitlarge}, $f(y, z_0, z_1) = i_s$, and since $y < y_u < z_0$, and $f(u_0, y_u, z_1) = i_\ell$, then by property (a) of Statement~\ref{stat:lceexplicitlarge}, $f(u_0, y, z_1) = i_\ell$ or $f(y, y_u, z_1) = i_\ell$. The former case does not hold, so $f(y, y_u, z_1) = i_\ell$. Again by property (a) of Statement~\ref{stat:lceexplicitlarge}, $f(y, z_0, z_1) = i_\ell$. Contradiction. 
	
Claim 5: For $w < x < y < z \in H$ such that $\mu_z(w) > x$ and $\mu_z(x) > y$, then $\mu_z(w) > y$. By definition of $\mu$, $f(w, x, z) = f(x, y, z) = i_s$. By property (a) of Statement~\ref{stat:lceexplicitlarge}, $f(w, y, z) = i_s$. Therefore, $\mu_z(w) > y$. This proves Claim~5.

By Claim 2, 3 and 4, $\mu$ is a strongly increasing left-c.e.\ function.
Moreover, for every $x < y < z \in H$, $f(x, y, z) = i_\ell$ if and only if $\mu_z(x) \leq y$, so $f(x, y, z) = \chi(\Pcal_3(\mu, \{x, y, z\}))$.
By Claim 5, $H$ is strongly $\mu$-transitive. Therefore $H$ is a $\lcelargeo^3_k$-solution to $f$ with witnesses $\chi$ and $\mu$.
\end{proof}

\begin{theorem}\label{thm:lcelargeo-ca}
$\lcelargeo^n_k$ admits cone avoidance.
\end{theorem}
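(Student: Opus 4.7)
The plan is to reduce cone avoidance of $\lcelargeo^n_k$ to strong cone avoidance of $\largeo^{n-1}_k$ (Theorem~\ref{thm:largeo-sca}) via a stabilization argument, exploiting the bridge $\Pcal_n \to \Lcal_{n-1}$ provided by Lemma~\ref{lem:lcelargeo-to-largeo}. The cases $n \in \{1, 2\}$ are trivial: $|\Pcal_n| = 1$ forces $\lcelargeo^n_k = \rt^n_k$, and cone avoidance holds for free at $n = 1$ and by Seetapun--Slaman~\cite{Seetapun1995strength} at $n = 2$. So assume $n \geq 3$.

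Fix $Z$, a set $C \not\leq_T Z$, and a $Z$-computable instance $f : [\omega]^n \to k$. Inside a cone-avoiding Mathias forcing, first apply Patey's stable reservoir construction~\cite[Theorem~12]{Patey2017Iterative} (as invoked inside the proof of Theorem~\ref{thm:avoid-colors-dominate-left-ce-mu-computable}) to obtain an infinite set $X \subseteq \omega$ over which $f$ is stable, while maintaining $C \not\leq_T Z \oplus X$. Write $\tilde{f} : [X]^{n-1} \to k$ for the stable limit $\tilde{f}(D) = \lim_{y \in X} f(D \cup \{y\})$. Using zoomability of $\largeo^{n-1}_k$ together with its strong cone avoidance (Theorem~\ref{thm:largeo-sca}), I then extract an infinite strongly $\mu$-transitive set $H_1 \subseteq X$, a strongly increasing relative left-c.e.\ function $\mu : \omega \to \omega^{+}$, and a coloring $\chi : \Lcal_{n-1} \to k$ such that $\tilde{f}(D) = \chi(\Lcal_{n-1}(\mu, D))$ for every $D \in [H_1]^{n-1}$, while still preserving $C \not\leq_T Z \oplus H_1$.

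To finish, I would thin $H_1$ to an infinite subset $H$ which is still strongly $\mu$-transitive and is in addition \emph{pre-homogeneous} for both $f$ and the auxiliary coloring $D \mapsto \Pcal_n(\mu, D)$, meaning that for every $D \in [H]^{n-1}$ and every $y \in H$ with $y > \max D$, one has $f(D \cup \{y\}) = \tilde{f}(D)$ and $\Pcal_n(\mu, D \cup \{y\}) = \lim_{z \in X} \Pcal_n(\mu, D \cup \{z\})$. Defining $\chi'(\Gcal) = \chi(\Lcal_{n-1}(\Gcal))$ for $\Gcal \in \Pcal_n$, Lemma~\ref{lem:lcelargeo-to-largeo} supplies $\Lcal_{n-1}(\mu, D) = \Lcal_{n-1}(\Pcal_n(\mu, D \cup \{y\}))$ past the stability threshold, and chaining equalities yields $f(D \cup \{y\}) = \tilde{f}(D) = \chi(\Lcal_{n-1}(\mu, D)) = \chi'(\Pcal_n(\mu, D \cup \{y\}))$ on $[H]^n$, certifying $H$ as a $\lcelargeo^n_k$-solution to $f$. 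The main obstacle is performing the pre-homogeneity thinning without pushing $H$ above the cone: the relevant stability thresholds live in $(Z \oplus X)'$ and are not in general computable from $H_1$, so a naive post-processing step could destroy cone avoidance. I would address this by folding pre-homogeneity into the forcing that produces $H_1$ in the previous step as an extra density requirement alongside strong $\mu$-transitivity and $\largeo^{n-1}_k$-correctness, in the spirit of the combined constructions of Cholak--Patey~\cite{Cholak2019Thin}.
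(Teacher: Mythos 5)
Your route is genuinely different from the paper's. The paper proves Theorem~\ref{thm:lcelargeo-ca} by exactly mirroring the proof of Theorem~\ref{thm:largeo-sca}: fix $Z_1 \geq_T Z$ with $C$ being $\Delta^0_2(Z_1)$ but $C \not\leq_T Z_1$ (Lerman), take a strongly increasing left $Z_1$-c.e.\ modulus $\mu$ of $C$, form $f_1(D) = \langle f(D), \Pcal_n(\mu,D)\rangle$ (which is $Z_1$-computable because $\Pcal_n$ only uses finite-stage approximations of $\mu$), and apply \emph{cone avoidance} of $\rt^n_{<\infty,C_{n-1}}$ (Cholak--Patey, Cor.~4.19) to $f_1$. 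The number $|\Pcal_n| = C_{n-1}$ forces the resulting thin set to determine the coloring $\chi$, and the usual post-processing (Cholak--Patey Cor.~5.5 and Lemma~\ref{lem:transitive-to-strongly-transitive}) gives a strongly $\mu$-transitive subset. Your proposal instead tries to re-derive this from one dimension lower: stabilize $f$ to get $\tilde f$ on $[X]^{n-1}$, apply \emph{strong} cone avoidance of $\largeo^{n-1}_k$ to $\tilde f$, and then use the packed-to-unpacked bridge of Lemma~\ref{lem:lcelargeo-to-largeo}. In effect you are unfolding the Cholak--Patey bridge between $n-1$ and $n$ at the top level, whereas the paper keeps it packaged inside the thin-set theorem it cites.

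The plan has two genuine gaps. First, you cite Patey~\cite[Theorem~12]{Patey2017Iterative} for the cone-avoiding stable reservoir, but that theorem preserves only the property ``does not compute a function dominating $\mu$''; to preserve $C \not\leq_T Z \oplus X$ you need cone avoidance of a cohesiveness-type statement applied to the $Z$-computable sequence $R_{D,i} = \{y : f(D\cup\{y\}) = i\}$, which is a different (known, but different) fact. Second, and more seriously, the pre-homogeneity thinning you identify as the main obstacle is not something you can simply ``fold into the forcing'' in the form you describe, because the coloring $D \mapsto \Pcal_n(\mu,D)$ for which you need pre-homogeneity is not available until \emph{after} the $\largeo^{n-1}_k$ step: $\mu$ is part of the witness of that solution, not part of the instance. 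So either you need to determine $\mu$ up front (which is essentially what the paper does via Lerman), in which case the $\largeo^{n-1}_k$ machinery is no longer doing the work you assign it, or you need a two-stage forcing that first commits to $\mu$ and $\chi$ and then enforces pre-homogeneity, at which point verifying that the combined forcing still avoids the cone is a nontrivial argument that the proposal does not supply. You also need, but do not explicitly arrange, that the final $H$ be strongly $\mu$-transitive; that can be handled by Lemma~\ref{lem:transitive-to-strongly-transitive}, but only once the pre-homogeneity step is resolved. The paper's direct appeal to cone avoidance of $\rt^n_{<\infty,C_{n-1}}$ avoids all of this, because the stability and pre-homogeneity bookkeeping is already internal to that theorem.
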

\begin{proof}
Fix two sets $Z$ and $C$ with $C \not \leq_T Z$ and let $f : [\omega]^n \to k$ be a $Z$-computable instance of $\lcelargeo^n_k$.

By Lerman~\cite[4.18]{Lerman1983Degrees}, there is a set $Z_1 \geq_T Z$ such that $C$ is $\Delta^0_2(Z_1)$ but $C \not \leq_T Z_1$. Since $C$ is $\Delta^0_2(Z_1)$, there is a left $Z_1$-c.e.\ modulus $\mu : \omega \to \omega$ for $C$. By Lemma~\ref{lem:leftce-computes-strongly-increasing}, we can assume that $\mu$ is strongly increasing.

Let $f_1 : [\omega]^n \to k \times \Pcal_n$ be defined for each $D \in [\omega]^n$ by $f_1(D) = \langle f(D), \Pcal_n(\mu, D) \rangle$.
By cone avoidance of $\rt^n_{<\infty, C_{n-1}}$ (Cholak and Patey~\cite[Corollary 4.19]{Cholak2019Thin}), there is an infinite set $H \subseteq \omega$ such that $C \not \leq_T Z_1 \oplus H$ and $|f_1[H]^n| \leq C_{n-1}$. In particular, $Z_1 \oplus H$ does not compute a function dominating $\mu$, so by Theorem~\ref{thm:avoid-colors-dominate-left-ce-mu}, for every $\Gcal \in \Pcal_n$, there is some $i < k$ and some $D \in [H]^n$ such that $f_1(D) = \langle i, \Gcal \rangle$. Since $|\Pcal_n| = C_{n-1}$ (Cholak and Patey~\cite[Lemma 3.15,Lemma 3.16]{Cholak2019Thin}), this $i$ is unique. For each $\Gcal$, let $\chi(\Gcal)$ be such an~$i$. 

We claim that for every $D \in [H]^n$, $f(D) = \chi(\Pcal_n(\mu, D))$.
By definition of $\chi$, $f_1(D) = \langle f(D), \Pcal_n(\mu, D)\rangle = \langle \chi(\Pcal_n(\mu, D)), \Pcal_n(\mu, D) \rangle$. It follows that $f(D) = \chi(\Pcal_n(\mu, D))$. By Cholak and Patey~\cite[Corollary 5.5]{Cholak2019Thin}, there is an infinite $\mu$-transitive subset $H_1 \subseteq H$ such that $C \not \leq_T Z \oplus H_1$. By Lemma~\ref{lem:transitive-to-strongly-transitive}, there is an infinite strongly $\mu$-transitive subset $H_2 \subseteq H_1$ such that $C \not \leq_T Z \oplus H_2$. Therefore, $H_2$ is a $\lcelargeo^n_k$-solution to~$f$.
\end{proof}

The following technical lemma will be useful for Theorem~\ref{thm:not-leq-lcelargeo-not-ca}, Lemma~\ref{lem:rt-v-w-ca} and Lemma~\ref{lem:w-not-fixed-lcelargeo-but-v-does}.

\begin{lemma}\label{lem:lcelargeo-avoid-pattern-dominates-mu}
Fix $\chi : \Pcal_n \to k$, and let $f_0 : [\omega]^n \to k$ and $f_1 : [\omega]^n \to k$ be two colorings such that $f_1$ is computable.
Let $H_0$ be an infinite $\lcelargeo^n_k$-solution to~$f_0$ with witnesses $\chi$ and $\mu_0 : \omega \to \omega$, and let $H_1$ be an infinite $\lcelargeo^n_k$-solution to~$f_1$ with witnesses $\chi$ and $\mu_1 : \omega \to \omega$.
If $H_0$ $f_0$-meets some $\rt^n_k$-pattern $P$ but $H_1$ $f_1$-avoids $P$ and $\mu_1$ is left-c.e., then $H_1$ computes a function dominating~$\mu_1$.
\end{lemma}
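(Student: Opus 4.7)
The plan is to mirror the proof of Lemma~\ref{lem:largeo-avoid-pattern-dominates-mu} almost verbatim, substituting packed largeness graphs $\Pcal_\bullet$ for largeness graphs $\Lcal_\bullet$ and invoking the packed-version counterparts of the supporting results (Lemma~\ref{lem:strongly-increasing-yields-packed-largeness-graph}, Lemma~\ref{lem:strongly-mu-transitive-yields-packed-largeness-graph}, and Theorem~\ref{thm:avoid-colors-dominate-left-ce-mu-computable}). Because $H_0$ $f_0$-meets $P$, I first extract a finite witness $E_0 = \{x_0 < \dots < x_{r-1}\} \subseteq H_0$ which $f_0$-satisfies $P$, where $r$ is at least the largest variable index appearing in $P$.

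Suppose for contradiction that $H_1$ does not compute a function dominating $\mu_1$. Since $H_1$ is a $\lcelargeo^n_k$-solution with witness $\mu_1$, the function $\mu_1$ is strongly increasing, and by hypothesis it is left-c.e.\ Applying Theorem~\ref{thm:avoid-colors-dominate-left-ce-mu-computable} to $\mu_1$ at arity $r$, the map $D \mapsto \Pcal_r(\mu_1, D)$ must take every value in $\Pcal_r$ on $[H_1]^r$. On the other hand, Lemma~\ref{lem:strongly-increasing-yields-packed-largeness-graph} applied to $\mu_0$ guarantees $\Pcal_r(\mu_0, E_0) \in \Pcal_r$. Hence I may select $E_1 = \{y_0 < \dots < y_{r-1}\} \subseteq H_1$ with $\Pcal_r(\mu_1, E_1) = \Pcal_r(\mu_0, E_0)$.

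I then claim that $f_0 \uh [E_0]^n$ and $f_1 \uh [E_1]^n$ coincide under the bijection $x_i \leftrightarrow y_i$. Fix $I \in [r]^n$ and set $D_0 = \{x_i : i \in I\}$, $D_1 = \{y_i : i \in I\}$. Using strong $\mu_0$-transitivity of $H_0$ and strong $\mu_1$-transitivity of $H_1$, Lemma~\ref{lem:strongly-mu-transitive-yields-packed-largeness-graph} converts the equality $\Pcal_r(\mu_0, E_0) = \Pcal_r(\mu_1, E_1)$ into matching approximate $\mu$-largeness on all relevant pairs of indices, which in turn yields $\Pcal_n(\mu_0, D_0) = \Pcal_n(\mu_1, D_1)$. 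Because both solutions use the same $\chi$, this gives $f_0(D_0) = \chi(\Pcal_n(\mu_0, D_0)) = \chi(\Pcal_n(\mu_1, D_1)) = f_1(D_1)$. Since $E_0$ $f_0$-satisfies $P$, the equality of function graphs forces $E_1$ to $f_1$-satisfy $P$, contradicting the assumption that $H_1$ $f_1$-avoids $P$.

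The principal technical step is the second paragraph, where we exploit that $\Pcal_r$-realizability of every packed graph on $H_1$ (via Theorem~\ref{thm:avoid-colors-dominate-left-ce-mu-computable}) is precisely what lets us build $E_1$ of the prescribed packed type; this uses in an essential way both that $\mu_1$ is strongly increasing and that it is left-c.e., the two features that distinguish the $\lcelargeo$ setting from its $\largeo$ counterpart. Aside from this packed-graph realization, the argument is a direct transcription of the proof of Lemma~\ref{lem:largeo-avoid-pattern-dominates-mu}.
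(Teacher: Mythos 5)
Your proposal is correct and follows essentially the same route as the paper's own proof: extract a finite witness $E_0 \subseteq H_0$ for the pattern $P$, assume $H_1$ does not compute a dominating function, use Theorem~\ref{thm:avoid-colors-dominate-left-ce-mu-computable} to realize the packed graph $\Pcal_r(\mu_0, E_0)$ inside $[H_1]^r$, and then transport the coloring via Lemma~\ref{lem:strongly-mu-transitive-yields-packed-largeness-graph} and the shared $\chi$ to force $E_1$ to $f_1$-satisfy $P$. The only cosmetic difference is that you explicitly cite Lemma~\ref{lem:strongly-increasing-yields-packed-largeness-graph} to justify $\Pcal_r(\mu_0,E_0) \in \Pcal_r$, a step the paper leaves implicit.
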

\begin{proof}
Since $H_0$ $f_0$-meets some $\rt^n_k$-pattern $P$, there is a finite set $E_0 = \{x_0 < \dots < x_{r-1} \} \subseteq H_0$ which $f_0$-meets $P$.
Suppose for the sake of contradiction that $H_1$ does not compute a function dominating $\mu_1$.
By Theorem~\ref{thm:avoid-colors-dominate-left-ce-mu-computable}, there is a finite set $E_1 = \{y_0 < \dots < y_{r-1}\} \subseteq H_1$ of size $r$ such that $\Pcal_r(\mu_0, E_0) = \Pcal_r(\mu_1, E_1)$. By Lemma~\ref{lem:strongly-mu-transitive-yields-packed-largeness-graph}, since $H_0$ is strongly $\mu_0$-transitive and $H_1$ is strongly $\mu_1$-transitive, for every $I \in [r]^n$, letting $D_0 = \{x_i : i \in I\}$ and $D_1 = \{y_i : i \in I\}$, $\Pcal_n(\mu_0, D_0) = \Pcal_n(\mu_1, D_1)$. Since $f_0(D_0) = \chi(\Pcal_n(\mu_0, D_0))$ and $f_1(D_1) = \chi(\Pcal_n(\mu_1, D_1))$, then $f_0(D_0) = f_1(D_1)$. Thus $f_0 \uh [E_0]^n$ and $f_1 \uh [E_1]^n$ have the same function graph.
It follows that $E_1$ $f_1$-meets $P$, so $H_1$ $f_1$-meets $P$. Contradiction.
\end{proof}

\begin{theorem}\label{thm:not-leq-lcelargeo-not-ca}
Let $W$ be a collection of $\rt^n_k$-patterns such that $\rt^n_k(W) \not \leq_{id} \lcelargeo^n_k$. Then for every left-c.e.\ function $\mu : \omega \to \omega$, there is a computable $\rt^n_k(W)$-instance such that every solution computes a function dominating~$\mu$.
\end{theorem}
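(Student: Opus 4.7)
The plan is to mimic the structure of Theorem~\ref{thm:not-leq-arithscart-not-sca}, replacing $\arithscart^n_k$ with $\lcelargeo^n_k$, the vector graphs $\Vcal_n$ with the packed largeness graphs $\Pcal_n$, and arbitrary moduli with strongly increasing left-c.e.\ ones so that the resulting $\rt^n_k(W)$-instance is computable. First, I would unpack the hypothesis $\rt^n_k(W) \not \leq_{id} \lcelargeo^n_k$ to extract a failure witness: a coloring $f_{fail} : [\omega]^n \to k$, an infinite strongly $\mu_{fail}$-transitive set $H_{fail}$, a strongly increasing relative left-c.e.\ function $\mu_{fail} : \omega \to \omega^{+}$, and a coloring $\chi : \Pcal_n \to k$ such that $f_{fail}(D) = \chi(\Pcal_n(\mu_{fail}, D))$ for every $D \in [H_{fail}]^n$, and such that $H_{fail}$ $f_{fail}$-meets some pattern $P \in W$. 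The coloring $\chi$ is the key piece of data recycled below.

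Next, given a left-c.e.\ function $\mu : \omega \to \omega$, I would apply Lemma~\ref{lem:leftce-computes-strongly-increasing} to obtain a strongly increasing left-c.e.\ function $g : \omega \to \omega$ dominating $\mu$. I would then define $f : [\omega]^n \to k$ by $f(D) = \chi(\Pcal_n(g, D))$. Because $g$ is left-c.e.\ with a computable approximation and the edges of $\Pcal_n(g, D) = (\{0,\dots,n-1\}, E)$ only involve finite-stage approximations of $g$ at stages bounded by $\max D$, the map $f$ is a computable $\rt^n_k(W)$-instance.

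The core claim is that every $\rt^n_k(W)$-solution $H$ to $f$ computes a function dominating $g$, hence dominating $\mu$. Assume for contradiction that some solution $H$ does not. By Cholak and Patey~\cite[Theorem 5.11]{Cholak2019Thin} combined with Lemma~\ref{lem:transitive-to-strongly-transitive}, there is an infinite strongly $g$-transitive subset $H_1 \subseteq H$ that still does not compute a function dominating $g$. By construction, $H_1$ is a $\lcelargeo^n_k$-solution to $f$ witnessed by $\chi$ and $g$, and $H_1$ still $f$-avoids $P$ (as $P \in W$). On the other hand, $H_{fail}$ is a $\lcelargeo^n_k$-solution to $f_{fail}$ with witnesses $\chi$ and $\mu_{fail}$ that $f_{fail}$-meets $P$. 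Applying Lemma~\ref{lem:lcelargeo-avoid-pattern-dominates-mu} with $f_0 = f_{fail}$, $f_1 = f$, $\mu_0 = \mu_{fail}$, $\mu_1 = g$ (left-c.e.\ and with $f$ computable) forces $H_1$ to compute a function dominating $g$, contradicting the choice of $H_1$.

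The main obstacle, and the reason the argument is not entirely mechanical, is ensuring that all hypotheses of Lemma~\ref{lem:lcelargeo-avoid-pattern-dominates-mu} are simultaneously satisfied: namely, that $g$ is left-c.e.\ so that the packed-largeness encoding is effective, that the instance $f$ is genuinely computable (which requires the strongly increasing refinement of $\mu$ provided by Lemma~\ref{lem:leftce-computes-strongly-increasing}), and that thinning $H$ to a strongly $g$-transitive $H_1$ can be done without injecting the power to dominate $g$. Once these pieces are in place, the contradiction closes exactly as in Theorem~\ref{thm:not-leq-arithscart-not-sca}, with $\Lcal_n$ replaced by $\Pcal_n$ throughout.
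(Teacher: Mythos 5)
Your proposal is correct and follows essentially the same approach as the paper's proof: extracting the failure witness $(\ftt_{fail}, H_{fail}, \mu_{fail}, \chi)$, strengthening $\mu$ to a strongly increasing left-c.e.\ dominating function $g$ via Lemma~\ref{lem:leftce-computes-strongly-increasing}, defining the computable instance $f(D) = \chi(\Pcal_n(g, D))$, and using Cholak--Patey Theorem~5.11 together with Lemma~\ref{lem:transitive-to-strongly-transitive} and Lemma~\ref{lem:lcelargeo-avoid-pattern-dominates-mu} to derive the contradiction. Your additional remark about why the packed-largeness encoding makes $f$ computable (edges only depend on finite-stage approximations $\mu_{x_q}$) is a correct and useful observation that the paper leaves implicit.
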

\begin{proof}
Since $\rt^n_k(W) \not \leq_{id} \lcelargeo^n_k$, there is a coloring $f_{fail} : [\omega]^n \to k$ and a $\lcelargeo^n_k$-solution $H_{fail}$ to $f_{fail}$ witnessed by strongly increasing left-c.e.\ function $\mu_{fail} : \omega \to \omega^{+}$ and a coloring $\chi : \Pcal_n \to k$, such that $H_{fail}$ is strongly $\mu_{fail}$-transitive, and $H_{fail}$ meets some $\rt^n_k$-pattern $P \in W$.
	
Let $\mu : \omega \to \omega$ be a left-c.e.\ function. By Lemma~\ref{lem:leftce-computes-strongly-increasing}, there is a strongly increasing left-c.e.\ function $g : \omega \to \omega$ dominating $\mu$. Let $f : [X]^n \to k$ be a computable instance of $\rt^n_k(W)$ defined by $f(D) = \chi(\Pcal_n(g, D))$. 
We claim that every $\rt^n_k(W)$-solution $H$ to $f$ computes a function dominating~$g$, hence dominating $\mu$. 
Fix $H$ and suppose for the contradiction that $H$ does not compute a function dominating $g$. By Cholak and Patey~\cite[Theorem 5.11]{Cholak2019Thin} and Lemma~\ref{lem:transitive-to-strongly-transitive}, there is an infinite strongly $g$-transitive subset $H_1 \subseteq H$ such that $H_1$ does not compute a function dominating~$g$.
	In particular, $H_1$ is an infinite $\lcelargeo^n_k$-solution to $f$ with witnesses $\chi$ and $g$, and such that $H_1$ $f$-avoids $P$. By Lemma~\ref{lem:lcelargeo-avoid-pattern-dominates-mu}, $H$ computes a function dominating~$g$, contradiction.
\end{proof}

\begin{theorem}\label{thm:lcelargeo-ca-characterization}
A problem $\rt^n_k(W)$ admits cone avoidance if and only if $\rt^n_k(W) \leq_{id} \lcelargeo^n_k$.
\end{theorem}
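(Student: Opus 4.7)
The plan is to mimic the structure of the proof of Theorem~\ref{thm:largeo-sca-characterization}, which established the analogous characterization for strong cone avoidance via $\largeo^n_k$. We now have all the ingredients for the cone avoidance version: Theorem~\ref{thm:lcelargeo-ca} supplies cone avoidance for $\lcelargeo^n_k$, and Theorem~\ref{thm:not-leq-lcelargeo-not-ca} provides the failure direction. So the proof is essentially a packaging argument.

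For the $\Leftarrow$ direction, assume $\rt^n_k(W) \leq_{id} \lcelargeo^n_k$, fix a set $Z$, a non-$Z$-computable set $C$, and a $Z$-computable instance $f : [\omega]^n \to k$ of $\rt^n_k(W)$. Note that $f$ is also a $Z$-computable instance of $\lcelargeo^n_k$ since both problems share the same instance space. By Theorem~\ref{thm:lcelargeo-ca}, there is a $\lcelargeo^n_k$-solution $H$ to $f$ with $C \not\leq_T Z \oplus H$. By identical reducibility, $H$ is automatically an $\rt^n_k(W)$-solution to $f$, witnessing cone avoidance of $\rt^n_k(W)$.

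For the $\Rightarrow$ direction, assume $\rt^n_k(W) \not\leq_{id} \lcelargeo^n_k$. Pick a left-c.e.\ modulus $\mu : \omega \to \omega$ of $\emptyset'$ (the standard modulus of the halting set). By Theorem~\ref{thm:not-leq-lcelargeo-not-ca}, there is a \emph{computable} $\rt^n_k(W)$-instance such that every solution computes a function dominating $\mu$, and therefore computes $\emptyset'$. Since $\emptyset'$ is non-computable, this computable instance admits no solution avoiding the cone of $\emptyset'$, so $\rt^n_k(W)$ does not admit cone avoidance.

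Neither direction presents a genuine obstacle once Theorems~\ref{thm:lcelargeo-ca} and~\ref{thm:not-leq-lcelargeo-not-ca} are in hand; the only subtlety is ensuring that the instance produced in the failure direction is \emph{computable} rather than merely arbitrary, which is exactly what distinguishes the cone avoidance setup from the strong cone avoidance setup. This is precisely the point where Theorem~\ref{thm:not-leq-lcelargeo-not-ca} is stronger than its $\largeo$ analog: the constructed instance $f : D \mapsto \chi(\Pcal_n(g, D))$ is computable because $g$ is a (strongly increasing) left-c.e.\ function and $\chi$ is finite, so the proof goes through unchanged.
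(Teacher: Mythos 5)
Your proposal is correct and takes essentially the same approach as the paper: the forward direction applies Theorem~\ref{thm:lcelargeo-ca} together with identical reducibility, and the reverse direction applies Theorem~\ref{thm:not-leq-lcelargeo-not-ca} to a left-c.e.\ modulus of $\emptyset'$. Your added remark that the constructed instance is computable because $\chi$ is finite and $g$ is left-c.e.\ is a fair observation about why the machinery works, though it is not needed once Theorem~\ref{thm:not-leq-lcelargeo-not-ca} is taken as given.
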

\begin{proof}
$\Leftarrow$: Suppose $\rt^n_k(W) \leq_{id} \lcelargeo^n_k$. Fix a set $Z$, a non-$Z$-computable set $C$ and a $Z$-computable coloring $f : [\omega]^n \to k$. By Theorem~\ref{thm:lcelargeo-ca}, there is a $\lcelargeo^n_k$-solution $H$ to $f$ such that $C \not \leq_T Z \oplus H$. In particular, $H$ is an $\rt^n_k(W)$-solution to~$f$.

$\Rightarrow$: Suppose $\rt^n_k(W) \not\leq_{id} \lcelargeo^n_k$. Let $\mu : \omega \to \omega$ be a left-c.e.\ modulus of $\emptyset'$. By Theorem~\ref{thm:not-leq-lcelargeo-not-ca}, there is a computable $\rt^n_k(W)$-instance such that every solution computes a function dominating $\mu$, hence computes~$\emptyset'$. Therefore $\rt^n_k(W)$ does not admit cone avoidance.
\end{proof}

\section{Promise Ramsey-like theorems}\label{sect:promise-ramsey-like}

The class of Ramsey-like problems encompasses Ramsey's theorem and the Erd\H{o}s-Moser theorem~\cite{Bovykin2005strength} since both statements are of the form \qt{For every coloring $f : [\omega]^n \to k$, there is an infinite set $H$ which avoids some set of patterns.} There are however two kind of consequences of Ramsey's theorem which do not fit within this framework. 

First, some statements have a restricted class of instances. For example, the Ascending Descending Sequence~\cite{Hirschfeldt2007Combinatorial} ($\ads$) principle asserts that every infinite linear order admits an infinite ascending or descending sequence. A linear order $\Lcal = (\omega, \prec_{\Lcal})$ can be formalized as a coloring $f : [\omega]^2 \to 2$ such that for every $x < y$, $f(x, y) = 1$ if and only if $x \prec_{\Lcal} y$. Such a coloring is called \emph{transitive}, since for every $x < y < z$ and $i < 2$, if $f(x, y) = i$ and $f(y, z) = i$ then $f(x, z) = i$. $\ads$ can therefore be formulated as the statement \qt{For every transitive coloring $f : [\omega]^2 \to 2$, there exists an infinite $f$-homogeneous set.} 

Second, some consequences of Ramsey's theorem such as the Free Set theorem~\cite{Cholak2001Free} involve $\omega$-colorings of $[\omega]^n$. The free set theorem for $n$-tuples is the statement \qt{For every coloring $f : [\omega]^n \to \omega$, there is an infinite set $H$ such that for every $x \in H$, $x \not \in f[H \setminus \{x\}]^n$.} Suppose that for every $D \in [\omega]^n$, $f(D) \leq \min D$. 
	We can define a coloring $g : [\omega]^{n+1} \to 2$ by $g(x_0, x_1, \dots, x_n) = 1$ if and only if $f(x_1, \dots, x_n) = x_0$. 
	Every $g$-homogeneous set must be of color 0, and therefore be a free set solution to $f$.
	Then one can formulate this restricted version of the free set theorem as the statement \qt{For every coloring $g : [\omega]^{n+1} \to \omega$ such that for every $x \in \omega$, there is at most one $D \in [\omega]^n$ such that $g(\{x\} \cup D) = 1$, there is an infinite $g$-homogeneous set.} The full free set theorem can also fit within this framework with additional technicalities.
	
In both examples, the problems can be formulated as statements \qt{For every coloring $f : [\omega]^n \to k$ which avoids some set of patterns, there is an infinite set $H$ which avoids another set of patterns.} In computational complexity, problems whose class of instances is restricted by some properties are known as \emph{promise problems}. This motivates the following definition.

\begin{definition}
Given two collections $V$ and $W$ of $\rt^n_k$-patterns, the \emph{promise $\rt^n_k$-like problem} $\rt^n_k(V, W)$ is the problem whose instances are colorings $f : [\omega]^n \to k$ such that $\omega$ $f$-avoids every pattern in $V$. An $\rt^n_k(V,W)$-solution to an instance $f$ is an infinite set $H \subseteq \omega$ $f$-avoiding every pattern in $W$.
\end{definition}

For example, $\ads$ is the promise $\rt^2_2$-like problem $\rt^2_2(W_{\ads}, W_{\rt^2_2})$ with $W_{\ads} = \{ \ftt(x_0, x_1) = 1 \wedge \ftt(x_1, x_2) = 1 \wedge \ftt(x_0, x_2) = 0, \ftt(x_0, x_1) = 0 \wedge \ftt(x_1, x_2) = 0 \wedge \ftt(x_0, x_2) = 1 \}$. We interpret a function $f : [\omega]^2 \to 2$ such that $\omega$ $f$-avoids the pattern $W_{\ads}$ as a linear order $\prec$ defined by $x \prec y$ if and only if $x <_{\Nb} y$ and $f(x, y) = 1$ or $x >_{\Nb} y$ and $f(x, y) = 0$.

Similarly, $\cac$ is the promise $\rt^2_3$-like problem $\rt^2_3(W_{\cac}, W_{\rt^2_2})$ with $W_{\cac} = W_{\ads}$.
We interpret a function $f : [\omega]^2 \to 3$ such that $\omega$ $f$-avoids the pattern $W_{\ads}$ as a partial order $\prec$ defined by $x \prec y$ if and only if $x <_{\Nb} y$ and $f(x, y) = 1$ or $x >_{\Nb} y$ and $f(x, y) = 0$.

\subsection{Strongly avoiding non-arithmetical cones} We now extend our analysis of strong cone avoidance for non-arithmetical cones to the class of promise Ramsey-like problems. For this, we need to define a restricted version of $\arithscart^n_k$, in which the coloring $\chi : \Vcal_n \to k$ must belong to a predefined set of colorings.

\begin{statement}
Let $\Rcal$ be a set of functions of type $\chi : \Vcal_n \to k$.
$\arithscartp{\Rcal}^n_k$: For every function $f : [\omega]^n \to k$,
there is a function $\mu : \omega \to \omega^{+}$, an infinite $\mu$-transitive set $H \subseteq \omega$, and a coloring $\chi \in \Rcal$ such that for every $D \in [H]^n$, $f(D) = \chi(\Vcal_n(\mu, D))$.
\end{statement}

One particular case of interest is whenever $\Rcal$ is a singleton $\{\chi\}$. The following notion will be very useful in our case analysis.
Given some $n, k \in \omega$ and a collection $W$ of $\rt^n_k$-patterns,
let 
$$
\Rcal^n_k(W) = \left\{\chi : \Vcal_n \to k \mbox{ }|\mbox{ } \rt^n_k(W) \leq_{id} \arithscartp{\{\chi\}}^n_k\right\}
$$
In particular, if $\rt^n_k(W)$ is a true statement, then every constant function $\chi : \Vcal_n \to k$ belongs to~$\Rcal^n_k(W)$.

\begin{lemma}\label{lem:rt-v-w-sca-arith}
Let $V$ and $W$ be collections of $\rt^n_k$-patterns such that
$\rt^n_k(W) \leq_{id} \arithscartp{\Rcal^n_k(V)}^n_k$. 
Then $\rt^n_k(V, W)$ admits strong cone avoidance for non-arithmetical cones.
\end{lemma}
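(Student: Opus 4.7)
The strategy is to start from the strong cone avoidance of $\arithscart^n_k$ for non-arithmetical cones (Theorem~\ref{thm:arithscart-sca-nonarith}) and then exploit the promise on $f$ to show that the witnessing coloring $\chi$ automatically lies in $\Rcal^n_k(V)$, after which the hypothesis $\rt^n_k(W) \leq_{id} \arithscartp{\Rcal^n_k(V)}^n_k$ converts our $\arithscart^n_k$-solution into a $\rt^n_k(W)$-solution, which is exactly what a $\rt^n_k(V,W)$-solution needs.

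Fix an instance $f$ of $\rt^n_k(V,W)$ (so $\omega$ $f$-avoids every $P\in V$) and a non-$Z$-arithmetical set $C$. I would split into two subcases, mirroring the proof of Theorem~\ref{thm:arithscart-sca-nonarith}. If $C$ is $Z$-hyperarithmetical, fix a $Z$-modulus $\mu:\omega\to\omega$ of $C$ (Groszek--Slaman) and apply strong cone avoidance of $\rt^n_{<\infty,2^{n-1}}$ for non-arithmetical cones to the coloring $D\mapsto\langle f(D),\Vcal_n(\mu,D)\rangle$ to extract an infinite $\mu$-transitive set $H$ with $C\not\leq_T Z\oplus H$ and a coloring $\chi:\Vcal_n\to k$ witnessing that $H$ is an $\arithscart^n_k$-solution to $f$. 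If $C$ is not $Z$-hyperarithmetical, then $C$ is not computably encodable relative to $Z$ (Solovay), so inside any infinite $f$-homogeneous set (given by classical Ramsey) I extract an infinite $H$ with $C\not\leq_T Z\oplus H$; then $f$ is constant of some value $c$ on $[H]^n$, and $H$ is a trivial $\arithscart^n_k$-solution with $\chi\equiv c$ and any $\omega$-valued $\mu$.

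The crux is proving $\chi\in\Rcal^n_k(V)$. Suppose otherwise: then there are $g:[\omega]^n\to k$, $\mu':\omega\to\omega^{+}$ and an infinite $\mu'$-transitive $H'$ with $g(D)=\chi(\Vcal_n(\mu',D))$ on $[H']^n$, together with some $P\in V$ that $H'$ $g$-meets. In the hyperarithmetical subcase, I would apply Lemma~\ref{lem:arithscart-avoid-pattern-dominates-mu} with $(f_0,H_0,\mu_0)=(g,H',\mu')$ and $(f_1,H_1,\mu_1)=(f,H,\mu)$: since $\mu$ is a $Z$-modulus of $C$ and $C\not\leq_T Z\oplus H$, $H$ does not compute a function dominating $\mu$, so by the contrapositive $H$ $f$-meets $P$, whence $\omega$ does, contradicting the promise. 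In the non-hyperarithmetical subcase, $\chi\equiv c$ forces every conjunct value of $P$ to equal $c$; but then any $r$-element subset of $H$ already $f$-satisfies $P$ (since $f\equiv c$ on $[H]^n$), again contradicting $\omega$ $f$-avoiding $P$.

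Once $\chi\in\Rcal^n_k(V)$ is established, $H$ is an $\arithscartp{\Rcal^n_k(V)}^n_k$-solution to $f$, so by the hypothesis it is also an $\rt^n_k(W)$-solution, and we are done. The main obstacle is the hyperarithmetical subcase: extracting a finite tuple of $H$ with the prescribed vector-graph structure of $E'$ forces $\mu$ to be $\omega$-valued so that Theorem~\ref{thm:avoid-colors-dominate-mu} (inside Lemma~\ref{lem:arithscart-avoid-pattern-dominates-mu}) applies, and the hyperarithmeticity/non-hyperarithmeticity split is precisely tailored to secure this while also pinning $\mu$ to be a modulus of $C$.
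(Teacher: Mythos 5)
Your proof follows the paper's own strategy quite closely: split on whether $C$ is $Z$-hyperarithmetical; in the hyperarithmetical subcase combine a $Z$-modulus $\mu$ of $C$ with strong cone avoidance of $\rt^n_{<\infty,2^{n-1}}$ for non-arithmetical cones to produce a $\mu$-transitive $\arithscart^n_k$-solution $H$ with a witnessing $\chi$, then use Lemma~\ref{lem:arithscart-avoid-pattern-dominates-mu} (in contrapositive form, which is logically equivalent to how the paper deduces the contradiction) to force $\chi\in\Rcal^n_k(V)$; then conclude by the identity reduction hypothesis. All of this matches the paper.

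The one place where you add something substantive is the non-$Z$-hyperarithmetical subcase. The paper's printed proof stops there after extracting a cone-avoiding $\arithscart^n_k$-solution $H$ via Solovay's computable encodability characterization, and its subsequent $\chi$-check paragraph invokes Lemma~\ref{lem:arithscart-avoid-pattern-dominates-mu} and the modulus $\mu$ --- so that paragraph only applies to the hyperarithmetical subcase. You correctly observe that one must also establish $\chi\in\Rcal^n_k(V)$ in the first subcase, and that Lemma~\ref{lem:arithscart-avoid-pattern-dominates-mu} is useless there (one can take $\mu$ constant, making its conclusion vacuous). Your fix is exactly right: take $H$ to be $f$-homogeneous so that $\chi\equiv c$ is constant, note that $\chi\notin\Rcal^n_k(V)$ would produce a pattern $P\in V$ all of whose conjunct values are $c$, and then any large enough tuple of $H$ $f$-satisfies $P$, so $\omega$ $f$-meets $P$, contradicting the promise on $f$. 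This is a small but genuine tightening of the argument as written in the paper.
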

\begin{proof}
Fix a set $Z$, a non-$Z$-arithmetical set $C$ and a coloring $f : [\omega]^n \to k$.

Suppose first that $C$ is not $Z$-hyperarithmetical. By Solovay~\cite{Solovay1978Hyperarithmetically}, $C$ is not computably encodable relative to~$Z$. Since for every infinite set $X \subseteq \omega$, there is an infinite $\arithscart^n_k$-solution $Y \subseteq X$ to $f$, there is an $\arithscart^n_k$-solution $H$ to $f$ such that $C \not \leq_T Z \oplus H$. 

Suppose now that $C$ is $Z$-hyperarithmetical. By Groszek and Slaman~\cite{Groszek2007Moduli}, there is a modulus $\mu : \omega \to \omega$ relative to $Z$, that is, for every function $g$ dominating $\mu$, $C \leq_T Z \oplus g$.
Let $f_1$ be defined for each $D \in [\omega]^n$ by $f_1(D) = \langle f(D), \Vcal_n(\mu, D) \rangle$.
By strong cone avoidance of $\rt^n_{<\infty, 2^{n-1}}$ for non-arithmetical cones (see Cholak and Patey~\cite[Theorem 4.15]{Cholak2019Thin}), there is an infinite set $H$ such that $C \not \leq_T Z \oplus H$ and $|f_1[H]^n| \leq 2^{n-1}$. In particular, $Z \oplus H$ does not compute a function dominating~$\mu$, so by Theorem~\ref{thm:avoid-colors-dominate-mu}, for every $\Gcal \in \Vcal_n$, there is some $i < k$ and some $D \in [H]^n$ such that $f_1(D) = \langle i, \Gcal \rangle$. Since $|\Vcal_n| = 2^{n-1}$, this $i$ is unique. For each $\Gcal \in \Vcal_n$, let $\chi(\Gcal)$ be such an~$i$.
	In particular, for every $D \in [H]^n$, $f(D) = \chi(\Vcal_n(\mu, D))$.
	By Cholak and Patey~\cite[Corollary 5.5]{Cholak2019Thin}, there is an infinite $\mu$-transitive subset $H_1 \subseteq H$ such that $C \not \leq_T Z \oplus H_1$. Therefore, $H_1$ is a $\arithscart^n_k$-solution to~$f$.
	
If $\chi \in \Rcal^n_k(V)$, then since $\rt^n_k(W) \leq_{id} \arithscartp{\Rcal^n_k(V)}^n_k$, $H_2$ is an $\rt^n_k(V, W)$-solution to $f$, and we are done. So suppose $\chi \not \in \Rcal^n_k(V)$. Unfolding the definition, $\rt^n_k(V) \not \leq_{id} \arithscartp{\{\chi\}}^n_k$, so there is a coloring $f_\chi : [\omega]^n \to k$ and some infinite $\arithscart^n_k$-solution $H_\chi$ to~$f_\chi$ with witnesses $\mu_\chi : \omega \to \omega^{+}$ and $\chi : \Vcal_n \to k$, that meets some $\rt^n_k$-pattern $P_\chi \in V$. 

However, $H_2$ $f$-avoids $P_\chi$ since $f$ is an instance of $\rt^n_k(V, W)$. By Lemma~\ref{lem:arithscart-avoid-pattern-dominates-mu}, $H_2 \oplus Z$ computes a function dominating~$\mu$, hence $C \leq_T Z \oplus H_2$, contradiction. This completes the proof of Lemma~\ref{lem:rt-v-w-sca-arith}.
\end{proof}

\begin{lemma}\label{lem:w-not-fixed-arithscart-but-v-does}
Let $V$ and $W$ be collections of $\rt^n_k$-patterns such that
$\rt^n_k(W) \not \leq_{id} \arithscartp{\Rcal^n_k(V)}^n_k$. 
For every function $\mu : \omega \to \omega$, there is an $\rt^n_k(V, W)$-instance such that every solution computes a function dominating~$\mu$.
\end{lemma}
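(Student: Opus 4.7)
The plan is to mirror the proof of Theorem~\ref{thm:not-leq-arithscart-not-sca}, the crucial novelty being that the coloring $f$ I build must genuinely be an instance of $\rt^n_k(V, W)$, so $\omega$ itself must $f$-avoid every pattern in $V$. The only way to enforce this globally is to transport the construction through an infinite $\mu$-transitive set, exploiting $\chi \in \Rcal^n_k(V)$ to transfer $V$-avoidance from infinite $\mu'$-transitive solutions to the entire domain.

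First I unpack the hypothesis to extract a failure witness: a coloring $f_{\mathrm{fail}} : [\omega]^n \to k$, a modulus $\mu_{\mathrm{fail}} : \omega \to \omega^{+}$, a coloring $\chi \in \Rcal^n_k(V)$, and an infinite $\mu_{\mathrm{fail}}$-transitive set $H_{\mathrm{fail}}$ that is an $\arithscartp{\{\chi\}}^n_k$-solution to $f_{\mathrm{fail}}$ yet $f_{\mathrm{fail}}$-meets some $P \in W$. Given the target $\mu : \omega \to \omega$, which I assume monotone without loss of generality, I invoke Cholak and Patey~\cite[Corollary 5.5]{Cholak2019Thin} to fix an infinite $\mu$-transitive set $X = \{x_0 < x_1 < \dots\} \subseteq \omega$ and push $\mu$ forward to $\tilde\mu : \omega \to \omega$ via $\tilde\mu(i) = \min\{j : x_j \geq \mu(x_i)\}$. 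An easy unfolding gives $\Vcal_n(\tilde\mu, D) = \Vcal_n(\mu, \{x_i : i \in D\})$, and the $\mu$-transitivity of $X$ transfers to yield that $\omega$ itself is $\tilde\mu$-transitive. The instance is then $f(D) = \chi(\Vcal_n(\tilde\mu, D))$.

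The conceptual core is the validity check: since $\omega$ is an infinite $\tilde\mu$-transitive set with $f = \chi \circ \Vcal_n(\tilde\mu, \cdot)$ on all of $[\omega]^n$, the set $\omega$ is an $\arithscartp{\{\chi\}}^n_k$-solution to $f$, and unfolding $\chi \in \Rcal^n_k(V)$, i.e., $\rt^n_k(V) \leq_{id} \arithscartp{\{\chi\}}^n_k$, forces $\omega$ to $f$-avoid every pattern in $V$. Hence $f$ is a legitimate $\rt^n_k(V, W)$-instance. Any solution $H$ then inherits $\tilde\mu$-transitivity from $\omega$, satisfies $f \upharpoonright [H]^n = \chi \circ \Vcal_n(\tilde\mu, \cdot)$, and $f$-avoids the pattern $P$ that $H_{\mathrm{fail}}$ $f_{\mathrm{fail}}$-meets. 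Applying Lemma~\ref{lem:arithscart-avoid-pattern-dominates-mu} to the pair $(H_{\mathrm{fail}}, H)$ with common witness $\chi$ then forces $H$ to compute a function dominating $\tilde\mu$.

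The main obstacle is the final translation step: a solution-computable dominator of $\tilde\mu$ must be converted into a solution-computable dominator of the original $\mu$, and the naive estimate $\mu(x_i) \leq x_{\tilde\mu(i)}$ only yields $\mu$-domination relative to $X$. My strategy is to pre-process $\mu$ (monotonizing it and, by choosing $X$ carefully, arranging $x_i \geq i$) so that composing the $\tilde\mu$-dominator with the principal function of $H$ itself, which is $H$-computable and grows at least as fast as $n$, recovers a uniform lower bound on $\mu$. If this direct route turns out too delicate, the fallback is to imitate the proof of Theorem~\ref{thm:not-leq-arithscart-not-sca} and invoke Cholak and Patey~\cite[Theorem 5.11]{Cholak2019Thin} to pass to a $\tilde\mu$-transitive subset of $H$ before applying Lemma~\ref{lem:arithscart-avoid-pattern-dominates-mu}, closing the gap by a relativization argument.
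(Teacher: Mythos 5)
Your construction is the paper's construction seen in renamed coordinates: where the paper keeps $\mu$ and restricts the instance's domain to a $\mu$-transitive set $H$, you pull $\mu$ back along $X$ to $\tilde\mu$ and build the instance on all of $\omega$. Up through the validity check (``$\omega$ is $\tilde\mu$-transitive, hence an $\arithscartp{\{\chi\}}^n_k$-solution, hence $f$-avoids $V$'') these are equivalent, and your unfolding of $\chi \in \Rcal^n_k(V)$ is exactly right. The approaches diverge at the application of Lemma~\ref{lem:arithscart-avoid-pattern-dominates-mu}, and that divergence is precisely where your proof has a genuine gap.

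The paper applies Lemma~\ref{lem:arithscart-avoid-pattern-dominates-mu} to the solution $G \subseteq H \subseteq \omega$ \emph{with witness $\mu$}: since $G$ is a subset of the $\mu$-transitive $H$ sitting in the original $\omega$, and $f(D) = \chi(\Vcal_n(\mu, D))$ holds verbatim for $D \in [G]^n$, the lemma directly outputs a function dominating $\mu$, with no translation needed. You instead apply the lemma with witness $\tilde\mu$ and get a $\tilde\mu$-dominator. This cannot in general be turned into a $\mu$-dominator, and neither of your two suggestions closes the gap. A $\tilde\mu$-dominator $h$ only yields $\mu(x_i) \leq x_{h(i)}$; to unwind this you need the principal function of $X$, which the solution has no access to, and there is nothing forcing the solution's own principal function to grow as fast as $X$'s. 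The monotonization and $x_i \geq i$ preconditioning do not change this: $\tilde\mu$ measures largeness in \emph{index space}, so if $X$ is sparse enough (say all adjacent intervals $\mu$-large), $\tilde\mu$ is bounded by $i \mapsto i+1$ and the ``dominator'' conclusion becomes vacuous while $\mu$ remains arbitrarily fast-growing. The fallback also fails: a solution $H \subseteq \omega$ is already $\tilde\mu$-transitive, so passing to a further $\tilde\mu$-transitive subset via Theorem~5.11 just reproduces the same $\tilde\mu$-domination conclusion.

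The fix is to avoid the renaming of the modulus entirely. Keep $f(D) = \chi(\Vcal_n(\mu, D))$ on $[\omega]^n$; use Cholak--Patey Theorem~5.11 to obtain an infinite $\mu$-transitive $H \subseteq \omega$ not computing a function dominating $\mu$ (assuming, as you may, that $\mu$ is not dominated by any computable function); note $H$ is an $\arithscartp{\{\chi\}}^n_k$-solution and hence $f$-avoids $V$, so $f$ restricted to $H$ is a legitimate $\rt^n_k(V,W)$-instance; and for any solution $G \subseteq H$ apply Lemma~\ref{lem:arithscart-avoid-pattern-dominates-mu} to $(H_{\mathrm{fail}}, G)$ \emph{with common witness $\chi$ and modulus $\mu$} — here $G$ is genuinely $\mu$-transitive as a subset of $H$, so the lemma delivers a $\mu$-dominator directly. (Note that even here the role of $H$ as an implicit oracle is being swept under the rug; compare the cone-avoidance analogue Lemma~\ref{lem:w-not-fixed-lcelargeo-but-v-does}, where the paper makes the oracle $Z$ explicit in the statement. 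But that is a presentational issue with the lemma as stated, not something your push-forward repairs.)
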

\begin{proof}
Since $\rt^n_k(W) \not \leq_{id} \arithscartp{\Rcal^n_k(V)}^n_k$, there is some coloring $f_{fail} : [\omega]^n \to k$ and some infinite $\arithscart^n_k$-solution $H$ to~$f$ with some witness $\mu_{fail} : \omega \to \omega$ and $\chi \in \Rcal^n_k(V)$ that meets some $\rt^n_k$-pattern $P \in W$.
Define $f : [\omega]^n \to k$ by $f(D) = \chi(\Vcal_n(\mu, D))$.
Suppose $\mu$ is not dominated by any computable function, otherwise we are done.
By Cholak and Patey~\cite[Theorem 5.11]{Cholak2019Thin}, there is an infinite $\mu$-transitive set $H$ which does not compute a function dominating $\mu$.
Therefore, $H$ is an $\arithscart^n_k$-solution to $f$ with witnesses $\mu$ and $\chi \in \Rcal^n_k(V)$. By definition of $\Rcal^n_k(V)$, $\rt^n_k(V) \leq_{id} \arithscartp{\{\chi\}}^n_k$ and $H$ is an $\rt^n_k(V)$-solution to $f$, so $f : [H]^n \to k$ is an instance of $\rt^n_k(V, W)$.

We claim that for every $\rt^n_k(V, W)$-solution $G \subseteq H$ to $f$, $G$ computes a function dominating $\mu$. In particular, $G$ $f$-avoids $P$, so by Lemma~\ref{lem:arithscart-avoid-pattern-dominates-mu}, $G$ computes a function dominating~$\mu$. This completes the proof of Lemma~\ref{lem:w-not-fixed-arithscart-but-v-does}.
\end{proof}

\begin{theorem}\label{thm:characterization-sca-arith-promise-rt22-problems}
Let $V$ and $W$ be two collections of $\rt^n_k$-patterns.
$\rt^n_k(V, W)$ admits strong cone avoidance for non-arithmetical cones if and only if $\rt^n_k(W) \leq_{id} \arithscartp{\Rcal^n_k(V)}^n_k$.
\end{theorem}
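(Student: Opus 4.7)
The plan is to prove this characterization by packaging the two preceding technical lemmas, mirroring the structure of the analogous characterization Theorem~\ref{thm:arithscart-sca-characterization} for plain $\rt^n_k$-like problems and Theorem~\ref{thm:largeo-sca-characterization} for the computable case. Since the definition of $\Rcal^n_k(V)$ and Lemmas~\ref{lem:rt-v-w-sca-arith}~and~\ref{lem:w-not-fixed-arithscart-but-v-does} have already done the heavy combinatorial work, the proof reduces to an abstract bookkeeping argument.

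For the $(\Leftarrow)$ direction, suppose $\rt^n_k(W) \leq_{id} \arithscartp{\Rcal^n_k(V)}^n_k$. Fix a set $Z$, a non-$Z$-arithmetical set $C$, and an $\rt^n_k(V, W)$-instance $f : [\omega]^n \to k$. Viewing $f$ simply as an instance of $\arithscartp{\Rcal^n_k(V)}^n_k$, Lemma~\ref{lem:rt-v-w-sca-arith} directly yields an infinite $\arithscartp{\Rcal^n_k(V)}^n_k$-solution $H$ with $C \not\leq_T Z \oplus H$, which by the identity reducibility hypothesis is automatically an $\rt^n_k(V,W)$-solution.

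For the $(\Rightarrow)$ direction, I argue the contrapositive. Assume $\rt^n_k(W) \not\leq_{id} \arithscartp{\Rcal^n_k(V)}^n_k$. I need a witness that strong cone avoidance for non-arithmetical cones fails, so I must exhibit some non-arithmetical set $C$ and an $\rt^n_k(V,W)$-instance all of whose solutions compute $C$. The natural choice is to take $C$ to be any non-arithmetical hyperarithmetical set; by Groszek and Slaman~\cite{Groszek2007Moduli}, $C$ admits a modulus $\mu : \omega \to \omega$, so that any function dominating $\mu$ computes $C$. Applying Lemma~\ref{lem:w-not-fixed-arithscart-but-v-does} to this $\mu$ produces an $\rt^n_k(V, W)$-instance $f$ all of whose solutions compute a function dominating $\mu$, and hence compute $C$. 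Taking $Z = \emptyset$, this witnesses the failure of strong cone avoidance for non-arithmetical cones.

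I don't anticipate any genuine obstacle: the only care needed is to ensure that a non-arithmetical set with a modulus actually exists, which is immediate from Groszek-Slaman (any hyperarithmetical set, in particular any non-arithmetical one at a level above $\omega$ in the hyperarithmetical hierarchy, has a modulus). The proof is essentially a verbatim adaptation of Theorem~\ref{thm:arithscart-sca-characterization}, with the role of $\arithscart^n_k$ replaced by $\arithscartp{\Rcal^n_k(V)}^n_k$ and the role of Theorem~\ref{thm:arithscart-sca-nonarith} and Theorem~\ref{thm:not-leq-arithscart-not-sca} played by Lemmas~\ref{lem:rt-v-w-sca-arith}~and~\ref{lem:w-not-fixed-arithscart-but-v-does}, respectively.
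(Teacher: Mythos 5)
Your proof is correct and follows essentially the same route as the paper: the $(\Leftarrow)$ direction is exactly Lemma~\ref{lem:rt-v-w-sca-arith} (your restatement slightly misattributes the lemma's conclusion — it already produces an $\rt^n_k(V,W)$-solution outright rather than an $\arithscartp{\Rcal^n_k(V)}^n_k$-solution to be upgraded afterward, so the extra $\leq_{id}$ step is redundant), and the $(\Rightarrow)$ direction via contrapositive, a non-arithmetical set with a modulus (Groszek--Slaman), and Lemma~\ref{lem:w-not-fixed-arithscart-but-v-does} matches the paper exactly.
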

\begin{proof}
$\Leftarrow$: This is Lemma~\ref{lem:rt-v-w-sca-arith}.  $\Rightarrow$: We prove the contrapositive. Suppose $\rt^n_k(W) \not \leq_{id} \arithscartp{\Rcal^n_k(V)}^n_k$ Let $C$ be a non-arithmetical set with modulus $\mu$. By Lemma~\ref{lem:w-not-fixed-arithscart-but-v-does}, there is a $\rt^n_k(V, W)$-instance such that every solution computes a function dominating $\mu$, hence computes~$C$.
\end{proof}

In the case $V = \emptyset$, then $\Rcal^n_k(V)$ is the set of all functions $\chi : \Vcal_n \to k$ and therefore $\arithscartp{\Rcal^n_k(V)}^n_k$ is $\arithscart^n_k$. We obtain Theorem~\ref{thm:arithscart-sca-characterization}, that is, $\rt^n_k(\emptyset, W)$ admits strong cone avoidance for non-arithmetical cones if and only if $\rt^n_k(W) \leq_{id} \arithscart^n_k$.
More interestingly, we consider the case where $W$ is the set $W_{\rt^n_k}$ of patterns forbidding non-homogeneous sets.

\begin{corollary}\label{cor:rt-v-rtnk-sca-arith}
Let $V$ be a collection of $\rt^n_k$-patterns.
$\rt^n_k(V, W_{\rt^n_k})$ admits strong cone avoidance for non-arithmetical cones if and only if $\Rcal^n_k(V)$ contains only constant functions.
\end{corollary}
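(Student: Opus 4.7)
The plan is to specialize Theorem~\ref{thm:characterization-sca-arith-promise-rt22-problems} by taking $W = W_{\rt^n_k}$, for which $\rt^n_k(W_{\rt^n_k})$ is just Ramsey's theorem $\rt^n_k$. The corollary then reduces to the equivalence: $\rt^n_k \leq_{id} \arithscartp{\Rcal^n_k(V)}^n_k$ if and only if every $\chi \in \Rcal^n_k(V)$ is constant. Unfolding $\leq_{id}$, this identical reduction states that every $\arithscartp{\Rcal^n_k(V)}^n_k$-solution $H$ to any coloring $f : [\omega]^n \to k$, witnessed by some $\mu$ and $\chi \in \Rcal^n_k(V)$, must already be $f$-homogeneous.

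The easy direction is immediate: if every $\chi \in \Rcal^n_k(V)$ is constant with value $i < k$, then $f(D) = \chi(\Vcal_n(\mu, D)) = i$ for every $D \in [H]^n$, making $H$ $f$-homogeneous.

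For the converse, I will take a non-constant $\chi \in \Rcal^n_k(V)$ and produce a coloring $f$ together with an $\arithscartp{\{\chi\}}^n_k$-solution $H$ to $f$ that is not $f$-homogeneous; since $\chi \in \Rcal^n_k(V)$, such an $H$ is also an $\arithscartp{\Rcal^n_k(V)}^n_k$-solution and thus certifies $\rt^n_k \not\leq_{id} \arithscartp{\Rcal^n_k(V)}^n_k$. Fix $\Gcal, \Gcal' \in \Vcal_n$ with $\chi(\Gcal) \neq \chi(\Gcal')$. I take $H = \bigcup_i C_i$, a disjoint union of $n$-element clusters of consecutive integers $C_i = \{P_i, P_i+1, \dots, P_i+n-1\}$ whose base points grow fast enough that $M(P_i+n-1) < P_{i+1}$ for a fixed large constant $M$, and set $\mu(x) = Mx$. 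Then $[x, y]$ is $\mu$-small precisely when $x$ and $y$ lie in a common cluster; since each cluster is an interval of consecutive integers, the ``same cluster'' relation is transitive along $H$, which yields $\mu$-transitivity. Given any target vector graph $\Gcal^* \in \Vcal_n$ with adjacent bit sequence $(b_0, \dots, b_{n-2})$, I partition the index set $\{0, 1, \dots, n-1\}$ into consecutive runs by cutting exactly between positions $j$ and $j+1$ with $b_j = 1$, assign distinct clusters to the runs, and pick the requisite number of initial consecutive elements from each assigned cluster to form $D = \{d_0 < \dots < d_{n-1}\}$; this makes $d_j$ and $d_{j+1}$ share a cluster exactly when $b_j = 0$, so $\Vcal_n(\mu, D) = \Gcal^*$. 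Every vector graph in $\Vcal_n$ is thereby realized on $[H]^n$, in particular both $\Gcal$ and $\Gcal'$, and defining $f(D) = \chi(\Vcal_n(\mu, D))$ on $[H]^n$ and arbitrarily elsewhere gives the desired witness.

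The core technical step is this geometric construction, together with the verification that $H$ is $\mu$-transitive and that every vector graph arises on $[H]^n$; once this is in place, the corollary follows by direct invocation of Theorem~\ref{thm:characterization-sca-arith-promise-rt22-problems}.
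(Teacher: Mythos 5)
Your proof is correct and establishes exactly what is needed, but it takes a genuinely different and more elementary route for the hard direction than the paper. Both proofs open identically by invoking Theorem~\ref{thm:characterization-sca-arith-promise-rt22-problems} to reduce the corollary to the equivalence $\rt^n_k \leq_{id} \arithscartp{\Rcal^n_k(V)}^n_k$ iff $\Rcal^n_k(V)$ consists only of constant functions, and the easy direction (constant $\chi$ implies every $\arithscartp{\Rcal^n_k(V)}^n_k$-solution is homogeneous) is the same in both. Where you diverge is in producing a witness to $\rt^n_k \not\leq_{id} \arithscartp{\Rcal^n_k(V)}^n_k$ from a non-constant $\chi \in \Rcal^n_k(V)$: the paper lets $\mu$ be a modulus of a non-arithmetical set, invokes Cholak--Patey (Corollary 5.5) to obtain an infinite $\mu$-transitive set $H$ that does not compute a function dominating $\mu$, and then uses Theorem~\ref{thm:avoid-colors-dominate-mu} to conclude that every $\Gcal \in \Vcal_n$ is realized on $[H]^n$. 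You instead give a direct, explicit construction: $\mu(x)=Mx$ and $H$ a union of sufficiently separated $n$-clusters of consecutive integers, so that the $\mu$-largeness relation on $H$ is exactly the ``different clusters'' relation, which makes $\mu$-transitivity immediate, and your run-partition argument realizes every vector graph on $[H]^n$ by choosing consecutive elements within clusters. This buys you a purely combinatorial, self-contained argument that needs none of the computability-theoretic machinery, at the cost of having to verify the small amount of elementary arithmetic about the cluster spacing; the paper's version buys the convenience of reusing already-established theorems at the cost of invoking the fairly heavy notion of a non-arithmetical modulus for what is in the end a combinatorial non-implication. Both are valid; yours is arguably the more transparent proof of this particular corollary.
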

\begin{proof}
By Theorem~\ref{thm:characterization-sca-arith-promise-rt22-problems}, 
$\rt^n_k(V, W_{\rt^n_k})$ admits strong cone avoidance for non-arithmetical cones if and only if $\rt^n_k \leq_{id} \arithscartp{\Rcal^n_k(V)}^n_k$.
Case 1: $\Rcal^n_k(V)$ contains only constant functions. Then for any $f : [\omega]^n \to k$, any $\arithscartp{\Rcal^n_k(V)}^n_k$-solution to $f$ is $f$-homogeneous, hence $\rt^n_k \leq_{id} \arithscartp{\Rcal^n_k(V)}^n_k$, so $\rt^n_k(V, W_{\rt^n_k})$ admits strong cone avoidance for non-arithmetical cones.
Case 2: $\Rcal^n_k(V)$ contains a non-constant function $\chi : \Vcal_n \to k$.
Let $\mu : \omega \to \omega$ be a modulus of a non-arithmetical set $C$.
Let $f(D) = \chi(\Vcal_n(\mu, D))$. By Cholak and Patey~\cite[Corollary 5.5]{Cholak2019Thin}, there is an infinite $\mu$-transitive set $H$ which does not compute a function dominating $\mu$. Therefore, $H$ is an $\arithscartp{\Rcal^n_k(V)}^n_k$-solution to $f$ with witnesses $\mu$ and $\chi$. We claim that $H$ is not $f$-homogeneous. Indeed, by Theorem~\ref{thm:avoid-colors-dominate-mu}, for every $\Gcal \in \Vcal_n$, there is some $D \in [H]^n$ such that $\Gcal = \Vcal_n(\mu, D)$. Since $\chi$ is not constant on $\Vcal_n$, $H$ is not $f$-homogeneous. Thus $\rt^n_k \not\leq_{id} \arithscartp{\Rcal^n_k(V)}^n_k$ and $\rt^n_k(V, W_{\rt^n_k})$ does not admit strong cone avoidance for non-arithmetical cones.
\end{proof}

\subsection{Strongly avoiding non-computable cones} The strong cone avoidance analysis is very similar to the one for non-arithmetical cones, \emph{mutatis mutandis}. We start again by defining a restricted version of $\largeo^n_k$.

\begin{statement}
Let $\Scal$ be a set of functions of type $\chi : \Lcal_n \to k$.
$\largep{\Scal}^n_k$: For every function $f : [\omega]^n \to k$,
there is a strongly increasing relative left-c.e.\ function $\mu : \omega \to \omega^{+}$, an infinite strongly $\mu$-transitive set $H \subseteq \omega$, and a coloring $\chi \in \Scal$ such that for every $D \in [H]^n$, $f(D) = \chi(\Lcal_n(\mu, D))$.
\end{statement}

Given a collection of $\rt^n_k$-patterns, we define a class $\Scal^n_k(W)$ for $\largeo^n_k$ is a similar way as the class $\Rcal^n_k(W)$ for $\arithscart^n_k$, that is, given some $n, k \in \omega$ and a collection $W$ of $\rt^n_k$-patterns,
let 
$$
\Scal^n_k(W) = \left\{\chi : \Lcal_n \to k \mbox{ }|\mbox{ } \rt^n_k(W) \leq_{id} \largep{\{\chi\}}^n_k\right\}
$$
In particular, if $\rt^n_k(W)$ is a true statement, then every constant function $\chi : \Lcal_n \to k$ belongs to~$\Scal^n_k(W)$.

\begin{lemma}\label{lem:rt-v-w-sca}
Let $V$ and $W$ be collections of $\rt^n_k$-patterns such that
$\rt^n_k(W) \leq_{id} \largep{\Scal^n_k(V)}^n_k$. 
Then $\rt^n_k(V, W)$ admits strong cone avoidance.
\end{lemma}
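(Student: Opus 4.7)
The plan is to mirror the proof of Lemma~\ref{lem:rt-v-w-sca-arith}, substituting the $\largeo^n_k$ machinery for the $\arithscart^n_k$ machinery used there: Theorem~\ref{thm:largeo-sca} replaces the Solovay/Groszek--Slaman cocktail, and Lemma~\ref{lem:largeo-avoid-pattern-dominates-mu} replaces Lemma~\ref{lem:arithscart-avoid-pattern-dominates-mu}. Fix $Z$, a set $C$ with $C \not\leq_T Z$, and an $\rt^n_k(V,W)$-instance $f : [\omega]^n \to k$.

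First I would replay the construction inside the proof of Theorem~\ref{thm:largeo-sca}: invoke Lerman~\cite[4.18]{Lerman1983Degrees} to fix $Z_1 \geq_T Z$ with $C$ being $\Delta^0_2(Z_1)$ but not $Z_1$-computable, pull out a strongly increasing left-$Z_1$-c.e.\ modulus $\mu$ of $C$ via Lemma~\ref{lem:leftce-computes-strongly-increasing}, form $f_1(D) = \langle f(D), \Lcal_n(\mu, D)\rangle$, and apply strong cone avoidance of $\rt^n_{<\infty, C_n}$ together with Theorem~\ref{thm:avoid-colors-dominate-left-ce-mu} to extract an infinite $H$ with $C \not\leq_T Z_1 \oplus H$ and, using $|\Lcal_n| = C_n$, an induced colouring $\chi : \Lcal_n \to k$ satisfying $f(D) = \chi(\Lcal_n(\mu, D))$ on $[H]^n$. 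Then thin $H$ to an infinite strongly $\mu$-transitive subset $H_2$ via Cholak--Patey~\cite[Corollary 5.5]{Cholak2019Thin} and Lemma~\ref{lem:transitive-to-strongly-transitive} while preserving $C \not\leq_T Z \oplus H_2$. At this point $H_2$ is an $\largeo^n_k$-solution to $f$ with witnesses $(\mu, \chi)$.

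Next I would run the case split on $\chi$. If $\chi \in \Scal^n_k(V)$, then $H_2$ is automatically a $\largep{\Scal^n_k(V)}^n_k$-solution, and by the hypothesis $\rt^n_k(W) \leq_{id} \largep{\Scal^n_k(V)}^n_k$ it $f$-avoids every pattern of $W$; combined with the instance promise that $\omega$ (and hence $H_2$) $f$-avoids every pattern of $V$, $H_2$ is the desired $\rt^n_k(V,W)$-solution. Otherwise $\chi \notin \Scal^n_k(V)$ unpacks via the definition of $\Scal^n_k(V)$ to the existence of a witness coloring $f_{fail}$, an infinite $\largep{\{\chi\}}^n_k$-solution $H_{fail}$ with witnesses $(\mu_{fail}, \chi)$, and a pattern $P \in V$ that $H_{fail}$ $f_{fail}$-meets. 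Since the instance promise forces $H_2$ to $f$-avoid $P$, applying the (relativized) Lemma~\ref{lem:largeo-avoid-pattern-dominates-mu} with $(H_0, f_0, \mu_0) = (H_{fail}, f_{fail}, \mu_{fail})$ and $(H_1, f_1, \mu_1) = (H_2, f, \mu)$ yields that $Z_1 \oplus H_2$ computes a function dominating $\mu$, hence $C \leq_T Z_1 \oplus H_2$; this contradicts the choice of $H_2$, so this case is vacuous.

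The one bookkeeping subtlety is that the modulus $\mu$ produced along the way is left-$Z_1$-c.e.\ rather than absolutely left-c.e., so the final step needs the $Z_1$-relativization of Lemma~\ref{lem:largeo-avoid-pattern-dominates-mu}. This is routine, since every ingredient of that lemma (Theorem~\ref{thm:avoid-colors-dominate-left-ce-mu} and the packed/largeness-graph transitivity lemmas) relativizes uniformly. Otherwise the argument is entirely parallel to its arithmetic analog.
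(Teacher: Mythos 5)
Your proposal is correct and follows essentially the same route as the paper's proof: the same Lerman trick to obtain $Z_1$ with $C$ $\Delta^0_2(Z_1)$, the same product coloring $f_1$, the same application of strong cone avoidance of $\rt^n_{<\infty, C_n}$ plus Theorem~\ref{thm:avoid-colors-dominate-left-ce-mu} to extract $\chi$, the same thinning to a strongly $\mu$-transitive $H_2$, and the same case split on $\chi \in \Scal^n_k(V)$ concluding with Lemma~\ref{lem:largeo-avoid-pattern-dominates-mu}. Your observation that $\mu$ is left-$Z_1$-c.e.\ rather than left-c.e., so that Lemma~\ref{lem:largeo-avoid-pattern-dominates-mu} and Theorem~\ref{thm:avoid-colors-dominate-left-ce-mu} must be applied in relativized form to conclude $C \leq_T Z_1 \oplus H_2$, is a careful bookkeeping point that the paper handles implicitly (and in fact the paper's cites of $Z$ instead of $Z_1$ in the preservation step look like a minor typo).
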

\begin{proof}
Fix two sets $Z$ and $C$ with $C \not \leq_T Z$ and let $f : [\omega]^n \to k$ be an instance of $\largeo^n_k$.

By Lerman~\cite[4.18]{Lerman1983Degrees}, there is a set $Z_1 \geq_T Z$ such that $C$ is $\Delta^0_2(Z_1)$ but $C \not \leq_T Z_1$. Since $C$ is $\Delta^0_2(Z_1)$, there is a left $Z_1$-c.e.\ modulus $\mu : \omega \to \omega$ for $C$. By Lemma~\ref{lem:leftce-computes-strongly-increasing}, we can assume that $\mu$ is strongly increasing.

Let $f : [\omega]^n \to k$ be an instance of $\rt^n_k(V, W)$.
Let $f_1 : [\omega]^n \to k \times \Lcal_n$ be defined for each $D \in [\omega]^n$ by $f_1(D) = \langle f(D), \Lcal_n(\mu, D) \rangle$.
By strong cone avoidance of $\rt^n_{<\infty, C_n}$ (Cholak and Patey~\cite[Theorem 4.18]{Cholak2019Thin}), there is an infinite set $H \subseteq \omega$ such that $C \not \leq_T Z_1 \oplus H$ and $|f_1[H]^n| \leq C_n$. In particular, $Z_1 \oplus H$ does not compute a function dominating $\mu$, so by Theorem~\ref{thm:avoid-colors-dominate-left-ce-mu}, for every $\Gcal \in \Lcal_n$, there is some $i < k$ and some $D \in [H]^n$ such that $f_1(D) = \langle i, \Gcal \rangle$. Since $|\Lcal_n| = C_n$ (Cholak and Patey~\cite[Lemma 3.16]{Cholak2019Thin}), this $i$ is unique. For each $\Gcal$, let $\chi(\Gcal)$ be such an~$i$. Then for every $D \in [H]^n$, $f(D) = \chi(\Lcal_n(\mu, D))$. By Cholak and Patey~\cite[Corollary 5.5]{Cholak2019Thin}, there is an infinite $\mu$-transitive subset $H_1 \subseteq H$ such that $C \not \leq_T Z \oplus H_1$. By Lemma~\ref{lem:transitive-to-strongly-transitive}, there is an infinite strongly $\mu$-transitive subset $H_2 \subseteq H_1$ such that $C \not \leq_T Z \oplus H_2$. Therefore, $H_2$ is a $\largeo^n_k$-solution to~$f$.

If $\chi \in \Scal^n_k(V)$, then since $\rt^n_k(W) \leq_{id} \largep{\Scal^n_k(V)}^n_k$, $H_2$ is an $\rt^n_k(V, W)$-solution to $f$, and we are done. So suppose $\chi \not \in \Scal^n_k(V)$. Unfolding the definition, $\rt^n_k(V) \not \leq_{id} \largep{\{\chi\}}^n_k$, so there is a coloring $f_\chi : [\omega]^n \to k$ and some infinite $\largeo^n_k$-solution $H_\chi$ to~$f_\chi$ with witnesses $\mu_\chi : \omega \to \omega^{+}$ and $\chi : \Lcal_n \to k$, that meets some $\rt^n_k$-pattern $P_\chi \in V$. 

However, $H_2$ $f$-avoids $P_\chi$ since $f$ is an instance of $\rt^n_k(V, W)$. By Lemma~\ref{lem:largeo-avoid-pattern-dominates-mu}, $H_2 \oplus Z$ computes a function dominating~$\mu$, hence $C \leq_T Z \oplus H_2$, contradiction. This completes the proof of Lemma~\ref{lem:rt-v-w-sca}.
\end{proof}

\begin{lemma}\label{lem:w-not-fixed-largeo-but-v-does}
Let $V$ and $W$ be collections of $\rt^n_k$-patterns such that
$\rt^n_k(W) \not \leq_{id} \largep{\Scal^n_k(V)}^n_k$. 
For every strongly increasing left-c.e.\ function $\mu : \omega \to \omega$, there is an $\rt^n_k(V, W)$-instance such that every solution computes a function dominating~$\mu$.
\end{lemma}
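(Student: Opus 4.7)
The plan is to mirror the proof of Lemma~\ref{lem:w-not-fixed-arithscart-but-v-does}, substituting largeness graphs $\Lcal_n$ for vector graphs $\Vcal_n$, strongly $\mu$-transitive sets for $\mu$-transitive sets, and Lemma~\ref{lem:largeo-avoid-pattern-dominates-mu} for Lemma~\ref{lem:arithscart-avoid-pattern-dominates-mu}. First I would unpack the hypothesis $\rt^n_k(W) \not\leq_{id} \largep{\Scal^n_k(V)}^n_k$: there exist a coloring $f_{\mathrm{fail}}:[\omega]^n \to k$, a strongly increasing relative left-c.e.\ function $\mu_{\mathrm{fail}}$, a coloring $\chi \in \Scal^n_k(V)$, and an infinite strongly $\mu_{\mathrm{fail}}$-transitive set $H_{\mathrm{fail}}$ witnessing $f_{\mathrm{fail}}(D) = \chi(\Lcal_n(\mu_{\mathrm{fail}}, D))$ on $[H_{\mathrm{fail}}]^n$, while $H_{\mathrm{fail}}$ $f_{\mathrm{fail}}$-meets some pattern $P \in W$. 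Using the given strongly increasing left-c.e.\ function $\mu$, I would then define $f:[\omega]^n \to k$ by $f(D) = \chi(\Lcal_n(\mu, D))$, and may assume that $\mu$ is not dominated by any computable function, since otherwise the conclusion is immediate.

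Next I would show that, after suitable thinning, $f$ yields a bona fide instance of $\rt^n_k(V, W)$. By Cholak and Patey~\cite[Theorem 5.11]{Cholak2019Thin} followed by Lemma~\ref{lem:transitive-to-strongly-transitive}, there is an infinite strongly $\mu$-transitive set $X \subseteq \omega$ which does not compute any function dominating~$\mu$. By construction, $X$ is a $\largep{\{\chi\}}^n_k$-solution to $f$ with witnesses $\mu$ and $\chi$; since $\chi \in \Scal^n_k(V)$ is equivalent to $\rt^n_k(V) \leq_{id} \largep{\{\chi\}}^n_k$, the set $X$ is in turn an $\rt^n_k(V)$-solution, so $f \uh [X]^n$ is an instance of $\rt^n_k(V, W)$.

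Finally I would argue that every $\rt^n_k(V, W)$-solution $G \subseteq X$ to $f$ computes a function dominating~$\mu$. If some such $G$ did not, then by Cholak and Patey~\cite[Theorem 5.11]{Cholak2019Thin} and Lemma~\ref{lem:transitive-to-strongly-transitive} applied inside $G$, one obtains an infinite strongly $\mu$-transitive subset $G_1 \subseteq G$ which still fails to compute any function dominating~$\mu$. Then $G_1$ is a $\largeo^n_k$-solution to $f$ with witnesses $\mu$ and $\chi$, and $G_1$ $f$-avoids $P$ because $G$ does. Applying Lemma~\ref{lem:largeo-avoid-pattern-dominates-mu} to $(f_0, H_0, \mu_0) = (f_{\mathrm{fail}}, H_{\mathrm{fail}}, \mu_{\mathrm{fail}})$ and $(f_1, H_1, \mu_1) = (f, G_1, \mu)$ --- which is permitted because $\mu$ is left-c.e.\ by hypothesis --- forces $G_1$ to compute a function dominating~$\mu$, a contradiction. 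The only delicate point is the left-c.e.\ requirement on $\mu_1$ in Lemma~\ref{lem:largeo-avoid-pattern-dominates-mu}, which is exactly why the statement restricts $\mu$ to the strongly increasing left-c.e.\ class; beyond this, the argument transfers verbatim from the arithmetic case of Lemma~\ref{lem:w-not-fixed-arithscart-but-v-does}.
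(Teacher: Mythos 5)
Your proposal is correct and follows essentially the same route as the paper's proof. The one minor difference is the extra thinning of $G$ to $G_1$: since $G \subseteq X$ and strong $\mu$-transitivity is inherited by subsets, $G$ is already strongly $\mu$-transitive and hence already a $\largeo^n_k$-solution to $f$ with witnesses $\chi$ and $\mu$, so Lemma~\ref{lem:largeo-avoid-pattern-dominates-mu} applies to $G$ directly (as the paper does) without invoking Cholak--Patey Theorem~5.11 and Lemma~\ref{lem:transitive-to-strongly-transitive} a second time --- a harmless redundancy.
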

\begin{proof}
Since $\rt^n_k(W) \not \leq_{id} \largep{\Scal^n_k(V)}^n_k$, there is some coloring $f_{fail} : [\omega]^n \to k$ and some infinite $\largeo^n_k$-solution $H_{fail}$ to~$f_{fail}$ with some witness $\mu_{fail} : \omega \to \omega$ and $\chi \in \Scal^n_k(V)$ that meets some $\rt^n_k$-pattern $P \in W$.
Define $f : [\omega]^n \to k$ by $f(D) = \chi(\Lcal_n(\mu, D))$.
Suppose that $\mu$ is not dominated by a computable function.
By Cholak and Patey~\cite[Theorem 5.11]{Cholak2019Thin} and Lemma~\ref{lem:transitive-to-strongly-transitive}, there is an infinite strongly $\mu$-transitive set $H$ which does not compute a function dominating $\mu$. Therefore, $H$ is a $\largeo^n_k$-solution to $f$ with witnesses $\mu$ and $\chi \in \Scal^n_k(V)$. By definition of $\Scal^n_k(V)$, $\rt^n_k(V) \leq_{id} \largep{\{\chi\}}^n_k$ and $H$ is an $\rt^n_k(V)$-solution to $f$, so $f : [H]^n \to k$ is an instance of $\rt^n_k(V, W)$.

We claim that for every $\rt^n_k(V, W)$-solution $G \subseteq H$ to $f$, $G$ computes a function dominating $\mu$. In particular, $G$ $f$-avoids $P$, so by Lemma~\ref{lem:largeo-avoid-pattern-dominates-mu}, $G$ computes a function dominating~$\mu$. This completes the proof of Lemma~\ref{lem:w-not-fixed-largeo-but-v-does}.
\end{proof}

\begin{theorem}\label{thm:characterization-sca-promise-rt22-problems}
Let $V$ and $W$ be two collections of $\rt^n_k$-patterns.
$\rt^n_k(V, W)$ admits strong cone avoidance if and only if $\rt^n_k(W) \leq_{id} \largep{\Scal^n_k(V)}^n_k$.
\end{theorem}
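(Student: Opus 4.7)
The plan is to exactly parallel the proof of Theorem~\ref{thm:characterization-sca-arith-promise-rt22-problems}, substituting the left-c.e.\ machinery developed around $\largeo^n_k$ for the purely set-theoretic machinery around $\arithscart^n_k$. The two technical lemmas needed (Lemma~\ref{lem:rt-v-w-sca} for the forward direction and Lemma~\ref{lem:w-not-fixed-largeo-but-v-does} for the backward direction) are already in place, so the theorem should follow as a short wrapper.

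For the direction $\Leftarrow$, the statement is literally the content of Lemma~\ref{lem:rt-v-w-sca}: assuming $\rt^n_k(W) \leq_{id} \largep{\Scal^n_k(V)}^n_k$, that lemma already produces, for every non-$Z$-computable $C$ and every $\rt^n_k(V,W)$-instance $f$, an $\rt^n_k(V,W)$-solution $H$ with $C \not\leq_T Z \oplus H$. Thus this direction is just a citation.

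For the direction $\Rightarrow$, I would prove the contrapositive. Assume $\rt^n_k(W) \not\leq_{id} \largep{\Scal^n_k(V)}^n_k$. Take a left-c.e.\ modulus $\mu : \omega \to \omega$ of the halting set $\emptyset'$ (the standard modulus); by Lemma~\ref{lem:leftce-computes-strongly-increasing}, we may replace it by a strongly increasing left-c.e.\ function $g$ dominating $\mu$, which is then itself a modulus of $\emptyset'$. Then Lemma~\ref{lem:w-not-fixed-largeo-but-v-does}, applied to $g$, produces an $\rt^n_k(V,W)$-instance $f$ such that every $\rt^n_k(V,W)$-solution computes a function dominating $g$, hence computes $\emptyset'$. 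Taking $Z = \emptyset$ and $C = \emptyset'$, this witnesses the failure of strong cone avoidance for $\rt^n_k(V,W)$.

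There is essentially no hard step: all the combinatorial work (the passage through $\rt^n_{<\infty,C_n}$, strong $\mu$-transitivity, Lemma~\ref{lem:largeo-avoid-pattern-dominates-mu}) is already absorbed into Lemmas~\ref{lem:rt-v-w-sca} and~\ref{lem:w-not-fixed-largeo-but-v-does}. The only subtlety to keep in mind is that Lemma~\ref{lem:w-not-fixed-largeo-but-v-does} requires a \emph{strongly increasing} left-c.e.\ function, which is why the appeal to Lemma~\ref{lem:leftce-computes-strongly-increasing} is essential; one must also remember that a function dominating a modulus of $\emptyset'$ computes $\emptyset'$, so dominating $g$ (which dominates the standard modulus $\mu$ of $\emptyset'$) is enough to compute $\emptyset'$.
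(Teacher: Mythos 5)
Your proof is correct and follows exactly the same route as the paper: cite Lemma~\ref{lem:rt-v-w-sca} for the $\Leftarrow$ direction, and prove $\Rightarrow$ by contrapositive via Lemma~\ref{lem:w-not-fixed-largeo-but-v-does} applied to a strongly increasing left-c.e.\ modulus of $\emptyset'$. Your explicit appeal to Lemma~\ref{lem:leftce-computes-strongly-increasing} to manufacture the strongly increasing modulus is a small clarification the paper leaves implicit, but the argument is the same.
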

\begin{proof}
$\Leftarrow$: This is Lemma~\ref{lem:rt-v-w-sca}. $\Rightarrow$: We prove the contrapositive. Suppose $\rt^n_k(W) \not \leq_{id} \largep{\Scal^n_k(V)}^n_k$. Let $\mu$ be a strongly increasing left-c.e. modulus of $\emptyset'$. By Lemma~\ref{lem:w-not-fixed-largeo-but-v-does}, there is a $\rt^n_k(V, W)$-instance such that every solution computes a function dominating $\mu$, hence computes~$\emptyset'$.
\end{proof}

\begin{corollary}\label{cor:rt-v-rtnk-sca}
Let $V$ be a collection of $\rt^n_k$-patterns.
$\rt^n_k(V, W_{\rt^n_k})$ admits strong cone avoidance if and only if $\Scal^n_k(V)$ contains only constant functions.
\end{corollary}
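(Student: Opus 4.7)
The plan is to mimic the proof of Corollary~\ref{cor:rt-v-rtnk-sca-arith} almost verbatim, replacing the $\arithscart$-machinery ($\Vcal_n$, $\Rcal^n_k(V)$, Theorem~\ref{thm:avoid-colors-dominate-mu}, moduli of arithmetical sets) by the $\largeo$-machinery ($\Lcal_n$, $\Scal^n_k(V)$, Theorem~\ref{thm:avoid-colors-dominate-left-ce-mu}, strongly increasing left-c.e.\ moduli of $\emptyset'$). The main reduction step is that, by Theorem~\ref{thm:characterization-sca-promise-rt22-problems}, $\rt^n_k(V, W_{\rt^n_k})$ admits strong cone avoidance if and only if $\rt^n_k(W_{\rt^n_k}) \leq_{id} \largep{\Scal^n_k(V)}^n_k$, i.e.\ every $\largep{\Scal^n_k(V)}^n_k$-solution is $f$-homogeneous. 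So the corollary reduces to the claim that this identical reduction holds exactly when $\Scal^n_k(V)$ contains only constant maps $\chi : \Lcal_n \to k$.

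The easy direction is Case 1, where $\Scal^n_k(V)$ contains only constant functions. Then for any coloring $f : [\omega]^n \to k$ and any $\largep{\Scal^n_k(V)}^n_k$-solution $H$, witnessed by some $\mu$ and some constant $\chi \equiv i$, the equation $f(D) = \chi(\Lcal_n(\mu, D)) = i$ for $D \in [H]^n$ makes $H$ automatically $f$-homogeneous, so $\rt^n_k \leq_{id} \largep{\Scal^n_k(V)}^n_k$, and strong cone avoidance follows.

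The interesting direction is Case 2, where we pick some non-constant $\chi \in \Scal^n_k(V)$ and construct a coloring whose $\largep{\Scal^n_k(V)}^n_k$-solutions fail to be $f$-homogeneous. Fix a left-c.e.\ modulus of $\emptyset'$; by Lemma~\ref{lem:leftce-computes-strongly-increasing} we may take it strongly increasing, call it $\mu : \omega \to \omega$. Define $f : [\omega]^n \to k$ by $f(D) = \chi(\Lcal_n(\mu, D))$. By Cholak and Patey~\cite[Theorem 5.11]{Cholak2019Thin} followed by Lemma~\ref{lem:transitive-to-strongly-transitive}, there is an infinite strongly $\mu$-transitive set $H \subseteq \omega$ that does not compute any function dominating $\mu$. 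Then $H$ is a $\largep{\Scal^n_k(V)}^n_k$-solution to $f$ with witnesses $\mu$ and $\chi$ (and since $\chi \in \Scal^n_k(V)$, i.e.\ $\rt^n_k(V) \leq_{id} \largep{\{\chi\}}^n_k$, the set $H$ is in particular a valid instance side, so $f$ is a genuine $\rt^n_k(V, W_{\rt^n_k})$-instance after restricting to $[H]^n$ and zooming in as usual).

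The main obstacle, and the only point where the proof deviates slightly from the arithmetical analogue, is to argue that $H$ fails to be $f$-homogeneous. Here we invoke Theorem~\ref{thm:avoid-colors-dominate-left-ce-mu}: since $H$ does not compute a function dominating $\mu$, we must have $\Lcal_n \subseteq \{\Lcal_n(\mu, D) : D \in [H]^n\}$; in other words, every largeness graph in $\Lcal_n$ is realized on $H$. Since $\chi$ is non-constant on $\Lcal_n$, $f$ takes at least two distinct values on $[H]^n$, so $H$ is not $f$-homogeneous. Thus $\rt^n_k \not\leq_{id} \largep{\Scal^n_k(V)}^n_k$, and by Theorem~\ref{thm:characterization-sca-promise-rt22-problems} $\rt^n_k(V, W_{\rt^n_k})$ fails to admit strong cone avoidance, completing the contrapositive.
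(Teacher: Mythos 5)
Your proof is correct and takes essentially the same route as the paper's: both directions reduce via Theorem~\ref{thm:characterization-sca-promise-rt22-problems} to the question of whether $\rt^n_k \leq_{id} \largep{\Scal^n_k(V)}^n_k$, Case~1 is immediate from constancy of $\chi$, and Case~2 builds $f(D)=\chi(\Lcal_n(\mu,D))$ from a strongly increasing left-c.e.\ modulus and uses Theorem~\ref{thm:avoid-colors-dominate-left-ce-mu} on a cone-avoiding strongly $\mu$-transitive $H$ to realize all of $\Lcal_n$, contradicting homogeneity. The only cosmetic differences are citing Cholak--Patey Theorem~5.11 rather than Corollary~5.5 (the paper deduces "$H$ does not dominate $\mu$" from $C\not\leq_T H$ via the modulus property, whereas you obtain it directly — both are fine here), and your parenthetical remark that $f$ is a genuine $\rt^n_k(V,W_{\rt^n_k})$-instance, which is not actually needed since the argument proceeds entirely through the identical-reducibility criterion of Theorem~\ref{thm:characterization-sca-promise-rt22-problems}.
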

\begin{proof}
By Theorem~\ref{thm:characterization-sca-promise-rt22-problems}, 
$\rt^n_k(V, W_{\rt^n_k})$ admits strong cone avoidance if and only if $\rt^n_k \leq_{id} \largep{\Scal^n_k(V)}^n_k$.
Case 1: $\Scal^n_k(V)$ contains only constant functions. Then for any $f : [\omega]^n \to k$, any $\largep{\Scal^n_k(V)}^n_k$-solution to $f$ is $f$-homogeneous, hence $\rt^n_k \leq_{id} \largep{\Scal^n_k(V)}^n_k$, so $\rt^n_k(V, W_{\rt^n_k})$ admits strong cone avoidance.
Case 2: $\Scal^n_k(V)$ contains a non-constant function $\chi : \Lcal_n \to k$.
Let $\mu : \omega \to \omega$ be a strongly increasing left-c.e\ modulus of a non-computable set $C$.
Let $f(D) = \chi(\Lcal_n(\mu, D))$. By Cholak and Patey~\cite[Corollary 5.5]{Cholak2019Thin} and Lemma~\ref{lem:transitive-to-strongly-transitive}, there is an infinite strongly $\mu$-transitive set $H$ such that $C \not \leq_T H$.  Therefore, $H$ is an $\largep{\Scal^n_k(V)}^n_k$-solution to $f$ with witnesses $\mu$ and $\chi$. We claim that $H$ is not $f$-homogeneous. Since $C \not \leq_T H$, then in particular, $H$ does not compute a function dominating $\mu$, so by Theorem~\ref{thm:avoid-colors-dominate-left-ce-mu}, for every $\Gcal \in \Lcal_n$, there is some $D \in [H]^n$ such that $\Gcal = \Lcal_n(\mu, D)$. Since $\chi$ is not constant on $\Lcal_n$, $H$ is not $f$-homogeneous. Thus $\rt^n_k \not \leq_{id} \largep{\Scal^n_k(V)}^n_k$ and $\rt^n_k(V, W_{\rt^n_k})$ does not admit strong cone avoidance.
\end{proof}

\subsection{Avoiding non-computable cones} Last, we complete our analysis of this extended class of promise Ramsey-like theorems with cone avoidance. The analysis follows the same scheme.

\begin{statement}
Let $\Tcal$ be a set of functions of type $\chi : \Pcal_n \to k$.
$\lcelargep{\Tcal}^n_k$: For every function $f : [\omega]^n \to k$,
there is a strongly increasing relative left-c.e.\ function $\mu : \omega \to \omega^{+}$, an infinite strongly $\mu$-transitive set $H \subseteq \omega$, and a coloring $\chi \in \Tcal$ such that for every $D \in [H]^n$, $f(D) = \chi(\Pcal_n(\mu, D))$.
\end{statement}

We now define $\Tcal^n_k(W)$ the same way we defined $\Rcal^n_k(W)$ and $\Scal^n_k(W)$ for $\arithscart^n_k$ and $\largeo^n_k$, respectively.
Given some $n, k \in \omega$ and a collection $W$ of $\rt^n_k$-patterns,
let 
$$
\Tcal^n_k(W) = \left\{\chi : \Pcal_n \to k \mbox{ }|\mbox{ } \rt^n_k(W) \leq_{id} \lcelargep{\{\chi\}}^n_k\right\}
$$
In particular, if $\rt^n_k(W)$ is a true statement, then every constant function $\chi : \Pcal_n \to k$ belongs to~$\Tcal^n_k(W)$.

\begin{lemma}\label{lem:rt-v-w-ca}
Let $V$ and $W$ be collections of $\rt^n_k$-patterns such that $\rt^n_k(W) \leq_{id} \lcelargep{\Tcal^n_k(V)}^n_k$.
Then $\rt^n_k(V, W)$ admits cone avoidance.
\end{lemma}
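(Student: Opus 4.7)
The plan is to mirror the proof of Lemma~\ref{lem:rt-v-w-sca}, replacing $\largeo^n_k$ by $\lcelargeo^n_k$, the largeness graphs $\Lcal_n$ by the packed largeness graphs $\Pcal_n$, the cardinality $C_n$ by $C_{n-1} = |\Pcal_n|$, and Theorem~\ref{thm:largeo-sca} by Theorem~\ref{thm:lcelargeo-ca}. The argument will be relativized to an appropriate oracle so that left-c.e.\ approximations can be extracted from the cone hypothesis.

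Fix sets $Z$ and $C$ with $C \not\leq_T Z$, and let $f : [\omega]^n \to k$ be a $Z$-computable $\rt^n_k(V, W)$-instance. First I would invoke Lerman~\cite[4.18]{Lerman1983Degrees} to obtain $Z_1 \geq_T Z$ such that $C$ is $\Delta^0_2(Z_1)$ but $C \not \leq_T Z_1$, and then use Lemma~\ref{lem:leftce-computes-strongly-increasing} (relativized to $Z_1$) to obtain a strongly increasing left-$Z_1$-c.e.\ modulus $\mu : \omega \to \omega$ for $C$. Form $f_1(D) = \langle f(D), \Pcal_n(\mu, D)\rangle$, which is $Z_1$-computable, and apply cone avoidance of $\rt^n_{<\infty, C_{n-1}}$ (Cholak and Patey~\cite[Corollary 4.19]{Cholak2019Thin}) to produce an infinite $H$ with $|f_1[H]^n| \leq C_{n-1}$ and $C \not\leq_T Z_1 \oplus H$. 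Since $Z_1 \oplus H$ does not compute a function dominating $\mu$, Theorem~\ref{thm:avoid-colors-dominate-left-ce-mu-computable} shows that every $\Gcal \in \Pcal_n$ is realized as some $\Pcal_n(\mu, D)$ with $D \in [H]^n$; counting colors forces a unique $\chi : \Pcal_n \to k$ with $f(D) = \chi(\Pcal_n(\mu, D))$ on $[H]^n$. Thinning $H$ via Cholak and Patey~\cite[Corollary 5.5]{Cholak2019Thin} and then Lemma~\ref{lem:transitive-to-strongly-transitive} yields an infinite strongly $\mu$-transitive $H_2 \subseteq H$ with $C \not\leq_T Z \oplus H_2$, so that $H_2$ is a $\lcelargeo^n_k$-solution to $f$ witnessed by $\chi$ and $\mu$.

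Now I would split on whether $\chi \in \Tcal^n_k(V)$. If $\chi \in \Tcal^n_k(V)$, then the hypothesis $\rt^n_k(W) \leq_{id} \lcelargep{\Tcal^n_k(V)}^n_k$ immediately implies that $H_2$ is an $\rt^n_k(V, W)$-solution, and we are done. Otherwise, unfolding the definition of $\Tcal^n_k(V)$, there exist a coloring $f_\chi : [\omega]^n \to k$ and a $\lcelargeo^n_k$-solution $H_\chi$ to $f_\chi$ witnessed by $\chi$ and some $\mu_\chi$ that $f_\chi$-meets some pattern $P_\chi \in V$. But $f$ is an instance of $\rt^n_k(V, W)$, so $\omega$ $f$-avoids $P_\chi$, and hence so does $H_2$. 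Applying Lemma~\ref{lem:lcelargeo-avoid-pattern-dominates-mu} to the pair $(H_\chi, f_\chi)$ and $(H_2, f)$ then forces $H_2$ to compute a function dominating $\mu$, hence to compute $C$, contradicting $C \not\leq_T Z \oplus H_2$.

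The main obstacle is the relativization in the final step: Lemma~\ref{lem:lcelargeo-avoid-pattern-dominates-mu} is stated with $f_1$ computable and $\mu_1$ left-c.e., while in our application $f$ is only $Z$-computable and $\mu$ is only left-$Z_1$-c.e. I would handle this by observing that the proof of Lemma~\ref{lem:lcelargeo-avoid-pattern-dominates-mu} relativizes uniformly: its core ingredient, Theorem~\ref{thm:avoid-colors-dominate-left-ce-mu-computable}, holds for any strongly increasing left-$X$-c.e.\ $\mu$ and any infinite $H$ with $H \oplus X$ not computing a dominating function for $\mu$. Plugging in $X = Z_1$ gives exactly what is needed to conclude $C \leq_T Z_1 \oplus H_2 \leq_T Z \oplus H_2$ in the contradictory case, completing the proof.
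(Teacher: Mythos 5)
Your proof mirrors the paper's own argument closely and is essentially correct: apply Lerman to obtain $Z_1$ and a strongly increasing left-$Z_1$-c.e.\ modulus $\mu$ for $C$, form the product coloring $f_1(D)=\langle f(D),\Pcal_n(\mu,D)\rangle$, invoke cone avoidance of $\rt^n_{<\infty,C_{n-1}}$, extract the unique $\chi:\Pcal_n\to k$ by the realization theorem, thin to a strongly $\mu$-transitive $H_2$, split on whether $\chi\in\Tcal^n_k(V)$, and in the negative case use Lemma~\ref{lem:lcelargeo-avoid-pattern-dominates-mu} to produce a dominating function and derive a contradiction. In fact, your citation of Theorem~\ref{thm:avoid-colors-dominate-left-ce-mu-computable} (the $\Pcal_n$ version) for the realization step is the correct one, whereas the paper's proof cites Theorem~\ref{thm:avoid-colors-dominate-left-ce-mu} (the $\Lcal_n$ version), apparently a slip carried over from the $\Lcal_n$ case. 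One small error in your last sentence: the inequality $Z_1\oplus H_2 \leq_T Z\oplus H_2$ is reversed, since $Z_1\geq_T Z$. The contradiction you actually need is with $C\not\leq_T Z_1\oplus H_2$, and that does hold: the initial application of cone avoidance yields $C\not\leq_T Z_1\oplus H$, the thinning steps producing $H_1$ and $H_2$ are $(Z_1\oplus H)$-computable (they require access to $\mu$), hence $Z_1\oplus H_2\leq_T Z_1\oplus H$ and the avoidance is inherited. With that correction, your final step gives $C\leq_T Z_1\oplus H_2$ from the dominating function, contradicting $C\not\leq_T Z_1\oplus H_2$, and the proof closes.
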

\begin{proof}
Fix two sets $Z$ and $C$ with $C \not \leq_T Z$ and let $f : [\omega]^n \to k$ be a $Z$-computable instance of $\lcelargeo^n_k$.

By Lerman~\cite[4.18]{Lerman1983Degrees}, there is a set $Z_1 \geq_T Z$ such that $C$ is $\Delta^0_2(Z_1)$ but $C \not \leq_T Z_1$. Since $C$ is $\Delta^0_2(Z_1)$, there is a left $Z_1$-c.e.\ modulus $\mu : \omega \to \omega$ for $C$. By Lemma~\ref{lem:leftce-computes-strongly-increasing}, we can assume that $\mu$ is strongly increasing.

Let $f_1 : [\omega]^n \to k \times \Pcal_n$ be defined for each $D \in [\omega]^n$ by $f_1(D) = \langle f(D), \Pcal_n(\mu, D) \rangle$.
By cone avoidance of $\rt^n_{<\infty, C_{n-1}}$ (Cholak and Patey~\cite[Corollary 4.19]{Cholak2019Thin}), there is an infinite set $H \subseteq \omega$ such that $C \not \leq_T Z_1 \oplus H$ and $|f_1[H]^n| \leq C_{n-1}$. In particular, $Z_1 \oplus H$ does not compute a function dominating~$\mu$, so by Theorem~\ref{thm:avoid-colors-dominate-left-ce-mu}, for every $\Gcal \in \Pcal_n$, there is some $i < k$ and some $D \in [H]^n$ such that $f_1(D) = \langle i, \Gcal \rangle$. Since $|\Pcal_n| = C_{n-1}$ (Cholak and Patey~\cite[Lemma 3.15,Lemma 3.16]{Cholak2019Thin}), this $i$ is unique. For each $\Gcal$, let $\chi(\Gcal)$ be such an~$i$. 
For every $D \in [H]^n$, $f(D) = \chi(\Pcal_n(\mu, D))$. By Cholak and Patey~\cite[Corollary 5.5]{Cholak2019Thin}, there is an infinite $\mu$-transitive subset $H_1 \subseteq H$ such that $C \not \leq_T Z \oplus H_1$. By Lemma~\ref{lem:transitive-to-strongly-transitive}, there is an infinite strongly $\mu$-transitive subset $H_2 \subseteq H_1$ such that $C \not \leq_T Z \oplus H_2$. Therefore, $H_2$ is a $\lcelargeo^n_k$-solution to~$f$.

If $\chi \in \Tcal^n_k(V)$, then since $\rt^n_k(W) \leq_{id} \lcelargep{\Tcal^n_k(V)}^n_k$, $H_2$ is an $\rt^n_k(V, W)$-solution to $f$, and we are done. So suppose $\chi \not \in \Tcal^n_k(V)$. Unfolding the definition, $\rt^n_k(V) \not \leq_{id} \lcelargep{\{\chi\}}^n_k$, so there is a coloring $f_\chi : [\omega]^n \to k$ and some infinite $\lcelargeo^n_k$-solution $H_\chi$ to~$f_\chi$ with witnesses $\mu_\chi : \omega \to \omega^{+}$ and $\chi : \Pcal_n \to k$, that meets some $\rt^n_k$-pattern $P_\chi \in V$. 

However, $H_2$ $f$-avoids $P_\chi$ since $f$ is an instance of $\rt^n_k(V, W)$. By Lemma~\ref{lem:lcelargeo-avoid-pattern-dominates-mu}, $H_2 \oplus Z$ computes a function dominating~$\mu$, hence $C \leq_T Z \oplus H_2$, contradiction. This completes the proof of Lemma~\ref{lem:rt-v-w-ca}.
\end{proof}

\begin{lemma}\label{lem:w-not-fixed-lcelargeo-but-v-does}
Let $V$ and $W$ be collections of $\rt^n_k$-patterns such that
$\rt^n_k(W) \not \leq_{id} \lcelargep{\Tcal^n_k(V)}^n_k$. 
For every strongly increasing left-c.e.\ function $\mu : \omega \to \omega$, there is a set $Z$ which does not compute a function dominating $\mu$, and a $Z$-computable $\rt^n_k(V, W)$-instance such that every solution $Z$-computes a function dominating~$\mu$.
\end{lemma}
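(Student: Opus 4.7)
The plan is to closely follow the proofs of Lemma~\ref{lem:w-not-fixed-arithscart-but-v-does} and Lemma~\ref{lem:w-not-fixed-largeo-but-v-does}, adapting them to yield a $Z$-computable instance rather than an arbitrary one. The crucial observation is that because $\mu$ is left-c.e., the approximations $\mu_s(x)$ are uniformly computable, so the packed largeness graph $\Pcal_n(\mu, D)$ is computable as a function of $D$ alone, with no oracle needed. This is exactly what distinguishes the cone-avoidance case from the strong-cone-avoidance case and is what lets us produce a computable coloring from~$\mu$.

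Concretely, I first unpack the hypothesis $\rt^n_k(W) \not\leq_{id} \lcelargep{\Tcal^n_k(V)}^n_k$ to extract a coloring $f_{fail} : [\omega]^n \to k$ together with an infinite $\lcelargeo^n_k$-solution $H_{fail}$ witnessed by some $\mu_{fail}$ and some $\chi \in \Tcal^n_k(V)$, such that $H_{fail}$ $f_{fail}$-meets some pattern $P \in W$. Given the strongly increasing left-c.e.\ function $\mu$, I define $f : [\omega]^n \to k$ by $f(D) = \chi(\Pcal_n(\mu, D))$, which is computable by the observation above. To produce the reservoir $Z$, I apply Cholak and Patey~\cite[Theorem 5.11]{Cholak2019Thin} followed by Lemma~\ref{lem:transitive-to-strongly-transitive}, obtaining an infinite strongly $\mu$-transitive set $Z$ which does not compute a function dominating~$\mu$ (working under the assumption that $\mu$ is not dominated by any computable function, else the conclusion is vacuous). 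Since $\chi \in \Tcal^n_k(V)$ means $\rt^n_k(V) \leq_{id} \lcelargep{\{\chi\}}^n_k$, and $Z$ is a $\lcelargep{\{\chi\}}^n_k$-solution to $f$ with witnesses $\mu$ and $\chi$, the set $Z$ is also an $\rt^n_k(V)$-solution to $f$, i.e.\ $Z$ $f$-avoids every pattern in $V$. Zooming in on $Z$ then yields a $Z$-computable coloring $\tilde{f} : [\omega]^n \to k$ that forms a legitimate $\rt^n_k(V, W)$-instance.

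It remains to check that every solution to $\tilde{f}$ computes, relative to $Z$, a function dominating~$\mu$. Zooming back out, any such solution corresponds to an infinite subset $G \subseteq Z$ which is strongly $\mu$-transitive and $f$-avoids every pattern in $V \cup \{P\}$. Applying Lemma~\ref{lem:lcelargeo-avoid-pattern-dominates-mu} with $f_0 = f_{fail}$, $H_0 = H_{fail}$, $\mu_0 = \mu_{fail}$ and $f_1 = f$, $H_1 = G$, $\mu_1 = \mu$ shows that $G$ computes a function dominating~$\mu$, which is the desired conclusion.

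The main obstacle I anticipate is a bookkeeping one rather than a mathematical one: carefully verifying that after zooming in on $Z$, the resulting $\tilde{f}$ really is both $Z$-computable and a valid instance of the promise problem $\rt^n_k(V, W)$ (the latter requires $\omega$ to $\tilde{f}$-avoid every pattern in $V$, which translates back to $Z$ $f$-avoiding every pattern in $V$, exactly the content of $Z$ being an $\rt^n_k(V)$-solution to $f$). A more conceptual subtlety is the tacit assumption that $\mu$ has no computable dominator; the lemma is interesting precisely in this regime (for instance when $\mu$ is a left-c.e.\ modulus of $\emptyset'$, which is how it will be consumed in a characterization theorem analogous to Theorem~\ref{thm:lcelargeo-ca-characterization}).
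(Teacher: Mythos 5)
Your proposal follows the paper's proof essentially line for line: extract $f_{fail}$, $H_{fail}$, $\mu_{fail}$, $\chi \in \Tcal^n_k(V)$ and $P \in W$ from the failure of $\leq_{id}$; set $f(D) = \chi(\Pcal_n(\mu, D))$; get the strongly $\mu$-transitive reservoir $Z$ (the paper calls it $H$) via Cholak--Patey Theorem 5.11 plus Lemma~\ref{lem:transitive-to-strongly-transitive}; note $Z$ is an $\rt^n_k(V)$-solution via the definition of $\Tcal^n_k(V)$; and finish with Lemma~\ref{lem:lcelargeo-avoid-pattern-dominates-mu}. The only real difference is that you make the zooming step explicit where the paper just writes ``$f : [H]^n \to k$ is an $H$-computable instance,'' and you correctly observe, as the paper does silently, that one must assume $\mu$ has no computable dominator (when it does, the existential claim about $Z$ cannot hold, so the lemma is being consumed only in the intended regime, e.g.\ $\mu$ a left-c.e.\ modulus of $\emptyset'$).
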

\begin{proof}
Since $\rt^n_k(W) \not \leq_{id} \lcelargep{\Tcal^n_k(V)}^n_k$, there is some coloring $f_{fail} : [\omega]^n \to k$ and some infinite $\lcelargeo^n_k$-solution $H$ to~$f$ with some witness $\mu_{fail} : \omega \to \omega^{+}$ and $\chi \in \Tcal^n_k(V)$ that meets some $\rt^n_k$-pattern $P \in W$.
Define $f : [\omega]^n \to k$ by $f(D) = \chi(\Pcal_n(\mu, D))$. Suppose that $\mu$ is not dominated by a computable function.
By Cholak and Patey~\cite[Theorem 5.11]{Cholak2019Thin} and Lemma~\ref{lem:transitive-to-strongly-transitive}, there is an infinite strongly $\mu$-transitive set $H$ which does not compute a function dominating $\mu$. Therefore, $H$ is a $\lcelargeo^n_k$-solution to $f$ with witnesses $\mu$ and $\chi \in \Tcal^n_k(V)$. By definition of $\Tcal^n_k(V)$, $\rt^n_k(V) \leq_{id} \lcelargep{\{\chi\}}^n_k$ and $H$ is an $\rt^n_k(V)$-solution to $f$, so $f : [H]^n \to k$ is an $H$-computable instance of $\rt^n_k(V, W)$ such that $H$ does not compute a function dominating $\mu$.

We claim that for every $\rt^n_k(V, W)$-solution $G \subseteq H$ to $f$, $G \oplus H$ computes a function dominating $\mu$. In particular, $G$ $f$-avoids $P$, so by Lemma~\ref{lem:lcelargeo-avoid-pattern-dominates-mu}, $G \oplus H$ computes a function dominating~$\mu$. This completes the proof of Lemma~\ref{lem:w-not-fixed-lcelargeo-but-v-does}.
\end{proof}

\begin{theorem}\label{thm:characterization-ca-promise-rt22-problems}
Let $V$ and $W$ be two collections of $\rt^n_k$-patterns.
$\rt^n_k(V, W)$ admits cone avoidance if and only if $\rt^n_k(W) \leq_{id} \lcelargep{\Tcal^n_k(V)}^n_k$.
\end{theorem}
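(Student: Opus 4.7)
The proof will be a direct analogue of Theorem~\ref{thm:characterization-sca-promise-rt22-problems} and Theorem~\ref{thm:characterization-sca-arith-promise-rt22-problems}, with the two lemmas Lemma~\ref{lem:rt-v-w-ca} and Lemma~\ref{lem:w-not-fixed-lcelargeo-but-v-does} already doing essentially all of the work. The plan is therefore to stitch these together and handle the small issue that for cone avoidance we need a \emph{computable} instance, whereas Lemma~\ref{lem:w-not-fixed-lcelargeo-but-v-does} produces an instance computable from some auxiliary set $Z$.

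For the ($\Leftarrow$) direction I will assume $\rt^n_k(W) \leq_{id} \lcelargep{\Tcal^n_k(V)}^n_k$. Given a set $Z$, a set $C \not\leq_T Z$, and a $Z$-computable $\rt^n_k(V,W)$-instance $f : [\omega]^n \to k$, I simply invoke Lemma~\ref{lem:rt-v-w-ca} to obtain a $\lcelargep{\Tcal^n_k(V)}^n_k$-solution $H$ to $f$ with $C \not\leq_T Z \oplus H$. Because of the identical reducibility $\rt^n_k(W) \leq_{id} \lcelargep{\Tcal^n_k(V)}^n_k$ and because $f$ is a valid $\rt^n_k(V,W)$-instance (so $\omega$ already $f$-avoids the patterns in $V$), $H$ is automatically an $\rt^n_k(V,W)$-solution to $f$.

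For the ($\Rightarrow$) direction I prove the contrapositive: suppose $\rt^n_k(W) \not\leq_{id} \lcelargep{\Tcal^n_k(V)}^n_k$. Fix a strongly increasing left-c.e.\ modulus $\mu : \omega \to \omega$ of $\emptyset'$ (such a modulus exists, and can be made strongly increasing via Lemma~\ref{lem:leftce-computes-strongly-increasing}). By Lemma~\ref{lem:w-not-fixed-lcelargeo-but-v-does}, there exists a set $Z$ which does not compute a function dominating $\mu$, together with a $Z$-computable $\rt^n_k(V, W)$-instance $f$ such that every $\rt^n_k(V,W)$-solution $G$ to $f$ satisfies that $G \oplus Z$ computes a function dominating $\mu$, and hence $\emptyset' \leq_T G \oplus Z$. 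Since $\emptyset' \not\leq_T Z$ (otherwise $Z$ would compute $\mu$ itself), this witnesses the failure of cone avoidance for $\rt^n_k(V,W)$: the instance $f$ is $Z$-computable, but no solution $G$ satisfies $\emptyset' \not\leq_T Z \oplus G$.

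The only mildly subtle point, and the one where I should be careful, is the bookkeeping in the ($\Rightarrow$) direction: cone avoidance is defined relative to an arbitrary parameter $Z$, so one genuinely needs an instance computable from some $Z$ that itself avoids the chosen cone. Lemma~\ref{lem:w-not-fixed-lcelargeo-but-v-does} is phrased exactly to provide this, so there is no real obstacle; the rest of the argument is a routine application of the established framework.
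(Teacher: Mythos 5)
Your proof follows the paper's own argument almost verbatim: the ($\Leftarrow$) direction is exactly Lemma~\ref{lem:rt-v-w-ca}, and the ($\Rightarrow$) direction is the contrapositive via Lemma~\ref{lem:w-not-fixed-lcelargeo-but-v-does} applied to a strongly increasing left-c.e.\ modulus of $\emptyset'$, using the fact that the produced $Z$ does not compute a function dominating $\mu$ and hence does not compute $\emptyset'$. The bookkeeping point you flag is precisely why the paper phrases Lemma~\ref{lem:w-not-fixed-lcelargeo-but-v-does} to supply both $Z$ and the $Z$-computable instance; your proposal is correct and essentially identical to the paper's.
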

\begin{proof}
$\Leftarrow$: This is Lemma~\ref{lem:rt-v-w-ca}. $\Rightarrow$: We prove the contrapositive. Suppose $\rt^n_k(W) \not \leq_{id} \lcelargep{\Tcal^n_k(V)}^n_k$. Let $\mu$ be a strongly increasing left-c.e. modulus of $\emptyset'$. By Lemma~\ref{lem:w-not-fixed-lcelargeo-but-v-does}, there is a set $Z$ which does not compute a function dominating $\mu$ and a $Z$-computable $\rt^n_k(V, W)$-instance such that every solution $Z$~computes a function dominating $\mu$, hence computes~$\emptyset'$. Therefore $\rt^n_k(W)$ does not admit cone avoidance.
\end{proof}

\begin{corollary}\label{cor:rt-v-rtnk-ca}
Let $V$ be a collection of $\rt^n_k$-patterns.
$\rt^n_k(V, W_{\rt^n_k})$ admits cone avoidance if and only if $\Tcal^n_k(V)$ contains only constant functions.
\end{corollary}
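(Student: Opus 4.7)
The plan is to mimic the proof of Corollary~\ref{cor:rt-v-rtnk-sca}, with $\arithscart$ replaced by $\lcelargeo$, $\Vcal_n$ by $\Pcal_n$ and $\Rcal^n_k$ by $\Tcal^n_k$, using the cone avoidance characterization (Theorem~\ref{thm:characterization-ca-promise-rt22-problems}) in place of the non-arithmetic strong cone avoidance characterization. The starting point is the observation that $W_{\rt^n_k}$ is the set of $\rt^n_k$-patterns forbidding non-homogeneous finite configurations, so $\rt^n_k(W_{\rt^n_k}) \equiv \rt^n_k$. Hence by Theorem~\ref{thm:characterization-ca-promise-rt22-problems}, $\rt^n_k(V, W_{\rt^n_k})$ admits cone avoidance if and only if $\rt^n_k \leq_{id} \lcelargep{\Tcal^n_k(V)}^n_k$.

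I would then split into the two cases. \emph{Case 1.} Suppose $\Tcal^n_k(V)$ contains only constant functions. Then for any coloring $f : [\omega]^n \to k$, any $\lcelargep{\Tcal^n_k(V)}^n_k$-solution $H$ must satisfy $f(D) = \chi(\Pcal_n(\mu, D))$ for some constant $\chi$, which forces $H$ to be $f$-homogeneous. Thus $\rt^n_k \leq_{id} \lcelargep{\Tcal^n_k(V)}^n_k$, and cone avoidance follows.

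\emph{Case 2.} Suppose $\Tcal^n_k(V)$ contains a non-constant function $\chi : \Pcal_n \to k$. I would take a strongly increasing left-c.e.\ modulus $\mu : \omega \to \omega$ of some non-computable set $C$ (it exists by Lemma~\ref{lem:leftce-computes-strongly-increasing} applied to a left-c.e.\ modulus of $\emptyset'$), and define the computable coloring $f(D) = \chi(\Pcal_n(\mu, D))$. By Cholak and Patey~\cite[Corollary 5.5]{Cholak2019Thin} together with Lemma~\ref{lem:transitive-to-strongly-transitive}, there is an infinite strongly $\mu$-transitive set $H$ with $C \not\leq_T H$; in particular $H$ does not compute a function dominating $\mu$. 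Then $H$ is a $\lcelargep{\Tcal^n_k(V)}^n_k$-solution to $f$ with witnesses $\mu$ and $\chi$. By Theorem~\ref{thm:avoid-colors-dominate-left-ce-mu-computable}, every graph $\Gcal \in \Pcal_n$ is realized as $\Pcal_n(\mu, D)$ for some $D \in [H]^n$, and since $\chi$ is non-constant on $\Pcal_n$, $H$ is not $f$-homogeneous. Therefore $\rt^n_k \not\leq_{id} \lcelargep{\Tcal^n_k(V)}^n_k$, so $\rt^n_k(V, W_{\rt^n_k})$ does not admit cone avoidance.

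The only place where care is needed is in Case~2, where one must justify that the constructed $H$ is genuinely a $\lcelargep{\Tcal^n_k(V)}^n_k$-solution, i.e.\ that $\chi$ lies in the prescribed class $\Tcal^n_k(V)$ (given by hypothesis) and that the strong $\mu$-transitivity of $H$ plus $\mu$ being a strongly increasing left-c.e.\ function provide the witnesses required by the statement of $\lcelargep{\Tcal^n_k(V)}^n_k$. The rest of the argument is essentially a transcription of the analogous step in Corollary~\ref{cor:rt-v-rtnk-sca}, swapping Theorem~\ref{thm:avoid-colors-dominate-mu} for Theorem~\ref{thm:avoid-colors-dominate-left-ce-mu-computable}; I do not expect any substantive new obstacle.
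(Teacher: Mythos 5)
Your proposal matches the paper's proof essentially verbatim: same appeal to Theorem~\ref{thm:characterization-ca-promise-rt22-problems}, same two-case split, same use of a strongly increasing left-c.e.\ modulus of a non-computable set together with Cholak--Patey's Corollary 5.5, Lemma~\ref{lem:transitive-to-strongly-transitive} and Theorem~\ref{thm:avoid-colors-dominate-left-ce-mu-computable} in Case 2. No differences to report.
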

\begin{proof}
By Theorem~\ref{thm:characterization-ca-promise-rt22-problems}, 
$\rt^n_k(V, W_{\rt^n_k})$ admits cone avoidance if and only if $\rt^n_k \leq_{id} \lcelargep{\Tcal^n_k(V)}^n_k$.
Case 1: $\Tcal^n_k(V)$ contains only constant functions. Then for any $f : [\omega]^n \to k$, any $\lcelargep{\Tcal^n_k(V)}^n_k$-solution to $f$ is $f$-homogeneous, hence $\rt^n_k \leq_{id} \lcelargep{\Tcal^n_k(V)}^n_k$, so $\rt^n_k(V, W_{\rt^n_k})$ admits cone avoidance.
Case 2: $\Tcal^n_k(V)$ contains a non-constant function $\chi : \Pcal_n \to k$.
Let $\mu : \omega \to \omega$ be a strongly increasing left-c.e\ modulus of a non-computable set $C$.
Let $f(D) = \chi(\Pcal_n(\mu, D))$. By Cholak and Patey~\cite[Corollary 5.5]{Cholak2019Thin} and Lemma~\ref{lem:transitive-to-strongly-transitive}, there is an infinite strongly $\mu$-transitive set $H$ which does not compute $C$. Therefore, $H$ is a $\lcelargep{\Tcal^n_k(V)}^n_k$-solution to $f$ with witnesses $\mu$ and $\chi$. We claim that $H$ is not $f$-homogeneous. Indeed, since $C \not \leq_T H$, then $H$ does not compute a function dominating $\mu$, so by Theorem~\ref{thm:avoid-colors-dominate-left-ce-mu-computable}, for every $\Gcal \in \Pcal_n$, there is some $D \in [H]^n$ such that $\Gcal = \Pcal_n(\mu, D)$. Since $\chi$ is not constant on $\Pcal_n$, $H$ is not $f$-homogeneous. Thus $\rt^n_k \not \leq_{id} \lcelargep{\Tcal^n_k(V)}^n_k$ and $\rt^n_k(V, W_{\rt^n_k})$ does not admit cone avoidance.
\end{proof}

\section{Applications}\label{sect:applications}

In this section, we exemplify the use of this framework by reproving existing theorems in reverse mathematics without involving any forcing argument.

\subsection{Ramsey's theorem}

Ramsey's theorem for pairs is one of the most famous theorems studied in reverse mathematics as it is historically the first one which is not equivalent (and not even linearly ordered with) the five systems of axioms known as the Big Five~\cite{Simpson2009Subsystems}.
The first theorem that we reprove with our framework is the celebrated Seetapun theorem~\cite[Theorem 2.1]{Seetapun1995strength}, answering negatively the long-standing question whether $\rt^2_2$ is equivalent to the Arithmetical Comprehension Axiom ($\aca$) over $\rca$.

\begin{theorem}[Seetapun and Slaman~\cite{Seetapun1995strength}]
For every $k \geq 1$, $\rt^2_k$ admits cone avoidance.
\end{theorem}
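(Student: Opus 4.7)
The plan is to invoke Theorem~\ref{thm:lcelargeo-ca-characterization}, which characterizes cone avoidance among Ramsey-like problems $\rt^n_k(W)$ exactly as identical reducibility to $\lcelargeo^n_k$. It therefore suffices to verify that $\rt^2_k \leq_{id} \lcelargeo^2_k$, that is, every $\lcelargeo^2_k$-solution to a coloring $f : [\omega]^2 \to k$ is already $f$-homogeneous.

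The key observation is that $\Pcal_2$ is a singleton. Indeed, unfolding the definition of $\Pcal_n(\mu, D)$, the edge set consists of pairs $\{p, q\}$ with $p+1 < q < n$, and for $n = 2$ no such pair exists. Hence $\Pcal_2$ contains only the edgeless graph on two vertices, and any coloring $\chi : \Pcal_2 \to k$ is constant. If $H$ witnesses $\lcelargeo^2_k$ for $f$ via some $\mu$ and $\chi$, then $f(D) = \chi(\Pcal_2(\mu, D))$ takes a single value as $D$ ranges over $[H]^2$, so $H$ is an infinite $f$-homogeneous set. This establishes $\rt^2_k \leq_{id} \lcelargeo^2_k$ (in fact, the two problems coincide, as already noted immediately after the statement of $\lcelargeo^n_k$).

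By Theorem~\ref{thm:lcelargeo-ca-characterization}, cone avoidance of $\rt^2_k$ follows at once. There is essentially no obstacle at this step; the whole point of developing the framework is to reduce classical results such as Seetapun's theorem to a trivial verification, while all of the genuine combinatorial and forcing content is absorbed into Theorem~\ref{thm:lcelargeo-ca} (and ultimately into cone avoidance of $\rt^n_{<\infty, C_{n-1}}$ from Cholak and Patey).
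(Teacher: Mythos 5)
Your proof is correct and follows essentially the same route as the paper: both rest on the observation that $|\Pcal_2|=1$, so $\chi$ is constant and any $\lcelargeo^2_k$-solution is $f$-homogeneous. The only cosmetic difference is that you invoke the characterization Theorem~\ref{thm:lcelargeo-ca-characterization} and verify $\rt^2_k \leq_{id} \lcelargeo^2_k$, while the paper cites Theorem~\ref{thm:lcelargeo-ca} directly and notes that $\lcelargeo^2_k$ simply \emph{is} $\rt^2_k$; these amount to the same thing.
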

\begin{proof}
By Theorem~\ref{thm:lcelargeo-ca}, $\lcelargeo^2_k$ admits cone avoidance.
As explained, $\lcelargeo^2_k$ is nothing but the statement $\rt^2_k$. Indeed, since $|\Pcal_2| = 1$, the function $\chi : \Pcal_2 \to k$ is constant, and $\lcelargeo^2_k$ asserts the existence of an infinite set $H$ over which $f$ belongs to the range of $\chi$. In particular $H$ is $f$-homogeneous.
\end{proof}

Later, Jockusch and Dzhafarov~\cite[Lemma 3.2]{Jockusch1972Ramseys} adapted the proof of Seetapun's theorem to obtain strong cone avoidance of $\rt^1_k$.

\begin{theorem}[Jockusch and Dzhafarov~\cite{Jockusch1972Ramseys}]
For every $k \geq 1$, $\rt^1_k$ admits strong cone avoidance.
\end{theorem}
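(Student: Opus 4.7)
The plan is to apply Theorem~\ref{thm:largeo-sca}, which already establishes strong cone avoidance of $\largeo^n_k$ for every $n, k \geq 1$, and to observe that in the base case $n = 1$ this statement is literally $\rt^1_k$. So the proof reduces to unwinding a definition.

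First, I would unpack $\largeo^1_k$. A largeness graph of size one consists of the single vertex $0$ with no possible edges, so $|\Lcal_1| = 1$ and every coloring $\chi : \Lcal_1 \to k$ is constant with some value $i < k$. For any $\mu : \omega \to \omega^{+}$ and any singleton $D = \{x\}$, the graph $\Lcal_1(\mu, D)$ is the unique element of $\Lcal_1$, and both $\mu$-transitivity and strong $\mu$-transitivity are vacuous on sets built from singletons. Hence $\largeo^1_k$ says exactly that for every $f : \omega \to k$ there is a color $i < k$ and an infinite set $H \subseteq \omega$ with $f(x) = i$ for every $x \in H$, which is $\rt^1_k$.

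With this identification, Theorem~\ref{thm:largeo-sca} applied to $n = 1$ immediately yields strong cone avoidance of $\rt^1_k$. One may phrase this more formally by viewing $\rt^1_k$ as the $\rt^1_k$-like problem $\rt^1_k(W_{\rt^1_k})$ whose patterns forbid two singletons of distinct colors, and then invoking Theorem~\ref{thm:largeo-sca-characterization}: the reduction $\rt^1_k(W_{\rt^1_k}) \leq_{id} \largeo^1_k$ holds trivially since the two statements coincide.

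There is no genuine obstacle here, as the theorem is a direct corollary of the general machinery; the combinatorial content of the original Jockusch--Dzhafarov argument is absorbed into the proof of Theorem~\ref{thm:largeo-sca} at arbitrary $n$, and is in fact needed there (via strong cone avoidance of $\rt^n_{<\infty, C_n}$) to handle the construction of the homogeneous set relative to a left-c.e.\ modulus.
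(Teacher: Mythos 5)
Your proposal is correct and matches the paper's own proof: both identify $\largeo^1_k$ with $\rt^1_k$ via $|\Lcal_1| = 1$ and then invoke Theorem~\ref{thm:largeo-sca}. The extra unpacking and the alternative phrasing through Theorem~\ref{thm:largeo-sca-characterization} are fine but not needed beyond what the paper does.
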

\begin{proof}
By Theorem~\ref{thm:largeo-sca}, $\largeo^1_k$ admits strong cone avoidance.
Here again, $\largeo^1_k$ is nothing but the statement $\rt^1_k$, since $|\Lcal_1| = 1$.
\end{proof}

Wang~\cite[Theorem 3.1]{Wang2014Some} surprisingly proved that for every $n$, whenever $\ell$ is sufficiently large, then $\rt^n_{<\infty, \ell}$ admits strong cone avoidance. Cholak and Patey~\cite[Theorem 4.18]{Cholak2019Thin} improved his bound to Catalan's sequence, and proved the tightness of the result. The following theorem was used all over the article, and we can actually get a reversal.

\begin{theorem}[Cholak and Patey~\cite{Cholak2019Thin}]
For every $n \geq 1$, $\rt^n_{<\infty, C_n}$ admits strong cone avoidance.	
\end{theorem}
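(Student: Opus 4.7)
The plan is to express the thin set theorem $\rt^n_{<\infty, C_n}$ as a Ramsey-like problem and then apply the general characterization from Theorem~\ref{thm:largeo-sca-characterization}. Since an arbitrary instance $f : [\omega]^n \to k$ has some fixed finite range, it suffices to prove strong cone avoidance separately for each fixed $k \geq 1$ and then quantify over $k$. So fix $k \geq 1$, and let $W$ be the collection of $\rt^n_k$-patterns of the form $\bigwedge_{s \leq C_n} \ftt(\{x_i : i \in D_s\}) = v_s$, where $D_0, \ldots, D_{C_n}$ are distinct elements of $[r]^n$ for some $r \in \omega$ and $v_0, \ldots, v_{C_n} < k$ are pairwise distinct colors.

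A direct verification shows that an infinite set $H$ is an $\rt^n_k(W)$-solution to $f$ if and only if $|f[H]^n| \leq C_n$. Indeed, if $|f[H]^n| > C_n$, pick $E_0, \ldots, E_{C_n} \in [H]^n$ realizing $C_n + 1$ pairwise distinct colors, let $E = E_0 \cup \cdots \cup E_{C_n} = \{n_0 < \cdots < n_{r-1}\}$, and write $E_s = \{n_i : i \in D_s\}$; the corresponding pattern in $W$ is then $f$-met by $E \subseteq H$. Conversely, any element of $W$ witnessed in $H$ exhibits $C_n + 1$ distinct values in $f[H]^n$.

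By Theorem~\ref{thm:largeo-sca-characterization}, $\rt^n_k(W)$ admits strong cone avoidance if and only if $\rt^n_k(W) \leq_{id} \largeo^n_k$. To establish the reduction, let $f : [\omega]^n \to k$ be arbitrary, and let $H$ be any $\largeo^n_k$-solution to $f$ witnessed by a strongly increasing relative left-c.e.\ function $\mu : \omega \to \omega^{+}$ and a coloring $\chi : \Lcal_n \to k$. Then for every $D \in [H]^n$, $f(D) = \chi(\Lcal_n(\mu, D)) \in \chi(\Lcal_n)$, so
\[
|f[H]^n| \leq |\chi(\Lcal_n)| \leq |\Lcal_n| = C_n,
\]
where the final equality is Cholak and Patey~\cite[Lemma 3.16]{Cholak2019Thin}. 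Hence $H$ is an $\rt^n_k(W)$-solution to $f$, which yields $\rt^n_k(W) \leq_{id} \largeo^n_k$ as desired.

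There is essentially no obstacle in this argument: the framework developed in Section~\ref{sect:ramsey-like} has internalized the combinatorial content (through the structure of largeness graphs), while the numerical coincidence $|\Lcal_n| = C_n$ supplies precisely the Catalan-number threshold. Thus, combining the identical reduction with Theorem~\ref{thm:largeo-sca}, we conclude that $\rt^n_k(W)$ admits strong cone avoidance for each $k$, and hence so does $\rt^n_{<\infty, C_n}$.
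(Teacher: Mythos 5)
Your proof is correct and rests on exactly the same key facts as the paper's: the statement $\largeo^n_k$ admits strong cone avoidance (Theorem~\ref{thm:largeo-sca}), and any $\largeo^n_k$-solution $H$ has $|f[H]^n| \leq |\Lcal_n| = C_n$. The paper simply applies Theorem~\ref{thm:largeo-sca} directly and observes that the solution is automatically thin; you take the slightly longer route of first packaging the thin-set condition as an explicit collection of $\rt^n_k$-patterns and then invoking the characterization Theorem~\ref{thm:largeo-sca-characterization}, but that characterization is itself built on Theorem~\ref{thm:largeo-sca}, so the two arguments reduce to the same observation.
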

\begin{proof}
By Theorem~\ref{thm:largeo-sca}, $\largeo^n_k$ admits strong cone avoidance. 
In particular, $\largeo^n_k$ asserts the existence, for every coloring $f : [\omega]^n \to k$, of an infinite set $H$ and a function $\chi : \Lcal_n \to k$ such that $f[H]^n \subseteq \chi(\Lcal_n)$. Since $|\Lcal_n| = C_n$, it follows that $|f[H]^n| \leq C_n$ and therefore that $H$ is an $\rt^n_{<\infty, C_n}$-solution to~$f$.
\end{proof}

\subsection{The Erd\H{o}s-Moser theorem and the Ascending Descending sequence}

A \emph{tournament} $T$ is a directed graph such that any two vertices has exactly one arrow. We consider the predicate $T(x, y)$ to be true if there is an arrow from $x$ to $y$. A set $H$ is \emph{$T$-transitive} if for every $x, y, z \in H$ such that $T(x, y)$ and $T(y, z)$ both hold, then $T(x, z)$ holds. 

\begin{statement}[Erd\H{o}s-Moser]
$\emo$: Every infinite tournament admits an infinite transitive set.
\end{statement}

A tournament $T$ can be represented by a coloring $f : [\omega]^2 \to 2$ such that for every $x < y$, $f(x, y) = 1$ if and only if $T(x, y)$ holds. A set $H$ is $f$-transitive if for every $x < y < z \in H$ and $i < 2$, if $f(x, y) = f(y, z)= i$ then $f(x, z) = i$. One can see the Erd\H{o}s-Moser theorem as the Ramsey-like statement \qt{For every coloring $f : [\omega]^2 \to 2$, there is an infinite $f$-transitive set $H$.}

The following $\ads$ principle was introduced and studied by Hirschfeldt and Shore~\cite{Hirschfeldt2007Combinatorial} in the context of reverse mathematics. 

\begin{statement}[Ascending Descending sequence]
$\ads$: Every infinite linear order has an infinite ascending or descending sequence.
\end{statement}

As explained, a linear order $(\omega, \prec_\Lcal)$ can be represented by a coloring $f : [\omega]^2 \to 2$ such that for every $x < y \in \omega$, $f(x, y) = 1$ if and only if $x \prec_\Lcal y$. In particular, $\omega$ is $f$-transitive.
The Ascending Descending sequence principle can be seen as the promise Ramsey-like statement \qt{For every coloring $f : [\omega]^2 \to 2$ such that $\omega$ is $f$-transitive, there is an infinite $f$-homogeneous set.}

The $\emo$ principle was introduced by Bovykin and Weiermann~\cite{Bovykin2005strength} as a way to decompose the proof of $\rt^2_2$ into two steps. Indeed, given a coloring $f : [\omega]^2 \to 2$, by $\emo$, there is an infinite set $X$ over which $f$ is transitive, and by $\ads$, there is an infinite $f$-homogeneous subset $Y \subseteq X$. The author~\cite{PateyCombinatorial} proved that $\emo$ admits strong cone avoidance.

\begin{theorem}[Patey~\cite{PateyCombinatorial}]
$\emo$ admits strong cone avoidance.
\end{theorem}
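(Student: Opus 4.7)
My plan is to apply Theorem~\ref{thm:largeo-sca-characterization}, which characterizes strong cone avoidance for an $\rt^n_k$-like problem as identical reducibility to $\largeo^n_k$. Formally, $\emo$ is the Ramsey-like problem $\rt^2_2(W_{\emo})$ as spelled out at the end of Section~\ref{sect:ramsey-like}, so the task reduces to establishing $\rt^2_2(W_{\emo}) \leq_{id} \largeo^2_2$; equivalently, every $\largeo^2_2$-solution $H$ to a coloring $f : [\omega]^2 \to 2$ must be $f$-transitive.

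To carry this out, I will use the explicit description $\largeo^2_2 = \explicitlarge_2$ of Statement~\ref{stat:explicitlarge}: such an $H$ comes equipped with colors $i_s, i_\ell < 2$ satisfying $f[H]^2 \subseteq \{i_s, i_\ell\}$ together with the biconditional ``$f(x, y) = f(y, z) = i_s$ if and only if $f(x, z) = i_s$'' for every $x < y < z$ in $H$. If $i_s = i_\ell$ then $H$ is $f$-homogeneous and is vacuously $f$-transitive, so I will assume $\{i_s, i_\ell\} = \{0, 1\}$. Transitivity for color $i_s$ is exactly the forward direction of the biconditional. For color $i_\ell$, I argue by contradiction: if $f(x, y) = f(y, z) = i_\ell$ but $f(x, z) = i_s$, the reverse direction of the biconditional would force $f(x, y) = i_s \neq i_\ell$, contradicting the hypothesis. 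Hence $f(x, z) = i_\ell$, and transitivity holds in both colors.

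There is no genuine obstacle here: the combinatorial content is fully absorbed by the framework, and the only real verification is that the Erd\H{o}s--Moser forbidden patterns are compatible with the largeness-based description of $\largeo^2_2$-solutions. In effect, the forcing construction of~\cite{PateyCombinatorial} is replaced by the general characterization theorem together with a two-line case analysis on the biconditional defining $\explicitlarge_2$.
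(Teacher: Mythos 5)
Your proof is correct and takes the same route as the paper: invoke Theorem~\ref{thm:largeo-sca-characterization} to reduce to showing $\emo \leq_{id} \largeo^2_2$, then read $\largeo^2_2$ as $\explicitlarge_2$ and observe that an $\explicitlarge_2$-solution is automatically $f$-transitive. The only difference is that you spell out the two-line case analysis on the biconditional (transitivity for $i_s$ from the forward direction, transitivity for $i_\ell$ by contradiction from the reverse direction), which the paper leaves implicit in its phrase ``In particular $H$ is $f$-transitive.''
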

\begin{proof}
By Theorem~\ref{thm:largeo-sca-characterization}, it suffices to prove that $\emo \leq_{id} \largeo^2_2$. However, $\largeo^2_2$ is the statement $\explicitlarge_2$ saying that for every coloring $f : [\omega]^2 \to 2$, there is some $i < 2$ and an infinite set $H$ such that for every $x < y < z \in H$, $f(x, y) = f(y, z) = i$ if and only if $f(x, z) = i$. In particular $H$ is $f$-transitive.
\end{proof}

The author~\cite{PateyCombinatorial} also deduced that $\ads$ does not admit strong cone avoidance. Indeed, if $\ads$ and $\emo$ both admit strong cone avoidance, then $\rt^2_2$ does, which is known not to be the case. One can however reprove it directly from our criterion for promise Ramsey-like theorems.

\begin{theorem}[Patey~\cite{PateyCombinatorial}]
$\ads$ does not admit strong cone avoidance.
\end{theorem}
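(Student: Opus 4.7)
The plan is to invoke Corollary~\ref{cor:rt-v-rtnk-sca} with $V = W_{\ads}$ and $n = k = 2$. Since $\ads$ is precisely the promise Ramsey-like problem $\rt^2_2(W_{\ads}, W_{\rt^2_2})$, it suffices to exhibit a non-constant function in $\Scal^2_2(W_{\ads})$, that is, a non-constant $\chi : \Lcal_2 \to 2$ such that every $\largep{\{\chi\}}^2_2$-solution is an $\rt^2_2(W_{\ads})$-solution (a set over which the coloring is transitive).

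Recall that $\Lcal_2 = \Vcal_2 = \{\Gcal_0, \Gcal_1\}$, where $\Gcal_0$ is the edgeless graph and $\Gcal_1$ is the graph with the single edge $\{0,1\}$. Define $\chi : \Lcal_2 \to 2$ by $\chi(\Gcal_0) = 1$ and $\chi(\Gcal_1) = 0$; this $\chi$ is evidently non-constant. Intuitively, $\chi$ colors small intervals by $1$ and large intervals by $0$.

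The core verification is that $\chi \in \Scal^2_2(W_{\ads})$. Fix a coloring $f : [\omega]^2 \to 2$ and suppose $H$ is a $\largep{\{\chi\}}^2_2$-solution, witnessed by a strongly increasing relative left-c.e.\ function $\mu : \omega \to \omega^{+}$ such that $H$ is strongly $\mu$-transitive and $f(x,y) = \chi(\Vcal_2(\mu, \{x,y\}))$ for every $x < y \in H$. Take any $x < y < z \in H$. If $f(x,y) = f(y,z) = 1$, then $[x,y]$ and $[y,z]$ are $\mu$-small; since $H$ is $\mu$-transitive, $[x,z]$ is $\mu$-small as well, giving $f(x,z) = 1$. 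If $f(x,y) = f(y,z) = 0$, then $\mu(x) \leq y$ and $\mu(y) \leq z$, so in particular $\mu(x) \leq z$, whence $f(x,z) = 0$. Thus $H$ is $f$-transitive, so $H$ is an $\rt^2_2(W_{\ads})$-solution to $f$. This shows $\rt^2_2(W_{\ads}) \leq_{id} \largep{\{\chi\}}^2_2$, so $\chi \in \Scal^2_2(W_{\ads})$.

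Since $\Scal^2_2(W_{\ads})$ contains a non-constant function, Corollary~\ref{cor:rt-v-rtnk-sca} yields that $\ads = \rt^2_2(W_{\ads}, W_{\rt^2_2})$ does not admit strong cone avoidance. No serious obstacle arises: the framework has already been set up to reduce this question to the purely combinatorial task of checking that the sparsity coloring $\chi$ above enforces transitivity.
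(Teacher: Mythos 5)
Your proposal is correct and takes essentially the same route as the paper: both reduce to Corollary~\ref{cor:rt-v-rtnk-sca} and show $\Scal^2_2$ of the transitivity constraint contains a non-constant function. The paper gets this by observing that $\rt^2_2(W_{\ads}) = \emo \leq_{id} \explicitlarge_2 = \largeo^2_2$, so \emph{every} $\chi : \Lcal_2 \to 2$ lies in $\Scal^2_2(W_{\ads})$; you instead pick one explicit non-constant $\chi$ and verify transitivity directly, which is a self-contained but equivalent presentation of the same argument.
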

\begin{proof}
Let $V_{\ads}$ be the set of $\rt^2_2$-patterns forbidding non-transitive colorings. $\ads$ is the promise Ramsey-like statement $\rt^2_2(V_{\ads}, W_{\rt^2_2})$. By Corollary~\ref{cor:rt-v-rtnk-sca}, $\ads$ admits strong cone avoidance if and only if $\Scal^2_2(V_{\ads})$ contains only constant functions. Since $\rt^2_2(V_{\ads}) \leq_{id} \explicitlarge_2$ and $\explicitlarge_2$ is the statement $\largeo^2_2$, then $\Scal^2_2(V_{\ads})$ contains all the functions $\chi : \Lcal_2 \to 2$. Since $|\Lcal_2| = 2$, $\ads$ does not admit strong cone avoidance.
\end{proof}

Dorais et al~\cite{Dorais2016uniform} studied combinatorial principles, including Ramsey's theorem, under the Weihrauch reduction. They introduced for this some new consequences of Ramsey's theorem for pairs, such as $\sher$. Given a coloring $f : [\omega]^2 \to k$, a set $H \subseteq \omega$ is \emph{$f$-semi-hereditary} if for all $i < k$ except possibly one, whenever $x < y < z \in H$ and $f(x, z) = f(y, z) = i$, then $f(x, y) = i$.

\begin{statement}[Semi-hereditary]
$\sher_k$: Every coloring $f : [\omega]^2 \to k$ such that $\omega$ is $f$-semi-hereditary has an infinite $f$-homogeneous set.
\end{statement}

The restriction $\sher_2$ was studied by Dorais (unpublished), who showed that it follows from $\ads$. Thanks to our general criterion for promise Ramsey-like principles, we can prove the following theorem.

\begin{theorem}
For every $k \geq 2$, $\sher_k$ does not admit strong cone avoidance.
\end{theorem}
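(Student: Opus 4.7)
The plan is to apply Corollary~\ref{cor:rt-v-rtnk-sca}, which reduces the question to showing that $\Scal^2_k(V_{\sher_k})$ contains a non-constant function, where $V_{\sher_k}$ is the collection of $\rt^2_k$-patterns whose $f$-satisfaction witnesses a failure of the semi-hereditary property. More precisely, $V_{\sher_k}$ should consist, for each pair of distinct colors $i \neq j$ with $i, j < k$, of the patterns $\ftt(x_0, x_2) = i \wedge \ftt(x_1, x_2) = i \wedge \ftt(x_0, x_1) = j$ for all suitable coordinate placements, arranged so that $\omega$ $f$-avoids $V_{\sher_k}$ precisely when $\omega$ is $f$-semi-hereditary.

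Next, using $k \geq 2$, I would fix any non-constant $\chi : \Lcal_2 \to k$; say $i_s = \chi(\Gcal_0) = 0$ and $i_\ell = \chi(\Gcal_1) = 1$, where $\Gcal_0, \Gcal_1$ are the two vector graphs of size $2$ (recall $\Lcal_2 = \Vcal_2$ as discussed after Statement~\ref{stat:explicitlarge}). The central step is to show $\rt^2_k(V_{\sher_k}) \leq_{id} \largep{\{\chi\}}^2_k$, i.e., every $\largep{\{\chi\}}^2_k$-solution $H$ to any $f : [\omega]^2 \to k$ is $f$-semi-hereditary.

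To verify this, recall that any such $H$ comes with a (strongly increasing, relative left-c.e.) modulus $\mu$ making $H$ strongly $\mu$-transitive and satisfying $f(x, y) = i_s$ iff $\mu(x) > y$ for $x < y \in H$. Fix $x < y < z \in H$ with $f(x, z) = f(y, z) = i$ and check whether $f(x, y) = i$: for $i = i_s = 0$ the hypothesis gives $\mu(x) > z > y$, hence $f(x, y) = 0$; for $i = i_\ell = 1$ the implication may fail, but this is the single allowed exception in the definition of semi-hereditary; and for any other $i \in \{2, \dots, k-1\}$, the hypothesis $f(x, z) = i$ never holds on $[H]^2$ since the range of $f$ restricted to $[H]^2$ is contained in $\{i_s, i_\ell\}$, so the condition is vacuously satisfied. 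Thus $H$ is $f$-semi-hereditary with $i_\ell$ as the only potential exceptional color, which gives $\chi \in \Scal^2_k(V_{\sher_k})$ and completes the argument via Corollary~\ref{cor:rt-v-rtnk-sca}.

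The main thing to be careful about is the bookkeeping that turns the semi-hereditary definition into an explicit set of $\rt^2_k$-patterns, together with the observation that the asymmetry in that definition (allowing one exceptional color) is exactly what permits a non-constant $\chi$ into $\Scal^2_k(V_{\sher_k})$, whereas the fully symmetric notion of being homogeneous forces $i_s = i_\ell$ and blocks this construction.
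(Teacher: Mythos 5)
Your proposal is correct and follows essentially the same route as the paper: apply Corollary~\ref{cor:rt-v-rtnk-sca} and verify that $\Scal^2_k(V_{\sher_k})$ contains a non-constant $\chi$ by showing every $\largep{\{\chi\}}^2_k$-solution is $f$-semi-hereditary (the paper does this uniformly for all $\chi$ via the $\explicitlarge_k$ reformulation, you do it for a specific non-constant $\chi$ via the modulus $\mu$ directly, but both arguments reduce to the observation that the one-directional failure of hereditarity for $i_\ell$ is precisely the exception allowed in the definition of semi-hereditary). The reasoning in your color-by-color check is sound, and the use of $k \geq 2$ to choose $i_s \neq i_\ell$ is exactly where the hypothesis enters, just as in the paper.
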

\begin{proof}
Let $V_{\sher_k}$ be the set of $\rt^2_k$-patterns forbidding non-semi-hereditary colorings. $\sher_k$ is the promise Ramsey-like statement $\rt^2_2(V_{\sher_k}, W_{\rt^2_k})$. By Corollary~\ref{cor:rt-v-rtnk-sca}, $\sher_k$ admits strong cone avoidance if and only if $\Scal^2_k(V_{\sher_k})$ contains only constant functions. 

We now prove that $\sher_k \leq_{id} \explicitlarge_k$.
Fix a coloring $f : [\omega]^2 \to k$ and let $H$ be an infinite $\explicitlarge_k$-solution to~$f$. In particular, there are two colors $i_s, i_\ell < k$ such that $f[H]^2 \subseteq \{i_s, i_\ell\}$ and for every $x < y < z \in H$, $f(x, y) = f(y, z) = i_s$ if and only if $f(x, z) = i_s$. Therefore, for all colors $i < k$ except possibly $i_\ell$, whenever $f(x, z) = f(y, z) = i$, then $f(x, y) = i$.

Since $\explicitlarge_k$ is the statement $\largeo^2_k$, then $\Scal^2_k(V_{\sher_k})$ contains all the functions $\chi : \Lcal_2 \to k$. Since $|\Lcal_2| = 2$, $\sher_k$ does not admit strong cone avoidance
\end{proof}

\subsection{The free set theorem} We now provide an example of application to a statement which involves $\omega$-colorings of $[\omega]^n$. The free set theorem was introduced by Friedman~\cite{FriedmanFom53free} and then studied by Cholak et al~\cite{Cholak2001Free} in the framework of reverse mathematics. Given a coloring $f : [\omega]^n \to \omega$, a set $H \subseteq \omega$ is \emph{$f$-free} if for every $D \in [H]^n$, whenever $f(D) \in H$ then $f(D) \in D$. Equivalently, $H$ is $f$-free if for every $x \in H$, $x \not \in f[H \setminus \{x\}]^n$.

\begin{statement}[Free set theorem]
$\fs^n$: Every coloring $f : [\omega]^n \to \omega$ has an infinite $f$-free set.
\end{statement}

Given a coloring $f : [\omega]^n \to \omega$, for every $D = \{x_0 < \dots < x_n \} \in [\omega]^{n+1}$, let $g(D)$ be the function $g_D : [n+1]^n \to 2$ defined for every $E \in [n+1]^n$ by $g_D(E) = 1$ if and only if $f(\{x_i : i \in E\}) = x_j$ where $j$ is the unique value in $D \setminus E$. Since there are $2^{n+1}$ functions of type $[n+1]^n \to 2$, one can see $g$ as a function of type $[\omega]^{n+1} \to 2^{n+1}$.
Moreover, every $g$-homogeneous set $H$ must be for color the 0-constant function of type $[n+1]^n \to 2$, and $H$ is then $f$-free.
We say that an infinite set $H$ is \emph{$g$-functional} if for every $D_1 = \{x_0 < \dots < x_n\}, D_2 = \{y_0 < \dots < y_n \} \in [\omega]^{n+1}$ and $E \in [n+1]^n$, if $g(D_1)(E) = g(D_2)(E) = 1$, then $\{x_i : i \in E\} = \{y_i : i \in E\}$. In particular, $\omega$ is $g$-functional for the coloring $g : [\omega]^{n+1} \to 2^{n+1}$ defined above. 
We can therefore see $\fs^n$ as the promise Ramsey-like statement \qt{For every coloring $g : [\omega]^{n+1} \to 2^{n+1}$ such that $\omega$ is $g$-functional, there is an infinite $g$-homogeneous set.}
Wang~\cite[Theorem 4.1]{Wang2014Some} proved the following theorem.

\begin{theorem}[Wang~\cite{Wang2014Some}]
For every $n \geq 1$, $\fs^n$ admits strong cone avoidance.
\end{theorem}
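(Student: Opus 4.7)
The plan is to apply Corollary~\ref{cor:rt-v-rtnk-sca} to the promise Ramsey-like encoding of $\fs^n$ explained at the beginning of Section~\ref{sect:promise-ramsey-like}: a coloring $f : [\omega]^n \to \omega$ gives rise to a coloring $g : [\omega]^{n+1} \to 2^{n+1}$ whose $0$-valued homogeneous sets are exactly the $f$-free sets, and the promise is that $\omega$ be $g$-functional. Writing $V_{\fs^n}$ for the collection of $\rt^{n+1}_{2^{n+1}}$-patterns expressing non-$g$-functionality, $\fs^n$ becomes the promise Ramsey-like problem $\rt^{n+1}_{2^{n+1}}(V_{\fs^n}, W_{\rt^{n+1}_{2^{n+1}}})$, so by Corollary~\ref{cor:rt-v-rtnk-sca} it suffices to prove that every $\chi \in \Scal^{n+1}_{2^{n+1}}(V_{\fs^n})$ is constant.

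To prove this, fix a non-constant $\chi : \Lcal_{n+1} \to 2^{n+1}$; we will construct an instance $g$ together with a $\largep{\{\chi\}}^{n+1}_{2^{n+1}}$-solution $H$ that is not $g$-functional, thereby witnessing $\chi \notin \Scal^{n+1}_{2^{n+1}}(V_{\fs^n})$. We take $\mu : \omega \to \omega$ to be a strongly increasing left-c.e.\ modulus of $\emptyset'$ and $H$ an infinite strongly $\mu$-transitive set not computing any function dominating~$\mu$, obtained by combining~\cite[Corollary 5.5]{Cholak2019Thin} with Lemma~\ref{lem:transitive-to-strongly-transitive}. Defining $g(D) = \chi(\Lcal_{n+1}(\mu, D))$ makes $H$ automatically a $\largep{\{\chi\}}^{n+1}_{2^{n+1}}$-solution with witnesses~$\mu$ and~$\chi$.

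The core of the argument is to produce a non-functional witness inside $H$. Since $\chi$ is non-constant, there is a position $E \in [n+1]^n$ and a largeness graph $\Gcal \in \Lcal_{n+1}$ with $\chi(\Gcal)(E) = 1$. By Theorem~\ref{thm:avoid-colors-dominate-left-ce-mu}, every largeness graph is realized on $[H]^{n+1}$, and in fact infinitely many tuples in $[H]^{n+1}$ realize any given graph. A pigeonhole argument over the finitely many possible $E$-projections will then produce two distinct tuples $D_1, D_2 \in [H]^{n+1}$ with $\chi(\Lcal_{n+1}(\mu, D_i))(E) = 1$ but differing $E$-projections, contradicting $g$-functionality of $H$. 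The hard part will be this last step: for an arbitrary non-constant $\chi$, one must guarantee that two realizations of the relevant largeness graph in $H$ have distinct $E$-projections. This reduces to a case analysis on the structure of $\Lcal_{n+1}$ and may require tailoring the growth of $\mu$ so that each largeness graph admits sufficiently many realizations in $H$ to force the required variation.
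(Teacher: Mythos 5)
Your high-level strategy is exactly the paper's: encode $\fs^n$ as the promise problem $\rt^{n+1}_{2^{n+1}}(V_{\fs^n}, W_{\rt^{n+1}_{2^{n+1}}})$, invoke Corollary~\ref{cor:rt-v-rtnk-sca}, fix a non-constant $\chi$, build $g(D) = \chi(\Lcal_{n+1}(\cdot, D))$ together with a cone-avoiding strongly transitive $H$, and argue that $H$ fails $g$-functionality. The structural setup is right.

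The gap is in the step you yourself flag as hard, and your speculation (``tailoring the growth of $\mu$ so that each largeness graph admits sufficiently many realizations'') is pointed in the wrong direction. Having more realizations of $\Gcal$ in $H$ is not by itself enough: to contradict $g$-functionality you need two tuples that project identically on $E$ yet differ in the excluded coordinate, and with a generic $H$ and the raw modulus $\mu$ there is no reason any two realizations of $\Gcal$ should line up this way. The ``pigeonhole over the finitely many possible $E$-projections'' does not go through — the set of $E$-projections of tuples in $[H]^{n+1}$ is infinite, and in any case pigeonholing produces agreement of projections, not the kind of controlled disagreement you then say you need. The paper instead replaces $\mu$ by a \emph{doubled} modulus $\rho$ with $\rho_{2s}(2x)=2\mu_s(x)$, $\rho_{2s}(2x{+}1)=\rho_{2s}(2x)$, $\rho_{2s+1}=\rho_{2s}$. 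This makes each pair $\{2x,2x{+}1\}$ indistinguishable from the point of view of $\rho$. One picks a strongly $\rho$-transitive $H_{\mathrm{even}}\subseteq 2\omega$ avoiding the cone and then sets $H=\{x,x{+}1 : x\in H_{\mathrm{even}}\}$. Now, given a realization $D_{\mathrm{even}}=\{x_0<\dots<x_n\}$ of $\Gcal$ inside $H_{\mathrm{even}}$ and the excluded index $t\notin E$, the tuple $D=\{x_i : i\in E\}\cup\{x_t{+}1\}$ lies in $[H]^{n+1}$ and satisfies $\Lcal_{n+1}(\rho,D)=\Lcal_{n+1}(\rho,D_{\mathrm{even}})=\Gcal$, so $g(D)(E)=g(D_{\mathrm{even}})(E)=1$ while $D\neq D_{\mathrm{even}}$ agree exactly on the $E$-coordinates. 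This controlled pair is what witnesses the failure of $g$-functionality, and there is no substitute for building the duplicated structure into $H$ itself: neither a larger $\mu$ nor a pigeonhole produces two realizations that are off by a single ``twin'' element. Note also that one still has to verify that the engineered $H$ (which is no longer a subset of a transitive set produced by the black-box lemmas) remains strongly $\rho$-transitive; that check is needed for $H$ to be a $\largep{\{\chi\}}^{n+1}_{2^{n+1}}$-solution at all and is not addressed in your sketch.
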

\begin{proof}
Let $V_{\fs^n}$ be the set of $\rt^{n+1}_{2^{n+1}}$-patterns which ensures that $\omega$ is $g$-functional. By Corollary~\ref{cor:rt-v-rtnk-sca}, $\fs^n$ admits strong cone avoidance if and only if $\Scal^{n+1}_{2^{n+1}}(V_{\fs^n})$ contains only constant functions. 

Let $\chi : \Lcal_{n+1} \to 2^{n+1}$ be a non-constant function. In particular, there is some $\Gcal \in \Lcal_{n+1}$ such that $\chi(\Gcal) : [n+1]^n \to 2$ is different from the 0-constant function of type $[n+1]^n \to 2$.

Let $\mu : \omega \to \omega$ be a strongly increasing left-c.e.\ modulus of~$\emptyset'$. Let $\rho : \omega \to \omega$ be the left-c.e.\ function defined for every $n, x \in \omega$ by $\rho_{2n}(2x) = 2\mu_n(x)$, $\rho_{2n}(2x+1) = \rho_{2n}(2x)$ and $\rho_{2n+1} = \rho_{2n}$. Informally, $\rho$ is obtained from $\mu$ by considering the integers of $\mu$ as even numbers for $\rho$, and interleaving odd numbers which do not change the value of $\rho$. We claim that $\rho$ is strongly increasing. First of all for every $n$, $\rho_{2n}$ is non-decreasing, and since $\rho_{2n+1} = \rho_{2n}$, neither is $\rho_{2n+1}$. We need to check that for every $x < y \in \omega$ and $s \in \omega$, if $\rho_{s+1}(x) > \rho_s(x)$ then $\rho_{s+1}(y) > s$. By construction of $\rho$, if $\rho_{s+1}(x) > \rho_s(x)$ then $s$ is of the form $2s_1+1$. Let $x_1 = \lfloor x/2 \rfloor$ and $y_1 = \lfloor y/2\rfloor$. In particular $\mu_{s_1+1}(x_1) > \mu_{s_1}(x_1)$, and since $\mu$ is strongly increasing, $\mu_{s_1+1}(y_1) > s_1$ so $\rho_{s+1}(y) = 2\mu_{s_1+1}(y_1) > 2s_1$ so $\rho_{s+1}(y) > s$. Thus $\rho$ is a strongly increasing left-c.e.\ function. Moreover, every function dominating $\rho$ computes $\emptyset'$.

Let $f : [\omega]^{n+1} \to 2^{n+1}$ be defined by $f(D) = \chi(\Lcal_{n+1}(\rho, D))$. By Cholak and Patey~\cite[Corollary 5.5]{Cholak2019Thin} and Lemma~\ref{lem:transitive-to-strongly-transitive}, there is an infinite strongly $\rho$-transitive set $H_{even} \subseteq \{2n : n \in \omega\}$ which does not compute $\emptyset'$, hence does not compute a function dominating $\rho$. We claim that $H = \{x, x+1 : x \in H_{even}\}$ is strongly $\rho$-transitive. Let $w < x < y < z \in H$ be such that $\rho_z(w) > x$ and $\rho_z(x) > y$. Let $w_1, x_1, y_1$ and $z_1$ be the largest even value smaller or equal to $w$, $x$, $y$ and $z$, respectively. 
	Then $\rho_{z_1}(w_1) = \rho_z(w) > x \geq x_1$ and $\rho_{z_1}(x_1) = \rho_z(x) > y \geq y_1$. By strong $\rho$-transitivity of $H_{even}$, $\rho_{z_1}(w_1) > y_1$. In particular, $\rho_z(w) = \rho_{z_1}(w_1) > y_1$, and since $\rho_z(w)$ is even, $\rho_z(w) > y$.
Therefore, $H$ is a $\largep{\chi}^{n+1}_{2^{n+1}}$-solution to $f$ with witness $\rho$.

We claim that $H$ is not an $\rt^{n+1}_{2^{n+1}}(V_{\fs^n})$-solution to $f$. Let $\Gcal \in \Lcal_{n+1}$ be such that $\chi(\Gcal)$ is not the 0-constant function of type $[n+1]^n \to 2$. In particular, there is some $E \in [n+1]^n$ such that $\chi(\Gcal)(E) = 1$.

Since $H_{even}$ does not compute $\emptyset'$, hence does not compute a function dominating $\rho$, by Theorem~\ref{thm:avoid-colors-dominate-left-ce-mu}, there is some $D_{even} = \{x_0 < \dots < x_n\} \in [H_{even}]^{n+1}$ such that $\Gcal = \Lcal_{n+1}(\rho, D_{even})$. 
Let $t$ be the unique element of $\{0, \dots, n+1\} \setminus E$
and let $D = \{x_i : i \in E\} \cup \{x_t+1\}$. In particular, $D \in [H]^{n+1}$. By construction of $\rho$, $\Lcal_{n+1}(\rho, D_{even}) = \Lcal_{n+1}(\rho, D) = \Gcal$, so $f(D) = f(D_{even}) = \chi(\Gcal)$.
Then $D$ and $D_{even} \in [H]^{n+1}$ witness that $H$ is not $f$-functional, and therefore that $H$ is not an $\rt^{n+1}_{2^{n+1}}(V_{\fs^n})$-solution to $f$.
\end{proof}

\subsection{The canonical Ramsey theorem and the rainbow Ramsey theorem}
We conclude this section by providing a more general translation scheme from principles involving $\omega$-colorings of $[\omega]^n$ into promise Ramsey-like statements.

As shown by Theorem~\ref{thm:rtnk-is-universal}, Ramsey's theorem is the maximal true Ramsey-like theorem for finite colorings of $[\omega]^n$.
The following canonical Ramsey theorem can be seen as the maximal true Ramsey-like theorem for $\omega$-colorings from $[\omega]^n$. Given a coloring $f : [\omega]^n \to \omega$, a set $H$ is \emph{$f$-canonical} if there is a set $U \subseteq \{0, \dots, n-1\}$ such that for every $D_0 = \{x_0 < \dots < x_{n-1}\} \in [H]^n$ and $D_1 = \{y_0 < \dots < y_{n-1}\} \in [H]^n$, $f(D_0) = f(D_1)$ if and only if for every $i \in U$, $x_i = y_i$.

\begin{statement}[Canonical Ramsey theorem]
$\crt^n$: Every coloring $f : [\omega]^n \to \omega$ has an infinite $f$-canonical set.
\end{statement}

The canonical Ramsey theorem was first studied by Mileti~\cite{Mileti2004Partition} from a computability-theoretic viewpoint. He proved in particular that $\crt^2$ does not admit cone avoidance. We now explain how to express the canonical Ramsey theorem as a promise Ramsey-like statement.

Given a function $f : [\omega]^n \to \omega$, for every $D = \{x_0 < \dots < x_{2n-1}\} \in [\omega]^{2n}$, let $g(D)$ be the function $g_D : [2n]^n \times [2n]^n \to 2$ defined for every $E_0, E_1 \in [2n]^n$ by
$g_D(E_0, E_1) = 1$ if and only if $f(\{x_i : i \in E_0\}) = f(\{x_i : i \in E_1\})$.

\begin{lemma}
An infinite set $H$ is $g$-homogeneous if and only if it is $f$-canonical.
\end{lemma}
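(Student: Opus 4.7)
The backward direction is an immediate calculation that I would handle first. Assuming $H$ is $f$-canonical with witness $U \subseteq \{0, \dots, n-1\}$, for any $D = \{x_0 < \dots < x_{2n-1}\} \in [H]^{2n}$ and any $E_0, E_1 \in [2n]^n$, the $i$-th smallest element of $\{x_k : k \in E_0\}$ is $x_{e_i}$ where $e_i$ is the $i$-th smallest element of $E_0$, and similarly for $E_1$. By $f$-canonicity, $g_D(E_0, E_1) = 1$ iff $e_i = e'_i$ for every $i \in U$, a condition on $(E_0, E_1)$ independent of $D$; hence $g_D$ takes the same value $c$ for every $D \in [H]^{2n}$, and $H$ is $g$-homogeneous.

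For the forward direction, assume $H$ is $g$-homogeneous with common value $c$; the plan is to extract a suitable $U$. The first step is to prove that for $D_0, D_1 \in [H]^n$ with $D_0 = \{y_0 < \dots < y_{n-1}\}$ and $D_1 = \{z_0 < \dots < z_{n-1}\}$, the truth value of $f(D_0) = f(D_1)$ depends only on the agreement set $A(D_0, D_1) := \{i : y_i = z_i\}$. I would embed $D_0 \cup D_1$ into some $D^* \in [H]^{2n}$ to record the position pair $(E_0, E_1) \in [2n]^n \times [2n]^n$, so that $g$-homogeneity gives $[f(D_0) = f(D_1)] = c(E_0, E_1)$. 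For another pair $(D'_0, D'_1)$ in $[H]^n \times [H]^n$ with the same agreement set but possibly different interleaving pattern, I would chain through a sequence of $2n$-supersets in $H$, using that $H$ is infinite to freely relocate padding elements, so that repeated $g$-homogeneity forces the associated $c$-values to coincide. This yields a well-defined function $\eta : 2^{\{0,\dots,n-1\}} \to \{0,1\}$ with $[f(D_0) = f(D_1)] = \eta(A(D_0, D_1))$.

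Next I would extract $U$ by setting $U = \bigcap \{A : \eta(A) = 1\}$, well-defined since $\eta(\{0, \dots, n-1\}) = 1$. The goal is to prove $\eta(A) = 1 \iff U \subseteq A$, which reduces to showing $\eta^{-1}(1)$ is upward closed and closed under binary intersection. For intersection closure, given $\eta(A_1) = \eta(A_2) = 1$, I would pick $D_0, D_1, D_2 \in [H]^n$ with $A(D_0, D_1) = A_1$, $A(D_0, D_2) = A_2$, and $A(D_1, D_2) = A_1 \cap A_2$ (such a triple exists by the infiniteness of $H$, arranging the disagreement positions to be filled with pairwise distinct elements), and invoke transitivity of $f$-equality to get $f(D_1) = f(D_0) = f(D_2)$, whence $\eta(A_1 \cap A_2) = 1$. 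Upward closure follows by a similar bridging argument through a third $n$-subset that agrees at $A$ with both sides. Combined, these give $\eta^{-1}(1) = \{A : A \supseteq U\}$, and $H$ is $f$-canonical with witness $U$.

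The main obstacle will be Step 1 of the forward direction, namely establishing the factorization of $c(E_0, E_1)$ through the agreement set alone. Two pairs with the same agreement set can lie inside $2n$-supersets with genuinely different interleaving patterns within $[2n]$, so equating their $c$-values cannot be done by direct inspection of $(E_0, E_1)$; one has to chain through carefully chosen intermediate $2n$-supersets inside $H$ and repeatedly apply $g$-homogeneity. Once this factorization is secured, the combinatorial extraction of $U$ from $\eta$ reduces to a routine argument on the boolean lattice of agreement sets.
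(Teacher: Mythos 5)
Your proposal takes a genuinely different route from the paper. The backward direction and the extraction of $U$ from $\eta$ are both sound: the bridging triples you describe (for upward closure and intersection closure of $\eta^{-1}(1)$) can indeed be assembled inside any infinite $H$, and transitivity of $f$-equality then closes both lattice-theoretic claims. However, Step 1 of the forward direction, establishing that $[f(D_0) = f(D_1)]$ depends only on $A(D_0, D_1)$ (equivalently, that $c(E_0, E_1)$ factors through the agreement pattern $\{i : a_i = b_i\}$), is left as an unproven sketch, and it is exactly where all the work lies. The phrase \emph{chain through carefully chosen intermediate $2n$-supersets} undersells the difficulty: two position-pairs $(E_0,E_1)$ and $(E'_0,E'_1)$ with the same agreement pattern can have different sizes $|E_0 \cup E_1| \neq |E'_0 \cup E'_1|$ because of off-diagonal coincidences $a_i = b_j$ with $i \neq j$, and no single linear chain of overlapping $2n$-tuples suffices. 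In the case $c(E_0,E_1) = 0$ one cannot chain negations of equalities through transitivity at all; the argument instead has to proceed by contradiction, combining several instances of $g$-homogeneity over shifted supersets. So as written there is a genuine gap at the most delicate point, which you yourself flagged as \emph{the main obstacle}.

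For comparison, the paper sidesteps Step 1 entirely: it applies the classical (Erd\H{o}s--Rado) canonical Ramsey theorem to $f$ on $H$ to obtain an infinite $H_1 \subseteq H$ that is $f$-canonical with some witness $U$; since $H_1 \subseteq H$ inherits $g$-homogeneity for the same color $c$, this $U$ forces $c$ to be precisely the coloring \emph{agree on $U$}; and $g$-homogeneity of $H$ then transports $f$-canonicity from $H_1$ back to all of $H$. Your approach, if completed, would avoid invoking the canonical Ramsey theorem as a black box and yield a more self-contained argument, at the cost of substantially more combinatorial work in Step 1. Either fill in that step (be explicit about the contradiction argument for the $c = 0$ case), or adopt the paper's shortcut.
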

\begin{proof}
$\Rightarrow$: Suppose first that $H$ is $g$-homogeneous, say for color $c : [2n]^n \times [2n]^n \to 2$.

Claim 1: There is a finite set $U \subseteq \{0, \dots, n-1\}$ such that for every $E_0 = \{a_0 < \dots < a_{n-1}\} \in [2n]^n$ and $E_1 = \{b_0 < \dots < b_{n-1}\} \in [2n]^n$, $c(E_0, E_1) = 1$ if and only if for every $i \in U$, $a_i = b_i$.
 Fix $E_0$ and $E_1$.
 By the classical canonical Ramsey theorem, there is an infinite subset $H_1 \subseteq H$ which is $f$-canonical with some witness set $U \subseteq \{0, \dots, n-1\}$. Note that $c(E_0, E_1) = 1$ if and only if for every $F = \{z_0 < \dots < z_{2n-1}\} \in [H_1]^{2n}$, letting $D_0 = \{z_i : i \in E_0\}$ and $D_1 = \{z_i : i \in E_1\}$, $f(D_0) = f(D_1)$. Since $H_1$ is $f$-canonical with witness $U$, then $f(D_0) = f(D_1)$ if and only if for every $i \in U$, the $i$th element of $D_0$ equals the $i$th element of $D_1$, if for every $i \in U$, $a_i = b_i$. This proves Claim 1. From now on, fix the set~$U$.
 
 Claim 2: $H$ is $f$-canonical with witness $U$. Fix some $D_0, D_1 \in [H]^n$, and let $F = \{z_0 < \dots < z_{2n-1}\} \in [H]^{2n}$ be some set such that $D_0 \cup D_1 \subseteq F$. Let $E_0 = \{i < 2n : z_i \in D_0\}$ and $E_1 = \{i < 2n : z_i \in D_1\}$. By definition of $g$, $f(D_0) = f(D_1)$ if and only if $c(E_0, E_1) = 1$. By Claim 1, $c(E_0, E_1) = 1$ if and only if for every $i \in U$, the $i$th element of $E_0$ equals the $i$th element of $E_1$. Therefore, $f(D_0) = f(D_1)$ if and only if for every $i \in U$, the $i$th element of $D_0$ equals the $i$th element of $D_1$. This proves Claim 2.
 
$\Leftarrow$: Suppose now that $H$ is $f$-canonical, with witness $U \subseteq \{0, \dots, n-1\}$. We claim that $H$ is $g$-homogeneous. Fix some $D = \{z_0 < \dots < z_{2n-1}\} \in [H]^{2n}$ and let $c = f(D)$, with $c : [2n]^n \times [2n]^n \to 2$. We claim that $c$ is fully specified by $U$. Fix some $E_0, E_1 \in [2n]^n$ and let $D_0 = \{z_i : i \in E_0\}$ and $D_1 = \{z_i : i \in E_1\}$. Then $c(E_0, E_1) = 1$ if and only if $f(D_0) = f(D_1)$, and by $f$-canonicity of $H$, this holds if and only if for every $i \in U$, the $i$th element of $D_0$ equals the $i$th element of $D_1$, which again holds if and only if the $i$th element of $E_0$ equals the $i$th element of $E_1$.
This property depends only on $E_0$, $E_1$ and $U$. Therefore $c$ is unique.
\end{proof}

We say that a set $H$ is \emph{$g$-comparing} if $g$ behaves as the coding of some function $f$, that is, for every $F_{1,2} = \{x_0 < \dots < x_{2n-1}\} \in [H]^{2n}$, $F_{1,3} = \{y_0 < \dots < y_{2n-1}\} \in [H]^{2n}$ and $F_{2,3} = \{z_0 < \dots < z_{2n-1}\} \in [H]^{2n}$ and $E^1_{1,2}, E^1_{1,3}, E^2_{1,2}, E^2_{2,3}, E^3_{1,3}, E^3_{2,3}$ such that $\{x_i : i \in E^1_{1,2} \} = \{ y_i : i \in E^1_{1,3} \}$, $\{x_i : i \in E^2_{1,2} \} = \{ z_i : i \in E^2_{2,3} \}$ and $\{y_i : i \in E^3_{1,3}\} = \{z_i : i \in E^3_{2,3}\}$, if $g(F_{1,2})(E^1_{1,2},E^2_{1,2}) = g(F_{2,3})(E^2_{2,3}, E^3_{2,3}) = 1$ then $g(F_{1,3})(E^1_{1,3}, E^3_{1,3}) = 1$. Moreover $g(F_{1,2})(E^1_{1,2},E^2_{1,2}) = 1$ and if $g(F_{1,2})(E^1_{1,2},E^2_{1,2}) = 1$ then $g(F_{1,2})(E^2_{1,2},E^1_{1,2}) = 1$.
Let $V_{\crt^n}$ be the set of $\rt^{2n}_\ell$-patterns (where $\ell$ is the number of functions of type $[2n]^n \times [2n]^n \to 2$) forbidding the sets which are non $g$-comparing. Then $\crt^n$ can be seen as the promise Ramsey-like statement $\rt^{2n}_\ell(V_{\crt^n}, W_{\rt^{2n}_\ell})$.

A function $f : [\omega]^n \to \omega$ is \emph{$k$-bounded} if for every $c \in \omega$, $|f^{-1}(c)| \leq k$, that is, each color appears at most $k$ times. A set $H \subseteq \omega$ is an \emph{$f$-rainbow} if $f$ is injective over $[H]^n$. 

\begin{statement}[Rainbow Ramsey theorem]
$\rrt^n_k$: Every $k$-bounded coloring $f : [\omega]^n \to \omega$ has an infinite $f$-rainbow.
\end{statement}

Cisma and Mileti~\cite{Csima2009strength} first studied the rainbow Ramsey theorem in the context of reverse mathematics. Wang~\cite[Theorem 4.2]{Wang2014Some} proved that $\rrt^n_2$ follows directly from $\fs^n$, thus that $\rrt^n_2$ admits strong cone avoidance for every $n \geq 1$. Later, the author~\cite[Theorem 4.6]{Patey2015Somewhere} proved that $\fs^n$ follows from $\rrt^{2n+1}_2$.
Given a $k$-bounded function $f : [\omega]^n \to \omega$, we can define $g : [\omega]^{2n} \to [2n]^n \times [2n]^n \to 2$ as for the canonical Ramsey theorem. An infinite set $H$ is $g$-homogeneous if and only if it is an $f$-rainbow. Then letting $V_{\rrt^n_k}$ be the set of $\rt^{2n}_\ell$-patterns which force the sets to be $g$-comparing and to code a $k$-bounded function, the rainbow Ramsey theorem can be seen as the promise Ramsey-like statement $\rt^{2n}_\ell(V_{\rrt^n_k}, W_{\rt^{2n}_\ell})$.

\section{Open questions}\label{sect:open-questions}

This article provides an extensive analysis of cone avoidance for Ramsey-like and promise Ramsey-like theorems. Other weakness notions have been proven to be very useful in reverse mathematics. It would be interesting to extend this analysis to these notions. We detail some of the remaining questions.

\subsection{PA avoidance} Among the five main subsystems of second-order arithmetics studied in reverse mathematics, weak K\"onig's lemma ($\wkl$) captures compactness arguments. Weak K\"onig's lemma asserts that every infinite binary tree admits an infinite path. The question whether $\rt^2_2$ implies $\wkl$ was a long-standing open question, until Liu~\cite{Liu2012RT22} answered it negatively using the notion of PA avoidance. A Turing degree $\mathbf{d}$ is \emph{PA} relative to $X$ if every infinite $X$-computable binary tree has an infinite path bounded by $\mathbf{d}$. 

\begin{definition}[PA avoidance]
A problem $\Psf$ admits \emph{PA avoidance} if for every set $Z$ of non-PA degree and every $Z$-computable $\Psf$-instance $X$, there is a $\Psf$-solution $Y$ to $X$ such that $Z \oplus Y$ is of non-PA degree.
\end{definition}

 The notion of strong PA avoidance is defined accordingly. Liu~\cite{Liu2012RT22} proved that $\rt^1_2$ admits strong PA avoidance and deduced that $\rt^2_2$ admits PA avoidance.

\begin{question}
What Ramsey-like statements admit PA and strong PA avoidance, respectively?
\end{question}

The cone avoidance analysis for Ramsey-like statements strongly relies on finding the exact bounds for which the thin set theorems admits cone avoidance. The author~\cite{PateyCombinatorial} proved that for every $n \geq 1$, there is some $\ell \in \omega$ such that $\rt^n_{<\infty, \ell}$ admits strong PA avoidance. 

\subsection{Preservation of hyperimmunities}
A very important and successful computability-theoretic notion to separate statements in reverse mathematics is simultaneous preservation of hyperimmunities. A function $f : \omega \to \omega$ is \emph{$X$-hyperimmune} if it is not dominated by any $X$-computable function.

\begin{definition}[Preservation of hyperimmunities]
A problem $\Psf$ admits \emph{preservation of $k$ hyperimmunities} if for every set $Z$, every $k$-tuple of $Z$-hyperimmune functions $f_0, \dots, f_{k-1}$ and every $Z$-computable $\Psf$-instance $X$, there is a $\Psf$-solution $Y$ such that all the functions $f_0, \dots, f_{k-1}$ are $Z \oplus Y$-hyperimmune.
\end{definition}

Again, the notion of strong preservation of $k$ hyperimmunities is defined accordingly. The analysis of Section~\ref{sect:ramsey-like} actually shows that whenever $\rt^n_k(W) \not \leq_{id} \largeo^n_k$, then $\rt^n_k(W)$ does not admit strong preservation of 1 hyperimmunity. Actually, the proof of Cholak and Patey~\cite{Cholak2019Thin} that $\rt^n_{<\infty, C_n}$ admits strong cone avoidance can be adapted to prove that $\rt^n_{<\infty, C_n}$ admits strong preservation of 1 hyperimmunity. By a similar analysis, we can prove that $\largeo^n_k$ admits strong preservation of 1 hyperimmunity, and thus that the Ramsey-like statements which admit strong preservation of 1 hyperimmunity and strong cone avoidance coincide. The situation becomes different when preserving 2 hyperimmunities. Indeed, the author~\cite[Lemma 25 and Lemma 27]{Patey2015Iterative} proved that $\rt^1_2$ does not admit strong preservation of 2 hyperimmunities, while Dzhafarov and Jockusch~\cite{Dzhafarov2009Ramseys} proved that $\rt^1_2$ can strongly avoid multiple cones simultaneously.

\begin{question}
What Ramsey-like statements admit preservation and strong preservation of $k$ hyperimmunities, respectively?
\end{question}

The author~\cite[Theorem 8.4.1]{Patey2016reverse} proved that for every $k \in \omega$ and $n \geq 1$, there is some $\ell \in \omega$ such that $\rt^n_{<\infty, \ell}$ admits preservation of $k$ hyperimmunities.

\subsection{Jump cone avoidance}
The question of the relation between stable Ramsey's theorem for pairs and cohesiveness~\cite{Patey2016Open} motivated the study of jump computation and yielded the notion of jump cone avoidance which is similar to the notion of cone avoidance, but for jump computation.

\begin{definition}[Jump cone avoidance]
A problem $\Psf$ admits \emph{jump cone avoidance} if for every set $Z$, every non-$\Delta^0_2(Z)$ set $C$, and every $Z$-computable $\Psf$-instance $X$, there is a $\Psf$-solution $Y$ such that $C$ is not $\Delta^0_2(Z \oplus Y)$.
\end{definition}

Once again, the notion of strong jump cone avoidance is defined accordingly by dropping the effectiveness restraint on the $\Psf$-instance.
Recently, Monin and Patey~\cite[Theorem 4.1]{Monin2018Pigeons} proved that $\rt^1_2$ admits strong jump cone avoidance.
It is however currently unknown whether for every $n \geq 1$, there is some $\ell \in \omega$ such that $\rt^n_{<\infty, \ell}$ admits strong jump cone avoidance. 

\begin{question}
What Ramsey-like statements admit jump cone and strong jump cone avoidance, respectively?
\end{question}

\bibliographystyle{plainnat}
\bibliography{bibliography}

\end{document}